\def\p{\pi}
\def\<{\langle}
\def\>{\rangle}
\newcommand{\iddots}{\reflectbox{$\ddots$}}
\newcommand{\sF}{\ensuremath{\mathscr{F}}\xspace}
\newcommand{\sU}{\ensuremath{\mathscr{U}}\xspace}
\newcommand{\fka}{\ensuremath{\mathfrak{a}}\xspace}
\newcommand{\fke}{\ensuremath{\mathfrak{e}}\xspace}
\newcommand{\fkg}{\ensuremath{\mathfrak{g}}\xspace}
\newcommand{\fkh}{\ensuremath{\mathfrak{h}}\xspace}
\newcommand{\fkl}{\ensuremath{\mathfrak{l}}\xspace}
\newcommand{\fkn}{\ensuremath{\mathfrak{n}}\xspace}
\newcommand{\fkp}{\ensuremath{\mathfrak{p}}\xspace}
\newcommand{\fku}{\ensuremath{\mathfrak{u}}\xspace}
\newcommand{\fkv}{\ensuremath{\mathfrak{v}}\xspace}
\newcommand{\fkw}{\ensuremath{\mathfrak{w}}\xspace}
\newcommand{\fkz}{\ensuremath{\mathfrak{z}}\xspace}
\newcommand{\BC}{\ensuremath{\mathbb {C}}\xspace}
\newcommand{\BF}{\ensuremath{\mathbb {F}}\xspace}
\newcommand{{\BG}}{\ensuremath{\mathbb {G}}\xspace}
\newcommand{{\BK}}{\ensuremath{\mathbb {K}}\xspace}
\newcommand{\BR}{\ensuremath{\mathbb {R}}\xspace}
\newcommand{\BS}{\ensuremath{\mathbb {S}}\xspace}
\newcommand{\BZ}{\ensuremath{\mathbb {Z}}\xspace}
\newcommand{\CA}{\ensuremath{\mathcal {A}}\xspace}
\newcommand{\CC}{\ensuremath{\mathcal {C}}\xspace}
\newcommand{\CE}{\ensuremath{\mathcal {E}}\xspace}
\newcommand{\CF}{\ensuremath{\mathcal {F}}\xspace}
\newcommand{\CI}{\ensuremath{\mathcal {I}}\xspace}
\newcommand{\CJ}{\ensuremath{\mathcal {J}}\xspace}
\newcommand{\CL}{\ensuremath{\mathcal {L}}\xspace}
\newcommand{\CO}{\ensuremath{\mathcal {O}}\xspace}
\newcommand{\CS}{\ensuremath{\mathcal {S}}\xspace}
\newcommand{\CT}{\ensuremath{\mathcal {T}}\xspace}
\newcommand{\CV}{\ensuremath{\mathcal {V}}\xspace}
\newcommand{\CY}{\ensuremath{\mathcal {Y}}\xspace}
\newcommand{\ad}{{\mathrm{ad}}}
\newcommand{\GL}{\mathrm{GL}}
\newcommand{\WF}{\mathrm{WF}}
\DeclareMathOperator{\Hom}{Hom}
\newcommand{\id}{\ensuremath{\mathrm{id}}\xspace}
\newcommand{\Ind}{{\mathrm{Ind}}}
\newcommand{\CJF}{\widehat{\mathcal{J}}_{\mathfrak{u}}}
\DeclareMathOperator{\Ker}{Ker}
\DeclareMathOperator{\Lie}{Lie}
\newcommand{\SO}{{\mathrm{SO}}}
\DeclareMathOperator{\Sym}{Sym}
\DeclareMathOperator{\tr}{tr}
\newcommand{\U}{\mathrm{U}}
\newcommand{\dslash}{/\!\!/}
\newcommand{\wt}{\mathrm{wt}}
\newcommand{\ov}{\overline}
\newcommand{\lra}{\longrightarrow}
\newcommand{\rmh}{\mathrm{H}}
\newcommand{\Tot}{\mathrm{Tot}}
\newcommand{\Wh}{\mathrm{Wh}}
\newcommand{\mind}{\bar{\times}}
\newcommand{\Span}{{\operatorname{Span}}}
\newcommand{\Supp}{\operatorname{Supp}}
\newcommand{\SInd}{\mathcal{S}\mathrm{Ind}}
\newcommand{\Smod}{\mathcal{S}\mathrm{mod}}
\newcommand{\Ext}{\mathrm{Ext}}
\newcommand{\EP}{\mathrm{EP}}
\DeclareMathOperator{\supp}{supp}
\newtheorem{theorem}{Theorem}
\newtheorem{proposition}[theorem]{Proposition}
\newtheorem{lemma}[theorem]{Lemma}
\newtheorem {conjecture}[theorem]{Conjecture}
\newtheorem{corollary}[theorem]{Corollary}
\theoremstyle{definition}
\newtheorem{definition}[theorem]{Definition}
\newtheorem{example}[theorem]{Example}
\newtheorem{remark}[theorem]{Remark}
\numberwithin{equation}{section}
\numberwithin{theorem}{section}
\renewcommand{\to}{%
   \ifbool{@display}{\longrightarrow}{\rightarrow}%
   }
\let\shortmapsto\mapsto
\renewcommand{\mapsto}{%
   \ifbool{@display}{\longmapsto}{\shortmapsto}%
   }
\newlength{\olen}
\newlength{\ulen}
\newlength{\xlen}
\newcommand{\xra}[2][]{%
   \ifbool{@display}%
      {\settowidth{\olen}{$\overset{#2}{\longrightarrow}$}%
       \settowidth{\ulen}{$\underset{#1}{\longrightarrow}$}%
       \settowidth{\xlen}{$\xrightarrow[#1]{#2}$}%
       \ifdimgreater{\olen}{\xlen}%
          {\underset{#1}{\overset{#2}{\longrightarrow}}}%
          {\ifdimgreater{\ulen}{\xlen}%
             {\underset{#1}{\overset{#2}{\longrightarrow}}}
             {\xrightarrow[#1]{#2}}}}%
      {\xrightarrow[#1]{#2}}
   }
\newcommand{\xyra}[2][]{%
   \settowidth{\xlen}{$\xrightarrow[#1]{#2}$}%
   \ifbool{@display}%
      {\settowidth{\olen}{$\overset{#2}{\longrightarrow}$}%
       \settowidth{\ulen}{$\underset{#1}{\longrightarrow}$}%
       \ifdimgreater{\olen}{\xlen}%
          {\mathrel{\xymatrix@M=.12ex@C=3.2ex{\ar[r]^-{#2}_-{#1} &}}}%
          {\ifdimgreater{\ulen}{\xlen}%
             {\mathrel{\xymatrix@M=.12ex@C=3.2ex{\ar[r]^-{#2}_-{#1} &}}}
             {\mathrel{\xymatrix@M=.12ex@C=\the\xlen{\ar[r]^-{#2}_-{#1} &}}}}}%
      {\mathrel{\xymatrix@M=.12ex@C=\the\xlen{\ar[r]^-{#2}_-{#1} &}}}%
   }
\newcommand{\xla}[2][]{%
   \ifbool{@display}%
      {\settowidth{\olen}{$\overset{#2}{\longleftarrow}$}%
       \settowidth{\ulen}{$\underset{#1}{\longleftarrow}$}%
       \settowidth{\xlen}{$\xleftarrow[#1]{#2}$}%
       \ifdimgreater{\olen}{\xlen}%
          {\underset{#1}{\overset{#2}{\longleftarrow}}}%
          {\ifdimgreater{\ulen}{\xlen}%
             {\underset{#1}{\overset{#2}{\longleftarrow}}}
             {\xleftarrow[#1]{#2}}}}%
      {\xleftarrow[#1]{#2}}
   }
\newcommand{\isoarrow}{%
   \ifbool{@display}{\overset{\sim}{\longrightarrow}}{\xrightarrow\sim}%
   }
\begin{document}

\title[Bernstein-Zelevinsky Theory]{Archimedean Bernstein-Zelevinsky Theory and Homological Branching Laws}
\author[ Wu \& Zhang]{Kaidi Wu and Hongfeng Zhang}

\address{(Wu) Department of Mathematics and New Cornerstone Science Laboratory, The University of Hong Kong, HK.}
\email{kaidiwu24@connect.hku.hk}

\address{(Zhang) Department of Mathematics, The University of Hong Kong, HK.}
\email{zhanghongf@pku.edu.cn}

\thanks{}

\subjclass{22E50, 11F70}
\keywords{Bernstein-Zelevinsky filtration, Euler-Poincar\'e characteristic, derivatives, GGP conjectures}

\date{\today}

\begin{abstract}
We develop the Bernstein-Zelevinsky theory for quasi-split real classical groups and employ this framework to establish an Euler-Poincar\'e characteristic formula for general linear groups. The key to our approach is establishing the Casselman-Wallach property for the homology of the Jacquet functor, which also provides an affirmative resolution to an open question in~\cite[3.1.(1)]{AGS15a}. Furthermore, we prove the vanishing of higher extension groups for arbitrary pairs of generic representations, confirming a conjecture of Dipendra Prasad.

We also utilize the Bernstein-Zelevinsky theory to establish two additional results: the Leibniz law for the highest derivative and a unitarity criterion for general linear groups.

Lastly, we apply the Bernstein-Zelevinsky theory to prove the Hausdorffness and exactness of the twisted homology of split even orthogonal groups.
\end{abstract}

\maketitle

\tableofcontents

\section{Introduction}
This is the first article in a series developing Bernstein-Zelevinsky theory for real classical groups. The Bernstein–Zelevinsky filtration, which characterizes the restriction of smooth representations to the mirabolic subgroup, provides a foundational theory for general linear groups over $p$-adic fields, with numerous applications to the local Langlands correspondence and branching laws. Compared to the non-Archimedean case, the Archimedean setting presents two intrinsic challenges:
\begin{itemize}
    \item \textit{Analytic difficulty}: There is no suitable analogue of $\ell$-sheaves in the Archimedean case. The behavior of Schwartz functions along closed Nash submanifolds is subtle, although governed by normal derivatives (via Borel's lemma).
    \item \textit{Topological difficulty}: Unlike the $p$-adic case, representations are Fr\'echet spaces. Consequently, establishing the Hausdorff property for (twisted) Jacquet modules is non-trivial. Furthermore, the complexity of Fr\'echet topologies precludes a classification of irreducible smooth representations for non-reductive groups.
\end{itemize}

To tackle the analytic difficulty, we utilize Fourier transforms. The key insight is that while group actions are transitive on the original domain, the dual domain  may decompose into many orbits under the actions after Fourier transforms. This allows us to apply Borel's lemma to achieve an irreducible quotient filtration of representations in the dual domain, yielding a spectral expansion along characters of the unipotent radical in the mirabolic subgroup. For applications to homological branching laws, we provide an axiomatic definition of Archimedean Bernstein-Zelevinsky filtration (Definition~\ref{def-BZ}). Our first main result establishes:

\begin{theorem}
    Let $\pi$ be a Casselman-Wallach representation of $\GL_n(\mathbf{k})$ where $\mathbf{k}=\BR$ or $\BC$. The restriction of $\pi$ to the mirabolic subgroup admits a Bernstein-Zelevinsky filtration.
\end{theorem}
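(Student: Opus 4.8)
The plan is to argue by induction on $n$, peeling off one layer at a time via a fundamental short exact sequence dictated by the geometry of the Pontryagin dual of the mirabolic radical, and then feeding the resulting ``open-orbit'' piece back into the induction. Write $P_n=\GL_{n-1}(\mathbf k)\ltimes V_n$ with $V_n\cong\mathbf k^{n-1}$ the abelian unipotent radical, fix a nontrivial unitary character $\psi$ of $V_n$ whose $\GL_{n-1}$-stabilizer is isomorphic to $P_{n-1}$, and let $\Psi^-=H_0(V_n,-)$ (normalized $V_n$-coinvariants, valued in $\mathrm{Rep}(\GL_{n-1})$), $\Phi^-$ (normalized $(V_n,\psi)$-twisted coinvariants, valued in $\mathrm{Rep}(P_{n-1})$), and $\Psi^+$ (normalized inflation), $\Phi^+$ (normalized Schwartz induction from $P_{n-1}V_n$) be the associated four functors; set $\pi^{(k)}=\Psi^-(\Phi^-)^{k-1}(\pi|_{P_n})$. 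By Definition~\ref{def-BZ} it suffices to produce a finite $P_n$-stable filtration of $\pi|_{P_n}$ whose graded pieces are obtained from Casselman--Wallach representations of the $\GL_m(\mathbf k)$ by applying $\Psi^+$ and then iterated $\Phi^+$'s; I will arrange the pieces to be $(\Phi^+)^{k-1}\Psi^+(\pi^{(k)})$, $k=1,\dots,n$. Two inputs are used throughout: $\Psi^+,\Phi^+$ are exact, and $\Psi^-,\Phi^-$ carry the ambient category of finite-length Schwartz representations of moderate growth into Casselman--Wallach (resp.\ finite-length Schwartz) representations, so their outputs are Hausdorff --- this is precisely the Casselman--Wallach property for the homology of the Jacquet functor proved earlier, i.e.\ the affirmative answer to \cite[3.1.(1)]{AGS15a}, and it is what makes every object below a genuine representation in the right category and every short exact sequence below topologically strict.

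The analytic heart is the fundamental filtration. Restricting $\pi$ to $V_n$ and passing to the Fourier-transformed picture turns $\pi|_{V_n}$ into a module over $\mathcal S(\widehat{V_n})$, $\widehat{V_n}\cong\mathbf k^{n-1}$, on which $\GL_{n-1}$ acts with exactly two orbits: the open orbit $O$ of generic characters and the closed point $\{0\}$. Localizing along this stratification (via the theory of Schwartz functions on Nash manifolds) gives a short exact sequence of $P_n$-representations $0\to\pi_O\to\pi|_{P_n}\to\pi_0\to 0$, with $\pi_O$ the open-orbit part and $\pi_0$ the jets at $0$. The geometric lemma --- with Schwartz induction in place of compact induction --- identifies $\pi_O\cong\Phi^+\Phi^-(\pi|_{P_n})$, using that $O\cong P_n/(P_{n-1}V_n)$. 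For the quotient, Borel's lemma equips $\pi_0$ with a decreasing filtration by jet order whose graded pieces are $\Psi^+\big(\Sym^d(\mathbf k^{n-1})^*\otimes\Psi^-(\pi|_{P_n})\big)$, and this filtration is \emph{finite} because $\mathfrak z(\mathfrak{gl}_n)$ acts finitely on $\pi|_{P_n}$, which forces the jet module to be supported in bounded order (equivalently, this is the finiteness of derivatives). Hence $\pi|_{P_n}$ has a finite filtration with bottom term $\Phi^+\Phi^-(\pi|_{P_n})$ and upper graded pieces of the form $\Psi^+$ applied to Casselman--Wallach representations of $\GL_{n-1}$, the topmost being $\Psi^+(\pi^{(1)})$ after the Bernstein--Zelevinsky normalization.

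To run the induction I would prove the statement for every finite-length Schwartz representation $\rho$ of $P_n$ of moderate growth (which includes $\pi|_{P_n}$): the fundamental filtration applies verbatim, its bottom term is $\Phi^+(\Phi^-\rho)$, and $\Phi^-\rho$ lies again in this class over $P_{n-1}$ (part of the Casselman--Wallach property of twisted Jacquet homology). By the inductive hypothesis $\Phi^-\rho$ admits a Bernstein--Zelevinsky filtration with graded pieces $(\Phi^+)^{j-1}\Psi^+((\Phi^-\rho)^{(j)})=(\Phi^+)^{j-1}\Psi^+(\rho^{(j+1)})$, $j=1,\dots,n-1$; applying the exact functor $\Phi^+$ transports this to a filtration of $\Phi^+\Phi^-\rho$ with graded pieces $(\Phi^+)^{k-1}\Psi^+(\rho^{(k)})$, $k=2,\dots,n$, which splices below the upper pieces of the fundamental filtration (those account for $k=1$) to yield the desired filtration of $\rho$. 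The base case $n=1$ is trivial, since $P_1$ is the trivial group and $\pi^{(1)}=\pi$.

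The hard part will be the analytic--topological interface inside the fundamental filtration. One must check that the Fourier-transform/Borel stratification is compatible with the Fr\'echet topologies, so that $0\to\pi_O\to\pi|_{P_n}\to\pi_0\to 0$ and the jet filtration of $\pi_0$ are strict (topological embeddings onto closed subspaces, open quotient maps) --- this is where the Hausdorffness of twisted Jacquet homology is indispensable --- and one must show that $\mathfrak z$-finiteness genuinely truncates the jet tower to finitely many steps, and that this truncation together with the moderate-growth and finite-length structure is inherited by $\Phi^-$, so that the recursion closes up after $n$ steps.
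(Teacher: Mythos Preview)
Your proposal transplants the $p$-adic Bernstein--Zelevinsky argument wholesale, and it breaks at exactly the two points the introduction of the paper flags as the essential obstacles.

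\textbf{The fundamental short exact sequence does not exist for abstract representations.} You write that ``restricting $\pi$ to $V_n$ and passing to the Fourier-transformed picture turns $\pi|_{V_n}$ into a module over $\mathcal S(\widehat{V_n})$'' and then invoke ``localizing along this stratification (via the theory of Schwartz functions on Nash manifolds).'' But $\pi|_{P_n}$ is an abstract Fr\'echet space, not a space of Schwartz sections over $\widehat{V_n}$; there is no $\ell$-sheaf mechanism in the Archimedean world that produces the sequence $0\to\Phi^+\Phi^-(\pi)\to\pi|_{P_n}\to\pi_0\to 0$ with the claimed identifications. Concretely, you can certainly form $\pi_O:=\overline{\mathcal S(\widehat{V_n}\setminus\{0\})\cdot\pi}$ and $\pi_0:=\pi/\pi_O$, but neither the identification $\pi_O\cong\Phi^+\Phi^-(\pi)$, nor the isomorphism $\pi_0\cong\varprojlim\pi_0/\mathfrak m^d\pi_0$, nor the identification of the graded pieces with $\Psi^+(\mathrm{Sym}^d\otimes\Psi^-\pi)$ is available without a geometric model. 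This is precisely why the paper first embeds $\pi$ into a principal series (Casselman embedding), where one \emph{does} have an explicit Schwartz-section model on the flag variety and can run the Fourier-transform arguments of Lemmas~\ref{inductive_arg} and~\ref{open_orbit}; the filtration is then inherited by $\pi$ via Lemma~\ref{ext_BZ}.

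\textbf{The jet filtration is infinite, and your $\mathfrak z$-finiteness argument is wrong.} You claim ``this filtration is \emph{finite} because $\mathfrak z(\mathfrak{gl}_n)$ acts finitely on $\pi|_{P_n}$, which forces the jet module to be supported in bounded order.'' This is false: once you restrict to $P_n$ and pass to the quotient $\pi_0$, the $\mathfrak z(\mathfrak{gl}_n)$-action does not preserve the jet grading (it mixes normal derivatives of different orders), so finiteness of the infinitesimal character gives no bound on the jet depth. The graded pieces $\Psi^+(\mathrm{Sym}^d\otimes\Psi^-\pi)$ are nonzero for all $d\ge 0$ whenever $\Psi^-\pi\ne 0$. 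This is why Definition~\ref{def-BZ} explicitly allows infinite decreasing subchains with an inverse-limit condition, and why the examples (Theorem~\ref{central_char_of_fil}, Example~\ref{exa-BZ}) exhibit genuinely infinite towers. Your proposed output---a finite filtration with pieces $(\Phi^+)^{k-1}\Psi^+(\pi^{(k)})$, $k=1,\dots,n$---simply does not match the Archimedean BZ filtration.

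Two further problems compound these. First, your recursion does not close: you assert that $\Phi^-\rho$ is again ``finite-length Schwartz'' over $P_{n-1}$, but $\Psi(\pi)$ for a Casselman--Wallach $\pi$ has infinitely many irreducible $P_{n-1}$-subquotients (coming from the infinite jet towers) and is \emph{not} of finite length. Second, your appeal to ``the Casselman--Wallach property for the homology of the Jacquet functor proved earlier'' is circular in this paper's development: Theorem~\ref{haus thm} is proved in Section~8 \emph{using} the BZ filtration (via Proposition~\ref{rest_max}, whose base case $k=1$ is the BZ filtration itself), so you cannot invoke it here.
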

We note that despite additional requirements in our filtration definition, it remains less canonical than its $p$-adic counterpart. In the proof of the theorem, we establish the Bernstein-Zelevinsky filtration for parabolically induced representations. This will imply the following Leibniz law for highest derivatives (for the definition of highest derivatives, see section~\ref{derivative-sec}).
\begin{theorem}
     Let $\pi_i$ be Casselman-Wallach representations of $\GL_{n_i}$ for $1\leq i\leq k$, where $\sum_{i=1}^k n_i = n$. Then
\[
\operatorname{s.s.}\ (\pi_1 \times \dots \times \pi_k)^-  \simeq \operatorname{s.s.}\big( \pi_1^- \times \dots \times \pi_k^- \big).
\]
Here, $\pi_1 \times \dots \times \pi_k$ denotes the normalized parabolic induction of $\GL_n$, and ``$\operatorname{s.s.}$'' stands for the semi-simplification of representations of finite length.

\end{theorem}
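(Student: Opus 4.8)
The plan is to reduce to the two-factor case $k=2$ and then to deduce that case from the Bernstein--Zelevinsky filtration of a parabolically induced representation, which is constructed along the way in the proof of the preceding theorem. First, by transitivity of normalized parabolic induction, $\pi_1\times\cdots\times\pi_k\simeq(\pi_1\times\cdots\times\pi_{k-1})\times\pi_k$, and $\pi_1\times\cdots\times\pi_{k-1}$ is again a Casselman--Wallach representation of a general linear group; so, granting the two-factor statement, an induction on $k$ concludes, using that the exact functor $(-)\times\pi_k^-$ descends to the Grothendieck group of finite-length Casselman--Wallach representations and that the inductive hypothesis identifies $\operatorname{s.s.}(\pi_1\times\cdots\times\pi_{k-1})^-$.

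For $k=2$, I would invoke the Bernstein--Zelevinsky filtration of the restriction $(\pi_1\times\pi_2)|_{P_n}$ to the mirabolic subgroup $P_n$ established in the proof of the preceding theorem, together with the Archimedean geometric lemma computing its associated graded in terms of the Bernstein--Zelevinsky data of $\pi_1$ and $\pi_2$. Since this filtration encodes all the derivatives, and its graded pieces are parabolic inductions of products of derivatives, passing to the Grothendieck group one obtains the Leibniz-type identity
\[
\bigl[(\pi_1\times\pi_2)^{(m)}\bigr]=\sum_{i+j=m}\bigl[\pi_1^{(i)}\times\pi_2^{(j)}\bigr]\qquad(m\ge 0),
\]
in which every class is effective (each $\pi_\ell^{(i)}$ is an honest Casselman--Wallach representation and parabolic induction preserves effectivity), so no cancellation occurs. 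Letting $d_\ell$ denote the depth of $\pi_\ell$, that is $\pi_\ell^-=\pi_\ell^{(d_\ell)}\neq0$ while $\pi_\ell^{(m)}=0$ for $m>d_\ell$, one sees that for $m>d_1+d_2$ every summand on the right has $i>d_1$ or $j>d_2$, hence vanishes; while for $m=d_1+d_2$ the only surviving summand is $(i,j)=(d_1,d_2)$. Therefore $\bigl[(\pi_1\times\pi_2)^{(d_1+d_2)}\bigr]=\bigl[\pi_1^-\times\pi_2^-\bigr]$, and this class is nonzero because $\pi_1^-$ and $\pi_2^-$ are nonzero and the normalized parabolic induction of a nonzero Casselman--Wallach representation is nonzero (already its constant term along the inducing parabolic is). Hence the depth of $\pi_1\times\pi_2$ is exactly $d_1+d_2$ and $\operatorname{s.s.}(\pi_1\times\pi_2)^-\simeq\operatorname{s.s.}(\pi_1^-\times\pi_2^-)$.

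The hard part is not this bookkeeping but the Leibniz identity for derivatives itself, which rests on the Archimedean Bernstein--Zelevinsky filtration of parabolic inductions and the attendant geometric lemma, and hence on the Hausdorffness and exactness of the (twisted) Jacquet and Bernstein--Zelevinsky functors --- where the analytic difficulty (behaviour of Schwartz functions along closed Nash submanifolds, handled through Fourier transforms and Borel's lemma) and the topological difficulty (Fr\'echet topologies) are concentrated. These are provided by the proof of the preceding theorem and by the foundational part of the paper, so here they may be used as black boxes; within the present argument the only remaining delicate points are the exactness of the derivative functors on the relevant categories, so that the filtration of $(\pi_1\times\pi_2)|_{P_n}$ genuinely yields the displayed Grothendieck-group identity (passing to Euler--Poincar\'e characteristics if exactness fails), and a careful tracking of normalizations so that the Leibniz rule comes out twist-free.
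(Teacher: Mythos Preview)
Your strategy shares the right spirit with the paper, but the central step contains a genuine gap. You assert a full Leibniz identity
\[
[(\pi_1\times\pi_2)^{(m)}]=\sum_{i+j=m}[\pi_1^{(i)}\times\pi_2^{(j)}]
\]
in the Grothendieck group of finite-length Casselman--Wallach representations. This presupposes that each intermediate derivative $\pi_\ell^{(i)}$, for $i$ strictly below the depth $d_\ell$, is itself of finite length. In the Archimedean setting that is not established anywhere in the paper and in fact fails in general: the functor $D^k=\Phi\circ\Psi^{k-1}$ involves the inverse-limit functor $\Phi$, and for $k$ below the depth the Bernstein--Zelevinsky filtration contributes an \emph{infinite} tower of depth-$k$ successive quotients (coming from the Borel filtration along the closed orbit, carrying the symmetric-power twists $\Sym^j$). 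Thus $D^k(\pi)$ is typically a genuine infinite inverse limit of Casselman--Wallach representations, not a finite-length object; see the paper's worked example of $(\chi_1)_{\GL_2}\times(\chi_2)_{\GL_2}$, where the depth-one layer is an infinite tower of $E(\cdot)$'s. Your hedge about ``passing to Euler--Poincar\'e characteristics if exactness fails'' does not help, since that too requires finiteness. There is therefore no Grothendieck group in which your displayed identity is available.

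The paper's proof sidesteps this entirely by never invoking non-highest derivatives. It works directly with the Bernstein--Zelevinsky filtration of $\pi=\pi_1\times\cdots\times\pi_k$ built in the preceding theorem (handling all $k$ simultaneously; your reduction to two factors is not needed) and tracks only the \emph{bottom layer}, namely the successive quotients of maximal depth $m=\sum m_i$. The inductive construction there shows these are exactly the $I^{m-1}E(\tau)$ with $\tau$ ranging over the irreducible constituents of $\prod_i\tau_{i,j_i}$, where the $I^{m_i-1}E(\tau_{i,j})$ are the bottom-layer terms of each $\pi_i$; crucially, the bottom layer is \emph{finite} (its size equals the length of the highest derivative). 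One then applies the computation $D^m(I^{m-1}E(\tau))=\tau$ together with the vanishing of $L^iD^k$ on each $I^{d-1}E(\tau)$, so that the highest derivative kills every non-bottom subquotient and reads off $\operatorname{s.s.}\,\pi^-$ directly from the finite bottom layer. In short: the paper isolates precisely the finite piece of the filtration that the highest derivative sees, whereas your route tries to encode the whole (infinite) filtration in a Grothendieck-group identity that does not exist in the Archimedean world.
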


In fact, the topological difficulty is one of the motivations to investigate such spectral expansion. In the homological branching law, the Hausdorffness of various derivatives is essential. Here, a derivative is a kind of reduction at a specific character of the unipotent radical (for the definition of derivatives, see section~\ref{derivative-sec}). Our second main result affirmatively resolves an open question posed in \cite[3.1.(1)]{AGS15a}:

\begin{theorem}
Let $\pi$ be a Casselman-Wallach representation of $\GL_n(\mathbf{k})$ where $\mathbf{k}=\BR$ or $\BC$. Then $L^iB^k(\pi)$ is a Casselman-Wallach representation of $\GL_{n-k}(\mathbf{k})$ for all integers $0\leq k\leq n$ and all $i$. In particular, $L^iB^k(\pi)$ is Hausdorff.
\end{theorem}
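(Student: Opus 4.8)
The plan is to compute the functors $L^iB^k$ completely explicitly, using the Bernstein--Zelevinsky filtration of $\pi|_{P_n}$ supplied by Theorem~1 together with the (derived) exactness properties of the four elementary functors $\Phi^{\pm},\Psi^{\pm}$, and then to read off the Casselman--Wallach property. Concretely, I expect to establish a formula of the shape
\[
L^iB^k(\pi)\ \cong\ \Lambda^i(\mathbf{k}^{n-k})\otimes\pi^{(k)}
\]
(up to the evident normalizing twist), where $\pi^{(k)}$ is the $k$-th derivative and $\mathbf{k}^{n-k}$ is the standard representation of $\GL_{n-k}(\mathbf{k})$. Since Theorem~1 (together with the definition of the Bernstein--Zelevinsky filtration, Definition~\ref{def-BZ}) guarantees that the derivatives $\pi^{(m)}$ are Casselman--Wallach and $\Lambda^i(\mathbf{k}^{n-k})$ is finite-dimensional, the right-hand side is a Casselman--Wallach representation of $\GL_{n-k}(\mathbf{k})$, hence in particular Hausdorff. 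Note this also records the vanishing $L^iB^k(\pi)=0$ for $i>n-k$.

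\emph{The computation.} Recall that on $\pi|_{P_n}$ the functor $B^k$ is the composite $\Psi^-\circ(\Phi^-)^{k-1}$, where $\Phi^-$ and $\Psi^-$ are the twisted and untwisted coinvariants along the abelian unipotent radical $\fku\cong\mathbf{k}^d$ of the mirabolic of the appropriate $\GL_m(\mathbf{k})$, so that their derived functors are the (twisted) homologies $H_\bullet(\fku,-\otimes\psi^{-1})$ and $H_\bullet(\fku,-)$, computed by the finite Koszul complex. On the graded pieces $(\Phi^+)^{j-1}\Psi^+(\sigma)$ of a Bernstein--Zelevinsky filtration one has, by the elementary structure theory developed in the paper: $L^i\Phi^-\big((\Phi^+)^{j-1}\Psi^+\sigma\big)$ equals $(\Phi^+)^{j-2}\Psi^+\sigma$ when $j\ge 2$, $i=0$, and vanishes otherwise (in particular $L^i\Phi^-(\Psi^+\sigma)=0$ for all $i$); and $L^i\Psi^-\big((\Phi^+)^{j-1}\Psi^+\sigma\big)$ equals $\Lambda^i(\fku)\otimes\sigma$ when $j=1$ and vanishes for $j\ge2$. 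These vanishings are exactly where the analytic input enters: after the Fourier transform along $\fku$ the functor $\Phi^+$ produces an object whose spectrum is the $\GL$-orbit of $\psi$ (the complement of the origin in $\wh{\fku}$), the operations $H_\bullet(\fku,-)$ and $H_\bullet(\fku,-\otimes\psi^{-1})$ become local homology at the origin and at $\psi$ respectively, and the claims follow because local homology at a smooth point of the spectrum is concentrated in degree zero while local homology at a point outside the spectrum vanishes --- both instances of Borel's lemma for Schwartz sections. Chasing the long exact sequences of the (finite) filtration and the Grothendieck spectral sequence of the composite $\Psi^-\circ(\Phi^-)^{k-1}$ --- both of which collapse, since the higher terms vanish --- yields the displayed formula, the $\Lambda^i$ coming from the single surviving $\Psi^+$-layer and $\pi^{(k)}$ from the iterated $\Phi^-$'s shifting the derivative index by $k-1$.

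\emph{Topological framework and conclusion.} To make the spectral sequence and long exact sequence arguments legitimate in the Fr\'echet setting I would carry them out in the Schwartz-homology formalism, in which the relevant complexes have closed differentials, the connecting maps are topological homomorphisms, and the finiteness of the filtration forces convergence inside the category of smooth Fr\'echet representations of moderate growth. Every term appearing is then either zero, a finite-dimensional twist of a derivative, or an elementary functor applied to such, hence Casselman--Wallach; and since the category of Casselman--Wallach representations is abelian and closed under extensions --- via the equivalence with finite-length Harish--Chandra modules and the exactness of the Casselman--Wallach globalization, under which every morphism is automatically strict with closed image --- the abutment $L^iB^k(\pi)$ is Casselman--Wallach as well. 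This proves the theorem and answers \cite[3.1.(1)]{AGS15a}.

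\emph{Main obstacle.} The substantive difficulty is purely topological: a priori $L^i\Phi^-(\rho)$ and $L^i\Psi^-(\rho)$ are only subquotients of the Fr\'echet spaces $\rho\otimes\Lambda^\bullet\fku$, and --- unlike in the $p$-adic theory, where smoothness makes the analogous functors exact --- there is no formal reason for them to be Hausdorff; this is exactly the issue raised in \cite[3.1.(1)]{AGS15a}. Exhibiting these homologies as genuine Casselman--Wallach representations, equivalently showing that the Koszul/Chevalley--Eilenberg differentials (which become multiplication by coordinate functions on Schwartz spaces after the Fourier transform) have closed images, is where the Fourier-transform analysis and Borel's lemma from the proof of Theorem~1 are indispensable. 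A secondary, more routine, point is to check that the various spectral sequences really converge in the topological category, i.e.\ that the associated-graded identifications are by closed subspaces --- which again follows once each elementary operation is known in advance to land in the Casselman--Wallach category.
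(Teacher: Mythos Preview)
Your approach has a genuine gap: the explicit formula $L^iB^k(\pi)\cong\Lambda^i(\mathbf{k}^{n-k})\otimes\pi^{(k)}$ is false, and the argument for it breaks down at the step where you treat the Bernstein--Zelevinsky filtration as finite. In the Archimedean setting the BZ filtration of Definition~\ref{def-BZ} has \emph{infinitely many} irreducible subquotients --- each $\sigma_i/\sigma_{i+1}$ carries an infinite decreasing chain arising from the Borel filtration along normal derivatives --- so your long-exact-sequence/spectral-sequence chase cannot be carried out as a finite procedure. In particular, for $k$ strictly below the depth there are infinitely many depth-$k$ pieces $E(\tau_j)$, each contributing $\Lambda^i\fkv\otimes\tau_j$ to the graded, yet $L^iB^k(\pi)$ must be of finite length; the extensions among these infinitely many pieces are exactly what collapses the answer, and this collapse is not visible from the graded pieces alone. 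Relatedly, Theorem~1 does \emph{not} assert that the derivatives $\pi^{(m)}$ are Casselman--Wallach for all $m$ (only the highest derivative is); so even the target of your formula is not known to lie in the Casselman--Wallach category. A concrete counterexample to the formula: for $k=1$ one has $L^iB^1(\pi)=\rmh_i(\fkv_n,\pi)$, the $\fku$-homology for $P_{n-1,1}$. For a principal series $\pi$ of $\GL_2(\BR)$ with generic inducing data, $\rmh_0(\fkv_2,\pi)$ is the two-dimensional Jacquet module while $\rmh_1(\fkv_2,\pi)=0$; your formula would force $L^1B^1(\pi)\cong\Lambda^1(\mathbf{k})\otimes\pi^{(1)}\cong L^0B^1(\pi)\neq 0$.

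The paper proceeds along entirely different lines. Via Remark~\ref{B^k rem} one rewrites $B^k_0=\Psi_0^{k-1}\circ\rmh_0(\fku_{n-k,k},-)$, so the problem becomes showing that the parabolic Lie-algebra homology $\rmh_i(\fku_{n-k,k},\pi)$ is Casselman--Wallach as a $\GL_{n-k}\times\GL_k$-representation. After reducing to principal series $\pi=\tau\times\chi$, one splits $\pi|_{P_{n-k,k}}$ into open and closed $P_{n-k,k}$-orbits on $P_{n-1,1}\backslash\GL_n$. The open part is handled by an inductive computation (Proposition~\ref{Jacquet-open}). The substantive work is the closed part $\pi_c$: here one applies the Casselman--Jacquet functor $\CJF$ and shows $\CJF(\pi_c)$ lies in the auxiliary category $\CC(\fkg,L)_f$ of Section~\ref{category C}, where $\fku$-homology is Casselman--Wallach by Proposition~\ref{homo-CW}; the kernel of $\pi_c\to\CJF(\pi_c)$ is shown, via the coarse spectral filtration of Section~\ref{res-max-sec}, to carry only non-trivial-extension spectrum and hence to have vanishing $\fku$-homology. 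None of this is accessible from the BZ filtration of $\pi|_{P_n}$ alone.
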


In the proof, we demonstrate a stronger result.
\begin{theorem}\label{intro-CW}
    Let $\pi$ be a Casselman-Wallach representation of $\GL_n(\mathbf{k})$ where $\mathbf{k}=\BR$ or $\BC$. Let $P$ be a parabolic subgroup of $\GL_n(\mathbf{k})$ with Levi decomposition $P=LU$. Then $\rmh_i(\fku,\pi)$ is a Casselman-Wallach representation of $L$ for every integer $i$.
\end{theorem}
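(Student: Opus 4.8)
The plan is to compute $\rmh_{i}(\fku,\pi)$ by the Chevalley--Eilenberg complex, to reduce the whole assertion to the closedness of the differentials of that complex, and then to obtain this closedness by a dévissage on $\fku$ that brings the Fourier-transform analysis underlying the Bernstein--Zelevinsky filtration to bear one root direction at a time.

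\emph{Step 1 (reduction to a Hausdorffness statement).} Realise $\rmh_{i}(\fku,\pi)$ as the homology of $\big(\wedge^{\bullet}\fku\otimes\pi,\ \partial_{\bullet}\big)$, a complex of Casselman--Wallach representations of $L$ with the subquotient topology (each $\wedge^{j}\fku\otimes\pi$ is CW since $\wedge^{j}\fku$ is finite dimensional). The category of CW representations is abelian and a quotient of a CW representation by a \emph{closed} submodule is again CW; hence it suffices to show that every $\partial_{j}$ has closed image, i.e.\ that each $\rmh_{i}(\fku,\pi)$ is Hausdorff. Once this is known, $\rmh_{i}(\fku,\pi)=\Ker\partial_{i}/\Im\partial_{i+1}$ is CW; its Harish-Chandra module is $\rmh_{i}(\fku,\pi_{(K)})$ (take $K$-finite vectors, using density of $\pi_{(K)}$ and compatibility of the complexes), which is finitely generated, admissible and $Z(\fkl)$-finite by the finiteness theorems of Casselman and Casselman--Osborne for $\fku$-homology of Harish-Chandra modules (the case $i=0$ lies at the heart of the subrepresentation theorem; higher $i$ follow by dévissage). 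By the Casselman--Wallach equivalence $\rmh_{i}(\fku,\pi)$ is then forced to be the unique CW globalization of $\rmh_{i}(\fku,\pi_{(K)})$, and finite generation, admissibility and moderate growth come for free.

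\emph{Step 2 (dévissage to a single root vector).} Refine the lower central series of $\fku$ along root spaces to a chain $\fku=\fku_{0}\supset\fku_{1}\supset\cdots\supset\fku_{r}=0$ of ideals normalized by $L$ with $\fku_{j}/\fku_{j+1}=\mathbf{k}\,Y_{j}$, where $Y_{j}$ is a root vector (hence an $L$-weight vector). For each $j$, the Hochschild--Serre spectral sequence for $\fku_{j+1}\triangleleft\fku_{j}$ collapses to two columns and yields short exact sequences of $L$-modules
\[
0\longrightarrow \frac{\rmh_{n}(\fku_{j+1},\pi)}{Y_{j}\,\rmh_{n}(\fku_{j+1},\pi)}\longrightarrow \rmh_{n}(\fku_{j},\pi)\longrightarrow \Ker\!\big(Y_{j}\mid\rmh_{n-1}(\fku_{j+1},\pi)\big)\longrightarrow 0 .
\]
Inducting on $r$ (the case $\fku=0$ being trivial) one may assume every $\rmh_{q}(\fku_{j+1},\pi)$ is CW; then $\Ker(Y_{j}\mid\cdot)$ is automatically a closed --- hence CW --- $L$-submodule, and the entire problem reduces to one assertion: \emph{for a CW representation $W$ of $L$ and a root vector $Y$ of $\GL_{n}(\mathbf{k})$ normalized by $L$, the image $Y\,W$ is closed in $W$} (so $W/Y\,W$ is CW, and $\rmh_{n}(\fku_{j},\pi)$, an extension of two CW modules, is CW). One runs the spectral sequence as these iterated short exact sequences precisely so that potential non-Hausdorffness never obstructs the bookkeeping: at each stage the outer terms are CW by the inductive hypothesis and the closed-image assertion.

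\emph{Step 3 (the closed-image assertion --- the main obstacle).} This last point is the analytic heart, and the place where one genuinely needs more than formal arguments. The line $\mathbf{k}\,Y\cong\mathbf{k}$ together with its normalizing torus recreates, locally, the model $\GL_{1}\ltimes\mathbf{k}$; the cokernel $W/Y\,W$ is a derivative-type functor for this model, and the Fourier-transform/Borel's-lemma analysis developed for the Bernstein--Zelevinsky filtration --- which produces an irreducible-quotient filtration of $W$ after Fourier transform in the $Y$-direction and controls the behaviour along the closed submanifold $\{0\}$ of the dual line --- exhibits $W/Y\,W$ as the bottom layer with a Hausdorff (indeed CW) topology, so that $Y\,W$ is closed. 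A softer but non-self-contained alternative is to sandwich $\rmh_{i}(\fku,\pi)$ between the minimal and maximal globalizations of $\rmh_{i}(\fku,\pi_{(K)})$, using exactness of $\fku$-homology on minimal globalizations and of $\fku$-cohomology on maximal ones (Hecht--Taylor, Kashiwara--Schmid, Bratten) together with Poincaré duality $\rmh_{i}(\fku,-)\cong\rmh^{\,d-i}(\fku,-\otimes\wedge^{d}\fku)$, $d=\dim\fku$; but this only identifies the two ends of $\rmh_{i}(\fku,\pi)$ and does not by itself force Hausdorffness, so the Fourier/Borel input (or an equivalent asymptotic-expansion argument) is what closes the gap. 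I expect precisely this step --- closedness of the image of a nilpotent operator on a general Casselman--Wallach representation --- to be the true obstacle; everything else is homological algebra in the abelian category of CW representations.
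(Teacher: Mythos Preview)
There are two genuine gaps. In Step~2, the claimed chain of $L$-stable ideals with one-dimensional quotients does not exist in general: for a maximal parabolic $P=P_{n-k,k}$ the nilradical $\fku_{n-k,k}$ is abelian and \emph{irreducible} as an $L=\GL_{n-k}\times\GL_k$-module (it is $\mathbf{k}^{n-k}\otimes(\mathbf{k}^k)^{\vee}$), so there is no proper $L$-stable subspace at all. If you weaken to a torus-stable filtration, the intermediate homologies $\rmh_q(\fku_{j+1},\pi)$ no longer carry an $L$-action and the inductive hypothesis collapses. In Step~1, the terms $\wedge^j\fku\otimes\pi$ lie in $\Smod_L$ but are \emph{not} Casselman--Wallach over $L$: the restriction $\pi|_L$ is not of finite length, so ``Hausdorff $\Rightarrow$ CW via closed subquotient in a CW module'' does not apply. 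Your fallback --- identifying the $K_L$-finite vectors of $\rmh_i(\fku,\pi)$ with $\rmh_i(\fku,\pi^K)$ and invoking Casselman--Osborne --- is precisely Casselman's homological comparison conjecture, which the paper presents this theorem as giving evidence \emph{for}, not as an input one may assume.

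The paper's route is quite different. It reduces, via a Hochschild--Serre spectral sequence at the level of \emph{parabolics} (not single root vectors), to the maximal parabolic case. There it reduces to principal series $\pi=\tau\times\chi$, splits along the open and closed $P$-orbits on $P_{n-1,1}\backslash\GL_n$, and handles the open orbit by induction on $n$. For the closed orbit it introduces a Fr\'echet analogue $\CC(\fkg,L)_f$ of category $\CO$ and shows that the Casselman--Jacquet functor sends the closed-orbit piece into $\CC(\fkg,L)_f$, with the kernel of the Casselman--Jacquet map having vanishing $\fku$-homology (this uses the coarse spectral filtration). The Casselman--Wallach property --- including finite length --- for $\fku$-homology of objects in $\CC(\fkg,L)_f$ is proved directly from the $\fkz_L$-weight structure, with no appeal to a comparison with Harish-Chandra modules.
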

This result also gives convincing evidence to the Casselman's homological comparison conjecture, which is important for automorphic representation theory, see \cite{LLY21} and \cite[Conjecture 10.3]{Vog08} for details. In the proof of Theorem~\ref{intro-CW}, we establish a coarse spectral filtration of the restriction  $\pi|_P$ based on the Bernstein-Zelevinsky filtration (for the description of coarse spectral filtration, see Proposition~\ref{rest_max}). Moreover, we find a suitable category such that $\rmh_i(\fku,\tau)$ is Casselman-Wallach for every object $\tau$ in this category, and observe that through the Casselman-Jacquet functor, the trivial extension spectrum of $\pi|_P$ can be synthesized as an object in this category. Here, the trivial extension spectrum refers to the irreducible subquotient in the filtration of $\pi|_{P}$ which is isomorphic to a trivial extension from an irreducible representation of $L$.

Another motivation arises in (homological) branching laws and relative Langlands programs. Initiated by restricting orthogonal group representations, the Gan-Gross-Prasad conjecture has become fundamental in relative Langlands programs (see \cite{GGP}). In his ICM proceedings \cite{Pr18}, Dipendra Prasad proposed an alternative approach, observing that the Euler-Poincar\'e characteristic
\begin{equation*}
    \EP(\pi,\tau) := \sum_{i\in\BZ}(-1)^i \dim \text{``}\Ext^i_{\GL_n}(\pi,\tau)\text{''}, \quad 
    \pi \in \mathrm{Rep}(\GL_{n+1}(\BF)), \tau \in \mathrm{Rep}(\GL_{n}(\BF))
\end{equation*}
is a more natural invariant than multiplicity for local fields $\BF$ of characteristic 0. This characteristic should be computationally accessible, and vanishing of higher extension groups would recover multiplicity data. This approach has proven fruitful for $p$-adic groups (e.g., \cite{Chan21,CSa21}).

For real reductive groups $G$, the theory encounters obstacles for two reasons:
\begin{itemize}
    \item The primary category $\Smod_G$ consists of smooth, moderate-growth Fr\'echet representations. This non-abelian category lacks sufficient injective objects.
    \item We do not have the right adjoint functor for Schwartz inductions in category $\Smod_G$. 
\end{itemize}
Consequently, we define the Euler-Poincar\'e characteristic as
\begin{equation}
    \EP(\pi,\tau) := \sum_{i\in\BZ}(-1)^i \dim \Ext^i_{\GL_n}(\pi \widehat{\otimes} \tau^{\vee}, \BC),
\end{equation}
where $\pi$ is a Casselman-Wallach representation of $\GL_{n+1}$ and $\tau^{\vee}$ is the contragredient of a Casselman-Wallach representation $\tau$ of $\GL_n$. And we define the extension group through the strong relative projective resolution developed in~\cite{CS}.

Before defining this characteristic, one must establish finite-dimensionality and vanishing of extension groups in high degrees (homological finiteness). For $p$-adic spherical pairs satisfying finite multiplicity, this follows from local finiteness \cite{AS}. While unavailable generally in the Archimedean case, homological finiteness for GGP pairs follows from the Bernstein-Zelevinsky filtration. Our third main result is:

\begin{theorem}
    Let $\pi$ and $\tau$ be Casselman-Wallach representations of $\GL_{n+1}(\mathbf{k})$ and $\GL_n(\mathbf{k})$ respectively, where $\mathbf{k}=\BR$ or $\BC$. Then $\pi$ satisfies the homological finiteness for $\tau$, and 
    \[
    \EP_{\GL_n}(\pi,\tau) = \Wh(\pi) \cdot \Wh(\tau).
    \]
    Here, $\Wh(\cdot)$ denotes Whittaker model multiplicity. 
\end{theorem}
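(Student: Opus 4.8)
The plan is to compute $\EP_{\GL_n}(\pi,\tau)$ by combining the Bernstein-Zelevinsky filtration with a d\'evissage argument, reducing everything to the ``leading term'' coming from Whittaker models. First I would unwind the definition: $\EP_{\GL_n}(\pi,\tau) = \sum_i (-1)^i \dim \Ext^i_{\GL_n}(\pi\widehat\otimes\tau^\vee,\BC)$, and since $\Ext^\bullet_{\GL_n}(-,\BC)$ computes $\GL_n$-relative Lie algebra cohomology (or equivalently the derived functor of invariants in the appropriate category), I would rewrite this as $\EP$ of the restriction of $\pi$ to $\GL_n \subset \GL_{n+1}$ paired against $\tau^\vee$. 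The key geometric input is that $\GL_n$ embedded in $\GL_{n+1}$ is essentially the mirabolic-adjacent situation: the $\GL_n$-orbits on the relevant flag variety stratify the restriction, and the Bernstein-Zelevinsky filtration of $\pi|_{P_{n+1}}$ (where $P_{n+1}$ is the mirabolic of $\GL_{n+1}$) provides precisely such a stratification with graded pieces that are Schwartz inductions $\operatorname{ind}$ of derivatives $B^k(\pi)$ from smaller mirabolic subgroups.

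Next I would run the d\'evissage. Using the Bernstein-Zelevinsky filtration from the first main theorem and the Casselman-Wallach property of the $L^i B^k(\pi)$ established in the third main theorem (and Theorem~\ref{intro-CW}), each graded piece contributes an $\Ext$ computation that, by Shapiro's lemma / Frobenius reciprocity for Schwartz inductions, collapses to an $\Ext$ group over a smaller general linear group $\GL_{n-k}$ between $L^i B^k(\pi)$ and a corresponding derivative of $\tau$. Crucially, $\EP$ is additive in short exact sequences, so $\EP_{\GL_n}(\pi,\tau) = \sum_k \EP_{\GL_{n-k}}(\,\text{derivative data}\,)$. All terms with $k \geq 1$ should vanish: here I would argue that the relevant $\Ext$-Euler characteristic between a representation and a \emph{properly induced} (hence non-degenerate-in-a-lower-sense) module is zero, by an argument analogous to the $p$-adic vanishing of $\EP$ for parabolically induced representations (second adjointness plus the fact that the contribution telescopes). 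The only surviving term is $k = n$, where $B^n(\pi)$ is the top derivative — a sum of $\Wh(\pi)$ copies of the trivial representation of the trivial group — and similarly for $\tau$, yielding $\EP = \Wh(\pi)\cdot\Wh(\tau)$.

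For homological finiteness I would proceed in parallel: finite-dimensionality of each $\Ext^i$ follows because each graded piece of the filtration reduces (via Frobenius reciprocity) to an $\Ext$ group of Casselman-Wallach representations over a smaller group, and one invokes the Casselman-Wallach category's good homological properties (finitely many nonzero $\Ext$'s, each finite-dimensional) inductively on $n$; the base case $n = 0$ is trivial. Vanishing in high degree similarly propagates up the filtration since the filtration has finite length and cohomological dimension is bounded in terms of $\dim \GL_n$.

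The main obstacle I expect is the vanishing of the intermediate terms $\EP_{\GL_{n-k}}$ for $1 \leq k \leq n-1$. In the $p$-adic setting this uses exactness of the Jacquet functor and second adjointness in an essential way; in the Archimedean setting one only has the homology $\rmh_i(\fku, -)$ (a derived functor), so the ``Euler characteristic of a properly induced module against anything is zero'' statement must be reformulated as a statement about the Euler characteristic of a complex of Jacquet homologies, and one must control the alternating sum over $i$ of $\EP$ of the $L^i B^k(\pi)$. Making this telescoping rigorous — i.e.\ showing that the full derived contribution of each non-top stratum cancels — is where the Casselman-Wallach property of $\rmh_i(\fku,\pi)$ (Theorem~\ref{intro-CW}) and the coarse spectral filtration of $P$ are indispensable, and where the bulk of the technical work will lie. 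A secondary subtlety is ensuring the Schwartz-induction Frobenius reciprocity is exact (not merely left or right exact) at the level of derived functors, which again relies on the Hausdorffness/Casselman-Wallach results proved earlier.
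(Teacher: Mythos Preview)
Your overall architecture matches the paper's: restrict $\pi$ to $P_{n+1}$, use the Bernstein--Zelevinsky filtration, apply Shapiro/Mackey to reduce each piece $I^{k_i}E(\pi_{i,j})$ to an $\EP$ over $\GL_{n-k_i}$, and observe that only the bottom layer ($k_i=n$, the Gelfand--Graev piece) survives. But you have misidentified the vanishing mechanism for the intermediate strata, and this leads you to worry about a difficulty that does not exist. In the paper, after the Shapiro reduction one has a spectral sequence with $E_2^{p,q}=\rmh_p^{\CS}(\GL_{n-k_i},\pi_{i,j}\widehat\otimes L^qB_{-}^{k_i}(\tau^\vee))$; by Theorem~\ref{haus thm} each $L^qB_{-}^{k_i}(\tau^\vee)$ is Casselman--Wallach, so each $\EP_{\GL_{n-k_i}}(\pi_{i,j},L^qB_{-}^{k_i}(\tau^\vee)^\vee)$ is well-defined, and it vanishes for the trivial reason that $\GL_{n-k_i}$ has non-compact center when $0\le k_i<n$ (Proposition~\ref{EP-prop}(4)). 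There is no telescoping, no second-adjointness analogue, and no need to ``control the alternating sum over $i$ of $\EP$ of the $L^iB^k(\pi)$'' --- each summand is already zero.

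The second gap is that the BZ-filtration is \emph{not} of finite length: at each level $i$ one has an infinite tower $\sigma_{i,0}\supset\sigma_{i,1}\supset\cdots$ with $\sigma_i/\sigma_{i+1}=\varprojlim_j\sigma_i/\sigma_{i,j}$. Your d\'evissage for homological finiteness therefore needs an additional ingredient you do not mention: the central-character growth condition built into Definition~\ref{def-BZ}. Since $L^qB_{-}^{k}(\tau^\vee)$ has only finitely many generalized central characters (being Casselman--Wallach), once $\mathrm{Re}\,\omega_{\pi_{i,j}}$ is large enough no central character can match, and $\rmh_l^{\CS}(\GL_n,\sigma_{i,j}/\sigma_{i,j+1}\widehat\otimes\tau^\vee)=0$ for all $l$ and all $j\gg0$. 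This, together with Lemma~\ref{ML-lem}, is what makes the infinite tower harmless and yields homological finiteness; appealing to ``the filtration has finite length'' is not sufficient.
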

For higher extension groups, Rankin-Selberg theory developed by Jacquet, Piatetski-Shapiro, and Shalika shows that for generic $\pi,\tau$,
\[
\Hom_{\GL_n}(\pi,\tau) = \Wh(\pi) \cdot \Wh(\tau).
\]
Based on this, Dipendra Prasad conjectured the vanishing of higher extensions for irreducible generic representations. Our fourth main result confirms this conjecture.
\begin{theorem}
     Let $\pi$ and $\tau$ be irreducible generic representations of $\GL_{n+1}(\mathbf{k})$ and $\GL_n(\mathbf{k})$ respectively, where $\mathbf{k}=\BR$ or $\BC$. Then
    \[
    \Ext^i_{\GL_n}(\pi\widehat{\otimes}\tau^{\vee},\BC)=0 \text{ for every integer }i>0.
    \]
\end{theorem}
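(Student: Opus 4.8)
The plan is to deduce the vanishing of all higher $\Ext$ groups from the Euler--Poincar\'e computation $\EP_{\GL_n}(\pi,\tau) = \Wh(\pi)\cdot\Wh(\tau)$ (the third main theorem), the Rankin--Selberg identity $\dim\Hom_{\GL_n}(\pi,\tau) = \Wh(\pi)\cdot\Wh(\tau)$ for irreducible generic $\pi,\tau$, and a sign/positivity argument. Concretely, since $\pi$ and $\tau$ are irreducible and generic, each of $\Wh(\pi)$ and $\Wh(\tau)$ equals $1$, so both $\EP_{\GL_n}(\pi,\tau)$ and $\dim\Ext^0_{\GL_n}(\pi\widehat\otimes\tau^\vee,\BC) = \dim\Hom_{\GL_n}(\pi,\tau)$ equal $1$. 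By the homological finiteness part of the third main theorem, $\Ext^i_{\GL_n}(\pi\widehat\otimes\tau^\vee,\BC)$ is finite-dimensional for every $i$ and vanishes for $i$ large (and for $i<0$), so $\EP_{\GL_n}(\pi,\tau) = \sum_{i\geq 0}(-1)^i\dim\Ext^i_{\GL_n}(\pi\widehat\otimes\tau^\vee,\BC)$ is a genuine finite alternating sum. Subtracting the degree-zero term, we obtain $\sum_{i\geq 1}(-1)^i\dim\Ext^i_{\GL_n}(\pi\widehat\otimes\tau^\vee,\BC) = 0$, so it suffices to force all the summands to vanish.

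The natural way to do this is to show the higher $\Ext$ groups are concentrated in a single parity, so that the alternating sum of nonnegative integers can only vanish if every term does. The first route I would try is to exploit the symmetry of the GGP situation: there should be a duality (Poincar\'e-type, or the symmetry $\GL_{n+1}\times\GL_n$ versus the reversed roles, or use of contragredients together with the Bessel/Rankin--Selberg functional equation) identifying $\Ext^i_{\GL_n}(\pi\widehat\otimes\tau^\vee,\BC)$ with $\Ext^{d-i}_{\GL_n}(\pi^\vee\widehat\otimes\tau,\BC)$ for a fixed $d$, which combined with a cohomological-dimension bound $\Ext^i = 0$ for $i> d$ pins the nonzero range. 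If such a self-duality forces $d=0$ for the pair $(\pi\widehat\otimes\tau^\vee,\BC)$ — which is plausible given that the GGP spherical variety $\GL_n\backslash(\GL_{n+1}\times\GL_n)$, twisted appropriately, behaves like a homogeneous space with no higher cohomology in the generic case — we would be done immediately. Absent a clean duality, the fallback is the Bernstein--Zelevinsky filtration itself: filter $\pi|_{P_{n+1}}$ (or the relevant restriction to the mirabolic inside $\GL_{n+1}$) and use the derivative computations so that, for $\pi$ and $\tau$ \emph{generic}, the filtration collapses — only the highest-derivative piece contributes a nonzero $\Ext$, and that piece is a Schwartz induction of a generic representation of a smaller group, whose relevant $\Ext$ against $\BC$ one computes by induction on $n$ via Shapiro's lemma for Schwartz inductions. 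The base case $n=0$ (or $n=1$) is trivial, and the inductive step reduces the problem for the pair $(n+1,n)$ to an already-established vanishing for $(n,n-1)$.

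I would organize the write-up as: (1) reduce to showing $\Ext^i = 0$ for $i\geq 1$ given $\EP = \dim\Hom = 1$; (2) establish the parity concentration (via duality if available, otherwise via the BZ filtration and the genericity-induced collapse); (3) conclude by the positivity argument on the alternating sum. I expect step (2) — proving that the higher $\Ext$ groups all live in one parity, equivalently controlling the cohomological dimension of the pair in the generic case — to be the main obstacle: it is where the structure theory (BZ filtration, derivatives, Schwartz homological algebra over non-abelian Fr\'echet categories) has to do real work, and where the Archimedean topological subtleties (Hausdorffness of the twisted homology, which the earlier theorems of this paper secure) are genuinely used. The $p$-adic analogue of this argument (Chan--Savin, Chan) suggests the pattern is robust, but transporting it requires the Casselman--Wallach property for Jacquet-functor homology established earlier in this paper, so I would lean on Theorem~\ref{intro-CW} throughout step (2).
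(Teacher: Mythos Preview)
Your reduction step (1) is fine: $\EP=\dim\Hom=1$ does give $\sum_{i\geq 1}(-1)^i\dim\Ext^i=0$. The gap is entirely in step (2). Neither of your proposed mechanisms for parity concentration actually goes through. No Poincar\'e-type duality of the shape $\Ext^i\cong\Ext^{d-i}$ is available for this Archimedean branching problem (nothing of the sort is established in the paper or in the literature it cites), so the first route is speculation. Your fallback claim that ``for generic $\pi$ and $\tau$ the BZ filtration collapses and only the highest-derivative piece contributes a nonzero $\Ext$'' is precisely the content of the theorem: a priori the non-bottom subquotients $I^kE(\pi^\flat)$ with $k<n$ can contribute in any degree, and nothing in the generic hypothesis alone forces them to vanish. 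The induction on $n$ you sketch would need, at each stage, exactly the vanishing you are trying to prove for the lower-depth pieces.

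The paper's proof does not use the $\EP$/parity shortcut at all. It works directly through the (opposite) BZ filtration and kills each non-bottom subquotient by an explicit infinitesimal-character comparison, organized around three devices: (i) a notion of one character being ``positively/negatively linked'' to another, so that a mismatch of infinitesimal characters between $\pi^\flat$ and the derivative $B^k_-(\tau^\vee)$ forces $\Ext^\bullet=0$; (ii) a \emph{substitution} trick, replacing a troublesome character (or discrete series factor) in $\pi$ by an unlinked one via the open/closed orbit sequence $0\to\pi_o\to\pi\to\pi_c\to 0$, exploiting that $\pi_o$ is unchanged; and (iii) a \emph{switching} lemma trading the roles of $\pi$ and $\tau$ so that one can always arrange the substitution to apply. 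The only genuine vanishing input is the Gelfand--Graev case (Theorem~\ref{GG-van}, from Casselman--Hecht--Mili\v{c}i\'c), which handles the bottom layer. None of these combinatorial steps is captured by the high-level ``collapse'' or ``parity'' picture you describe.
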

Our proof essentially uses the opposite Bernstein-Zelevinsky filtration, which is a filtration of the opposite mirabolic subgroup. The existence of such a filtration can be deduced from the Bernstein-Zelevinsky filtration of the contragredient representation. The following diagram summarizes our framework.
\begin{center}
   \begin{tikzpicture}[node distance=10pt]
  \node[draw, rounded corners]            (U1)   {$(\pi,V)$};
  \node[draw, align=center, rounded corners, right=107pt of U1]    
   (U2)  {BZ-filtration\\ of $\pi$};
 \node[draw, align=center, rounded corners, right=60pt of U2]         (U3)  {positivity of \\ $\omega_{\pi_{i,j}}$ when $\pi$ is unitary};
\node[draw, rounded corners, below=50pt of U1]            (D1)   {$(\pi^{\vee},V)$};
\node[draw, align=center, rounded corners, right=100pt of D1]    
   (D2)  {opposite BZ\\-filtration of $\pi$};
\node[draw, align=center, rounded corners, right=60pt of D2]         (D3)  {BZ-filtration \\ of classical group};
  \draw[->] (U1)  -- node[above]{Fourier transform}node[below]{along $V_n$-orbit}(U2);
  \draw[->] (U2) -- (U3);
\draw[<->] (U1)  --node[left, align=center]{contra-\\gredient} (D1);
\draw[->] (D1)  -- (D2);
\draw[<->] (U2)  --node[left, align=right]{switch inequality \\ infinitesimal }node[right, align=left]{of \\ character } (D2);
\draw[->] (U2)  -- (D3);
\draw[->] (D2)  -- (D3);
\end{tikzpicture} 
\end{center}
Here, $(\pi,V)$ is a Casselman-Wallach representation of $\GL_n$, and $\pi_{i,j}$ is the irreducible representation appearing in the Bernstein-Zelevinsky filtration of $\pi$. The central character of $\pi_{i,j}$ is denoted by $\omega_{\pi_{i,j}}$.  The contragredient representation $\pi^{\vee}$ is realized on the same space $V$ by $\pi^{\vee}(g):=\pi(g^{-t})$. 

In addition to the opposite Bernstein-Zelevinsky filtration, we employ two technical methods. The first, called \textit{substitution}, identifies an irreducible generic representation $\widetilde{\pi}$ that is compatible with the inductive argument and satisfies the isomorphism
\[
 \Ext^i_{\GL_n}(\pi\widehat{\otimes}\tau^{\vee},\BC)= \Ext^i_{\GL_n}(\widetilde{\pi}\widehat{\otimes}\tau^{\vee},\BC)\text{ for every integer }i.
\]
The key step is to construct a sequence quasi-isomorphic to the long exact sequence associated to the open-closed orbits of $\pi$.
 The second, \textit{switching}, exchanges the positions of $\pi$ and $\tau$. Substitution was inspired by \cite{CSa21}, while switching originated from \cite{CS15}. We emphasize that the combinatorics in the Archimedean case is significantly more complicated than the $p$-adic case due to the absence of Zelevinsky classifications and obstacles from normal derivatives.

As indicated in the diagram above, our fifth main result provides a necessary condition for unitarity in irreducible $\GL_n$-representations, generalizing the $p$-adic unitary criterion of \cite[section 7.3]{Ber84} to the Archimedean setting.
\begin{theorem}\label{uni-crit}
    Let $\pi$ be an irreducible unitary representation of $\GL_n(\mathbf{k})$ of depth $d$, where $\mathbf{k}=\BR$ or $\BC$. For every irreducible subquotient $I^{k-1}E(\tau)$ in the Bernstein-Zelevinsky filtration of $\pi|_{M_n}$ satisfying $k \neq d$ (where $\tau$ denotes an irreducible representation of $\GL_{n-k}$), we have
    \[
    \mathrm{Re} \, \omega_{\tau} >0 .
    \]
\end{theorem}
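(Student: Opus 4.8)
The plan is to transplant Bernstein's $p$-adic argument \cite[section 7.3]{Ber84} into the Casselman--Wallach setting. The three ingredients are: unitarity of $\pi$ forces the matrix coefficients of $\pi$ to be bounded on $\GL_n(\mathbf{k})$, hence on $P_n$; the Bernstein--Zelevinsky filtration reduces the question to a single irreducible graded piece $I^{k-1}E(\tau)$; and the Schwartz model of $I^{k-1}E(\tau)$ built in the proof of the existence of the filtration lets one read off the growth of its matrix coefficients. First I would fix a $\GL_n(\mathbf{k})$-invariant positive Hermitian form on $\pi$ and restrict it to $P_n$. The point that needs the new machinery is that the steps $\Lambda_k$ of the filtration of $\pi|_{P_n}$ are honest closed $P_n$-submodules with Hausdorff, Casselman--Wallach subquotients, so that a vector with nonzero image in $I^{k-1}E(\tau)$ has genuinely moderate-growth matrix coefficients; this is where Theorem~\ref{intro-CW} and the Casselman--Wallach property of $\rmh_i(\fku,\pi)$ enter, replacing the cheap fact — valid for smooth $p$-adic representations but false for Fr\'echet ones — that a submodule of a unitarizable representation splits off.

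Next comes the asymptotic analysis. Embed $\GL_{n-k}$ in $P_n$ as the standard block with center $z_t=\diag(t,\dots,t,1,\dots,1)$. In the Schwartz model, $I^{k-1}E(\tau)$ is realized on Schwartz sections over a tower of affine spaces on which $z_t$ acts by simultaneous dilation, twisted by $\omega_\tau(t)$ and by the modulus characters of the $k-1$ stages of Schwartz induction; iterating, a matrix coefficient of such a vector has leading term
\[
\langle \pi(z_t)v,v\rangle \sim c\,|t|^{\,\alpha(\mathrm{Re}\,\omega_\tau,\,k)}\bigl(\log\tfrac1{|t|}\bigr)^{k-1}\qquad(t\to0),
\]
with $c\neq0$ once the value at the origin of the Schwartz datum is nonzero, and with $\alpha$ an explicit affine function. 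One calibrates the normalization by hand against the building blocks — essentially square-integrable representations and complementary series — using the Leibniz law $\operatorname{s.s.}(\pi_1\times\cdots\times\pi_r)^-\simeq\operatorname{s.s.}(\pi_1^-\times\cdots\times\pi_r^-)$, so that $\alpha(\mathrm{Re}\,\omega_\tau,k)>0\iff\mathrm{Re}\,\omega_\tau>0$ when $k<d$, while at $k=d$ one has $\tau=\pi^-$ unitary and the piece is of Schwartz type, so its matrix coefficients decay rapidly and no constraint appears. Boundedness of the left-hand side as $t\to0$ gives $\alpha\geq0$; the logarithmic factor — present exactly because at a level $k\neq d$ the highest derivative $\pi^-$ does not absorb this level — forbids the boundary value $\alpha=0$, giving the strict inequality. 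The small-rank cases and $k=1$ are handled by the same expansion together with irreducibility of $\pi$: a level $k\neq d$ at the unitarity threshold would produce a subrepresentation or quotient of $\pi$ with a strictly smaller highest derivative, contradicting that $\pi$ has depth $d$.

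The hard part is the functional-analytic one just flagged: in the Fr\'echet world ``a subquotient of a unitarizable representation is unitarizable'' is unavailable, so the argument must be routed through the moderate-growth estimates for $\rmh_i(\fku,\pi)$, and one must already know that the steps $\Lambda_k$ are closed with Casselman--Wallach quotients before the matrix-coefficient expansion is even meaningful. A close second is the bookkeeping: unlike the $p$-adic case there is no Zelevinsky classification to identify the constituents $\tau$, so the calibration of $\alpha$ and the confinement of the exponent-$0$ locus to the single level $k=d$ have to be proved synthetically — from the filtration, the Leibniz law, and, where needed, the ``switching'' comparison with the opposite Bernstein--Zelevinsky filtration of $\pi$ used elsewhere in the paper.
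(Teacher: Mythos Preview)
Your plan is a genuine attempt to port Bernstein's $p$-adic argument, but the paper's proof takes a completely different route, and there are real obstructions to yours.

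\textbf{What the paper does.} The paper explicitly says (at the start of Section~7) that its approach ``is based on the classification of unitary dual, which is different from the $p$-adic case since the theory of $\ell$-sheaves is not available.'' Concretely, it invokes Vogan's classification: every irreducible unitary $\pi$ is a product of unitary characters, (Stein) complementary series, Speh representations, and Speh complementary series. Then the proof is a direct case-by-case computation of the BZ-filtration of each building block. For products of characters the paper uses Theorem~\ref{central_char_of_fil} to read off the central characters of the successive quotients and checks positivity by hand (Lemma~\ref{uni-cen-char}). For Speh representations $\delta(m,n)$ it uses the realization $\delta(m,n)\hookrightarrow D_m|\det|^{\frac{1-n}{2}}\times\delta(m,n-1)|\det|^{\frac12}$ and an induction on $n$, analyzing $\Pi_o$ and $\Pi_c$ separately. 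No matrix-coefficient asymptotics, no unitarity-of-subquotient arguments.

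\textbf{Where your plan breaks.} The central difficulty is not the one you flag. The Archimedean BZ-filtration (Definition~\ref{def-BZ}) is \emph{not} a finite chain of closed $P_n$-submodules: each layer $\sigma_i/\sigma_{i+1}$ is an inverse limit $\varprojlim_j \sigma_i/\sigma_{i,j}$, and the irreducible pieces $I^{k-1}E(\tau)$ sit inside that inverse-limit tower (possibly nested several levels deep, up to level~$n$). So ``choose $v\in\pi$ with nonzero image in $I^{k-1}E(\tau)$'' does not make sense as stated --- the map $\pi\to I^{k-1}E(\tau)$ is not a quotient map of $\pi$ but a subquotient of a formal completion. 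The Casselman--Wallach property of $\rmh_i(\fku,\pi)$ (Theorem~\ref{intro-CW}) concerns Jacquet modules for a \emph{reductive} Levi, not the $P_n$-filtration pieces, and does not give you closed steps $\Lambda_k\subset\pi$ with Hausdorff quotients isomorphic to the $I^{k-1}E(\tau)$. Consequently the asymptotic expansion $\langle\pi(z_t)v,v\rangle\sim c\,|t|^{\alpha}(\log\tfrac1{|t|})^{k-1}$ has no rigorous meaning here; even in the $p$-adic case Bernstein does not argue this way, and in the Archimedean case the Borel-filtration layers contribute infinitely many exponents, so isolating a single ``leading term'' attached to a prescribed $\tau$ would require an entirely new asymptotic theory. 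The calibration step (``$\alpha>0\iff\mathrm{Re}\,\omega_\tau>0$'') and the strict-inequality argument via the logarithmic factor are both unsubstantiated.

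In short: the paper circumvents all of this by using the known unitary dual and computing the filtration explicitly; your plan would need a genuinely new ingredient --- a way to relate vectors in $\pi$ to the deep inverse-limit subquotients of the BZ-filtration with controlled matrix-coefficient asymptotics --- that the paper neither provides nor claims.
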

Here $I$ and $E$ denote the Mackey induction and trivial extension, respectively, see Section~\ref{induction section} for details.

The last part of this article is devoted to the Bernstein-Zelevinsky filtration of the isometry group of split $\epsilon$-Hermitian space. The Bernstein-Zelevinsky filtration we pursue constitutes a smooth spectral expansion over the coadjoint orbits in $E_n^*$ of the maximal parabolic subgroup $R_{n-1,1}$(see Section~\ref{derivative-sec} and Section~\ref{BZ-cl-sec} for precise definitions). For orthogonal groups, $E_n$ is abelian, so its irreducible representations are characters. For unitary and symplectic groups, there exist Weil representations of $E_n$, which will contribute to the Fourier-Jacobi model. 

In this article, we establish the Bernstein-Zelevinsky filtration for orthogonal groups based on the Bernstein-Zelevinsky filtration of $\GL_n$ and prove that, similar to $\GL_n$, the twisted homology of $E_n$ is Hausdorff and its higher homology vanishes (see Theorem~\ref{tw-homo-thm} for details). This result refines the exactness and finite-dimensionality properties of the Whittaker model. It is also useful for the further study of the Euler-Poincar\'e characteristic formula.

Finally, we note that the idea of the smooth spectral expansion can be generalized to any maximal parabolic subgroup of a classical group. By~\cite[Theorem 1.1]{HG99}, the maximal parabolic subgroups of classical groups have finitely many co-adjoint orbits on their unipotent radicals. In the specific case where the unipotent radical is abelian, we propose Conjecture~\ref{coa-conj}. As previously mentioned, the filtration in Conjecture~\ref{coa-conj} is also non-canonical. In particular, it is not unique due to the flexibility in the choice of order, nor is it functorial. However, our philosophical stance is that the coarse spectral filtration is sufficiently strong to allow us to derive a canonical filtration and obtain the comparison conjecture. This program will be carried out in a forthcoming article~\cite{CWYZ}.

\subsection{Convention and notation}\label{notation_sec}
In this subsection, we introduce some notations that will be used throughout this article.
\subsubsection{General groups}
\begin{itemize}
    \item $G$, $H$, etc. (capital English letters): various real Lie groups (almost linear Nash groups or real reductive groups).
    \item $\mathfrak{g} := \mathrm{Lie}(G)_{\mathbb{C}}$ (Gothic letters): the complexified Lie algebras.
    \item $\delta_H$: the modular character of Lie group $H$.
    \item $\mathcal{U}(\mathfrak{g})$: the universal enveloping algebra of $\mathfrak{g}$;  $\mathcal{U}(\mathfrak{g})^{<k}$ the subspace of elements with degree $<k$; $\mathcal{Z}(\mathfrak{g})$ the center of $\mathcal{U}(\mathfrak{g})$.
\end{itemize}

\subsubsection{Real reductive groups}
For a real reductive group $G$:
\begin{itemize}
    \item We fix a Cartan involution $\theta$ and a $\theta$-stable maximally split Cartan subgroup $A$ (from now on, the Cartan involution will no longer be involved and
 $\theta$ is free for other notation).
    \item $P^0 = L^0 U^0$: a minimal parabolic subgroup with Levi decomposition such that $L^0$ contains $A$.
    \item $P = LU$: standard parabolic subgroup $P \supset P^0$ with Levi decomposition.
    \item $\overline{P}$: the opposite parabolic subgroup of $P$.
    \item $K$ (resp.\ $K_L$): the complexification of the maximal compact subgroup of $G$ (resp.\ $L$) fixed by the Cartan involution.
    \item $\mathfrak{b} \subset \mathfrak{p}^0$: a Borel subalgebra with $\mathfrak{a} \subset \mathfrak{b}$.
    \item $\Delta(\mathfrak{a}, \mathfrak{g})$: the roots of $\mathfrak{a}$ in $\mathfrak{g}$ with positive roots corresponding to $\mathfrak{b}$.
    \item $\rho$: the half-sum of positive roots.
    \item $\rho_{\mathfrak{l}}$: half-sum of positive roots in $\Delta(\mathfrak{a}, \mathfrak{l})$, where $L \subset P$ is a standard Levi subgroup.
    \item $W$ (resp.\ $W_L$): the Weyl group of $G$ (resp.\ $L$). For general linear groups, $W$ is represented by permutation matrices.
    \item $\chi_{\lambda}$: the infinitesimal character (algebra homomorphism $\mathcal{Z}(\mathfrak{g}) \to \mathbb{C}$) corresponding to $\lambda \in \mathfrak{a}^*$ via the Harish-Chandra isomorphism, normalized so that $-\rho$ corresponds to the trivial representation.
\end{itemize}

\subsubsection{General linear groups}
Let $\mathrm{GL}_n = \mathrm{GL}_n(\mathbf{k})$ where $\mathbf{k} = \mathbb{R}$ or $\mathbb{C}$:
\begin{itemize}
    \item Cartan involution: transpose inverse; Cartan subgroup $A$: diagonal matrices.
    \item $B_n$: Borel subgroup of upper triangular matrices; $N_n$: unipotent radical of $B_n$.
    \item $M_n$: mirabolic subgroup (matrices with last row $(0,\dots,0,1)$).
    \item $V_n$: subgroup of $M_n$ of matrices $\begin{pmatrix} I_{n-1} & v \\ & 1 \end{pmatrix}$.
    \item $H_{n,d}$: subgroup of $M_n$ of matrices $\begin{pmatrix} a & x \\ 0 & u \end{pmatrix}$ with $a \in \mathrm{GL}_{n-d}$, $u \in N_d$, and $x$ an $(n-d) \times d$ matrix.
    \item $P_{k,n-k}$: standard parabolic subgroup with Levi factor $\mathrm{GL}_k \times \mathrm{GL}_{n-k}$.
    \item $U_{k,n-k}$: unipotent radical of $P_{k,n-k}$.
\end{itemize}
For a subgroup $H \subseteq \mathrm{GL}_n$, $\overline{H}$ denotes the transpose of $H$. Fixed characters:
\begin{itemize}
    \item $\psi$: a fixed unitary character of $\mathbf{k}$.
    \item $\psi_n$: character of $V_n$ defined by $\psi_n\left(\begin{bmatrix} I_{n-1} & v \\ & 1 \end{bmatrix}\right) := \psi(x_{n-1})$ for $v = [x_1,\dots,x_{n-1}]^t \in \mathbf{k}^{n-1}$; which also denotes the corresponding character of the Lie algebra $\mathfrak{v}_n$.
    \item $\psi_{n,d}$: character of $H_{n,d}$ defined by 
        \[
        \psi_{n,d}\left(\begin{bmatrix} a & x \\ 0 & u \end{bmatrix}\right) := \psi\left(\sum_{i=1}^{d-1} u_{i,i+1}\right), \quad u = (u_{ij})_{1 \leq i,j \leq d}.
        \]
\end{itemize}
 
Characters of $\mathbf{k}^\times$ have the form
\[
\chi_{\epsilon,s}(x) = 
\begin{cases}
\left(\frac{x}{|x|}\right)^\epsilon |x|^s, & \epsilon = 0,1,\ s \in \mathbb{C},\ \mathbf{k} = \mathbb{R}, \\
\left(\frac{x}{|x|}\right)^\epsilon |x|^{2s}, & \epsilon \in \mathbb{Z},\ s \in \mathbb{C},\ \mathbf{k} = \mathbb{C}.
\end{cases}
\]
The \textbf{real part} of $\chi = \chi_{\epsilon,s}$ is $\mathrm{Re}\,\chi := \mathrm{Re}\,s$. We also regard
$\chi_{\epsilon,s}$ as a character of some general linear group by composing determinant.

For a character $\xi$ of $H$ and automorphism $\theta$ of $H$, let ${}^\theta\xi := \xi \circ \theta$.

\subsubsection{Representation-theoretic conventions}

Let $G$ be an almost linear Nash group.  

\begin{itemize}
    \item For a Nash action of $G$ on a Nash manifold $X$, let $G^x$ denote the stabilizer of $x \in X$ in $G$.
    \item If there is no other clarification, representations of $G$ mean \textbf{Fréchet representations which are of moderate growth and smooth under the $G$-action}. This category is denoted by $\Smod_G$.
    \item $V'$: strong dual of a locally convex topological vector space $V$.
    \item Maps between locally convex topological vector spaces are continuous.
    \item For irreducible representation $\pi$, $\omega_\pi$ denotes the central character of $Z_G$.
    \item For a vector space $V$ over $\mathbf{k}$, $V^* := \mathrm{Hom}_{\mathbf{k}}(V, \mathbf{k})$ denotes the algebraic dual.
    \item $\widehat{G}$: equivalence classes of irreducible unitarizable representations in $\Smod_G$.
    \item When $G \cong \mathbb{R}^n$, we identify $\widehat{G}$ with $\sqrt{-1}\,\mathrm{Lie}(G)^* \simeq \mathrm{Lie}(G)$.
\end{itemize}

If there is no other clarification, representations of a Lie algebra $\mathfrak{g}$ are \textbf{Fréchet representations continuous under the $\mathfrak{g}$-action}, and \textbf{subrepresentations} are closed subspaces.

\bigskip
\centerline{\scshape Acknowledgements}
We sincerely thank Binyong Sun and Dipendra Prasad for their inspiring conversations. We also appreciate Hao Ying for introducing us to the topological spectral sequence. Special thanks are extended to Kei Yuen Chan for his valuable insights into the Casselman-Jacquet functor. In particular,  Proposition~\ref{surj_CJ}  is due to him. We would like to thank Zhibin Geng for reading the manuscript and providing many helpful comments. We are grateful to Jun Yu for several fruitful conversations. The main part of this article was written during our visits to the Institute of Advanced Study in Mathematics of Zhejiang University, SIMIS, the Chern Institute of Mathematics, the Institute for Mathematical Science, and the Tianyuan Mathematics Research Center. We are grateful for their warm hospitality. Wu is partially supported by the New Cornerstone Science Foundation through the New Cornerstone Investigator Program awarded to Professor Xuhua He. Wu is also supported by the National Natural Science Foundation of China (Grant No. 123B1004). He also thanks Xuhua He for his continued support and encouragement. Zhang is supported by the Research Grants Council of the Hong Kong Special Administrative Region, China (Project No: 17305223) and the NSFC grant for Excellent Young Scholar (Project No: 12322120). Zhang thanks the kind support from Kei Yuen Chan.

\section{Preliminary}
\subsection{Casselman-Wallach representations}
For \textbf{Casselman-Wallach representations} of a reductive group $G$, we mean smooth moderate growth Fr\'echet representations of finite length. They appear as Archimedean components of automorphic representations. Readers may consult \cite[Chapter 11]{Wal92} for details about Casselman-Wallach representations. Harish-Chandra modules, on the other hand, offer algebraic advantages. For \textbf{Harish-Chandra modules}, we mean the $(\fkg,K)$-module which is admissible and finitely generated over $\U(\fkg)$. Casselman and Wallach constructed a canonical globalization for each Harish-Chandra module as the smooth vectors of every Banach globalization. Using such a globalization, they proved the following result.
\begin{theorem}[see \cite{Wal92}, 11.6.8]
    The functor taking $K$-finite vectors defines an equivalence between the category of Casselman-Wallach representations and the category of Harish-Chandra modules.
\end{theorem}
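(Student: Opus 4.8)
The plan is to prove this along the classical lines of Casselman and Wallach, reducing everything to one automatic-continuity statement. Write $HC(\pi):=\pi_K$ for the functor of $K$-finite vectors. First one checks it is well defined: if $\pi$ is Casselman--Wallach then $\pi_K$ is finitely generated over $\U(\fkg)$ (finite length, together with finite generation of irreducible Harish-Chandra modules) and admissible (Harish-Chandra's admissibility theorem), hence a Harish-Chandra module. Faithfulness is immediate, because the $K$-finite vectors are dense in any continuous representation of the compact group $K$ on a complete locally convex space (Peter--Weyl), so a continuous $G$-morphism is determined by its restriction to $\pi_K$. It then remains to prove essential surjectivity and fullness, and I would deduce both from the following \emph{Key Lemma}: for Casselman--Wallach representations $\pi,\tau$, the restriction map $\Hom_G(\pi,\tau)\to\Hom_{(\fkg,K)}(\pi_K,\tau_K)$ is bijective, and moreover every injective continuous $G$-morphism of Casselman--Wallach representations is a topological embedding onto a closed subrepresentation of its target.

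Granting the Key Lemma, essential surjectivity runs as follows. Given a Harish-Chandra module $V$, Casselman's subrepresentation theorem embeds $V$ into the $K$-finite vectors of a (not necessarily unitary) Banach principal series $\Ind_{P^0}^G\sigma$; the smooth vectors $V^\infty$ of the closure of $V$ in that Banach space form a smooth moderate-growth Fréchet representation with $(V^\infty)_K=V$, which has finite length since $(\Ind_{P^0}^G\sigma)_K$ does, hence is Casselman--Wallach with $HC(V^\infty)=V$. (That $V^\infty$ does not depend on the chosen Banach globalization --- which is what makes the construction canonical --- is also a consequence of the Key Lemma: any two smooth moderate-growth globalizations of $V$ both embed, by the injective part of the Lemma applied with target the smooth principal series into whose $K$-finite vectors $V$ sits, as the common closure of $V$ there, hence are isomorphic.) Fullness is exactly the bijection in the Key Lemma, and applying the Lemma to $\mathrm{id}_{\pi_K}$ shows that $\pi$ itself is recovered as the canonical globalization of $\pi_K$; so $HC$ is an equivalence.

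The real work is the Key Lemma, and its crux is the case where the target is a smooth principal series $I^\infty$ of a minimal parabolic $P^0=L^0U^0$, the general case reducing to this via the Casselman embedding used above. By smooth Frobenius reciprocity,
\[
\Hom_G(\pi,I^\infty)\ \cong\ \Hom_{L^0}\bigl(\pi/\overline{\fkn\pi},\ \sigma\otimes\nu\bigr)
\]
for a suitable normalizing character $\nu$, where $\fkn$ denotes the complexified Lie algebra of $U^0$; so one must prove that the analytic Jacquet module $\pi/\overline{\fkn\pi}$ is finite-dimensional, Hausdorff, and coincides as an $L^0$-module with the purely algebraic Jacquet module $\pi_K/\fkn\pi_K$ (finite-dimensional by Casselman--Osborne), and that $L^0$-equivariant functionals on the latter extend continuously to $\pi$. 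This is precisely Casselman's theory of the canonical pairing and of the asymptotic expansions of smooth matrix coefficients of $\pi$ deep in the negative chamber of a maximal split torus $A_0\subseteq L^0$: the exponents of these expansions are the finitely many generalized $\fka$-weights occurring in $\fkn$-homology, and the moderate-growth hypothesis is exactly what makes the expansions converge, what makes the quotient topology Hausdorff, and what forces the resulting morphisms to be strict with closed image.

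The main obstacle is this analytic package --- Casselman's asymptotics, the Casselman--Osborne finiteness of $\fkn$-homology, and the moderate-growth estimates controlling the leading exponents --- which constitutes the technical heart of \cite[\S\S 11.3--11.6]{Wal92}. I would invoke it wholesale rather than reprove it; once it is in hand, the deduction of full faithfulness and essential surjectivity above is essentially formal, modulo routine bookkeeping with the $K$-types of the principal series.
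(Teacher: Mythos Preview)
The paper does not give its own proof of this theorem: it is stated with a citation to \cite[11.6.8]{Wal92} and used as a black box. There is therefore nothing in the paper to compare your proposal against.

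Your outline is a faithful sketch of the Casselman--Wallach argument as it appears in \cite[\S\S 11.3--11.6]{Wal92}: reduction of fullness and essential surjectivity to an automatic-continuity statement for maps into smooth principal series, which in turn is controlled by the comparison of analytic and algebraic Jacquet modules via the asymptotic expansion of matrix coefficients. The logical structure is correct, and you correctly identify the analytic package (Casselman--Osborne finiteness, asymptotics in the negative chamber, moderate growth forcing Hausdorffness and strictness) as the genuine content. One small point to be careful about in the essential-surjectivity step: the closure of $V$ in a Banach principal series is a $G$-invariant closed subspace of a Banach representation, and one should check that its smooth vectors form a moderate-growth Fr\'echet representation \emph{of finite length} --- you assert this, but it relies on knowing that every closed $G$-subrepresentation of a smooth principal series is again Casselman--Wallach, which is itself a consequence of the same circle of ideas (or of the admissibility of the ambient principal series). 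This is not a gap so much as a place where the argument is slightly circular unless one organizes it carefully; Wallach avoids this by proving the automatic-continuity theorem first and deducing the globalization afterward.
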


We denote the Harish-Chandra module consisting of $K$-finite vectors of the Casselman-Wallach representation $\pi$ by $\pi^K$. The theorem has a direct corollary.
\begin{corollary}[see \cite{AGS15a}, Corollary 2.2.5(2)]\label{closed image}
    Any morphism between Casselman-Wallach representations has a closed image.
\end{corollary}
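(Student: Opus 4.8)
The final statement is Corollary~\ref{closed image}: \emph{Any morphism between Casselman-Wallach representations has a closed image.} Here is how I would prove it, assuming only the equivalence of categories (Theorem 11.6.8 of Wallach) stated just above.

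\medskip

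The plan is to leverage the exactness of the Casselman-Wallach globalization functor together with the fact that the category of Casselman-Wallach representations is abelian (being equivalent to the category of Harish-Chandra modules). Let $f\colon \pi\to\sigma$ be a morphism of Casselman-Wallach representations. First I would pass to $K$-finite vectors: $f^K\colon \pi^K\to\sigma^K$ is a morphism of Harish-Chandra modules, and $M:=\Im(f^K)$ is again a Harish-Chandra module (the image of a morphism of finitely generated admissible $(\fkg,K)$-modules is finitely generated and admissible). Applying the Casselman-Wallach globalization functor $(-)^{\mathrm{CW}}$, which is a quasi-inverse to $(-)^K$ and in particular \emph{exact} (it sends short exact sequences of Harish-Chandra modules to short exact sequences of Casselman-Wallach representations, with the maps being continuous linear maps with closed image — this is part of the Casselman-Wallach theorem), I obtain a factorization $\pi\twoheadrightarrow M^{\mathrm{CW}}\hookrightarrow\sigma$ of $f$, where the second arrow is a closed embedding. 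The key point is then that the original $f$ agrees with the composite of this factorization, because both $(-)^K$ and $(-)^{\mathrm{CW}}$ are functorial and $(-)^K$ is faithful: the morphism $f$ and the composite $\pi\to M^{\mathrm{CW}}\to\sigma$ induce the same map $f^K$ on $K$-finite vectors (namely the canonical epi-mono factorization of $f^K$), hence are equal.

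\medskip

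Granting this, $\Im(f)=\Im\big(M^{\mathrm{CW}}\hookrightarrow\sigma\big)$, which is a closed subspace of $\sigma$ since the globalization of an injection of Harish-Chandra modules is a topological embedding onto a closed subspace (the image being itself a Casselman-Wallach representation, hence complete, hence closed in the Fréchet space $\sigma$). This gives the conclusion.

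\medskip

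The main obstacle — and the part that genuinely requires the strength of the Casselman-Wallach theorem rather than soft category theory — is establishing that the globalization functor is exact with closed images, i.e.\ that an injection $M_1\hookrightarrow M_2$ of Harish-Chandra modules globalizes to a \emph{closed} embedding $M_1^{\mathrm{CW}}\hookrightarrow M_2^{\mathrm{CW}}$ of Fréchet spaces, and dually that a surjection globalizes to a topological surjection. This is exactly the content of the automatic continuity / exactness results in \cite[Chapter 11]{Wal92} (see also Casselman's original work): the subtlety is that a priori a Harish-Chandra submodule could globalize to a non-closed subspace, and ruling this out uses the uniqueness of the smooth moderate-growth globalization together with the fact that the closure of the globalized submodule is itself a Casselman-Wallach representation with the same $K$-finite vectors, forcing equality. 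Once this input is in hand, the corollary is a formal consequence of functoriality and faithfulness of $(-)^K$ as above. Alternatively, one can cite it directly as in \cite[Corollary 2.2.5(2)]{AGS15a}; I would present the short functorial argument and point to these references for the exactness input.
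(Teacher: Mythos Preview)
Your proposal is correct and is precisely the standard argument underlying the cited result; the paper itself gives no proof beyond declaring it ``a direct corollary'' of the Casselman--Wallach equivalence and pointing to \cite[Corollary 2.2.5(2)]{AGS15a}. Your identification of the nontrivial input---that the globalization functor sends injections of Harish-Chandra modules to closed embeddings---is exactly the substance that the reference provides.
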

We remark that a more general result can be deduced from Corollary~\ref{closed image}.
\begin{remark}\label{closed-im-rem}
    Let $\varphi: L \to M$ be a continuous map from $L \in \Smod_G$ to a Casselman–Wallach representation $M$ of $G$. Then $\varphi$ induces a $G$-equivariant, injective, continuous map $L/\Ker\varphi \to M$, which implies that $L/\Ker\varphi$ is admissible and admits finite generalized infinitesimal characters. Consequently, $L/\Ker\varphi$ is a Casselman–Wallach representation, and $\mathrm{Im}(\varphi)$ is closed by Corollary~\ref{closed image}.
\end{remark}

We recall some basic facts about the parabolic production of $(\fkg,K)$-module. For $(\fkg,K)$-module, we always assume the $K$-action is locally finite. Let $P=LU$ be a parabolic subgroup of $G$ with Levi decomposition. Let $\beta$ be a $(\fkl,K_L)$-module, which is also viewed as a $(\fkp,K_L)$-module by trivial extension on $\fku$. Then we can define two functors from the category of $(\fkl,K_L)$-modules to the category of $(\fkg,K)$-modules:
\begin{itemize}
    \item Parabolic production functor $P_{\fkp,K_L}^{\fkg,K}$:
    \[
    \beta \mapsto R(\fkg,K)\otimes_{R(\fkp,K_L)}\beta;
    \]
    \item Parabolic induction functor $I_{\fkp,K_L}^{\fkg,K}$:
    \[
    \beta \mapsto \Hom_{R(\fkp,K_L)}(R(\fkg,K),\beta)^{K\text{-finite}}.
    \]
\end{itemize}
Here ``$R$" indicates the Hecke algebra of a Lie pair (see \cite[Chapter I, Section 5]{KV}). The parabolic production has the following two properties which we will use. The first property is \textbf{Mackey isomorphism}.
\begin{lemma}[see \cite{KV}, Theorem 2.103]
    Let $\beta$ be a $(\fkl,K_L)$-module and $\pi$ be a $(\fkg,K)$-module, then there is a natural isomorphism as $(\fkg,K)$-modules:
    \[
    \pi\otimes P_{\fkp,K_L}^{\fkg,K}(\beta)\simeq P_{\fkp,K_L}^{\fkg,K}(\pi|_{\fkp,K_L}\otimes \beta).
    \]
\end{lemma}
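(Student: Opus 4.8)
The statement is the \emph{tensor identity} (also called the \emph{Mackey isomorphism}, or projection formula) for the production pro-functor, and the plan is to recall how it is obtained by writing both sides in the Hecke-algebra realization $P_{\fkp,K_L}^{\fkg,K}(-)=R(\fkg,K)\otimes_{R(\fkp,K_L)}(-)$ and producing the isomorphism by an explicit ``untwisting'' formula; this is \cite[Theorem 2.103]{KV}, and I would only spell out the mechanism. First I would record the structures that go into it: the Hecke algebra $R(\fkg,K)$ carries a coassociative, cocommutative coproduct $\Delta$ and an antipode $S$, inherited from the Hopf algebra $\U(\fkg)$ (where $X\mapsto -X$) and from the group-like structure of $R(K)$, and these are precisely the operations realizing the tensor product and the contragredient on $(\fkg,K)$-modules. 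Crucially, $\Delta$ and $S$ restrict to the subalgebra $R(\fkp,K_L)$, and the module structures appearing in the statement — the diagonal action on $\pi\otimes P_{\fkp,K_L}^{\fkg,K}(\beta)$ and on $\pi|_{\fkp,K_L}\otimes\beta$ — are exactly the ones given by $\Delta$.

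The heart of the argument is the ``untwisting'' map
\[
\Psi\colon\ R(\fkg,K)\otimes_{R(\fkp,K_L)}\bigl(\pi|_{\fkp,K_L}\otimes\beta\bigr)\ \longrightarrow\ \pi\otimes\bigl(R(\fkg,K)\otimes_{R(\fkp,K_L)}\beta\bigr),\qquad r\otimes(v\otimes b)\ \longmapsto\ \sum r_{(1)}v\otimes(r_{(2)}\otimes b),
\]
written in Sweedler notation $\Delta(r)=\sum r_{(1)}\otimes r_{(2)}$. I would then carry out three verifications in order. (i) $\Psi$ descends across $\otimes_{R(\fkp,K_L)}$: for $p\in R(\fkp,K_L)$ one has $\Delta(p)\in R(\fkp,K_L)\otimes R(\fkp,K_L)$, so both sides of the balancing relation are sent to $\sum r_{(1)}p_{(1)}v\otimes(r_{(2)}\otimes p_{(2)}b)$. (ii) $\Psi$ is $(\fkg,K)$-equivariant: this follows from the Leibniz rule $\Delta(Xr)=(X\otimes 1+1\otimes X)\Delta(r)$ together with the $K$-equivariance of $\Delta$, matched against the diagonal action on the target. (iii) $\Psi$ is invertible, with inverse $v\otimes(r\otimes b)\mapsto\sum r_{(1)}\otimes\bigl(S(r_{(2)})v\otimes b\bigr)$; the two composites collapse, using coassociativity, cocommutativity and the antipode axiom $\sum r_{(1)}S(r_{(2)})=\epsilon(r)$, to the identity $\sum r_{(1)}S(r_{(2)})\otimes r_{(3)}=1\otimes r$. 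Naturality in $\beta$ (and in $\pi$) is immediate from the formula for $\Psi$.

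I expect the only genuine obstacle to be a bookkeeping one rather than a conceptual one: since $K$ here is a complexified maximal compact subgroup, $R(\fkg,K)$ is non-unital, so everything above must be run in the category of approximately unital $R(\fkg,K)$-modules, and one must know that $\Delta$, $S$ and the counit interact correctly with the approximate identity (so that ``$1\otimes r$'' is understood via the approximate unit). The precise construction of $\Delta$ and $S$ on $R(\fkg,K)$ and these compatibilities are exactly what is set up in \cite[Chapter II]{KV}; granting them, the computation above is purely formal Hopf-algebra manipulation, so in the end I would simply invoke \cite[Theorem 2.103]{KV}.
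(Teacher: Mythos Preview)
The paper does not give its own proof of this lemma; it simply cites \cite[Theorem 2.103]{KV} and moves on. Your sketch is a correct outline of the standard Hopf-algebraic argument that appears in that reference, so there is nothing to compare against and no gap to flag.
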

The second property is \textbf{Shapiro's lemma}. Although the proof is well-known, we still include it for the reader's convenience.
\begin{lemma}
    Let $\beta$ be a $(\fkl,K_L)$-module and $\pi$ be a finite-dimensional $(\fkg,K)$-module, then there is a natural isomorphism for every integer $i$
    \[
    \Ext^i_{\fkg,K}( P_{\fkp,K_L}^{\fkg,K}(\beta),\pi)\simeq \Ext^i_{\fkp,K_L}(\beta,\pi|_{\fkp,K_L})
    \]
\end{lemma}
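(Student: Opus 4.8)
The plan is to deduce Shapiro's lemma for parabolic production from two standard facts: a Frobenius-type adjunction for the production functor, and the exactness/projectivity properties that make the derived functors behave well. First I would recall that the parabolic production functor $P_{\fkp,K_L}^{\fkg,K}(\beta) = R(\fkg,K)\otimes_{R(\fkp,K_L)}\beta$ has a right adjoint given simply by restriction: for any $(\fkg,K)$-module $\pi$ and any $(\fkl,K_L)$-module (viewed as $(\fkp,K_L)$-module) $\beta$, there is a natural isomorphism
\[
\Hom_{\fkg,K}(P_{\fkp,K_L}^{\fkg,K}(\beta),\pi)\simeq \Hom_{\fkp,K_L}(\beta,\pi|_{\fkp,K_L}).
\]
This is the tensor-hom adjunction for the ring map $R(\fkp,K_L)\to R(\fkg,K)$ together with the fact that $\Hom_{R(\fkp,K_L)}(R(\fkg,K),\pi)^{K\text{-finite}}$, restricted along the unit, recovers $\pi|_{\fkp,K_L}$; concretely the natural map $\pi|_{\fkp,K_L}\to \Hom_{R(\fkp,K_L)}(R(\fkg,K),\pi|_{\fkp,K_L})^{K\text{-finite}}$ splits off $\pi$ because $R(\fkg,K)$ is free as a right $R(\fkp,K_L)$-module (via a PBW-type decomposition $R(\fkg,K)\simeq U(\fku^{-})\otimes R(\fkp,K_L)$ compatibly with the $K$-structure). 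Then I would derive both sides.

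The key point for passing to higher $\Ext$ is that $P_{\fkp,K_L}^{\fkg,K}$ sends projective $(\fkl,K_L)$-modules to projective $(\fkg,K)$-modules and is exact. Exactness is immediate because $R(\fkg,K)$ is free (hence flat) as a right $R(\fkp,K_L)$-module. Preservation of projectives is the standard consequence of the adjunction: if $F$ is left adjoint to an exact functor $G$ (here $G$ is restriction, which is visibly exact), then $F$ preserves projectives. Hence, taking a projective resolution $Q_\bullet \to \beta$ of $\beta$ in the category of $(\fkl,K_L)$-modules, $P_{\fkp,K_L}^{\fkg,K}(Q_\bullet)\to P_{\fkp,K_L}^{\fkg,K}(\beta)$ is a projective resolution in the category of $(\fkg,K)$-modules. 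Applying $\Hom_{\fkg,K}(-,\pi)$ and using the adjunction termwise gives the cochain complex $\Hom_{\fkp,K_L}(Q_\bullet,\pi|_{\fkp,K_L})$, whose cohomology is $\Ext^i_{\fkp,K_L}(\beta,\pi|_{\fkp,K_L})$; since the resolution computes $\Ext^i_{\fkg,K}(P_{\fkp,K_L}^{\fkg,K}(\beta),\pi)$ on the left, we obtain the desired natural isomorphism. Naturality in $\beta$ and in $\pi$ follows from naturality of the adjunction and the usual comparison-of-resolutions argument; the finite-dimensionality hypothesis on $\pi$ is not needed for this formal argument but is presumably retained because it is the case of interest (and guarantees $\pi|_{\fkp,K_L}$ lies in a convenient subcategory).

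The main obstacle, and the only genuinely non-formal input, is the freeness of $R(\fkg,K)$ as a right $R(\fkp,K_L)$-module in a way compatible with the $K$-action — i.e., the relative PBW/Poincaré–Birkhoff–Witt decomposition for Hecke algebras of Lie pairs. This is exactly \cite[Chapter II]{KV}; I would cite it rather than reprove it. A secondary technical care point is the $K$-finiteness decoration in the definition of $I_{\fkp,K_L}^{\fkg,K}$ and in the adjunction: one must check that the natural transformations stay within the category of honest $(\fkg,K)$- and $(\fkl,K_L)$-modules (locally $K$-finite, admissible when relevant), which is routine given that production of a finitely generated module over a Noetherian-type situation stays finitely generated. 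Granting these, the proof is a three-line application of derived-functor formalism: exact left adjoint to restriction, preserves projectives, compute $\Ext$ via a production of a projective resolution.
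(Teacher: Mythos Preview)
Your approach is essentially the same as the paper's: take a projective resolution of $\beta$, apply the production functor (which is exact and preserves projectives by the freeness/adjunction argument you sketch), and use the Hom-level adjunction termwise. The paper simply cites \cite[Proposition 11.2, Corollary 2.35, Propositions 2.33--2.34]{KV} for the exactness, projectivity preservation, and adjunction, respectively, where you spell out the mechanisms.

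One slip to fix: you write that $P_{\fkp,K_L}^{\fkg,K}$ sends projective $(\fkl,K_L)$-modules to projectives, and you take $Q_\bullet$ in the category of $(\fkl,K_L)$-modules. This should be $(\fkp,K_L)$-modules throughout: the right-hand side is $\Ext^i_{\fkp,K_L}$, the production functor's source category is $(\fkp,K_L)$-modules, and a projective $(\fkl,K_L)$-module extended trivially over $\fku$ is \emph{not} projective as a $(\fkp,K_L)$-module. With that correction the argument goes through exactly as you describe, and matches the paper.
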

\begin{proof}
    Let $C_{\bullet}$ be a projective resolution of $\beta$ in the category of $(\fkp,K_L)$-modules. Then by \cite[Proposition 11.2]{KV} and \cite[Corollary 2.35]{KV}, $P_{\fkp,K_L}^{\fkg,K}(C_{\bullet})$ is a projective resolution of $P_{\fkp,K_L}^{\fkg,K}(\beta)$. Hence the result follows from the usual Shapiro lemma
    \[
    \Hom_{\fkg,K}( P_{\fkp,K_L}^{\fkg,K}(C_{\bullet}),\pi)\simeq \Hom_{\fkp,K_L}(C_{\bullet},\pi|_{\fkp,K_L})
    \]
    by \cite[Proposition 2.33, 2.34]{KV}.
\end{proof}

\subsection{Derivative for quasi-split classical groups}\label{derivative-sec}
We first introduce various derivatives for representations in the $\rm{GL}_n$ case. 

Define the absolute value for Archimedean local field as $|x|_{\mathbb{R}}=|x|$ for $x\in \mathbb{R}$, while $|x|_{\mathbb{C}}=|x|^2$ for $x\in \mathbb{C}$.
\begin{definition}\label{def-der-gl}
    Let $\sigma$ be a smooth moderate growth Fr\'echet representation of $M_n$, we define
    $$\Psi(\sigma):=|\det|_{\mathbf{k}}^{-1/2} \otimes \sigma/\Span\{\alpha v -\psi_n(\alpha)v \mid v\in\sigma, \alpha\in \fkv_n\}$$
and
$$\Phi(\sigma):= \varprojlim_{l} \sigma /\Span\{\kappa v\mid v  \in \sigma ,\kappa \in ({\mathfrak{v}}_{n})^{\otimes l} \}, \quad \Phi_0(\sigma):= \sigma /\Span\{\kappa v\,\mid v  \in \sigma ,\kappa \in \fkv_n\}.$$
Here, $\Psi(\sigma)$ is a representation of $M_{n-1}$, $\Phi(\sigma)$ and $\Phi_0(\sigma)$ are representations of $\GL_{n-1}$.   For convenience, we also introduce the following notations.

\begin{itemize}
\item Define $\Psi_0(\sigma):=\Psi(\sigma)\cdot |\det|_{\mathbf{k}}^{1/2} $.
    \item The $k$-th derivative of $\sigma$ is defined to be $D^k(\sigma):=\Phi\Psi^{k-1}(\sigma)$. The \textbf{depth} of representation $\sigma$ is the maximal positive integer $k_0$ such that $D^{k_0}(\sigma)\neq 0$, and $D^{k_0}(\sigma)$ is called the \textbf{highest derivative} of $\sigma$, denoted by $\sigma^{-}$.
    \item For $k\neq 0$, define $B^k(\sigma):=\Phi_0\Psi^{k-1}(\sigma)$. It is a representation of $\GL_{n-k}$. The following are some variants of $B^k$ that appear in the context.
    
    Let $B^k_0(\sigma):=\Phi_0\Psi_0^{k-1}$, and let $B^k_{-}(\sigma):=B^k(\sigma)\cdot |\det|_{\mathbf{k}}^{-1/2} $. For $k=0$, set $B^k(\sigma)=B^k_0(\sigma)=B_{-}^k(\sigma)=\sigma$.
\end{itemize}
\end{definition}
\begin{remark}\label{B^k rem}
The functor $B^k_0$ has an alternative interpretation that is crucial in our proof of its Casselman-Wallach property. We note that
\[
B^k_0(\sigma) = \Psi_0^{k-1}(\rmh_0(\mathfrak{u}_{n-k,k}, \sigma)),
\]
where $\rmh_0(\mathfrak{u}_{n-k,k}, \sigma)$ is a $\mathrm{GL}_{n-k} \times \mathrm{GL}_k$-representation, and $\Psi_0^{k-1}$ is taken with respect to the $\mathrm{GL}_k$-representation.
\end{remark}
A priori, the representations $B^k_0(\sigma)$ are \textbf{possibly non-Hausdorff}. But we will show that these representations are Hausdorff when $\sigma$ is the restriction of some Casselman-Wallach representation of $\GL_n$.

In order to introduce derivatives for other classical groups, we fix the following notations. Let $\mathbf{k}/\mathbf{k}'$ be an archimedean local field extension with $[\mathbf{k}:\mathbf{k}']\leq 2$. Let $(V,(\cdot,\cdot))$ be a $\epsilon$-Hermitian space over $\mathbf{k}$, where $\epsilon=1$ or $-1$. That is, 
\[
(\cdot,\cdot):V\times V\lra \mathbf{k}
\]
is a bilinear form over $\mathbf{k}'$, which is  $\mathbf{k}$-linear over the first variable and satisfies
\[
(x,y)=\epsilon(y,x)^c, \text{ for }x,y\in V,
\]
where ${\bullet}^c$ is complex conjugation when $\mathbf{k}'=\BC$ and identity when $\mathbf{k}'=\BR$. When $V$ is quasi-split, it is determined up to isomorphism by its dimension. From now on, we assume $V$ is split with dimension $m=2n$. We use $G_n$ to denote the isometry group of $V$. Fix a decomposition of $V$ as follows,
\[
V=\langle e_1\rangle\oplus \cdots \oplus\langle e_n\rangle\oplus  \langle f_n\rangle\oplus \dots \oplus \langle f_1\rangle,
\]
where $e_i,f_i,1\leq i\leq n$ are isotropic vectors such that $(e_i,f_j)=\delta_{i,j}$. Let $J$ be the presentation matrix under this basis, in other words, $J$ is anti-diagonal,
\[J=\begin{pmatrix} 0_{n\times n} & A_n\\ \epsilon\cdot A_n & 0_{n\times n} \end{pmatrix},\]
where $A_n=\begin{pmatrix} 0 & \dots & 0 & 1\\ 0 & \dots & 1 & 0\\ \vdots &  \iddots & \vdots & \vdots \\ 1 & \dots & 0 & 0\end{pmatrix}$ is an $n\times n$ matrix with anti-diagonal elements $1$.

We define the following subgroups of $G_n$:  
\begin{itemize}
\item Let $G_{n-k}$ embeds into $G_n$ as the subgroup fixing $e_1,\dots,e_k$.
\item The Cartan subgroup $A$ is chosen as the subgroup stabilizing $\langle e_1\rangle,\langle e_2\rangle,\dots ,\langle e_n\rangle$.
    \item \textbf{Mirabolic subgroup} $Q_n$: the subgroup fixing $e_1$, with unipotent radical denoted $E_n$.
    \item Standard \textbf{maximal parabolic subgroup} $R_{n-k,k}$ ($0\leq k\leq n$): the parabolic subgroup stabilizing the subspace $\langle e_1\rangle\oplus \cdots \oplus\langle e_{n-k}\rangle$. When $k=0$, we simply denoted $R_{n-k,k}$ by $C_n$, which is called the \textbf{Siegel parabolic subgroup}. It has a standard Levi decomposition $C_n=\GL_n\ltimes U_n$, where $\GL_n$ is the general linear group of $\langle f_1\rangle\oplus \cdots \oplus\langle f_{n}\rangle$ and $U_n$ refers to the unipotent radical of $C_n$.  
\end{itemize}
If we write elements in $V$ as column vectors according to basis $\{e_1,\dots,e_n, f_n,\dots,f_1 \}$, then 
\[Q_n=\begin{pmatrix} 1 &  * & * \\ 0_{ (2n-2)\times 1} & * & * \\ 0 & 0_{1\times (2n-2)} & 1 \end{pmatrix}\cap G_n, \quad   R_{n-k,k}=\begin{pmatrix} * &  * & * \\ 0_{ (2n-2k)\times k} & * & * \\ 0_{k\times k} & 0_{k\times (2n-2k)} & * \end{pmatrix}\cap G_n.\]   
Moreover, let $C_{n-k}$, $U_{n-k}$ denote the subgroups of $G_{n-k}\subset G_n$.

We define a character of $E_n$:
\[
\psi_{n}(x):=\psi((x\cdot f_1,e_2)),\ x\in E_n.
\]
The stabilizer of $\psi_n$ under $G_{n-1}$-action is $Q_{n-1}$. Note that there is an abuse of notation since $\psi_n$ is used as a character of $V_n$ as well. Since the character is attached to different groups, it will cause no confusion. 

For convenience, inside $G_n$, let $V_n$ denote the subgroup $E_n\cap \GL_n$, and let $F_n$ denote the subgroup $E_n\cap U_n$.

\begin{definition}
    Let $\sigma$ be a smooth moderate growth Fr\'echet representation of $M_n$, we define
     $$\Psi(\sigma):=|\det|_{\mathbf{k}}^{-1/2} \otimes \sigma/\Span\{\alpha v -\psi_n(\alpha)v \mid v\in\sigma, \alpha\in \fke_n\}$$
and
$$ \Phi_0(\sigma):= \sigma /\Span\{\kappa v\,\mid v  \in \sigma ,\kappa \in \fke_n\}.$$
Here, $\Psi(\sigma)$ is a representation of $M_{n-1}$ and $\Phi_0(\sigma)$ is a representation of $G_{n-1}$.
\end{definition}
For applications to the Bessel model, it is helpful to introduce the following functor. Let $V'\subset V$ be a hermitian subspace such that $(V')^{\perp}=\mathrm{Span}_{\mathbf{k}}\{ e_1,f_1\}\oplus^{\perp} \mathbf{k}\cdot Z$ for an anisotropic vector $Z$. Let $\phi_n$ be the unitary character of $E_n$ defined by
\[
\phi_n(x):=\psi((x\cdot f_1,Z)),x\in E_n.
\]
\begin{definition}
    Let $\sigma$ be a smooth moderate growth Fr\'echet representation of $M_n$, we define
    \[
    \Upsilon(\sigma):=|\det|_{\mathbf{k}}^{-1/2} \otimes \sigma/\Span\{\alpha v -\phi_n(\alpha)v \mid v\in\sigma, \alpha\in \fke_n\}.
    \]
    Here, $\Upsilon(\sigma)$ is a representation of the isometry group of $V'$.
\end{definition}

\subsection{Lie algebra homology} 
In this subsection, let $\fkh$ be a complexified Lie algebra of some almost linear Nash group $H$. Let $M$ be an object in the abelian category of algebraic $\fkh$-representations, then the $i$-th Lie algebra homology $\rmh_i(\fkh,M)$ is defined as the $i$-th left derived functor of the right exact functor ``co-invariant":
\begin{equation*}
    \text{Rep}(\fkh)  \lra  \text{Vect}_{\BC}, \qquad 
    M \mapsto M/\Span\{ X\cdot m\mid X\in \fkh,m\in M\}.
\end{equation*} 
It is sometimes helpful to interpret the ``co-invariant" functor as the ``tensor product" functor:
\begin{equation*}
    \text{Rep}(\fkh)  \lra  \text{Vect}_{\BC}, \qquad 
    M \mapsto M\otimes_{\U(\fkh)}\text{triv},
\end{equation*} 
where ``$\mathrm{triv}$'' is the trivial representation of $\fkh$. By the Koszul resolution of the trivial representation,  $\rmh_i(\fkh,M)$ is isomorphic to the $i$-th homology of the Koszul complex \begin{equation}\label{koszul}
    0 \stackrel{d_0}{\longleftarrow} M \stackrel{d_1}{\longleftarrow} \fkh\otimes M \stackrel{d_2}{\longleftarrow} \dots\stackrel{d_{\dim(\fkh)}}{\longleftarrow}\wedge^{\dim(\fkh)}\fkh\otimes M  \longleftarrow 0.
\end{equation}
When $M$ is equipped with a Fr\'echet topology, we would like to equip 
\[
\rmh_i(\fkh,M)\simeq \Ker(d_i)/ \mathrm{Im}(d_{i+1}) 
\]
with the subquotient topology. Note that this topology is not necessarily Hausdorff.

Given a right exact functor $F$ between two abelian categories with enough projective objects, let $L^iF$ denote the $i$-th left derived functor of $F$. What we concern in this article are left derived functors of various derivatives. When topology matters, we also equip these left derived functors with topology given by Koszul resolution.

The following homological version of Mittag-Leffler lemma is critical for deducing the Hausdorffness of the homology of the space, which admits Borel filtration, see Lemma \ref{ext_Hausd}. Recall that an inverse system $\{V_k,\alpha_k:V_{k+1}\to V_k\}_{k\geq 0}$ is called \textbf{stationary} if for every positive integer $n$, there exists an integer $\nu(n)\geq n$ such that for all $p\geq \nu(n)$, \[\mathrm{Im}(V_p\to V_n)=\mathrm{Im}(V_{\nu(n)}\to V_n).\]
\begin{lemma}[see \cite{Gr}, Chapter 0, Proposition 13.2.3]\label{ML-lem}
Let $\{V_k,\alpha_k:V_{k+1}\to V_k\}_{k\geq 0}$ be an inverse system of $\fkh$-representations. Let $V:=\varprojlim\limits_{k}V_k$. Assume
    \begin{enumerate}
        \item $\{V_k,\alpha_k:V_{k+1}\to V_k\}_{k\geq 0}$ is stationary;
        \item $\{\rmh_i(\fkh,V_k),\alpha_k:\rmh_i(\fkh,V_{k+1})\to \rmh_i(\fkh,V_k)\}_{k\geq 0}$ is also stationary for each $i\in\BZ$.
    \end{enumerate}
    Then the complex
    \begin{equation*}
      0\longleftarrow \mathop{\varprojlim}\limits_{k}\rmh_i ( \fkh, V_k)\longleftarrow   \Ker d_i\longleftarrow \wedge^{i+1}\fkh\otimes V
    \end{equation*}
    is exact, where $d_i$ is the differential map in \eqref{koszul} with $M=V$.
\end{lemma}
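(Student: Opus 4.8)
The plan is to recognize the statement as the classical Mittag-Leffler comparison between the homology of an inverse limit of complexes and the inverse limit of homologies, applied to the Koszul complexes \eqref{koszul} of the $V_k$, in the spirit of \cite[Ch.~0, 13.2.3]{Gr}. Observe first that topology plays no role: since $\fkh$ is finite dimensional, $\wedge^j\fkh\otimes(-)$ commutes with inverse limits, and the assertion is purely homological-algebraic. Writing $K_\bullet(W)=\wedge^\bullet\fkh\otimes W$ for the Koszul complex of an $\fkh$-module $W$, the transition maps $\alpha_k$ make $\{K_\bullet(V_k)\}_k$ an inverse system of complexes with $\varprojlim_k K_j(V_k)=K_j(V)$ in each degree $j$; moreover hypothesis (1) upgrades to: $\{K_j(V_k)\}_k$ is stationary for every $j$, because $\Im(K_j(V_p)\to K_j(V_n))=\wedge^j\fkh\otimes\Im(V_p\to V_n)$.

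The first real step is to show that cycles and boundaries stay stationary. Let $Z_j^{(k)}$ and $B_j^{(k)}$ be the degree-$j$ cycles and boundaries of $K_\bullet(V_k)$, so $Z_j^{(k)}/B_j^{(k)}=\rmh_j(\fkh,V_k)$; since the differentials commute with the $\alpha_k$, these assemble into inverse systems. I would first note that $\{B_{j-1}^{(k)}\}_k\cong\{K_j(V_k)/Z_j^{(k)}\}_k$ is a quotient of a stationary system, hence stationary, so all $\{B_j^{(k)}\}_k$ are stationary. Feeding this together with hypothesis (2) into the short exact sequences of inverse systems $0\to B_j^{(k)}\to Z_j^{(k)}\to\rmh_j(\fkh,V_k)\to0$, I would invoke the permanence of the Mittag-Leffler property under extensions of inverse systems over $\BN$ (see \cite[Ch.~0, 13.1--13.2]{Gr}) to conclude that $\{Z_j^{(k)}\}_k$ is stationary as well.

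The remaining step is to pass to the limit, using that $\varprojlim$ over $\BN$ is left exact, preserves kernels, and has vanishing $\varprojlim^1$ on stationary systems. First, $\varprojlim_k Z_j^{(k)}=\Ker d_j$ inside $K_j(V)$. Next, from $0\to Z_{j+1}^{(k)}\to K_{j+1}(V_k)\to B_j^{(k)}\to0$ together with $\varprojlim^1 Z_{j+1}^{(k)}=0$, the space $K_{j+1}(V)=\wedge^{j+1}\fkh\otimes V$ surjects onto $\varprojlim_k B_j^{(k)}$, so $\varprojlim_k B_j^{(k)}=\Im d_{j+1}\subseteq\Ker d_j$. Finally, from $0\to B_j^{(k)}\to Z_j^{(k)}\to\rmh_j(\fkh,V_k)\to0$ together with $\varprojlim^1 B_j^{(k)}=0$, one gets the short exact sequence
\[
0\longrightarrow\Im d_{j+1}\longrightarrow\Ker d_j\longrightarrow\varprojlim_k\rmh_j(\fkh,V_k)\longrightarrow0.
\]
Read from the right, with $d_{j+1}\colon\wedge^{j+1}\fkh\otimes V\to\Ker d_j$ the Koszul differential, this is precisely the asserted exactness; a final line identifies the surjection $\Ker d_j\to\varprojlim_k\rmh_j(\fkh,V_k)$ produced here with the natural one coming from functoriality of $\rmh_j(\fkh,-)$ along $V\to V_k$, which is immediate from the construction.

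I expect the only genuinely delicate point to be the claim that the cycle subsystems $\{Z_j^{(k)}\}_k$ remain stationary: this is exactly where hypothesis (2) is used, and it relies on the standard but easily-misstated fact that a \emph{sub}system of a stationary system need not be stationary, so one really must route through the boundary systems and the homology via the extension property. Everything after that is routine diagram chasing with the left-exactness of $\varprojlim$ and the vanishing of $\varprojlim^1$ for stationary systems.
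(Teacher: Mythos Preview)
Your proposal is correct and follows exactly the classical argument from \cite[Ch.~0, Prop.~13.2.3]{Gr}: the paper itself does not supply a proof of this lemma, it simply cites Grothendieck, so your writeup is precisely what one would find by unwinding that reference for the Koszul complex. The key steps---stationarity of boundaries as quotients of stationary systems, then of cycles via the extension-closure of the Mittag-Leffler property using hypothesis~(2), and finally the $\varprojlim^1$-vanishing to pass the short exact sequences to the limit---are all in order.
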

 The following lemma plays a fundamental role in deducing the Hausdorffness of the extension of two Hausdorff representations. Let 
 \[
 0\lra L\lra M\lra N\lra 0
 \]
 be a short exact sequence of nuclear Fr\'echet representations of $\fkh$. Before the lemma, we remark the following two points.
 \begin{enumerate}
     \item When $\rmh_i(\fkh,L)$ is Hausdorff, by \cite[Proposition 5.3.2]{AGS15b}, we have
 \[
 \rmh_i(\fkh,L)'\simeq  \rmh^i(\fkh,L').
 \]
 \item The boundary map $\partial_i:\rmh_{i+1}(\fkh,N)\lra \rmh_i(\fkh,L)$ is continuous by the topology of the Fr\'echet space.
 \end{enumerate}
 \begin{lemma}\label{exa-haus-lem}
     Let $i$ be an arbitrary integer. Suppose $\rmh_i(\fkh,L)$ and $\rmh_i(\fkh,N)$ are Hausdorff, and the boundary map
     \begin{equation*}
       \partial_i:  \rmh_{i+1}(\fkh,N)\lra \rmh_i(\fkh,L)
     \end{equation*}
     has closed image, then $\rmh_i(\fkh,M)$ is Hausdorff.
 \end{lemma}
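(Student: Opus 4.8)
The plan is to run the long exact sequence of Lie algebra homology attached to $0\to L\to M\to N\to 0$ and chase Hausdorffness through it, exploiting that all the spaces involved are subquotients of nuclear Fréchet spaces and that the duality $\rmh_i(\fkh,L)'\simeq\rmh^i(\fkh,L')$ holds whenever $\rmh_i(\fkh,L)$ is Hausdorff. The short exact sequence of complexes $0\to \wedge^\bullet\fkh\otimes L\to \wedge^\bullet\fkh\otimes M\to \wedge^\bullet\fkh\otimes N\to 0$ (each term exact as a sequence of Fréchet spaces, since tensoring with the finite-dimensional $\wedge^i\fkh$ is exact) gives the long exact sequence
\begin{equation*}
\cdots \lra \rmh_{i+1}(\fkh,N)\xrightarrow{\partial_i}\rmh_i(\fkh,L)\lra \rmh_i(\fkh,M)\lra \rmh_i(\fkh,N)\xrightarrow{\partial_{i-1}}\rmh_i(\fkh,L)\lra\cdots
\end{equation*}
computed with the subquotient topologies coming from the Koszul complexes.

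First I would isolate the image of $\rmh_i(\fkh,L)\to\rmh_i(\fkh,M)$. By hypothesis $\partial_i$ has closed image, so $A_i:=\rmh_i(\fkh,L)/\mathrm{Im}(\partial_i)$ is a Hausdorff (indeed, again Fréchet-type subquotient) space, and the map $\rmh_i(\fkh,L)\to\rmh_i(\fkh,M)$ factors through a continuous injection $A_i\hookrightarrow\rmh_i(\fkh,M)$ whose image is exactly $\Ker\big(\rmh_i(\fkh,M)\to\rmh_i(\fkh,N)\big)$. Dually, $B_i:=\mathrm{Im}\big(\rmh_i(\fkh,M)\to\rmh_i(\fkh,N)\big)=\Ker(\partial_{i-1})$; since $\partial_{i-1}$ is a continuous map and $\rmh_i(\fkh,N)$ is Hausdorff, $B_i$ is a closed subspace of $\rmh_i(\fkh,N)$, hence Hausdorff. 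So $\rmh_i(\fkh,M)$ sits in an algebraically exact sequence $0\to A_i\to \rmh_i(\fkh,M)\to B_i\to 0$ of abstract vector spaces with $A_i,B_i$ Hausdorff.

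The remaining point is topological: an extension of two Hausdorff spaces need not be Hausdorff unless the subobject carries the subspace topology, i.e. unless $A_i\to\rmh_i(\fkh,M)$ is a topological embedding. Here I would invoke the nuclear Fréchet / strong-dual machinery: all of $L,M,N$ and the Koszul terms are nuclear Fréchet, so the homology spaces are quotients of closed subspaces of nuclear Fréchet spaces, and one may instead prove Hausdorffness by passing to strong duals, where cohomology of the dual complex $\wedge^\bullet\fkh\otimes(\,\cdot\,)'$ is in play and the relevant maps become maps of (DF)-type spaces. Concretely, $\rmh_i(\fkh,M)$ is Hausdorff iff $\mathrm{Im}(d_{i+1})$ is closed in $\Ker d_i$ inside $\wedge^i\fkh\otimes M$; I would show $\mathrm{Im}(d_{i+1}^M)$ is closed by splitting it, using the closed images of $d_{i+1}^L$ and $d_{i+1}^N$ (which hold since $\rmh_i(\fkh,L)$, $\rmh_i(\fkh,N)$ are Hausdorff) together with the closed image of $\partial_i$ to control the ``mixing'' term, and appealing to Corollary~\ref{closed image}-style arguments and the open mapping theorem for Fréchet spaces to promote algebraic surjections onto closed subspaces to topological quotient maps.

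The main obstacle I anticipate is exactly this last step: reconciling the abstract long exact sequence with the subquotient topologies, because in general the connecting maps in a long exact sequence of topological homology are only continuous, not open, and images of continuous maps between Fréchet (or dual-Fréchet) spaces are automatically closed only under extra hypotheses. The hypothesis that $\partial_i$ has closed image is precisely what is designed to remove this obstruction, so the technical heart of the argument is to feed "closed image of $\partial_i$" plus "Hausdorffness of $\rmh_i(\fkh,L),\rmh_i(\fkh,N)$" into a diagram chase at the level of Koszul complexes (rather than at the level of homology spaces) and conclude that $\mathrm{Im}(d_{i+1}^M)$ is closed — most cleanly by dualizing and checking that the dual cohomology map is a topological isomorphism onto its image using the theory of \cite[\S5]{AGS15b}.
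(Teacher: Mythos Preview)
Your plan points in the right direction and ends up sketching the paper's strategy---work at the Koszul-complex level, show $\mathrm{Im}(d_{i+1}^M)$ is closed, and control the ``mixing term'' via duality and the closed-image hypothesis on $\partial_i$---but it stops short of the actual argument. The detour through $A_i=\rmh_i(\fkh,L)/\mathrm{Im}\,\partial_i$ and $B_i=\Ker\partial_{i-1}$ is not productive: knowing both are Hausdorff, even with $A_i$ embedding topologically, does not force $\rmh_i(\fkh,M)$ to be Hausdorff, since the latter already carries a possibly non-Hausdorff quotient topology and that failure is invisible to continuous maps into Hausdorff targets. Your appeal to Corollary~\ref{closed image} is also misplaced: that result concerns morphisms of Casselman--Wallach representations, and the homology groups here carry no such structure.

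What is missing, and what the paper supplies, is the concrete element chase. By Hahn--Banach in nuclear Fr\'echet spaces, $x\in\Ker d_i^M$ lies in the closure of $\mathrm{Im}\,d_{i+1}^M$ iff $\eta(x)=0$ for every $\eta\in\Ker\bigl((d_{i+1}^M)'\bigr)$. For such $x$, its image in $\wedge^i\fkh\otimes N$ lies in the closure of $\mathrm{Im}\,d_{i+1}^N$, hence in $\mathrm{Im}\,d_{i+1}^N$ by Hausdorffness of $\rmh_i(\fkh,N)$; subtract a lift $x'\in\mathrm{Im}\,d_{i+1}^M$ so that $x-x'$ comes from $\wedge^i\fkh\otimes L$. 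It remains to show the class of $x-x'$ in $\rmh_i(\fkh,L)$ lies in $\mathrm{Im}\,\partial_i$. Since $\mathrm{Im}\,\partial_i$ is closed and $\rmh_i(\fkh,L)$ is Hausdorff, $\mathrm{Im}\,\partial_i$ is cut out by the functionals in the image of the dual map $\rmh^i(\fkh,M')\to\rmh^i(\fkh,L')$ (using $\rmh_i(\fkh,L)'\simeq\rmh^i(\fkh,L')$); these are restrictions of elements of $\Ker\bigl((d_{i+1}^M)'\bigr)$, which annihilate $x$ and hence $x-x'$. Thus $x\in\mathrm{Im}\,d_{i+1}^M$, and $\rmh_i(\fkh,M)$ is Hausdorff. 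This is the step your proposal gestures at but does not carry out.
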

 \begin{proof}
     Consider the Koszul resolution of short exact sequence:
     \[
     \begin{tikzcd}
         0\ar[r] &\wedge^i \fkh\otimes L \ar[r,"\phi_i"]& \wedge^i \fkh\otimes M \ar[r,"\varphi_i"] & \wedge^i \fkh\otimes N \ar[r]& 0\\
         0\ar[r] &\wedge^{i+1} \fkh\otimes L \ar[r,"\phi_{i+1}"]\ar[u,"\alpha_i"]& \wedge^{i+1} \fkh\otimes M \ar[r,"\varphi_{i+1}"]\ar[u,"\kappa_i"] & \wedge^{i+1} \fkh\otimes N \ar[r]\ar[u,"\gamma_i"]& 0\\
     \end{tikzcd}
     \]
    Note that $\rmh_i(\fkh,M)$ is Hausdorff is equivalent to $\mathrm{Im}(\kappa_i)$ is closed in $\wedge^i \fkh\otimes M $. Consider the short exact sequence
    \begin{equation*}
        0\lra N'\lra M'\lra L'\lra 0
    \end{equation*}
    and its dual Koszul resolution
    \[
    \begin{tikzcd}
         0 &\wedge^i \fkh^*\otimes L'\ar[l] \ar[d,"\alpha_i'"]& \wedge^i \fkh^*\otimes M'\ar[l,"\phi_i'"] \ar[d,"\kappa_i'"] & \wedge^i \fkh^*\otimes N' \ar[l,"\varphi_i'"]\ar[d,"\gamma_i'"]&\ar[l] 0\\
         0 &\wedge^{i+1} \fkh^*\otimes L' \ar[l]& \wedge^{i+1} \fkh^*\otimes M'  \ar[l,"\phi_{i+1}'"]& \wedge^{i+1} \fkh^*\otimes N' \ar[l,"\varphi_{i+1}'"]& \ar[l]0\\
     \end{tikzcd}
    \]
    Let $x\in\Ker \kappa_{i-1}$, such that $\eta(x)=0$ for every $\eta\in\Ker \kappa_i'$. Note that $\mathrm{Im}\kappa_i$ is closed if and only if $x\in \mathrm{Im}\kappa_i$ for every such $x$. Since $\rmh_i(\fkh,N)$ is Hausdorff, we have $\varphi_i(x)\in \mathrm{Im}\gamma_i$. Thus, we can take an element $x'\in \mathrm{Im}\kappa_i$ such that $\varphi_i(x')=\varphi_i(x)$. We have $x-x'\in \Ker\alpha_{i-1}$, and it is equivalent to show $x-x'\in \Ker \alpha_{i-1}\cap \mathrm{Im}\kappa_i$. We project $x-x'$ into $\rmh_i(\fkh,L)$, and still use the same notation. We need only to show 
    \begin{equation*}
        x-x'\in (\Ker \alpha_{i-1}\cap \mathrm{Im}\kappa_i)/\mathrm{Im}\alpha_i.
    \end{equation*}
    Consider the long exact sequence associated to the short exact sequence
    \begin{equation*}
 \dots  \lra  \rmh_{i+1}(\fkh,N)\stackrel{\partial_i}{\lra}   \rmh_i(\fkh,L) \stackrel{d_i}{\lra} \rmh_i(\fkh,M)\lra \rmh_i(\fkh,N)\lra \dots.
    \end{equation*}
    By definition, $(\Ker \alpha_{i-1}\cap \mathrm{Im}\kappa_i)/\mathrm{Im}\alpha_i=\Ker d_i=\mathrm{Im}\partial_i$. Hence it is closed in $\rmh_i(\fkh,L)$. Moreover, it has the following characterization since $\rmh_i(\fkh,L)$ is Hausdorff: an element $y\in\rmh_i(\fkh,L)$ falls in $ (\Ker \alpha_{i-1}\cap \mathrm{Im}\kappa_i)/\mathrm{Im}\alpha_i$ if and only if for every $\theta\in \mathrm{Im}d_i'$, $\theta(y)=0$. Here $d_i'$ is the dual map in the long exact sequence of Lie algebra cohomology
    \[
    d_i': \rmh^i(\fkh,L') \stackrel{d_i'}{\longleftarrow} \rmh^i(\fkh,M').
    \]
    This holds for $y=x-x'$ by our requirement on $x$.
 \end{proof}
 \begin{remark}\label{rem_haus}
     The proof of the above lemma utilizes the dual nuclear Fr\'echet complex. In fact, such a proof also applies to a more general statement that we will use. Let 
     \[
     0\lra Y_{\bullet}\lra Z_{\bullet} \lra W_{\bullet} \lra 0
     \]
     be a short exact sequence of nuclear Fr\'echet complexes. Let $i$ be an arbitrary integer. If $\rmh_i(Y_{\bullet})$ and $\rmh_i(W_{\bullet})$ are Hausdorff, and the boundary map 
     \[
     \partial_i:\rmh_{i+1}(W_{\bullet})\lra \rmh_i(Y_{\bullet})
     \]
     has closed image, then $\rmh_i(Z_{\bullet})$ is Hausdorff.
 \end{remark}
 By an argument similar to that in Lemma~\ref{exa-haus-lem}, we can establish the following result. The details are left to the reader.
 \begin{lemma}\label{exa-hau-2}
 Following the notation in Lemma~\ref{exa-haus-lem}. Let $i$ be an arbitrary integer. Suppose that $\rmh_i(\fkh,M)$ and $\rmh_{i+1}(\fkh,N)$ are Hausdorff, and the induced map on the homology
 \[
 \rmh_{i+1}(\fkh,M)\lra \rmh_{i+1}(\fkh,N)
 \]
 has a closed image, then $\rmh_i(\fkh,L)$ is Hausdorff. 
 \end{lemma}

 We will need one more lemma for proving Hausdorffness of the homology group in the context of the Borel filtration. For general setting, let $\pi$ be a representation of $G\ltimes H$, where $G$ is a real reductive group. Assume $\pi$ admits a decreasing filtration $\{F^i\pi\}_{i\in\BZ_{\geq 0}}$ of $G\ltimes H$ such that the canonical map
 \[
 \pi\lra \mathop{\varprojlim}\limits_{i} \pi/F^i\pi
 \]
 is an isomorphism. 
 \begin{lemma}\label{ext_Hausd}
     Let $i$ be an arbitrary integer. Assume that $\rmh_i(\fkh,F^{j-1}\pi/F^j\pi)$ is a Casselman-Wallach representation of $G$ for every integer $j$, then $\rmh_i(\fkh,\pi)$ is Hausdorff.
 \end{lemma}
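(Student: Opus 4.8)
The plan is to reduce to the finite levels $\pi/F^m\pi$ of the filtration, where Lemma~\ref{exa-haus-lem} applies directly, and then to pass to the inverse limit $\pi=\varprojlim_m\pi/F^m\pi$ using the homological Mittag-Leffler Lemma~\ref{ML-lem}.

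The first step would be to prove that $\rmh_i(\fkh,\pi/F^m\pi)$ is itself a Casselman-Wallach representation of $G$ for every $m$ and every $i$, by descending induction along the finite filtration of $\pi/F^m\pi$ whose graded pieces are the $F^{j-1}\pi/F^j\pi$ for $1\le j\le m$. For the inductive step one considers the short exact sequence $0\to F^j\pi/F^m\pi\to F^{j-1}\pi/F^m\pi\to F^{j-1}\pi/F^j\pi\to 0$ of nuclear Fr\'echet $G\ltimes H$-representations; the two outer terms have Casselman-Wallach $\fkh$-homology (by the hypothesis of the lemma and the inductive hypothesis), and the connecting maps $\rmh_{i+1}(\fkh,F^{j-1}\pi/F^j\pi)\to\rmh_i(\fkh,F^j\pi/F^m\pi)$ are $G$-equivariant morphisms between Casselman-Wallach representations, hence have closed image by Corollary~\ref{closed image}. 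Lemma~\ref{exa-haus-lem} then makes $\rmh_i(\fkh,F^{j-1}\pi/F^m\pi)$ Hausdorff, so it is Fr\'echet; reading the long exact sequence, it is an extension of a closed subrepresentation of $\rmh_i(\fkh,F^{j-1}\pi/F^j\pi)$ by the cokernel of a connecting map, hence of finite length, smooth, and of moderate growth --- that is, Casselman-Wallach. Taking $j=0$ gives the claim for $\pi/F^m\pi$.

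The second step would be to feed the inverse system $V_m:=\pi/F^m\pi$, with $\varprojlim_m V_m=\pi$, into Lemma~\ref{ML-lem}. Its first hypothesis is automatic because the transition maps $V_{m+1}\to V_m$ are surjective. For the second, the transition maps on $\rmh_i(\fkh,V_m)$ are morphisms of Casselman-Wallach representations, hence have closed image; for fixed $n$ the images of $\rmh_i(\fkh,V_p)$ in $\rmh_i(\fkh,V_n)$ (for $p\ge n$) form a descending chain of closed subrepresentations of the finite-length representation $\rmh_i(\fkh,V_n)$, which must stabilize, so the homology system is stationary for each $i$. Lemma~\ref{ML-lem} then yields that the complex $0\leftarrow\varprojlim_m\rmh_i(\fkh,V_m)\leftarrow\Ker d_i\leftarrow\wedge^{i+1}\fkh\otimes\pi$ is exact, where $d_i$ are the Koszul differentials from \eqref{koszul} with $M=\pi$. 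Hence $\mathrm{Im}\,d_{i+1}$ is exactly the kernel of the natural continuous map $p\colon\Ker d_i\to\varprojlim_m\rmh_i(\fkh,V_m)$ sending a cycle to its class at each level; since the target is a countable inverse limit of Fr\'echet spaces and so Hausdorff, $\Ker p$ is closed in $\Ker d_i$, hence $\mathrm{Im}\,d_{i+1}$ is closed in $\wedge^i\fkh\otimes\pi$ and $\rmh_i(\fkh,\pi)=\Ker d_i/\mathrm{Im}\,d_{i+1}$ is Hausdorff.

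The main obstacle I expect is the first step: one must propagate the full Casselman-Wallach property (not merely Hausdorffness) up the finite filtration, since this finite-length information is exactly what licenses Corollary~\ref{closed image} for the connecting maps and, in the second step, the stabilization-of-images argument verifying the second hypothesis of Lemma~\ref{ML-lem}. A subsidiary point to watch is that Lemma~\ref{exa-haus-lem} (equivalently Remark~\ref{rem_haus}) requires nuclear Fr\'echet data; this is fine because the graded pieces $F^{j-1}\pi/F^j\pi$ live in the ambient category of nuclear Fr\'echet representations, a property stable under the iterated extensions and closed subquotients used above.
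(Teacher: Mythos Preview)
Your proposal is correct and follows essentially the same two-step strategy as the paper's proof: first establish that each $\rmh_i(\fkh,\pi/F^m\pi)$ is Casselman-Wallach by induction along the finite filtration (using Lemma~\ref{exa-haus-lem} together with Corollary~\ref{closed image} for the connecting maps), then apply the Mittag-Leffler Lemma~\ref{ML-lem} to conclude that $\mathrm{Im}\,d_{i+1}$ is closed. The only cosmetic difference is that the paper runs the induction in Step~1 upward on $m$ via the short exact sequence $0\to F^m\pi/F^{m+1}\pi\to\pi/F^{m+1}\pi\to\pi/F^m\pi\to 0$, whereas you fix $m$ and descend on $j$ inside $\pi/F^m\pi$; these are equivalent.
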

 \begin{proof}
     We first show that $\rmh_i(\fkh,\pi/F^j\pi)$ is Casselman-Wallach for every integer $i,j$. Let us argue by induction on $j$. The base case $j=1$ follows from assumption. Assuming that it is true for $j$, we prove it for $j+1$. Consider the long exact sequence
     \[
     \rmh_{i+1}(\fkh,\pi/F^{j}\pi)\stackrel{\partial_i}{\lra}\rmh_i(\fkh,F^j\pi/F^{j+1}\pi)\lra \rmh_i(\fkh,\pi/F^{j+1}\pi)\lra \rmh_i(\fkh,\pi/F^{j}\pi), 
     \]
     since $\rmh_i(\fkh,F^j\pi/F^{j+1}\pi)$ is Casselman-Wallach, $\partial_i$ has closed image by Remark~\ref{closed-im-rem}. Hence we conclude $\rmh_i(\fkh,\pi/F^{j+1}\pi)$ is Casselman-Wallach from Lemma~\ref{exa-haus-lem}. 
    
     Note that the Casselman-Wallach representation has finite length, hence satisfies two stationary conditions of Mittag-Leffler Lemma~\ref{ML-lem}. Consequently, we have an exact sequence:
     \[
      0\longleftarrow \mathop{\varprojlim}\limits_{j}\rmh_i ( \fkh, \pi/F^j\pi)\longleftarrow   \Ker d_i\stackrel{d_{i+1}}{\longleftarrow} \wedge^{i+1}\fkh\otimes \pi.
     \]
     Thus 
     $$\mathrm{Im}d_{i+1}=\bigcap_j {(p^j_i)}^{-1}(0) \quad \text{ is closed }, $$
     where $p^j_i:\Ker d_i\to \Ker d_i^j\to \rmh_i ( \fkh, \pi/F^j\pi)$ is a continuous map. Here, $d_i^j$ is the differential of the Koszul complex for $\pi/F^j\pi$
     \[
     \lra\wedge^{i+1}\fkh \otimes \pi/F^j\pi\stackrel{d_{i+1}^j}{\lra}\wedge^{i}\fkh \otimes \pi/F^j\pi\stackrel{d_{i}^j}{\lra}\wedge^{i-1}\fkh \otimes \pi/F^j\pi\lra.
     \]
 \end{proof}

On the other hand, instead of a single representation, we will encounter a complex of representations with a finite filtration. Let $Y_{\bullet}$ be a complex of nuclear Fr\'echet spaces with a finite increasing filtration by closed subspaces
\[
 Y_{\bullet}=\CF^k\supset\CF^{k-1}\supset \dots \supset \CF^0=0.
\]
Let $E_0^{p,\bullet}=\CF^p/\CF^{p-1}$ and 
\[
d_0^{p,\bullet}: \CF^p/\CF^{p-1}\lra \CF^p/\CF^{p-1}
 \]
be the differential map in the complex. Inductively, we can define a spectral sequence $(E_r^{p,q},d_r^{p,q})_{r\geq 0}$, see \cite[section 5.4]{Wei} for details. Moreover, for two short exact sequences of complexes
\begin{equation}\label{fil-exa}
    0\lra \CF^p/\CF^{p-1}\lra \CF^{p+r-1}/\CF^{p-1}\lra \CF^{p+r-1}/\CF^p\lra 0,
\end{equation}
and 
\begin{equation*}
    0\lra \CF^{p-1}/\CF^{p-r}\lra \CF^p/\CF^{p-r}\lra \CF^{p}/\CF^{p-1}\lra 0,
\end{equation*}
 we define
 \begin{equation}\label{eq_B_r}
      B_r^{p,q}=\mathrm{Im} \left(\partial_r^{p,q}:  \rmh_{q+1}(\CF^{p+r-1}/\CF^p)\lra \rmh_{q} (\CF^p/\CF^{p-1})\right),
 \end{equation}
 and
 \[
 Z_r^{p,q}=\Ker \left(\epsilon_r^{p,q}:\rmh_{q}(\CF^p/\CF^{p-1})\lra \rmh_{q-1}(\CF^{p-1}/\CF^{p-r})\right).
 \]
 It is a standard fact that $B_r^{p,q}\subset Z_r^{p,q}$ and $E_r^{p,q}\simeq Z_r^{p,q}/B_r^{p,q}$ as topological vector spaces for every integer $p,q$ and $r\geq 1$.
\begin{lemma}\label{spec-haus}
    The notation is the same as above. If $E_r^{p,q}$ is Hausdorff for every $r\geq 1$, then $\rmh_i(Y_{\bullet})$ is Hausdorff for every integer $i$. 
\end{lemma}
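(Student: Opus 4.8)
The plan is to prove Hausdorffness of $\rmh_i(Y_\bullet)$ by tracking the filtration induced on homology and showing, degree by degree and step by step, that all the relevant subspaces are closed. First I would recall that the filtration $\CF^\bullet$ on $Y_\bullet$ induces a finite filtration on $\rmh_i(Y_\bullet)$, namely $F^p\rmh_i(Y_\bullet) := \mathrm{Im}\big(\rmh_i(\CF^p)\to \rmh_i(Y_\bullet)\big)$, and the associated graded pieces are, up to the usual identification, the $E_\infty^{p,q}$ terms with $p+q=i$. Since the filtration is finite and by hypothesis stabilizes at some finite page $r_0$ (each $E_r^{p,q}$ for $r$ large enough equals $E_\infty^{p,q}$, because the complex and hence each $E_r^{p,\bullet}$ lives in a bounded range of $p$), it suffices to show that each $F^p\rmh_i(Y_\bullet)$ is a closed subspace of $\rmh_i(Y_\bullet)$: then $\rmh_i(Y_\bullet)$, being a finite successive extension of the Hausdorff quotients $F^p/F^{p-1}\simeq E_\infty^{p,i-p}$, is itself Hausdorff. (Here I use that an extension of a Hausdorff TVS by a closed Hausdorff subspace is Hausdorff.)

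The core of the argument is therefore an induction establishing Hausdorffness and closedness simultaneously, which I would organize via Remark~\ref{rem_haus}: for the short exact sequences of complexes \eqref{fil-exa}, $0\to \CF^p/\CF^{p-1}\to \CF^{p+r-1}/\CF^{p-1}\to \CF^{p+r-1}/\CF^p\to 0$, one gets long exact sequences relating the homologies of the three complexes, with connecting maps $\partial_r^{p,q}$ whose images are the $B_r^{p,q}$ of \eqref{eq_B_r}. The strategy is to prove, by a double induction on $r$ (the page) and on the length of the filtration window, that $\rmh_q(\CF^b/\CF^a)$ is Hausdorff for all $a<b$ and all $q$. The base case $b=a+1$ is the hypothesis that $E_1^{p,q}=\rmh_q(\CF^p/\CF^{p-1})$ is Hausdorff. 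For the inductive step, one applies Remark~\ref{rem_haus} to \eqref{fil-exa}: the outer terms $\rmh_q(\CF^p/\CF^{p-1})$ and $\rmh_q(\CF^{p+r-1}/\CF^p)$ are Hausdorff by induction, so the only thing left is to verify that the connecting map $\partial_r^{p,q}$ has closed image — but its image is exactly $B_r^{p,q} = Z_r^{p,q}/B_{r}^{p,q}$... more precisely, $B_{r+1}^{p,q}/B_r^{p,q}$ sits inside the Hausdorff space $E_r^{p,q}\simeq Z_r^{p,q}/B_r^{p,q}$, and closedness of $B_r^{p,q}$ in $Z_{r-1}^{p,q}$ (hence the closedness of the image of $\partial$) follows because $E_r^{p,q}$ Hausdorff forces the $B$-subspaces that cut it out to be closed. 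I would make this precise by the observation that $E_r^{p,q}$ Hausdorff $\iff$ $B_r^{p,q}$ is closed in $Z_r^{p,q}$ (with the subquotient topology from $\wedge^\bullet\fkh\otimes Y_\bullet$), which is the same bookkeeping already used in Lemma~\ref{exa-haus-lem} and Lemma~\ref{ext_Hausd}.

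Concretely, the steps in order are: (1) set up the induced filtration on $\rmh_i(Y_\bullet)$ and reduce to closedness of each $F^p\rmh_i(Y_\bullet)$; (2) reduce the latter, via Remark~\ref{rem_haus} applied to the two exact triangles \eqref{fil-exa}, to closedness of all connecting maps $\partial_r^{p,q}$; (3) reinterpret "$\partial_r^{p,q}$ has closed image" as "$B_{r}^{p,q}$ is closed in $Z_{r-1}^{p,q}$" and deduce this from the hypothesis "$E_r^{p,q}$ is Hausdorff" via the identity $E_r^{p,q}\simeq Z_r^{p,q}/B_r^{p,q}$; (4) run the double induction on the page $r$ and on the width $b-a$ of the filtration window, with base case $r=1$ (resp. $b-a=1$) being the hypothesis, and conclude Hausdorffness of $\rmh_q(\CF^b/\CF^a)$ for all $a<b$, in particular for $\CF^k/\CF^0 = Y_\bullet$. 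I expect the main obstacle to be step (3): carefully matching the subquotient topology on $E_r^{p,q}$ coming from the spectral sequence with the one coming from viewing $B_r^{p,q}, Z_r^{p,q}$ as subspaces of the Koszul-type complex $\wedge^\bullet\fkh\otimes Y_\bullet$, and checking that "has closed image" for the abstract connecting map translates correctly into closedness of the relevant subspace — this is exactly the kind of duality/topology bookkeeping that was handled in the proof of Lemma~\ref{exa-haus-lem}, and I would cite that argument (and \cite[Proposition 5.3.2]{AGS15b}) rather than redo it. The finiteness of the filtration is what guarantees the spectral sequence degenerates at a finite page, so no Mittag-Leffler argument is needed here, unlike in Lemma~\ref{ext_Hausd}.
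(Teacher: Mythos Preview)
Your approach is essentially the same as the paper's: both prove by induction on the window width $r$ that $\rmh_\bullet(\CF^{p+r}/\CF^p)$ is Hausdorff for all $p$, using Remark~\ref{rem_haus} on the short exact sequence \eqref{fil-exa} together with the observation that $E_r^{p,q}$ Hausdorff is equivalent to $\partial_r^{p,q}$ having closed image. The paper's proof is slightly more streamlined --- it skips your step (1) entirely (taking the window to be the full filtration gives the result directly) and runs a single induction on $r$ rather than a double induction, since in this setup the spectral-sequence page index and the filtration-window width coincide.
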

\begin{proof}
    By equation~\eqref{eq_B_r}, we observe that ``$E_r^{p,q}$ is Hausdorff" is equivalent to ``$\partial_r^{p,q}$ has closed image". We prove by induction on $r$ that for every $p$, $\rmh_{\bullet}(\CF^p/\CF^{p+r})$ is Hausdorff. When $r=1$, then the result follows from
    \[
    \rmh_{\bullet}(\CF^p/\CF^{p+r})\simeq E_1^{p,\bullet}.
    \]
    Assume that the statement holds for some $r-1$, we prove the statement for $r$. Consider the short exact sequence \eqref{fil-exa}, by the induction hypothesis, we have $\rmh_{\bullet}(\CF^{p+r-1}/\CF^p)$ and $\rmh_{\bullet}(\CF^p/\CF^{p-1})$ is Hausdorff. Furthermore, $\partial_r^{p,\bullet}$ has closed image. Consequently, by Remark~\ref{rem_haus}, the statement follows.
\end{proof}

\subsection{Filtration of a representation}
To understand the branching law of the restriction to the parabolic subgroup, we introduce the following definition of filtration.
\begin{definition}\label{def_fil}
Given a representation $\sigma$ of an almost linear Nash group $G$, a \textbf{level $\leq 1$ filtration} of $\sigma$ consists of the data
\begin{itemize}
    \item[(i)] Finite decreasing subrepresentations of $\sigma$, \[\sigma= \sigma_0 \supset \sigma_1 \supset \dots \supset \sigma_m,\]
    \item[(ii)] For all $0 \leq i \leq m-1$, a finite or infinite decreasing chain of subrepresentations of $\sigma_i/\sigma_{i+1}$, 
    \[
    \sigma_i = \sigma_{i,0} \supset \sigma_{i,1} \supset \sigma_{i,2} \supset \dots \supset \sigma_{i+1},
    \]
    such that the canonical map
    $
    \sigma_i/\sigma_{i+1} \to \varprojlim_j \sigma_{i} / \sigma_{i,j}
    $
    is a topological isomorphism of $G$-representations.
\end{itemize}

A \textbf{level $\leq r$ filtration} of $\sigma$ consists of the data described above, with the additional requirement that each quotient $\sigma_{i,j} / \sigma_{i,j+1}$ is equipped with a level $\leq r-1$ filtration.

Given a level $\leq r$ filtration, for every pair of subrepresentations $\sigma^{\flat} \supset \sigma^{\sharp}$ in the filtration such that there are no other terms between $\sigma^{\flat}$ and $\sigma^{\sharp}$, we call the quotient $\sigma^{\flat} / \sigma^{\sharp}$ a \textbf{successive quotient} of the filtration.
\end{definition}

\begin{example}
Considering the multiplication action of $\mathbb{R}^{\times}$ on $\mathbb{R}$. Since $\mathbb{R}^{\times}\subset \mathbb{R}$ is $\mathbb{R}^{\times}$-stable open subset, $\mathcal{S}(\mathbb{R}^{\times})$ is an $\mathbb{R}^{\times}$-stable subspace.

By Borel's lemma, one has a short exact sequence \[0\to \mathcal{S}(\mathbb{R}^{\times})\to \mathcal{S}(\mathbb{R})\to \mathbb{C}[[x]]\to 0.\]
The formal power series $\mathbb{C}[[x]]$ admits a level $\leq 1$ filtration as representation of $\mathbb{R}^{\times}$,
\[\mathbb{C}[[x]]\supset x\cdot \mathbb{C}[[x]]\supset \dots\supset  x^k\cdot \mathbb{C}[[x]]\supset \dots,\]
whose successive quotients are $(\det)^i$, for $i\in \mathbb{Z}_{\geq 0}$.
\end{example}

The following lemma is useful in the study of twisted homology. Let $H$ be an almost linear Nash group.
\begin{lemma}\label{fil-lem}
    Let $\sigma$ be a representation of $H$ with a level $\leq r$ filtration. Suppose that each successive quotient $\beta$ of the filtration satisfies $\rmh_l(\fkh,\beta)=0$ for every integer $l\geq 1$
    and $\rmh_0(\fkh,\beta)$ is Hausdorff. Then, $\rmh_l(\fkh,\sigma)=0$ for every integer $l\geq 1$
    and $\rmh_0(\fkh,\sigma)$ is Hausdorff. 
\end{lemma}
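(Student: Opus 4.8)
\textit{Overall plan.} I would prove the statement by induction on the level $r$ of the filtration, the case $r=1$ being the substantive one. Throughout I use that the objects involved are nuclear Fr\'echet: if $\beta\in\Smod_H$, then every closed subrepresentation and every Hausdorff quotient of $\beta$ is again nuclear Fr\'echet, so that Lemma~\ref{exa-haus-lem} applies to any short exact sequence built from subquotients of $\sigma$; and for a subrepresentation $\beta$ one has $\rmh_l(\fkh,\beta)$ Hausdorff for \emph{all} $l$ as soon as $\rmh_l(\fkh,\beta)=0$ for $l\ge1$ and $\rmh_0(\fkh,\beta)$ is Hausdorff.

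\textit{Reduction to $r=1$.} Suppose $r\ge2$ and the lemma is known for level $\le r-1$. Each successive quotient $\sigma_{i,j}/\sigma_{i,j+1}$ of the given level $\le r$ filtration carries a level $\le r-1$ filtration whose successive quotients are, by the recursive definition, precisely (some of) the successive quotients of the ambient filtration, hence satisfy the hypotheses; the inductive hypothesis then gives $\rmh_l(\fkh,\sigma_{i,j}/\sigma_{i,j+1})=0$ for $l\ge1$ and $\rmh_0(\fkh,\sigma_{i,j}/\sigma_{i,j+1})$ Hausdorff. Forgetting the level $\le r-1$ data, the chains $(\sigma_i)$ and $(\sigma_{i,j})$ form a level $\le1$ filtration of $\sigma$ whose successive quotients are exactly these $\sigma_{i,j}/\sigma_{i,j+1}$, which now verify the hypotheses of the lemma. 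So it suffices to treat $r=1$.

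\textit{The inner (possibly infinite) chains.} Fix $i$ and consider the chain $\sigma_i=\sigma_{i,0}\supset\sigma_{i,1}\supset\cdots$ inside $\sigma_i/\sigma_{i+1}$, with $\sigma_i/\sigma_{i+1}\simeq\varprojlim_j\sigma_i/\sigma_{i,j}$. First, by induction on $j$ I would show $\rmh_l(\fkh,\sigma_i/\sigma_{i,j})=0$ for $l\ge1$ and $\rmh_0(\fkh,\sigma_i/\sigma_{i,j})$ Hausdorff: the base case $j=1$ is the hypothesis on $\sigma_{i,0}/\sigma_{i,1}$, and for the step one feeds $0\to\sigma_{i,j}/\sigma_{i,j+1}\to\sigma_i/\sigma_{i,j+1}\to\sigma_i/\sigma_{i,j}\to0$ into the long exact homology sequence; the two ends have vanishing $\rmh_l$ for $l\ge1$, forcing the same for the middle, and since every boundary map into $\rmh_l(\fkh,\sigma_{i,j}/\sigma_{i,j+1})$ has either vanishing source or vanishing target, Lemma~\ref{exa-haus-lem} makes $\rmh_0(\fkh,\sigma_i/\sigma_{i,j+1})$ Hausdorff. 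Now the transition maps $\sigma_i/\sigma_{i,j+1}\twoheadrightarrow\sigma_i/\sigma_{i,j}$ are surjective, so both $\{\sigma_i/\sigma_{i,j}\}_j$ and (since $\rmh_0(\fkh,-)$ is right exact) $\{\rmh_l(\fkh,\sigma_i/\sigma_{i,j})\}_j$ are stationary, the latter being the zero system for $l\ge1$. Hence Lemma~\ref{ML-lem} yields $\rmh_l(\fkh,\sigma_i/\sigma_{i+1})\simeq\varprojlim_j\rmh_l(\fkh,\sigma_i/\sigma_{i,j})=0$ for $l\ge1$, and for $l=0$ that the image of the Koszul differential $d_1$ in $\sigma_i/\sigma_{i+1}$ equals $\Ker\big(\sigma_i/\sigma_{i+1}\to\varprojlim_j\rmh_0(\fkh,\sigma_i/\sigma_{i,j})\big)=\bigcap_j\Ker\big(\sigma_i/\sigma_{i+1}\to\rmh_0(\fkh,\sigma_i/\sigma_{i,j})\big)$; each kernel in the intersection is closed because $\rmh_0(\fkh,\sigma_i/\sigma_{i,j})$ is Hausdorff, so $\mathrm{Im}\,d_1$ is closed and $\rmh_0(\fkh,\sigma_i/\sigma_{i+1})$ is Hausdorff. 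This last step is exactly the argument in the proof of Lemma~\ref{ext_Hausd}.

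\textit{The outer finite chain, and the obstacle.} Finally, a downward induction along $\sigma=\sigma_0\supset\cdots\supset\sigma_m=0$ using $0\to\sigma_{i+1}\to\sigma_i\to\sigma_i/\sigma_{i+1}\to0$ finishes the proof: the long exact sequence gives $\rmh_l(\fkh,\sigma_i)=0$ for $l\ge1$, and the boundary map $\rmh_1(\fkh,\sigma_i/\sigma_{i+1})\to\rmh_0(\fkh,\sigma_{i+1})$ vanishes since its source is $0$ by the previous paragraph, so Lemma~\ref{exa-haus-lem} makes $\rmh_0(\fkh,\sigma_i)$ Hausdorff; taking $i=0$ concludes. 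There is no genuinely new difficulty here; the lemma is a formal consequence of the topological homological algebra of Lemmas~\ref{exa-haus-lem}, \ref{ML-lem} and \ref{ext_Hausd}. The one point requiring real care is the passage to the inverse limit inside each $\sigma_i/\sigma_{i+1}$: one must verify the Mittag-Leffler hypotheses (granted by surjectivity of the transition maps) and, more essentially, preserve Hausdorffness of $\rmh_0$ through the limit — which is why one genuinely needs Hausdorffness of $\rmh_0$ at \emph{every} finite stage, and not merely the vanishing of higher homology, so that the subspaces whose intersection computes $\mathrm{Im}\,d_1$ are closed.
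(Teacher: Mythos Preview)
Your proposal is correct and follows essentially the same route as the paper: induction on the level, with the $r=1$ case handled by first establishing the conclusion for each finite-stage quotient $\sigma_i/\sigma_{i,j}$ via Lemma~\ref{exa-haus-lem}, then passing to the inverse limit with the Mittag--Leffler argument of Lemmas~\ref{ML-lem} and~\ref{ext_Hausd}. The paper is terser (it bundles your ``outer finite chain'' step into the phrase ``it suffices to prove the statement for $\sigma_i/\sigma_{i+1}$'' and defers the limit argument to ``an argument similar to that in Lemma~\ref{ext_Hausd}''), but the content is the same.
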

\begin{proof}
We proceed by induction on the level of the filtration. First, assume $r=1$. Following the notation of Definition~\ref{def_fil}, it suffices to prove the statement for $\sigma_i/\sigma_{i+1}$ with $0 \leq i \leq m-1$. By Lemma~\ref{exa-haus-lem}, for every integer $j\geq 0$, we have $\rmh_l(\fkh,\sigma_i/\sigma_{i,j}) = 0$ for every integer $l \geq 1$, and $\rmh_0(\fkh,\sigma_i/\sigma_{i,j})$ is Hausdorff. Moreover, the map
\[
\rmh_0(\fkh,\sigma_i/\sigma_{i,j}) \to \rmh_0(\fkh,\sigma_i/\sigma_{i,j'})
\]
is surjective for every $j \geq j'$. Therefore, the inverse system $\{\rmh_l(\fkh,\sigma_i/\sigma_{i,j})\}_{j \geq 0}$ is stationary for every integer $l$. By an argument similar to that in Lemma~\ref{ext_Hausd}, the statement for $\sigma_i/\sigma_{i+1}$ follows.

Now, assume the statement holds for filtrations of level $\leq r-1$. Then the statement for filtrations of level $\leq r$ holds by the same argument used for filtrations of level $\leq 1$.
\end{proof}

\subsection{Category $\CC(\fkg,L)$} \label{category C}
In this subsection, our main result is that the Lie algebra homology of objects in a certain category $\CC(\fkg,L)_f$ yields Casselman-Wallach representations. We first set up notations of this subsection. Let $P$ be a parabolic subgroup of a real reductive group $G$ with Levi decomposition $P=LU$. Let $Z_L$ be the center of $L$. For a representation $\tau$ of $L$, we define the generalized $\fkz_L$-weight subspace of weight $\alpha\in\fkz_L^*$ by
\[
 \tau_{\alpha}:= \{v\in\tau\mid (X-\alpha(X))^kv=0\ \text{ for some } k\in\BZ_{\geq 0}, \forall X\in \fkz_L\}.
\]
Moreover, let $\mathrm{wt}(\tau)$ denote the set of generalized $\fkz_L$-weights of $\tau$ such that the weight space is non-zero. The set of $\fkz_L$-weights in $\U(\fku)$ is denoted by $\Omega$. We define a partial order on $\fkz_L^*$ as follows:
\begin{equation}\label{partial_order}
\alpha \leq \kappa \quad \text{ if and only if } \quad \kappa-\alpha\in \Omega.
\end{equation}
\begin{definition}\label{def of cate}
A $(\fkg,L)$-module is a Fr\'echet space $V$ equipped with compatible continuous $\U(\mathfrak{g})$-action and smooth moderate growth $L$-representation structures. Let $\mathcal{C}(\fkg,L)$ be the category of $(\fkg,L)$-modules $V$ such that
\begin{itemize}
\item[(i)] Let $V^{\fkz_L\text{-finite}}$ be the $\fkz_L\text{-finite}$ subspace of $V$. Then $\ov{\fku}$-action on $V^{\fkz_L\text{-finite}}$ is locally finite. Moreover, for every $\alpha\in \fkz_L^*$, $V_{\alpha}$  equipped with subspace topology is a Casselman-Wallach representation of $L$.
\item[(ii)] For every finite subset $S\subset \fkz_L^*$, as topological vector space, 
\[V\simeq \mathop{\oplus}\limits_{\alpha\in S}V_{\alpha}\bigoplus \overline{ \mathop{\oplus}\limits_{\alpha\in \fkz_L^*\setminus S}V_{\alpha}}.\]
\item[(iii)] The canonical map gives an isomorphism as topological vector space 
\[V\simeq \mathop{\varprojlim}\limits_{S\subset \fkz_L^* \text{ finite}}V/\overline{ \mathop{\oplus}\limits_{\alpha\in \fkz_L^*\setminus S}V_{\alpha}}.\]
\end{itemize}
Morphisms in this category are the continuous linear maps intertwining the action of both $L$ and $\U(\fkg)$.
\end{definition}

For $V\in\CC(\fkg,L)$, denote by $\mathrm{pr}_{\kappa}$ the natural projection 
\begin{equation}\label{projection}
     V\lra V_{\kappa}\simeq V/\ov{\oplus_{\alpha\neq \kappa} V_{\alpha}}.
\end{equation}
For a closed submodule $M$ of an object in $V\in\CC(\fkg,L)$, the subobject generated by it is $\ov{\bigoplus_{\kappa\in \fkz_L^*} \mathrm{pr}_{\kappa}(M)}.$
 
 Let $\CC(\fkg,L)_f$ be the full subcategory consisting of finite length objects. Such a category assembles the characteristics of the BGG category $\CO$ and Casselman-Wallach representations. We now introduce a standard object in this category. 
\begin{definition}
Given a Casselman-Wallach representation $\tau$ of $L$, the 
\textbf{formal Verma module} $\CV(\tau)$ is defined as the topological inverse limit
\[
 \mathop{\varprojlim}\limits_{k\geq 0} \left(\U(\fkg)\otimes_{\U(\ov{\fkp})}\tau\right)/ \left(\fku^k\U(\fkg)\otimes_{\U(\ov{\fkp})}\tau\right).
\]
Here $\tau$ extends trivially to be a $\U(\ov{\fkp})$-module. As a topological vector space, $\U(\fku)^{<k}$ is equipped with Euclidean topology and 
\[
\left(\U(\fkg)\otimes_{\U(\ov{\fkp})}\tau\right)/ \left(\fku^k\U(\fkg)\otimes_{\U(\ov{\fkp})}\tau\right)\simeq \U(\fku)^{< k }\otimes \tau
\]
is equipped with the tensor product Fr\'echet topology.
\end{definition}

The functor ``$\tau\mapsto \CV(\tau)$" is an exact functor from the category of Casselman-Wallach representations of $L$ to $\CC(\fkg,L)$. 

Let $\tau$ be an irreducible representation of $L$. For a closed submodule $M$ of $\CV(\tau)$, we can define $L$-equivariant continuous maps 
$$\mathrm{pr}_{\kappa}:M \to M_{\kappa}\text{ for all }\kappa \in \fkz_L^*$$
using the action of the center $Z_L$. Let $\wt(\tau) = \alpha$. Given an element $m \in M$ and an element $z \in \fkz_L^*$ such that $\gamma(z) > 0$ for all $\gamma \in \Omega\setminus\{0\}$, the expression $\exp(-t z)\exp(t\alpha(z)) \cdot m$ converges to some $m_{\alpha} \in M_{\alpha}$ as $t \to \infty$. This convergence defines the map $\mathrm{pr}_{\alpha}$. Considering $m - m_{\alpha}$ and proceeding inductively along the partial order $\leq$, we obtain all the maps $\mathrm{pr}_{\kappa}$. When $M\in\CC(\fkg,L)$, this map coincides with~\eqref{projection}, thus we use the same notation. 
\begin{lemma}
    Let $\tau$ be an irreducible representation of $L$. Suppose $M$ is a closed submodule of $\CV(\tau)$, then $M$ and $N:=\CV(\tau)/M$ are objects in category $\CC(\fkg,L)$.
\end{lemma}
\begin{proof}
    By the map $\mathrm{pr}_{\kappa}$, we know that $M$ is the same as the subobject generated by it. On the other hand, by the construction of the map $\mathrm{pr}_{\kappa}$, we have the following short exact sequence for each $\kappa\in \fkz_L^*$
    \[
    0\lra M_{\kappa}\lra \CV(\tau)_{\kappa}\lra N_{\kappa}\lra 0.
    \]
    Therefore, we have the short exact sequence for surjective inverse system
    \[
    0\lra \mathop{\varprojlim}\limits_{S\subset \fkz_L^* \text{ finite} }\bigoplus_{\kappa\in S}M_{\kappa}\lra \mathop{\varprojlim}\limits_{S\subset \fkz_L^* \text{ finite}} \bigoplus_{\kappa\in  S}\CV(\tau)_{\kappa}\lra \mathop{\varprojlim}\limits_{S\subset \fkz_L^* \text{ finite}} \bigoplus_{\kappa\in S}N_{\kappa}\lra 0,
    \]
    which implies the topological isomorphism $ N\simeq \mathop{\varprojlim}\limits_{S\subset \fkz_L^* \text{ finite}} \bigoplus_{\kappa\in S}N_{\kappa}$.
\end{proof}

We now investigate irreducible objects in $\CC(\fkg,L)$. 
\begin{lemma}\label{irr_quo}
  Let $\tau$ be an irreducible Casselman-Wallach representation of $L$.Then the formal Verma module $\CV(\tau)$ has a unique maximal closed proper submodule, hence, a unique non-zero irreducible quotient.
\end{lemma}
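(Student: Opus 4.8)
The plan is to mimic the classical proof that a Verma module has a unique maximal submodule --- every proper submodule misses the highest-weight line, so the closure of the sum of all proper submodules still misses it and is therefore itself proper. Two adjustments are needed: the highest-weight line is replaced by the bottom $\fkz_L$-weight space of $\CV(\tau)$, which is a copy of the irreducible $L$-representation $\tau$, and the Fr\'echet completion must be controlled via continuity of the projection onto that weight space.

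First I would pin down the bottom weight space. By PBW one has $\U(\fkg)\otimes_{\U(\ov{\fkp})}\tau\simeq\U(\fku)\otimes\tau$, so the defining formula gives $\CV(\tau)\simeq\varprojlim_k\U(\fku)^{<k}\otimes\tau\simeq\prod_{d\geq 0}\U(\fku)_d\otimes\tau$ as topological vector spaces, where $\U(\fku)_d$ is the degree-$d$ component. Since $\fkl$ is the centralizer of $\fkz_L$ in $\fkg$, no root of $\fku$ restricts trivially to $\fkz_L$; these restrictions are nonzero elements of $\Omega$, and $\Omega\cap(-\Omega)=\{0\}$ because $\Omega$ underlies a partial order, so any sum of $d\geq 1$ of them is nonzero. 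Hence $\U(\fku)_d$ has no nonzero $\fkz_L$-invariant vector for $d\geq 1$, the $\fkz_L$-weights of $\CV(\tau)$ form $\lambda_0+\Omega$ with $\lambda_0$ the scalar $\fkz_L$-weight of $\tau$, and $\CV(\tau)_{\lambda_0}$ is exactly the image of $1\otimes\tau$, which I identify with $\tau$ as an $L$-representation. Let $p\colon\CV(\tau)\to\U(\fku)^{<1}\otimes\tau\simeq\tau$ be the structure map of the inverse limit; it is continuous, $L$- and $\fkl$-equivariant, surjective, and it coincides with the projection onto $\CV(\tau)_{\lambda_0}$ because the higher weight spaces die in this quotient.

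Next I would show that a closed $\U(\fkg)$- and $L$-stable subspace $N\subseteq\CV(\tau)$ is proper if and only if $p(N)=0$. Because $\fkz_L$ acts semisimply on $\CV(\tau)$ with closed weight spaces forming a topological direct sum (conditions (ii)--(iii) of $\CC(\fkg,L)$; for a finite set of weights the spectral projections lie in $\U(\fkz_L)$, so a closed $\fkz_L$-stable subspace inherits the decomposition), every such $N$ satisfies $p(N)=N\cap(1\otimes\tau)$, a closed $L$- and $\fkl$-stable subspace of $\tau$, hence $0$ or $\tau$ by irreducibility. If it equals $\tau$ then $N\supseteq 1\otimes\tau$, so $N\supseteq\overline{\U(\fkg)\cdot(1\otimes\tau)}$; but $\U(\fkg)\cdot(1\otimes\tau)$ is the image of the natural map $\U(\fkg)\otimes_{\U(\ov{\fkp})}\tau\to\CV(\tau)$, which is dense since it surjects onto every term of the inverse system, so $N=\CV(\tau)$. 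Conversely $p$ is surjective, so $p(N)=0$ forces $N\neq\CV(\tau)$.

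Finally I would take $\fkn$ to be the closure of the sum of all proper closed $\U(\fkg)$- and $L$-stable subspaces of $\CV(\tau)$. Each summand lies in $\ker p$, hence $p(\fkn)=0$ by linearity and continuity, so $\fkn$ is proper, and by construction it contains every proper closed submodule; thus it is the unique maximal one. Then $\CV(\tau)/\fkn$ is nonzero and Hausdorff, has no proper nonzero closed submodule (one would correspond to a proper closed submodule strictly containing $\fkn$), and any irreducible quotient of $\CV(\tau)$ has kernel a proper closed submodule contained in $\fkn$ and, since an irreducible representation has no proper quotients, equal to $\fkn$; this gives uniqueness. The step I expect to be the main obstacle is the middle one: verifying $p(N)=N\cap(1\otimes\tau)$ for closed submodules in the completed Fr\'echet setting, which is where the axioms of $\CC(\fkg,L)$ and the continuity of $p$ do the real work --- the rest is the textbook Verma-module argument.
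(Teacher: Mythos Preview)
Your proof is correct and takes essentially the same approach as the paper: both show that every proper closed submodule misses the bottom $\fkz_L$-weight space $\CV(\tau)_{wt(\tau)}\simeq\tau$, and then use the topology axioms (ii)--(iii) of $\CC(\fkg,L)$ (continuity of the weight projection) to conclude that closures of sums of proper submodules still miss it. You are more thorough---treating the sum of all proper submodules at once and spelling out why $1\otimes\tau$ generates a dense subspace---whereas the paper only writes the two-submodule case, but the underlying idea is identical.
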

\begin{proof}
    We need only to prove that for proper closed submodules $M_1,M_2$, $\ov{M_1+M_2}$ remains  proper. This is because, under the partial order \eqref{partial_order}, $\mathrm{wt}(\tau)< $ every element in both $\mathrm{wt}(M_1)$ and $\mathrm{wt}(M_2)$ by the structure of $\CV(\tau)$.  This implies $\mathrm{wt}(\tau)\notin \mathrm{wt}(M_1+M_2)$. By the condition (iii) of Definition~\ref{def of cate} on the topology, we find $\mathrm{wt}(\tau)\notin \mathrm{wt}(\ov{M_1+M_2})$.
\end{proof}

We denote the unique irreducible quotient of $\CV(\tau)$ by $\CL(\tau)$. Note that $\CL(\tau_1)\simeq \CL(\tau_2)$ if and only if $\tau_1\simeq \tau_2$ as $L$-representation. On the other hand, we have the following lemma:
\begin{lemma}\label{Irr_obj}
Let $V$ be an irreducible object in $\CC(\fkg,L)$. Then there exists an irreducible Casselman-Wallach representation $\tau$ of $L$, such that $V$ is a quotient of the formal Verma module $\CV({\tau})$.
\end{lemma}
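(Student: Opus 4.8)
The plan is to show that any irreducible $V \in \CC(\fkg,L)$ is generated, as a $(\fkg,L)$-module, by a suitable $\fkz_L$-weight subspace, and then use the universal property of the parabolic tensor product together with the topological inverse-limit structure of $V$ to produce the desired surjection from a formal Verma module. First I would locate a ``lowest weight'' for the partial order $\leq$ on $wt(V)$: by condition (ii) of Definition~\ref{def of cate} together with the fact that $V$ is nonzero, $wt(V)$ is nonempty, and I would argue — using that $\ov{\fku}$ acts locally finitely on $V^{\fkz_L\text{-finite}}$ and sends $V_\alpha$ into $\bigoplus_{\kappa \leq \alpha} V_\kappa$ — that $wt(V)$ has a minimal element $\alpha_0$ with respect to $\leq$. (One must be slightly careful: a priori $\leq$ need only be a preorder, but $\Omega$ lies in a cone inside $\fkz_L^*$, so minimal elements exist; if there are several, pick any one.)

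Next I would set $\tau := V_{\alpha_0}$, which by condition (ii) is a Casselman-Wallach representation of $L$. Minimality of $\alpha_0$ forces $\ov{\fku} \cdot \tau = 0$ (any nonzero image would land in a strictly smaller weight), so $\tau$ is stable under $\ov{\fkp} = \fkl \oplus \ov{\fku}$ and hence defines a $(\ov{\fkp},L)$-module on which $\ov{\fku}$ acts by zero. By the universal property of $\U(\fkg)\otimes_{\U(\ov{\fkp})}(-)$ the inclusion $\tau \hookrightarrow V$ extends to a $(\fkg,L)$-map $\U(\fkg)\otimes_{\U(\ov{\fkp})}\tau \to V$. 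Since each quotient $\fku^k\U(\fkg)\otimes_{\U(\ov{\fkp})}\tau$ maps into $\overline{\bigoplus_{\alpha \in \fkz_L^*\setminus S_k}V_\alpha}$ for an appropriate finite $S_k$ exhausting $\fkz_L^*$ (the PBW filtration of $\U(\fku)$ raises $\fkz_L$-weight by elements of $\Omega$, and $\Omega$ meets any bounded region finitely), this map is continuous for the inverse-limit topology defining $\CV(\tau)$ and factors through $\CV(\tau) \to V$, by conditions (ii) and (iii). The image is a nonzero $(\fkg,L)$-submodule of $V$; if it is not closed, one passes to its closure, which is still a proper-or-full submodule, but irreducibility of $V$ and the fact that the image already contains $V_{\alpha_0} \neq 0$ force the closure to be all of $V$. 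Finally, to get an \emph{irreducible} $\tau$ I would pass to the irreducible quotient: if $\tau$ were reducible, pick a proper closed $L$-subrepresentation $\tau'$; but then $\CV(\tau')$ maps into a proper closed submodule of $\CV(\tau)$, and comparing lowest weights one sees the surjection $\CV(\tau)\twoheadrightarrow V$ cannot kill all of $V_{\alpha_0}$, so it descends to $\CV(\tau/\tau'')$ for an irreducible quotient $\tau/\tau''$ of $\tau$ appearing in the socle filtration appropriately — more cleanly, since $\CV(-)$ is exact (noted in the excerpt) and $V$ is irreducible, the image of $\CV(\tau')$ must be zero for every proper closed $L$-subrepresentation $\tau'\subset\tau$, which is impossible unless $\tau$ itself is irreducible; thus $\tau$ is already irreducible.

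I expect the main obstacle to be the topological bookkeeping: verifying that the algebraic surjection $\U(\fkg)\otimes_{\U(\ov{\fkp})}\tau \to V$ is genuinely continuous for the projective-limit topology on $\CV(\tau)$, i.e. that the preimages of the defining neighborhoods $\overline{\bigoplus_{\alpha\notin S}V_\alpha}$ contain tails $\fku^k\U(\fkg)\otimes_{\U(\ov{\fkp})}\tau$ of the filtration. This is where conditions (ii) and (iii) of Definition~\ref{def of cate} do the real work, and where one must know that the $\fkz_L$-weights appearing in $\fku^k\U(\fkg)\cdot\tau$ are eventually disjoint from any fixed finite $S$ — a statement about the combinatorics of $\Omega$ inside $\fkz_L^*$ (a finitely generated cone, so only finitely many weights below a given bound). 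A secondary subtlety is ensuring the minimal weight $\alpha_0$ exists and that $\ov{\fku}\cdot V_{\alpha_0}=0$; this uses the local finiteness in condition (i) to guarantee that $\ov{\fku}$ cannot push $V_{\alpha_0}$ ``downward forever,'' combined with the cone structure again. Once these two points are pinned down, the argument is essentially the classical highest-weight-module argument transported to the Fréchet/inverse-limit setting.
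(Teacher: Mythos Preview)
Your overall strategy matches the paper's: locate a minimal $\alpha_0\in wt(V)$, set $\tau=V_{\alpha_0}$, observe $\ov{\fku}\cdot\tau=0$, and produce a map $\CV(\tau)\to V$ via PBW and the inverse-limit description. Two points, however, need repair.

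First, your irreducibility argument is backwards. You assert that ``the image of $\CV(\tau')$ must be zero for every proper closed $L$-subrepresentation $\tau'\subset\tau$,'' but this is false: that image contains $\tau'\neq 0$. The correct argument---which the paper gives in one sentence before constructing the map---runs the other way: if $\tau'\subsetneq V_{\alpha_0}$ is a proper closed $L$-subrepresentation, then the closed submodule of $V$ it generates has $\alpha_0$-weight component contained in $\tau'$ (because $\ov{\fku}$ annihilates $V_{\alpha_0}$, $\fku$ strictly raises weight, and the projection $V\to V_{\alpha_0}$ is continuous by condition~(ii), so the closure cannot pick up anything new in weight $\alpha_0$). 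Hence that submodule is proper, contradicting irreducibility of $V$. So $\tau=V_{\alpha_0}$ is already irreducible; there is no need to pass to a quotient of $\tau$.

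Second, your surjectivity argument only establishes that the image of $\CV(\tau)\to V$ is dense, which does not yet exhibit $V$ as a quotient. The paper argues levelwise: with $S_k:=\alpha_0+wt(\U(\fku)^{<k})$, the map
\[
\U(\fku)^{<k}\otimes\tau\;\lra\;\bigoplus_{\kappa\in S_k}V_\kappa\;\simeq\;V\big/\overline{\bigoplus_{\kappa\notin S_k}V_\kappa}
\]
is a morphism of Casselman-Wallach $L$-representations, hence has closed image by Corollary~\ref{closed image}; combined with the density you already have (from irreducibility of $V$), this gives genuine surjectivity at each finite stage, and then one takes the inverse limit over $k$ (the transition maps on the source side are surjective, so the system is Mittag-Leffler).
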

\begin{proof}
Since $V$ is irreducible, $\mathrm{wt}(V)$ has a unique minimal element $\alpha$. Then $V_{\alpha}$ is irreducible as $L$-representation; otherwise, we can take some proper subrepresentation of $V_{\alpha}$ to generate a proper submodule of $V$. 

Let $S_k:=\alpha+\mathrm{wt}(\U(\fku)^{<k})$. Then by the PBW theorem, there exists a continuous surjective map for each $k$ by $\fku$-action
\[
 \U(\fku)^{< k }\otimes V_{\alpha}\lra \bigoplus_{\kappa\in S_k} V_{\kappa}\simeq V/\ov{\bigoplus_{\kappa\notin S_k} V_{\kappa}},
\]
which implies a continuous surjective map:
\[
  \mathop{\varprojlim}\limits_{k\geq 0} \left(\U(\fkg)\otimes_{\U(\ov{\fkp})}\tau\right)/ \left(\fku^k\U(\fkg)\otimes_{\U(\ov{\fkp})}\tau\right)\lra  \mathop{\varprojlim}\limits_{k\geq 0}V/\ov{\bigoplus_{\kappa\notin S_k} V_{\kappa}}\simeq \mathop{\varprojlim}\limits_{S\subset \fkz_L^* \text{ finite}}V/\overline{ \mathop{\oplus}\limits_{\alpha\in \fkz_L^*\setminus S}V_{\alpha}} \simeq V.
\]
\end{proof}

Hence, we get the classification of irreducible objects in $\CC(\fkg,L)$.
\begin{corollary}
    There is a one-to-one correspondence between irreducible Casselman-Wallach representations of $L$ and irreducible objects in $\CC(\fkg,L)$ given by
    \[
    \tau \mapsto \CL(\tau).
    \]
\end{corollary}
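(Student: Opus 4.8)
The claimed correspondence $\tau\mapsto \CL(\tau)$ is already known to be well-defined on objects: for an irreducible Casselman--Wallach representation $\tau$ of $L$, the formal Verma module $\CV(\tau)$ has a unique irreducible quotient $\CL(\tau)$ by the penultimate lemma, and this construction is functorial in the obvious sense. So the task reduces to checking injectivity and surjectivity of this assignment (up to isomorphism). The plan is: (1) establish injectivity by recovering $\tau$ from $\CL(\tau)$ intrinsically as a minimal $\fkz_L$-weight space; (2) establish surjectivity by invoking Lemma~\ref{Irr_obj}, which already produces, for an arbitrary irreducible $V\in\CC(\fkg,L)$, an irreducible Casselman--Wallach $\tau$ with a surjection $\CV(\tau)\twoheadrightarrow V$; then identify $V$ with $\CL(\tau)$ using the uniqueness of the irreducible quotient of $\CV(\tau)$.

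\textbf{Step 1 (injectivity).} Suppose $\CL(\tau_1)\simeq \CL(\tau_2)$ in $\CC(\fkg,L)$. I would argue that $\wt(\CL(\tau))$ has a unique minimal element $\alpha$ with respect to the partial order $\le$ on $\fkz_L^*$: indeed $\CV(\tau)$ is, as a topological $L$-module, $\varprojlim_k \U(\fku)^{<k}\otimes\tau$, and since every $\fkz_L$-weight of $\U(\fku)$ lies in $\Omega$ and $\U(\fku)$ has trivial weight $0$ occurring (the constants), the minimal weight component of $\CV(\tau)$ is exactly $\tau$ with its $\fkz_L$-weight; this survives in the irreducible quotient $\CL(\tau)$ because, as in the proof of the previous lemma, a submodule missing the weight $\wt(\tau)$ is forced (being a closed $\U(\fkg)$- and $L$-submodule, closed under $\ov\fku$) to be proper, so the maximal proper submodule avoids the $\wt(\tau)$-component and $\CL(\tau)_{\wt(\tau)}\simeq\tau$. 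Moreover $\CL(\tau)_{\wt(\tau)}$ must be irreducible as an $L$-representation (a proper $L$-subrepresentation would generate, via $\U(\fkg)$, a proper submodule by the same minimal-weight argument). Hence $\tau$ is recovered from $\CL(\tau)$ as the $L$-representation on the minimal-$\fkz_L$-weight space; this gives $\tau_1\simeq\tau_2$ as $L$-representations, i.e.\ injectivity. This also records the remark preceding the corollary that $\CL(\tau_1)\simeq\CL(\tau_2)\iff \tau_1\simeq\tau_2$.

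\textbf{Step 2 (surjectivity).} Let $V$ be an arbitrary irreducible object of $\CC(\fkg,L)$. By Lemma~\ref{Irr_obj}, there is an irreducible Casselman--Wallach representation $\tau$ of $L$ and a (continuous, $\fkg$- and $L$-equivariant) surjection $q\colon \CV(\tau)\twoheadrightarrow V$. Since $V$ is irreducible and $q$ is a nonzero morphism, $\Ker q$ is a proper closed submodule of $\CV(\tau)$, hence contained in the unique maximal closed submodule; therefore $q$ factors through a surjection $\CL(\tau)=\CV(\tau)/(\text{max.\ proper})\twoheadrightarrow V$. As $\CL(\tau)$ is irreducible, this surjection is an isomorphism, so $V\simeq \CL(\tau)$. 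Combined with Step~1, the map $\tau\mapsto\CL(\tau)$ is a bijection on isomorphism classes, which is the assertion of the corollary.

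\textbf{Main obstacle.} The only genuinely delicate point is Step~1: pinning down that the minimal-$\fkz_L$-weight space of the \emph{irreducible quotient} $\CL(\tau)$ is still exactly $\tau$ (not a proper quotient of it), and that this weight space is a \emph{Casselman--Wallach} $L$-representation rather than merely an abstract $L$-module. The first is handled by the weight-order argument already used in the proof that $\CV(\tau)$ has a unique maximal submodule (the maximal proper submodule has weights strictly above $\wt(\tau)$, by condition (iii) on the topology controlling weights of closures of sums of weight spaces); the second is immediate from Definition~\ref{def of cate}(i), since each $V_\alpha$ with the subspace topology is Casselman--Wallach by hypothesis. Everything else is a formal consequence of the two preceding lemmas and the definition of $\CL(\tau)$ as the unique irreducible quotient of the formal Verma module.
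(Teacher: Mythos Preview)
Your proof is correct and follows exactly the paper's (implicit) approach: injectivity via the observation, stated just before Lemma~\ref{Irr_obj}, that $\CL(\tau)$ determines $\tau$ through its minimal-weight space, and surjectivity via Lemma~\ref{Irr_obj} together with uniqueness of the irreducible quotient of $\CV(\tau)$. One small slip in Step~2: from $\Ker q\subset M$ you cannot directly conclude that $q$ factors through $\CV(\tau)/M$; the clean fix is to note that since $V$ is irreducible, $\Ker q$ is itself a maximal proper closed submodule, hence $\Ker q=M$ and $V\simeq\CL(\tau)$.
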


Moreover, if the infinitesimal character of $\tau$ is $\chi_{\lambda}$, then the infinitesimal character of $\CV(\tau)$ is $\chi_{\overline{\lambda-\rho+\rho_{\fkl}}}$, where $\overline{\lambda}$ is the image of $\lambda$ under the following natural projection:
\begin{equation*}
    \fka^*  \dslash W_L \lra \fka^* \dslash W .
\end{equation*}
For an infinitesimal character $\chi_{\mu}$ of $\fkg$, we use $\CT_{\mu}$ to denote the set of irreducible Casselman-Wallach representations $\tau$ of $L$, for which $\CV(\tau)$ has infinitesimal character $\chi_{\mu}$. Then $\CT_{\mu}$ is a finite set. 
\begin{lemma}\label{finite length lem}
  If $V\in \CC(\fkg,L)$ has some infinitesimal character $\chi_{\mu}$, then $V\in\CC(\fkg,L)_f$.
\end{lemma}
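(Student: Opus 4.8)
The plan is to show that $V \in \CC(\fkg,L)$ with infinitesimal character $\chi_\mu$ decomposes as a finite iterated extension of irreducible objects, and then invoke that $\CC(\fkg,L)_f$ is closed under (Hausdorff) extensions. The starting observation is that, by Definition~\ref{def of cate}(i), each generalized weight space $V_\alpha$ is a Casselman-Wallach representation of $L$, hence of finite length; and because $V$ has infinitesimal character $\chi_\mu$, the set of possible $\fkz_L$-weights $\alpha$ for which $V_\alpha \neq 0$ must be a \emph{finite} set. Indeed, if $V_\alpha \neq 0$, pick a nonzero vector $v$ generating (topologically, using the condition (iii) limit description) a submodule whose minimal weight is $\le \alpha$; this submodule surjects from a formal Verma module $\CV(\tau)$ for some irreducible Casselman-Wallach $\tau$ of $L$ with minimal weight $\beta \le \alpha$, by the argument of Lemma~\ref{Irr_obj}. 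The infinitesimal character of $\CV(\tau)$ is $\chi_{\overline{\lambda - \rho + \rho_\fkl}}$ where $\chi_\lambda$ is the infinitesimal character of $\tau$; matching this with $\chi_\mu$ forces $\tau \in \CT_\mu$, and $\CT_\mu$ is finite. Since the minimal weights occurring are thus drawn from the finite set $\{wt(\tau) : \tau \in \CT_\mu\}$ (the minimal elements of $wt(\CV(\tau))$), and $\alpha$ lies in $\beta + \Omega$ with $\Omega = wt(\U(\fku))$, one still needs to bound how far $\alpha$ can climb; here one uses that $\beta + \Omega$ meets only finitely many $W$-translates... more precisely, the key finiteness input is that there are only finitely many $\fkz_L$-weights $\alpha$ with $\chi_{\overline{\alpha - \rho + \rho_\fkl}} = \chi_\mu$, because the fiber of $\fka^* /\!\!/ W_L \to \fka^* /\!\!/ W$ over any point is finite and $V_\alpha$ carries infinitesimal character $\chi_\mu$ as an $L$-module-with-$\fkz_L$-action only if this holds. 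So $wt(V)$ is finite.

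With $wt(V) = \{\alpha_1, \dots, \alpha_r\}$ finite, condition (ii) of Definition~\ref{def of cate} gives $V \simeq \bigoplus_{i=1}^r V_{\alpha_i}$ as a topological vector space (taking $S = wt(V)$, the complementary sum is zero), so $V$ is itself a Casselman-Wallach representation of $L$ when we forget the $\U(\fkg)$-action — in particular $V$ has finite length as an $L$-module, say length $N = \sum_i \ell(V_{\alpha_i})$. I claim $V$ then has length $\le N$ as an object of $\CC(\fkg,L)$: any strictly decreasing chain $V = W_0 \supsetneq W_1 \supsetneq \cdots$ of closed $(\fkg,L)$-submodules is in particular a strictly decreasing chain of closed $L$-submodules (strictness is preserved since the inclusions are proper and, by Corollary~\ref{closed image} applied in the Casselman-Wallach category, all these are closed embeddings of finite-length $L$-modules), hence has length $\le N$. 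Therefore $V$ admits a finite composition series in $\CC(\fkg,L)$ whose successive quotients are irreducible objects; each such quotient is automatically Hausdorff and closed under $\U(\fkg)$ and $L$, hence lies in $\CC(\fkg,L)_f$ by the remark following Definition~\ref{def of cate}. Finally, $\CC(\fkg,L)_f$ being closed under extensions by Hausdorff quotients (again the same remark, plus Lemma~\ref{exa-haus-lem} to see the extensions stay in $\CC(\fkg,L)$), we conclude $V \in \CC(\fkg,L)_f$.

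The main obstacle I anticipate is the first step: cleanly proving that $wt(V)$ is finite. One must be careful that $V$ a priori is a genuine topological inverse limit over the infinitely many finite subsets $S \subset \fkz_L^*$ (condition (iii)), so a priori infinitely many weight spaces could be nonzero; ruling this out really requires the infinitesimal-character constraint together with the structure of $wt(\CV(\tau))$ for $\tau \in \CT_\mu$. Concretely: every weight $\alpha$ of $V$ satisfies $\chi_{\overline{\alpha - \rho + \rho_\fkl}} = \chi_\mu$ — this is because $V$ has infinitesimal character $\chi_\mu$ and, near the weight $\alpha$, $V$ looks like a quotient of $\CV(V_\alpha)$ in an appropriate completed sense (one can make this precise by observing $V/\overline{\bigoplus_{\kappa \not\ge \alpha} V_\kappa}$ receives a surjection from $\CV$ of the bottom piece, as in Lemma~\ref{Irr_obj}). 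The equation $\chi_{\overline{\alpha - \rho + \rho_\fkl}} = \chi_\mu$ has only finitely many solutions $\alpha \in \fkz_L^*$ because the map $\fka^*/\!\!/W_L \to \fka^*/\!\!/W$ has finite fibers and $\alpha$ is the restriction to $\fkz_L^*$ of a $\fka^*$-weight; so once this is set up correctly the finiteness of $wt(V)$, and with it the whole lemma, follows. Everything after that is bookkeeping with finite-length Casselman-Wallach modules and the closure properties already recorded in the excerpt.
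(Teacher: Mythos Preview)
Your central claim---that $wt(V)$ is finite---is false, and this breaks the proof. The formal Verma module $\CV(\tau)$ is already a counterexample: it lies in $\CC(\fkg,L)$, has the infinitesimal character $\chi_{\overline{\lambda-\rho+\rho_\fkl}}$, and yet its set of $\fkz_L$-weights is $wt(\tau)+\Omega$, which is infinite whenever $\fku\neq 0$. So finite length in $\CC(\fkg,L)$ does \emph{not} force finitely many weight spaces.

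The flaw in your reasoning is the assertion that every weight $\alpha$ with $V_\alpha\neq 0$ satisfies $\chi_{\overline{\alpha-\rho+\rho_\fkl}}=\chi_\mu$. This constraint governs only the \emph{minimal} weights of subquotients: the argument of Lemma~\ref{Irr_obj} produces a surjection $\CV(\tau)\twoheadrightarrow W$ only when $wt(\tau)$ is minimal in $wt(W)$, so that $\ov\fku$ acts trivially on $\tau$. For a non-minimal weight $\alpha$ the space $V_\alpha$ need not be killed by $\ov\fku$, and there is no map from $\CV(V_\alpha)$ with which to pin down an infinitesimal character in terms of $\alpha$. Concretely, in $\CV(\tau)$ the weight spaces at $wt(\tau)+\kappa$ for $\kappa\in\Omega\setminus\{0\}$ carry no such constraint.

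The paper's argument avoids this by bounding the number of \emph{composition factors} rather than the number of weights. One builds (possibly infinite) a filtration whose successive irreducible quotients must be of the form $\CL(\tau)$ with $\tau\in\CT_\mu$ (this is where the infinitesimal character is used, via the finiteness of $\CT_\mu$), and then observes that each $\CL(\tau)$ can occur only finitely many times because its contribution to the single weight space $V_{wt(\tau)}$ is nonzero and $V_{wt(\tau)}$ has finite $L$-length. That contradicts an infinite filtration. Your attempt to pass through finite $L$-length of $V$ as a whole cannot work, since $V$ need not be finite length as an $L$-module.
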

\begin{proof}
    Suppose that $V$ is not of finite length. Then we can successively apply the following operations to get an infinite filtration by subobjects of $V$ such that the successive quotient is irreducible. 
    
    Since $V$ has infinitesimal character and is locally $\overline{\mathfrak{u}}$-finite, by the relation between the infinitesimal characters of formal Verma module $\mathcal{V}(\tau)$ and that of $\tau$, it follows  that $\mathrm{wt}(V)$ has a minimal value under the partial order \eqref{partial_order}.  Take an element $\alpha\in \min \mathrm{wt}(V)$, and an irreducible sub $L$-representation $\tau$ of $V_{\alpha}$. Consider the subobject $V_{\tau}$ generated by $\tau$. Since $\alpha\in \min \mathrm{wt}(V)$, the $\ov{\fku}$ acts on $\tau$ trivially. By the same argument as Lemma~\ref{irr_quo}, $V_{\tau}$ will have an irreducible quotient
    \[
    \varphi: V_{\tau} \twoheadrightarrow V'.
    \]
Then apply similar operation to $\Ker \varphi$, other irreducible subquotients of $V_{\alpha}$, and then $V/\ov{\U(\fkg)\cdot V_{\alpha}}$. 

Note that the successive quotients also have infinitesimal character $\chi_{\mu}$, hence, they are of the form $\CL(\tau)$ for $\tau\in\CT_{\mu}$. On the other hand, each $\CL(\tau)$ can only appear finitely many times, since $V_{\alpha}$ is finite length $L$-representation for every $\alpha\in \fkz_L^*$. This contradicts the infiniteness of the filtration.
 
\end{proof}

In BGG category $\CO$, the Verma module is irreducible when the lowest weight $\lambda$ is dominant. An analogous phenomenon occurs in the category $\CC(\fkg,L)$. For $\mu\in \fka^*$, we introduce the following notation:
\[
 \mathrm{wt}(\CT_{\mu}):=\{\mathrm{wt}(\tau)\mid \tau\in \CT_{\mu}\}.
\]
\begin{lemma}\label{max irr}
    Let $\chi_{\mu}$ be an infinitesimal character of $\fkg$. If $\tau$ is an element in $\CT_{\mu}$ such that $\mathrm{wt}(\tau)$ is maximal in $\mathrm{wt}(\CT_{\mu})$, then $\CV(\tau)$ is irreducible.
\end{lemma}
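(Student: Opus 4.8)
The plan is to show that any proper closed submodule $M \subsetneq \CV(\tau)$ must vanish, by using the weight constraint that $wt(\tau)$ being maximal in $wt(\CT_\mu)$ forces $M$ to have empty weight set. First I would recall that, since $\CV(\tau)$ has infinitesimal character $\chi_\mu$, Lemma~\ref{finite length lem} gives $\CV(\tau)\in\CC(\fkg,L)_f$, so every irreducible subquotient of $\CV(\tau)$ is of the form $\CL(\tau')$ for some $\tau'\in\CT_\mu$. In particular every such subquotient, being a quotient of $\CV(\tau')$, has minimal weight $wt(\tau')$; and by the defining surjection $\CV(\tau')\twoheadrightarrow\CL(\tau')$ together with the PBW description $\U(\fkg)\otimes_{\U(\ov{\fkp})}\tau' = \U(\fku)\otimes\tau'$, every weight $\kappa$ occurring in $\CL(\tau')$ satisfies $\kappa \geq wt(\tau')$, i.e. $\kappa - wt(\tau') \in \Omega$.

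Next I would argue that any proper closed submodule $M\subsetneq\CV(\tau)$ has $wt(\tau)\notin wt(M)$: indeed $\CV(\tau)_{wt(\tau)} = \tau$ is irreducible as an $L$-representation (this is immediate from the PBW identification, where $\tau$ sits in the lowest weight), so if $wt(\tau)\in wt(M)$ then $M\supseteq\tau$, hence $M$ contains the submodule generated by $\tau$, which surjects onto $\CV(\tau)$ by the condition (iii) limit description and the argument of Lemma~\ref{Irr_obj}; thus $M=\CV(\tau)$, a contradiction. So $M_{wt(\tau)}=0$, and by the topology condition (ii)--(iii) it suffices to show $M_\kappa = 0$ for every $\kappa$. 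Take any irreducible subquotient $\CL(\tau')$ appearing in $M$ (which exists if $M\neq 0$, using finite length): its lowest weight $wt(\tau')$ lies in $wt(M)$, and since $M\subseteq\CV(\tau)$ we have $wt(\tau')\in wt(\CV(\tau))$, so $wt(\tau') \geq wt(\tau)$, i.e. $wt(\tau') - wt(\tau)\in\Omega$. But $\tau'\in\CT_\mu$, and by maximality of $wt(\tau)$ in $wt(\CT_\mu)$ we cannot have $wt(\tau') > wt(\tau)$ unless $wt(\tau')$ and $wt(\tau)$ are incomparable — yet they are comparable by the previous sentence. Hence $wt(\tau') = wt(\tau)$, forcing $\tau' \simeq \tau$ and so $\CL(\tau)$ appears in $M$; but then $wt(\tau)\in wt(M)$, contradicting the previous paragraph.

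Therefore $M$ has no irreducible subquotient, so $M=0$, proving $\CV(\tau)$ is irreducible.

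The main obstacle I anticipate is justifying the passage from ``$M$ nonzero'' to ``$M$ contains some $\CL(\tau')$ with $wt(\tau')\in wt(M)$'' in the topological (non-abelian, Fréchet) setting: one must be careful that $M$, being closed and stable under $\U(\fkg)$ and $L$, is again an object of $\CC(\fkg,L)_f$ (this is asserted in the excerpt right after the definition of $\CC(\fkg,L)_f$), so that finite-length arguments and the minimal-weight analysis of Lemma~\ref{Irr_obj} apply verbatim to $M$ and its subquotients. A secondary point needing care is that $\Omega$ (the $\fkz_L$-weights of $\U(\fku)$) together with $0$ forms a monoid whose nonzero elements are ``positive'', so that $\alpha\le\kappa$ and $\kappa\le\alpha$ with $\alpha\neq\kappa$ is impossible — this is what makes ``maximal'' incompatible with ``$\ge$ and $\neq$'', and it should be recorded explicitly (it follows from $\fku$ being the nilradical of a parabolic, so the $\fkz_L$-weights on $\fku$ all lie in an open half-space).
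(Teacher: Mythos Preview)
Your argument is essentially the paper's proof: both show that the maximal proper closed submodule (you take an arbitrary one, the paper takes the kernel $\omega$ of $\CV(\tau)\twoheadrightarrow\CL(\tau)$) has all weights strictly above $wt(\tau)$, then extract an irreducible subquotient $\CL(\tau')$ with $\tau'\in\CT_\mu$ and $wt(\tau')>wt(\tau)$, contradicting maximality. One small correction: the step ``$wt(\tau')=wt(\tau)$, forcing $\tau'\simeq\tau$'' is unjustified and in fact false in general (distinct irreducible Casselman--Wallach $L$-representations may share a $\fkz_L$-weight), but it is also unnecessary --- you already recorded $wt(\tau')\in wt(M)$, so $wt(\tau')=wt(\tau)$ immediately gives $wt(\tau)\in wt(M)$ and the contradiction, without ever identifying $\tau'$ with $\tau$.
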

\begin{proof}
    Consider the short exact sequence 
    \[
     0\lra \omega \lra \CV(\tau)\lra \CL(\tau)\lra 0.
    \]
    Suppose $\omega$ is non-zero, then it will have an irreducible subquotient by the argument in Lemma~\ref{finite length lem}. Suppose it is of the form $\CL(\tau')$. Then we have $\mathrm{wt}(\tau')>\mathrm{wt}(\tau)$, which contradicts the assumption that $\mathrm{wt}(\tau)$ is maximal in $\mathrm{wt}(\CT_{\mu})$.
\end{proof}

We remark that category $\CC(\fkg,L)_f$ is related to the Casselman-Wallach representations of $G$ by the Casselman-Jacquet functor, and has a better algebraic structure with respect to $\fku$-actions. The following proposition, which is our primary concern, is a good illustration. 
\begin{proposition}\label{homo-CW}
    For every object $V\in \CC(\fkg,L)_f$, $\rmh_i(\fku,V)$ is a Casselman-Wallach representation of $L$.
\end{proposition}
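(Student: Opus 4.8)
The plan is to prove this by reducing, via a filtration argument, to the case of an irreducible object, and then to the case of an irreducible quotient of a formal Verma module $\CV(\tau)$, where the homology can be computed directly from the Koszul complex.

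First I would reduce to irreducible objects. Since $V \in \CC(\fkg,L)_f$ has finite length, it admits a finite filtration $0 = V_0 \subset V_1 \subset \dots \subset V_m = V$ with each quotient $V_j/V_{j-1}$ irreducible in $\CC(\fkg,L)$. Each such quotient has a fixed infinitesimal character (inherited from $V$), so by Lemma~\ref{finite length lem} and the classification corollary, $V_j/V_{j-1} \simeq \CL(\tau_j)$ for some irreducible Casselman-Wallach representation $\tau_j$ of $L$. Using the long exact sequence in $\fku$-homology attached to $0 \to V_{j-1} \to V_j \to V_j/V_{j-1} \to 0$ and induction on $m$, it suffices to treat the irreducible case: if $\rmh_i(\fku, V_{j-1})$ and $\rmh_i(\fku, \CL(\tau_j))$ are Casselman-Wallach for all $i$, then since morphisms of Casselman-Wallach representations have closed image (Corollary~\ref{closed image}), the boundary maps have closed image, so by Lemma~\ref{exa-haus-lem} each $\rmh_i(\fku, V_j)$ is Hausdorff; finiteness of length and admissibility of the homology then upgrade this to Casselman-Wallach. (One needs to check that $\rmh_i(\fku,-)$ of an object in $\CC(\fkg,L)_f$ is automatically finite length and admissible as an $L$-module — this follows because the Koszul complex $\wedge^\bullet \fku \otimes V$, truncated to a finite set of $\fkz_L$-weights, computes the homology and each $\wedge^k\fku \otimes V_\alpha$ is Casselman-Wallach by condition (i) of Definition~\ref{def of cate}, using that $\fku$-homology only involves finitely many weight blocks at a time because $\Omega$ generates a strict cone.)

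Next, for the irreducible case, I would use Lemma~\ref{Irr_obj} to write an irreducible $V$ as a quotient of a formal Verma module $\CV(\tau)$, $\tau$ irreducible Casselman-Wallach. For $\CV(\tau)$ itself the homology is computable: by the PBW identification $\CV(\tau) \simeq \varprojlim_k \U(\fku)^{<k}\otimes\tau$ (a completed version of $\U(\fku)\otimes\tau$), the Koszul complex $\wedge^\bullet\fku \otimes \CV(\tau)$ should be, up to the completion, the standard Koszul complex computing $\rmh_\bullet(\fku, \U(\fku)\otimes\tau)$, which is concentrated in degree $0$ and equals $\tau$ (the free/co-free resolution). To handle the completion carefully I would apply the Mittag-Leffler Lemma~\ref{ML-lem} to the inverse system $\{(\U(\fkg)\otimes_{\U(\ov\fkp)}\tau)/(\fku^k\cdots)\}_k$: each finite-level quotient has $\fku$-homology that is Casselman-Wallach (it is built from finitely many weight spaces, each Casselman-Wallach), the relevant inverse systems are stationary (finite length in each weight), and the lemma gives exactness of the limiting complex, showing $\rmh_0(\fku,\CV(\tau)) = \tau$ and $\rmh_i(\fku,\CV(\tau)) = 0$ for $i>0$. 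Then the short exact sequence $0 \to \omega \to \CV(\tau) \to V \to 0$, with $\omega \in \CC(\fkg,L)_f$ of strictly smaller length (or handled by Noetherian induction on the weight support), gives the long exact sequence; by induction $\rmh_i(\fku,\omega)$ is Casselman-Wallach, the boundary maps have closed image by Corollary~\ref{closed image}, and Lemma~\ref{exa-haus-lem} together with finite length and admissibility finishes the irreducible case.

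The main obstacle I anticipate is the interaction between the topological completions and the homological algebra: specifically, justifying that the Koszul complex of the \emph{completed} object $\CV(\tau)$ (or more generally of an object satisfying conditions (ii)--(iii) of Definition~\ref{def of cate}) has the same homology as the naive algebraic computation, and that this homology is Hausdorff. This is exactly where Lemma~\ref{ML-lem}, Lemma~\ref{ext_Hausd}, and the $\fkz_L$-weight decomposition with its strict-cone partial order must be combined: one wants to say the $\fku$-homology "sees" only finitely many weight blocks in each total degree, reducing everything to the Casselman-Wallach category where closed-image and finiteness statements are available. A secondary technical point is the well-definedness of the induction (ensuring the submodule $\omega$ in $0 \to \omega \to \CV(\tau) \to V \to 0$ is again in $\CC(\fkg,L)_f$, which is noted after Definition~\ref{def of cate}, and that the induction terminates — using finiteness of $\CT_\mu$ and Lemma~\ref{finite length lem}).
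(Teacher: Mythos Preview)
Your approach is essentially the paper's: reduce to irreducibles, then to formal Verma modules via the short exact sequence $0\to\iota\to\CV(\tau)\to\CL(\tau)\to 0$, and compute $\rmh_i(\fku,\CV(\tau))$ directly. Two technical differences are worth noting. First, the paper separates the argument cleanly into two steps: Hausdorffness of $\rmh_i(\fku,V)$ is proved \emph{first and for all $V\in\CC(\fkg,L)_f$ at once}, via the weight filtration $F^k(V)=\overline{\oplus_{\alpha\notin S_k}V_\alpha}$ and Lemma~\ref{ext_Hausd}; finite length is then proved separately by the d\'evissage you describe, with termination handled exactly by Lemma~\ref{max irr} (your ``Noetherian induction on the weight support'' and ``finiteness of $\CT_\mu$'' are on target). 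This decoupling avoids the circularity you are worried about in your induction on length. Second, for the formal Verma computation the paper does \emph{not} use Mittag--Leffler: instead it observes that $\CV(\tau)\simeq\U[[\fku]]\widehat\otimes\tau$ and that $\U[[\fku]]$ is flat over $\U(\fku)$ (since $\U(\fku)$ is Noetherian), so the Koszul complex for $\U[[\fku]]$ is already exact in positive degrees, whence $\rmh_i(\fku,\CV(\tau))=0$ for $i>0$ and $\rmh_0=\tau$. This is shorter and sidesteps the stationarity check for $\{\rmh_i(\fku,V_k)\}$, which, while ultimately true weight-by-weight, is more delicate than your sketch suggests.
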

\begin{proof}
    \textbf{Step 1:} Hausdorffness of $\rmh_i(\fku,V)$. Define the   finite set:
    \[
    S_k:=\{ \gamma+\kappa\mid \gamma\in \min \mathrm{wt}(V),\kappa\in \mathrm{wt}(\U(\fku)^{<k})\}.
    \]
    Then we have a decreasing filtration of $V$ as $\fkp$-module 
    \[
    F^k(V):=\bigoplus_{\alpha\notin S_k} V_{\alpha}.
    \]
    Such a filtration satisfies the condition of Lemma~\ref{ext_Hausd}. Thus,   $\rmh_i(\fku,V)$ is Hausdorff.
    
    \textbf{Step 2:} Finite-length of $\rmh_i(\fku,V)$ for every $i$. Since $\CC(\fkg,L)_f$ consists of finite-length objects,  it suffices to consider irreducible $V$.  Then $V\simeq \CL(\tau)$ for some irreducible Casselman-Wallach $L$-representation $\tau$. Consider the short exact sequence and its associated long exact sequence of homology:
    \[
     0\lra \iota \lra \CV(\tau)\lra \CL(\tau)\lra 0.
    \]
     To show that $\rmh_i(\fku,V)$ is finite length for every $i$, it suffices to show $\rmh_i(\fku,\iota)$ and $\rmh_i(\fku,\CV(\tau))$ have  finite length for every $i$. Note that
    \[
    \min \mathrm{wt}(\iota)>\mathrm{wt}(\tau).
    \]
    Applying similar argument as above to the composition factors of $\iota$, and by Lemma~\ref{max irr}, after finite steps, we reduce to proving that the homology of the formal Verma module has finite length.
    
    For every Casselman-Wallach representation $\tau$,  
    \[
    \CV(\tau)\simeq \U[[\fku]]\widehat{\otimes}\tau 
    \]
    as $\U(\fku)$-module by left multiplication on $\U[[\fku]]$, where $\U[[\fku]]:= \mathop{\varprojlim}\limits_{k\geq 0} \U(\fku)/\left(\fku^k\U(\fku)\right)$ is a completion of $\U(\fku)$. Since $\U(\fku)$ is Noetherian, $\U[[\fku]]$ is flat over $\U(\fku)$( for $\fku$-abelian case, see \cite[Corollary 23.1]{Mat80}; the proof also applies to general case). Hence, the Koszul complex for $\U[[\fku]]$
    \[\dots \to \wedge^{i+1} \mathfrak{u}\otimes \U[[\fku]]\to \wedge^{i} \mathfrak{u}\otimes \U[[\fku]]\to \dots \]
    is exact at $i>0$. This implies that the Koszul complex for $\U[[\fku]]\widehat{\otimes}\tau$
    \[\dots \to \wedge^{i+1} \mathfrak{u}\otimes \U[[\fku]]\widehat{\otimes}\tau\to \wedge^{i} \mathfrak{u}\otimes \U[[\fku]]\widehat{\otimes}\tau\to \dots \]
    is also exact at $i>0$. Therefore, one has
    \[\rmh_i(\mathfrak{u},\CV(\tau))\simeq \begin{cases} \tau , & i=0;\\ 0, & i>0.\end{cases}
    \]
   
\end{proof}

Let $\sigma$ be a $(\fkg,L)$-module. Our prototype of $\sigma$ is the subquotient occurring in the filtration of principal series of $G$, determined by a $P$-orbit in the flag variety. The key ingredient for transferring $\sigma$ into the category $\CC(\fkg,L)$ is the Casselman-Jacquet functor.
\begin{definition}
    The Casselman-Jacquet functor $\widehat{\CJ}_\fku$ sends $(\fkg,L)$-modules to $(\fkg,L)$-modules, defined by:
    \[
    \widehat{\CJ}_\fku(\sigma):=\mathop{\varprojlim}\limits_{k} \sigma/\overline{\fku^k\sigma}.
    \]
\end{definition}
An alternative interpretation of the Casselman-Jacquet functor is as follows.  Let $(\sigma')^{\fku}$ be the space of $\fku$-finite continuous linear functionals on $\sigma$. Then 
\[
 (\sigma')^{\fku}=\mathop{\varinjlim}\limits_{k}\left( \sigma/\overline{\fku^k\sigma} \right)'.
\]
Hence, we equip $ (\sigma')^{\fku}$ with the direct limit topology. If $\sigma$ is nuclear, as is our prototype, then 
\[
 \CJF(\sigma)\simeq \Hom_{cts}( (\sigma')^{\fku},\BC)
\]
since the nuclear Fr\'echet space is reflexive (see \cite[Appendix A]{CHM}). This implies (1) of the following lemma.
\begin{lemma}
    Let $\sigma$ be a nuclear $(\fkg,L)$-module, which has the infinitesimal character $\chi_{\lambda}$. Then
    \begin{itemize}
    \item[(1)] $\CJF(\sigma)$ has the same infinitesimal character $\chi_{\lambda}$.
     \item[(2)] Assume that 
\begin{equation}\label{CW_assump}\forall k\in \mathbb{Z}_{>0}, \ \sigma/\fku^k\sigma\ \text{is a Casselman-Wallach representation of}\ L.\end{equation} 
Then $\CJF(\sigma)$ is in the category $\CC(\fkg,L)_f$.
    \end{itemize} 
\end{lemma}
\begin{proof} 
It remains to show (2). By the  definition of $\CC(\fkg,L)$, it suffices to verify condition (i) in Definition \ref{def of cate} for $\CJF(\sigma)$. By the following surjective $L$-morphism
\[
 \fku^k\otimes \sigma/\fku\sigma \lra \fku^k\sigma/\fku^{k+1}\sigma,
\]
we have 
\[\min \mathrm{wt}(\CJF(\sigma))=\min  \mathrm{wt}(\sigma/\fku\sigma).\] Thus, the $\ov{\fku}$-action is locally finite on $\CJF(\sigma)^{\fkz_L\text{finite}}$. 
\end{proof}
Under the assumption \ref{CW_assump}, one has the exact sequence
\[0\to \mathop{\bigcap}_k\fku^k\sigma\to \sigma\to \varprojlim_k\sigma/\fku^k\sigma\to 0,\] 
which follows from the following proposition.
\begin{proposition}\label{surj_CJ}
Under the assumption \ref{CW_assump}, the natural map $\sigma\to \varprojlim_k \sigma/\mathfrak{u}^k\sigma$ is surjective.
\end{proposition}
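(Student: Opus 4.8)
Since each $\sigma/\fku^{k}\sigma$ is a Casselman--Wallach representation it is in particular Fr\'echet, so $\fku^{k}\sigma$ is a closed subspace of $\sigma$, the quotient maps $q_{k}\colon\sigma\to\sigma/\fku^{k}\sigma$ and the transition maps $\pi_{k}\colon\sigma/\fku^{k+1}\sigma\to\sigma/\fku^{k}\sigma$ are surjections of Fr\'echet spaces and hence open by the open mapping theorem, and the graded pieces $\fku^{k}\sigma/\fku^{k+1}\sigma=\Ker\pi_{k}$ are again Casselman--Wallach representations of $L$, being closed subspaces of $\sigma/\fku^{k+1}\sigma$ (cf. Corollary~\ref{closed image}). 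To prove surjectivity it suffices, $\sigma$ being complete, to lift a given compatible sequence $(v_{k})$ with $v_{k}\in\sigma/\fku^{k}\sigma$ to a \emph{Cauchy} sequence $(w_{k})$ in $\sigma$ with $q_{k}(w_{k})=v_{k}$ and $w_{k+1}-w_{k}\in\fku^{k}\sigma$; then $w:=\lim_{k}w_{k}$ satisfies $q_{j}(w)=\lim_{m}q_{j}(w_{m})=v_{j}$ for every $j$, since $q_{j}(w_{m})=v_{j}$ as soon as $m\ge j$.

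For the inductive step, given $w_{k}$ with $q_{k}(w_{k})=v_{k}$ the difference $v_{k+1}-q_{k+1}(w_{k})$ lies in $\fku^{k}\sigma/\fku^{k+1}\sigma$, so a correction $c_{k}\in\fku^{k}\sigma$ with $q_{k+1}(w_{k}+c_{k})=v_{k+1}$ exists, and by the open mapping theorem applied to $q_{k+1}$ it can be chosen with $\|c_{k}\|$ no larger than the corresponding quotient seminorm of $v_{k+1}-q_{k+1}(w_{k})$ permits. The crux is to guarantee that the corrections $c_{k}$ are summable in every continuous seminorm of $\sigma$; this is where the finite-dimensionality of $\fku$ and the Casselman--Wallach hypothesis enter. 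By PBW one realizes $\fku^{k}\sigma$ as the closed span of the $k$-fold products $X_{i_{1}}\cdots X_{i_{k}}\sigma$ (finitely many products of each degree), and the model situation is the formal Verma module $\CV(\sigma/\fku\sigma)$ (with $\sigma/\fku\sigma$ Casselman--Wallach by the case $k=1$), for which $\fku^{k}\CV(\sigma/\fku\sigma)$ consists of the ``power series vanishing to order $\ge k$'', so that the pertinent quotient seminorms vanish outright and the corrections automatically lie in the kernels of more and more of the defining seminorms. Transporting this along the PBW description of $\sigma/\fku^{k}\sigma$ as generated over $\fku$ by a lift of $\sigma/\fku\sigma$ should let one choose the $c_{k}$ lying in the kernel of the first $k$ seminorms, making $(w_{k})$ Cauchy. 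In the language of inverse systems, the same input amounts to the statement that the tower of kernels of suitable surjections onto $\{\sigma/\fku^{k}\sigma\}$ — and ultimately the tower $\{\fku^{k}\sigma\}$ with its inclusion maps — is $\varprojlim$-acyclic, via the Mittag--Leffler argument of \cite{Gr} together with the openness of all transition maps; the asserted short exact sequence is then the $\varprojlim$-sequence attached to $0\to\{\fku^{k}\sigma\}\to\{\sigma\}\to\{\sigma/\fku^{k}\sigma\}\to 0$.

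The step I expect to be the main obstacle is exactly this control of the corrections — equivalently, the vanishing of $\varprojlim^{1}\{\fku^{k}\sigma\}$ — which is where one must use the Casselman--Wallach structure of the quotients in an essential way rather than formally; everything else (closedness of $\fku^{k}\sigma$, openness of the $\pi_{k}$, finiteness of the graded pieces, and the passage to the limit) is routine once the open mapping theorem and the Casselman--Wallach hypothesis are in hand. With the surjectivity established one obtains the exact sequence $0\to\bigcap_{k}\fku^{k}\sigma\to\sigma\to\varprojlim_{k}\sigma/\fku^{k}\sigma\to 0$, which is what is needed to place $\CJF(\sigma)$ in $\CC(\fkg,L)$.
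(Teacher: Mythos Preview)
Your overall framework is correct and matches the paper's: reduce surjectivity to constructing a Cauchy lift of a compatible sequence, with the inductive corrections $c_k\in\fku^{k}\sigma$. You have also correctly identified the crux, namely controlling the corrections so that they are eventually small in every continuous seminorm. However, your proposed mechanism for this step does not work, and this is a genuine gap.

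The difficulty is not resolved by ``transporting'' the Verma module picture along PBW. The map $\U(\fku)\otimes(\sigma/\fku\sigma)\to\sigma/\fku^{k}\sigma$ is typically neither injective nor a topological isomorphism onto its image, so the fact that the seminorms on $\CV(\sigma/\fku\sigma)$ vanish on deep filtration layers says nothing a priori about the seminorms of $\sigma$ itself. Likewise, the classical Mittag--Leffler argument does not apply to the tower $\{\fku^{k}\sigma\}$ with inclusion maps, which is strictly decreasing and not stationary; one needs a \emph{topological} Mittag--Leffler statement, and that is precisely what has to be proved.

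The paper's key input is Lemma~\ref{zero-norm}: for every continuous seminorm $p$ on $\sigma$ there exists $r$ such that the induced seminorm $\bar p$ on $\sigma/\fku^{k}\sigma$ vanishes on $\fku^{r}\sigma/\fku^{k}\sigma$ for all $k>r$. The proof uses the \emph{moderate growth} of $\sigma$ as an $L$-representation in an essential way: pick $a\in\fkz_{L}$ with $\alpha(a)>0$ for all $\alpha\in\Omega$; since $\sigma/\fku\sigma$ is Casselman--Wallach it has finitely many generalized $a$-weights, so the $a$-weights on $\fku^{r}\sigma/\fku^{k}\sigma$ grow at least linearly in $r$. If $\bar p$ were nonzero on some $\fku^{r}\sigma/\fku^{k}\sigma$, one would find a vector $\bar u$ with $\bar p(\exp(ta)\cdot\bar u)=e^{\gamma(ta)}\bar p(\bar u)$, and for large $r$ the exponential rate $\gamma(a)$ beats any Nash bound $f(\exp(ta))$, contradicting $p(g\cdot v)\le f(g)q(v)$. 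Once this lemma is in hand, one chooses for each seminorm $p_i$ the corresponding $r_i$, passes to the cofinal subsequence $\tilde v_i=v_{r_i+1}$, and then the vanishing of $\bar p_j$ on $\fku^{r_j+1}\sigma/\fku^{k}\sigma$ lets one correct by elements of $\fku^{r_i+1}\sigma$ making $p_{i-1}(v_i'-v_{i-1}')<2^{-(i-1)}$. This is the argument you should supply in place of the PBW heuristic.
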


We require a technical lemma. If $p$ is a semi-norm on a Fr\'echet space $V$, and $W$ is a closed subspace of $V$, then the induced semi-norm on $V/W$ is defined as 
\[
\ov{p}(\ov{v}):=\inf_{w\in W} p(v+w)\text{ for }v\in V,
\]
where $\ov{v}$ is the image of $v$ in $V/W$.
\begin{lemma}\label{zero-norm}
Under the assumption \ref{CW_assump}, let $p$ be a continuous semi-norm on $\sigma$. Then, for sufficiently large $r$, and for $k>r$, the induced semi-norm of $\ov{p}$ on $\sigma/\mathfrak{u}^k\sigma$ is identically zero on $\mathfrak{u}^r\sigma/\mathfrak{u}^k\sigma$. 
\end{lemma}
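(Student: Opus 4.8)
\emph{Plan.} The engine is the generalized $\fkz_L$-weight decomposition of the Casselman--Wallach representations $\sigma/\fku^k\sigma$, combined with the moderate growth estimate applied to a one-parameter subgroup of the split center of $L$ that stretches $\fku$. First I would fix an element $H_0$ in the real split part of $\fkz_L$ with $\langle\beta,H_0\rangle>0$ for every $\fka$-root $\beta$ occurring in $\fku$, and set $z_t:=\exp(tH_0)\in Z_L$ and $\delta:=\min\{\langle\beta,H_0\rangle:\beta\in\Delta(\fka,\fku)\}>0$. Write $V_k:=\sigma/\fku^k\sigma$; by the assumption~\eqref{CW_assump} each $V_k$ is Casselman--Wallach, hence decomposes as a finite topological direct sum of generalized $\fkz_L$-weight spaces $(V_k)_\alpha$, and any closed $\fkz_L$-stable subspace of $V_k$ is a sum of such weight spaces. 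In particular this applies to $\fku^r\sigma/\fku^k\sigma=\fku^rV_k$, which is closed since $\fku^j\sigma$ is closed in $\sigma$ for all $j$ (again by~\eqref{CW_assump}, $\sigma/\fku^j\sigma$ being Hausdorff). Finally, the $L$-action on $V_k$ is of moderate growth with an exponent $N=N_p$ that can be taken equal to the moderate growth exponent of $(\sigma,p)$ for every $k$, since the quotient topology inherits the estimate.

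\emph{The key estimate.} I claim $\overline p$ vanishes on every $(V_k)_\alpha$ with $\langle\alpha,H_0\rangle$ large enough. If $v\in(V_k)_\alpha$ is a genuine $\fkz_L$-weight vector then $z_tv=e^{t\langle\alpha,H_0\rangle}v$, so
\[
\overline p(v)=e^{-t\langle\alpha,H_0\rangle}\,\overline p(z_tv)\le e^{t(Nc-\langle\alpha,H_0\rangle)}\,\overline q(v),
\]
where $\overline q$ is a continuous seminorm on $V_k$ and $c\ge 0$ is chosen with $\|z_t\|_L\le e^{ct}$ for $t\ge 0$; letting $t\to+\infty$ forces $\overline p(v)=0$ once $\langle\alpha,H_0\rangle>Nc$. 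For a general $v\in(V_k)_\alpha$ the operator $z_te^{-t\langle\alpha,H_0\rangle}$ differs from the identity by a nilpotent part, so $z_tv$ equals $e^{t\langle\alpha,H_0\rangle}$ times a polynomial in $t$ applied to $v$; reinserting this polynomial factor (or filtering $(V_k)_\alpha$ by nilpotency order and inducting) yields the same conclusion, namely $\overline p|_{(V_k)_\alpha}=0$ whenever $\langle\alpha,H_0\rangle>Nc$.

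\emph{Conclusion.} Since $\sigma/\fku\sigma$ is Casselman--Wallach, $c_0:=\min\{\langle\gamma,H_0\rangle:\gamma\in wt(\sigma/\fku\sigma)\}$ is finite, and the surjections $\fku^{\otimes j}\otimes(\sigma/\fku\sigma)\twoheadrightarrow\fku^j\sigma/\fku^{j+1}\sigma$ together with $\langle\beta,H_0\rangle\ge\delta$ on $\Delta(\fka,\fku)$ show that every $\fkz_L$-weight of $\fku^r\sigma/\fku^k\sigma$ has $H_0$-value at least $r\delta+c_0$. Hence, taking $r>(Nc-c_0)/\delta$, every weight $\alpha$ appearing in the finite, weight-graded subspace $\fku^rV_k$ satisfies $\langle\alpha,H_0\rangle>Nc$, so by the estimate above and the triangle inequality over the finitely many weight components, $\overline p$ is identically zero on $\fku^r\sigma/\fku^k\sigma$ for every $k>r$. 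The main obstacle is the uniformity in $k$: one must know that the moderate growth exponent for $\overline p$ on $\sigma/\fku^k\sigma$ and the lower bound $r\delta+c_0$ on the $H_0$-values of the relevant weights do not deteriorate as $k\to\infty$ --- both hold precisely because the quotient topology inherits the growth exponent and because positivity of the $\fku$-roots on $H_0$ pushes the weights of $\fku^r\sigma/\fku^k\sigma$ uniformly into the cone. A minor technical point is handling the non-semisimple part of the $\fkz_L$-action on the generalized weight spaces, which is absorbed into the polynomial factor in the estimate.
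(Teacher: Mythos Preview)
Your proof is correct and follows essentially the same route as the paper's: pick a central element $H_0\in\fkz_L$ positive on the $\fku$-roots, use the moderate growth estimate along the one-parameter subgroup $\exp(tH_0)$ to kill $\overline p$ on large-weight generalized eigenspaces, and then note that the $\fkz_L$-weights on $\fku^r\sigma/\fku^k\sigma$ are pushed uniformly past the threshold once $r$ is large, independently of $k$. The only cosmetic difference is that the paper argues by contradiction and handles the generalized eigenvector issue by locating a single vector $u$ in the cyclic $a$-span with $\overline p(\exp(ta)\,\overline u)=e^{t\gamma(a)}\overline p(\overline u)$, whereas you phrase the same step as an induction on nilpotency order (using that adding $\overline p$-null vectors does not change $\overline p$); these are equivalent.
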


\begin{proof}
Since $\sigma$ is an $L$-representation of moderate growth, there exist a semi-norm $q$ and some Nash function $f$ on $L$ such that 
\[p(g\cdot v)\leq f(g)q(v),\ \forall g\in L, \forall v\in \sigma.\]

Fix $a\in \fkz_L$ with $\alpha(a)>0$ for all $\alpha\in \Omega$.  
As $\sigma/\mathfrak{u}\sigma$ is a Casselman-Wallach representation of $L$, there are finitely many generalized weights of $a$ on $\sigma/ \mathfrak{u}\sigma$, denoted by \[\gamma_1(a),\dots,\gamma_s(a).\]  Thus, the generalized weights of $a$ on $\mathfrak{u}^r\sigma/\mathfrak{u}^k\sigma$ take the form
\[(\gamma_i+\sum_{\alpha\in \Omega} m_{\alpha}\alpha)(a)\] where $m_{\alpha}\in \mathbb{Z}_{\geq 0}$ and $r\leq \sum m_{\alpha}\leq k$.

Suppose that the induced semi-norm $\overline{p}$ on $\mathfrak{u}^r\sigma/\mathfrak{u}^k\sigma$ is non-zero, then there exists some $v\in \mathfrak{u}^r\sigma$ with $\overline{p}(\overline{v})>0$.  

Moreover, we can assume that $\overline{v}$ is a generalized eigenvector of $a$ with eigenvalue $\gamma(a)$. Note that for $v_1,v_2\in \sigma$, if $\overline{p}(\overline{v_1})=0$, then $\overline{p}(\overline{v_1}+\overline{v_2})=\overline{p}(\overline{v_2})$. Consider the finite-dimensional space generated by $\left\{\overline{\sigma(a)^lv}\ |\ l=0,1,\dots\right\}$, by choosing $u$ in this space properly, one has 
\[\overline{p}(\exp(ta)\cdot \overline{u})=e^{\gamma(ta)}\overline{p}(\overline{u})\neq 0,t\in\BR.\]

By the definition of Nash function, for $r$ sufficiently large, one has 
\[e^{\gamma(ta)}\geq C\cdot f(\exp(ta))\] for every constant $C$ when $t\to +\infty$. It contradicts the moderate growth condition $\overline{p}(\exp(ta)\cdot \overline{u})\leq f(\exp(ta)) q(u)$.
\end{proof}

Let us go back to prove Proposition \ref{surj_CJ}. 

\begin{proof}[Proof of Proposition \ref{surj_CJ}]
Take an arbitrary element in $\varprojlim_i \sigma/\mathfrak{u}^i\sigma$, i.e. a sequence 
\[\{v_i\in \sigma\}_{i\in \mathbb{N}},\  \text{with}\ v_j-v_k\in \mathfrak{u}^k\sigma, \ \forall j>k.\] 
To prove the statement, it suffices to find a sequence $\{v_i'\}_{i\in \mathbb{N}}$ in $\sigma$ such that $v_i-v_i'\in \mathfrak{u}^i\sigma$, and $\{v_i'\}$ converges in $\sigma$.

Let  $\{p_i\}_{i\in \mathbb{N}}$ be the countable family of semi-norms which define the topology of $\sigma$. By taking $\sum_{j\leq i}p_j$, we can assume that 
\[p_0\leq p_1\leq \dots.\] 

For $p_i$, by Lemma \ref{zero-norm}, there exists $r_i$ such that the induced semi-norm of $p_i$ on $\mathfrak{u}^r\sigma/\mathfrak{u}^k\sigma$ is zero for all $k>r>r_i$. We can take the $r_i$'s such that $1<r_0<r_1<\dots$. 

Define $\{\widetilde{v}_i\}_{i\in \mathbb{N}}$ as $\widetilde{v}_i=v_{r_i+1}$. Since $r_i>i$, $v_i-\widetilde{v}_i\in  \mathfrak{u}^i\sigma$. For the semi-norm $p_j$, when $l>j$, 
\[\overline{p_j}(\widetilde{v}_l-\widetilde{v}_j)=0\ \text{on} \  \mathfrak{u}^{r_j+1}\sigma/ \mathfrak{u}^k\sigma,\ \forall k>r_j+1.\] 

Set $v_1'=\widetilde{v}_1$. Then select 
\[v_2'\in \widetilde{v}_2+ \mathfrak{u}^{r_2+1}\sigma\] 
such that $p_1(v_2'-\widetilde{v}_{1})<\frac{1}{2^1}$. Similarly,  one can select $v_3'\in \widetilde{v}_3+ \mathfrak{u}^{r_3+1}\sigma$ such that $p_2(v_3'-v_2')<\frac{1}{2^2}$.

By such a procedure, one gets $\{v_i'\}_{i\in \mathbb{N}}$ such that $p_{i-1}(v_i'-v_{i-1}')<\frac{1}{2^{i-1}}$.  This is a convergent sequence such that $v_i'-v_i\in \mathfrak{u}^i\sigma$. 

\end{proof}

\begin{remark}\label{CW-ass-weaker}
The assumption~\eqref{CW_assump} can be weakened: it suffices to assume that $\sigma/\fku\sigma$ is a Casselman-Wallach representation of $L$. Indeed, one can show inductively that $\sigma/\fku^k\sigma$ is a Casselman-Wallach representation by using the short exact sequence
\[
0 \lra \fku^{k-1}\sigma \lra \sigma \lra \sigma/\fku^{k-1}\sigma \lra 0
\]
together with Lemma~\ref{exa-hau-2} and Remark~\ref{closed-im-rem}.
\end{remark}

\subsection{Schwartz functions on Nash manifolds}\label{sec-Schwartz fun}
Various Schwartz functions are natural objects in the category of smooth representations. This subsection reviews some facts about Schwartz functions, which will be used freely in the article.

We first recall the definition of \textbf{co-sheaf} on Nash manifold. Let $X$ be a Nash manifold and $\sF$ be a pre-co-sheaf on $X$(for detailed definition, see \cite[Appendix A.4]{AG}. We regard every section of $\mathscr{F}$ on an open subset $\mathscr{U}$ also as a section on every open subset containing $\mathscr{U}$ via the extension map. Then $\sF$ is called a \textbf{co-sheaf} if for every finite open covering $\{\sU_i\}_{1\leq i\leq n}$ of $X$, the following sequence is exact
\begin{equation}\label{co-sheaf exa}
    \bigoplus_{1\leq i<j\leq n}\sF(\sU_i\cap \sU_j)\lra \bigoplus_{1\leq i\leq n}\sF(\sU_i)\lra \sF(X)\lra 0.
\end{equation}
Here, the first map is given by  
\[
(s_{i,j})_{i<j}\mapsto (\sum_{l<k}s_{l,k}-\sum_{k<l}s_{k,l})_{1\leq k\leq n},\ \text{for}\ s_{i,j}\in \sF(\sU_i\cap \sU_j), 
\]
and the second map is given by
\[
(s_i)_{1\leq i\leq n}\mapsto \sum_{1\leq i\leq n}s_i, \ \text{for}\ s_i\in \sF(\sU_i). 
\]
Then Schwartz functions form a co-sheaf.
\begin{proposition}\label{co-sheaf prop}
    Let $X$ be a Nash manifold and $Z\subseteq X$ be a closed submanifold. Let $\CE$ be a tempered bundle over $X$. Then
    \begin{enumerate}
        \item The pre-co-sheaf $\CS(\cdot,\CE):\sU\mapsto \CS(\sU,\CE)$, where $\sU$ is an open subset of $X$, is a co-sheaf.
        \item The pre-co-sheaf 
        \[
        \CS_Z(\cdot,\CE):\sU\mapsto \CS_{\sU\cap Z}(\sU,\CE):=\CS( \sU,\CE)/\CS(\sU\backslash Z,\CE),
        \]
        where $\sU$ is an open subset of $X$, is a co-sheaf.
    \end{enumerate}
\end{proposition}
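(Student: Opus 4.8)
\textbf{Proof strategy for Proposition~\ref{co-sheaf prop}.}

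The plan is to reduce both statements to the exactness of the Mayer--Vietoris-type sequence~\eqref{co-sheaf exa} for two open sets, and then bootstrap to arbitrary finite covers by induction. The key analytic input is the partition of unity: for a Nash manifold $X$ with a finite open cover $\{\sU_i\}$, there exist Nash (or at least smooth, tempered) functions $\rho_i$ supported in $\sU_i$ with $\sum_i \rho_i = 1$, and, crucially, multiplication by such a $\rho_i$ preserves Schwartz sections and extends a Schwartz section on $\sU_i$ by zero to a Schwartz section on $X$. This last point is exactly where ``Schwartz'' (rapid decay) is used, as opposed to merely smooth: the product $\rho_i \cdot s$ for $s \in \CS(\sU_i,\CE)$ decays rapidly near $\partial \sU_i$ inside $X$ because $\rho_i$ does, so it lies in $\CS(X,\CE)$; this is the content of the standard gluing lemma for Schwartz functions (cf.\ \cite[Appendix A]{AG}).

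For part (1): surjectivity of $\bigoplus_i \CS(\sU_i,\CE) \to \CS(X,\CE)$ follows by writing $s = \sum_i \rho_i s$ with $\rho_i s \in \CS(\sU_i,\CE)$. For exactness in the middle, suppose $(s_i) \in \bigoplus_i \CS(\sU_i,\CE)$ has $\sum_i s_i = 0$ in $\CS(X,\CE)$. I would do the two-set case first: if $s_1 + s_2 = 0$ with $s_j \in \CS(\sU_j,\CE)$, then $s_1 = -s_2$ is supported in $\sU_1 \cap \sU_2$ and defines an element of $\CS(\sU_1\cap\sU_2,\CE)$ mapping to $(s_1,s_2)$. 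For $n$ sets, group $\sU_2 \cup \dots \cup \sU_n =: \sV$, apply the inductive hypothesis on $\sV$ to $\sum_{i\geq 2} s_i$, and then the two-set case to $\sU_1$ and $\sV$; a short diagram chase (using that the restriction $\CS(\sU_i \cap \sV,\CE) \to$ appropriate pieces is compatible) recovers an element of $\bigoplus_{i<j}\CS(\sU_i\cap\sU_j,\CE)$. One must check that the relevant sub-intersections $\sU_1 \cap \sU_j$ cover $\sU_1 \cap \sV$, which lets the inductive step on the cover of $\sU_1 \cap \sV$ go through.

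For part (2): this follows formally from part (1) together with the fact that $\CS(\sU\setminus Z, \CE) \to \CS(\sU,\CE)$ is injective with the quotient being $\CS_{\sU\cap Z}(\sU,\CE)$ by definition, plus the observation that the open sets $\{\sU_i \setminus Z\}$ cover $X \setminus Z$ and the intersections behave correctly ($(\sU_i \cap \sU_j)\setminus Z = (\sU_i\setminus Z)\cap(\sU_j\setminus Z)$). Concretely, I would write the $3\times 3$ commutative diagram whose rows are the sequences~\eqref{co-sheaf exa} for $\CS(\cdot,\CE)$ on the cover $\{\sU_i\}$ of $X$, for $\CS(\cdot,\CE)$ on the cover $\{\sU_i\setminus Z\}$ of $X\setminus Z$, and for $\CS_Z(\cdot,\CE)$; the first two rows are exact by part (1), and the columns are exact by definition of the quotient co-sheaf, so exactness of the third row follows by a diagram chase (snake lemma). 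The main obstacle is the analytic lemma underlying the partition-of-unity argument in part (1)---namely that cutting off by a Nash bump function sends $\CS(\sU_i,\CE)$ into $\CS(X,\CE)$ and that a globally supported Schwartz section decomposes as a sum of such pieces---but this is established in the references \cite{AG} and may be cited; everything else is formal homological algebra.
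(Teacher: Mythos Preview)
Your proposal is correct. For part (1), the paper simply cites \cite[Proposition 5.1.3]{AG}, whose proof is essentially the partition-of-unity-plus-induction argument you outline. For part (2), however, your route differs from the paper's. You deduce exactness of the $\CS_Z$ row from a $3\times 3$ diagram chase, invoking part (1) twice---once for the cover $\{\sU_i\}$ of $X$ and once for the cover $\{\sU_i\setminus Z\}$ of $X\setminus Z$---together with the defining short exact sequences in the columns. The paper instead argues directly: given $(s_i)$ summing to zero in $\CS_Z(X,\CE)$, it chooses lifts $\tilde s_i\in\CS(\sU_i,\CE)$, so that $\tilde s:=\sum_i\tilde s_i$ lies in $\CS(X\setminus Z,\CE)$, then corrects via a tempered partition of unity $(\alpha_i)$ subordinate to $\{\sU_i\}$ by replacing $\tilde s_i$ with $\tilde s_i-\alpha_i\tilde s$; this new tuple is still a lift of $(s_i)$ but now sums to zero already in $\CS(X,\CE)$, and part (1) finishes. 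Your approach is cleaner formally and avoids a second use of partition of unity, while the paper's is more explicit and only appeals to part (1) once; both are short and valid.
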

\begin{proof}
    \begin{enumerate}
        \item The proof is similar as \cite[Proposition 5.1.3]{AG}.
        \item By (1), the second map in \eqref{co-sheaf exa} is surjective. Moreover, it is easy to check the sequence is a complex. Let
        \[(s_i)_{1\leq i\leq n}\in\bigoplus_{1\leq i\leq n}\CS_{\sU_i\cap Z}(\sU_i,\CE)
        \text{ such that } \sum_{1\leq i\leq n} s_i=0.
       \]
       Take $\widetilde{s_i}$ as a lift of $s_i$ in $\CS(\sU_i,\CE)$. Then 
       we have $\widetilde{s}:=\sum_{1\leq i\leq n} \widetilde{s_i}\in \CS(X\backslash Z,\CE)$. By \cite[Theorem 4.4.1]{AG}, there exists a partition of unity by tempered functions $(\alpha_i)_{1\leq i\leq n}$ with $
       \Supp(\alpha_i)\subset \sU_i$. Then we have
  \[
       \alpha_i\cdot \widetilde{s}\in \CS(\sU_i\backslash Z,\CE).
  \]
 It follows that $(\widetilde{s_i}-\alpha_i\cdot \widetilde{s})_{1\leq i\leq n}$ is a lift of $(s_i)_{1\leq i\leq n}$, which maps to zero by the second map in \eqref{co-sheaf exa}. The result now follows from the co-sheaf property in (1).

    \end{enumerate}
\end{proof}

Let $H$ be a subgroup of an almost linear Nash group $G$, the (\textbf{normalized}) Schwartz induction $\SInd_H^G(V_{\sigma})$ of $(\sigma,V_{\sigma})\in\Smod_H$ is defined as the Schwartz sections of the tempered bundle $(V_{\sigma}\otimes (\frac{\delta_H}{\delta_G})^{\frac{1}{2}})\times_H G$ (see \cite{CS} for details).
\begin{proposition}[\cite{Fd}, Proposition 2.2.7]
    The Schwartz induction functor $\SInd_H^G:\Smod_H\to \Smod_G$ is exact.
\end{proposition}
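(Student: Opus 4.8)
The plan is to reduce the exactness of $\SInd_H^G$ to the local structure of Schwartz sections of a tempered bundle together with the co-sheaf property of Proposition~\ref{co-sheaf prop}. Fix a strict short exact sequence $0\to V_1\to V_2\to V_3\to 0$ in $\Smod_H$, set $X:=H\backslash G$, and let $\mathcal{V}_j$ denote the tempered bundle $(V_j\otimes(\delta_H/\delta_G)^{1/2})\times_H G$ over $X$, so that $\SInd_H^G(V_j)=\CS(X,\mathcal{V}_j)$. Pointwise the sequence of bundles $0\to\mathcal{V}_1\to\mathcal{V}_2\to\mathcal{V}_3\to 0$ is exact with $\mathcal{V}_1$ a closed subbundle. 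Left exactness and exactness in the middle are then immediate: a Schwartz section of $\mathcal{V}_2$ whose pointwise values lie in $\mathcal{V}_1$ is a Schwartz section of $\mathcal{V}_1$, because $V_1\hookrightarrow V_2$ is a topological embedding and hence the defining seminorm estimates for $\CS(X,\mathcal{V}_1)$ are the restrictions of those for $\CS(X,\mathcal{V}_2)$; for the same reason $\CS(X,\mathcal{V}_1)\to\CS(X,\mathcal{V}_2)$ is injective and strict. So the entire content is the surjectivity of $\CS(X,\mathcal{V}_2)\to\CS(X,\mathcal{V}_3)$.

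First I would treat the local case. Since $G\to X$ is a Nash principal $H$-bundle it admits Nash local sections, so there is a finite open Nash cover $X=\bigcup_{i=1}^n\mathcal{U}_i$ over which every $\mathcal{V}_j$ is trivial, with a standard identification $\CS(\mathcal{U}_i,\mathcal{V}_j|_{\mathcal{U}_i})\simeq\CS(\mathcal{U}_i)\widehat{\otimes}V_j$ (the twist by $(\delta_H/\delta_G)^{1/2}$ becomes a positive tempered function under the trivialization and does not change the Schwartz space). Now $\CS(\mathcal{U}_i)$ is a nuclear Fr\'echet space and the objects of $\Smod_H$ are nuclear Fr\'echet spaces, so the functor $\CS(\mathcal{U}_i)\widehat{\otimes}(-)$ carries the strict short exact sequence $0\to V_1\to V_2\to V_3\to 0$ to a short exact sequence. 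Hence $\CS(\mathcal{U}_i,\mathcal{V}_2|_{\mathcal{U}_i})\to\CS(\mathcal{U}_i,\mathcal{V}_3|_{\mathcal{U}_i})$ is surjective for each $i$.

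To globalize, let $s\in\CS(X,\mathcal{V}_3)$. By Proposition~\ref{co-sheaf prop}(1) applied to the tempered bundle $\mathcal{V}_3$, the second map in \eqref{co-sheaf exa} is surjective, so $s=\sum_{i=1}^n s_i$ with $s_i\in\CS(\mathcal{U}_i,\mathcal{V}_3|_{\mathcal{U}_i})$ (concretely one may take $s_i=\alpha_i s$ for a tempered partition of unity $\{\alpha_i\}$ subordinate to $\{\mathcal{U}_i\}$, as in the proof of Proposition~\ref{co-sheaf prop}). Lift each $s_i$ to some $\widetilde{s}_i\in\CS(\mathcal{U}_i,\mathcal{V}_2|_{\mathcal{U}_i})$ by the local surjectivity and put $\widetilde{s}:=\sum_{i=1}^n\widetilde{s}_i\in\CS(X,\mathcal{V}_2)$, where the summands are taken via the extension maps of the co-sheaf $\CS(\cdot,\mathcal{V}_2)$. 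Since extension by zero commutes with the fiberwise map $\mathcal{V}_2\to\mathcal{V}_3$, the section $\widetilde{s}$ maps to $\sum_i s_i=s$. This proves surjectivity, hence the exactness of $\SInd_H^G$; all maps involved are $G$-equivariant since they are assembled from $G$-equivariant data.

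The step I expect to be the main obstacle is the local identification $\CS(\mathcal{U}_i,\mathcal{V}_j|_{\mathcal{U}_i})\simeq\CS(\mathcal{U}_i)\widehat{\otimes}V_j$ together with the exactness of $\CS(\mathcal{U}_i)\widehat{\otimes}(-)$: one must verify that the chosen trivialization is an isomorphism of tempered bundles, so that it induces a topological isomorphism of the Schwartz-section spaces, and that the relevant Fr\'echet structures are precisely those for which Grothendieck's exactness of the completed tensor product against a nuclear factor applies to the (strict) short exact sequence. Once these are in place, the gluing step is purely formal, relying only on Proposition~\ref{co-sheaf prop} and the existence of tempered partitions of unity.
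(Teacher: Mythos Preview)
The paper does not give its own proof of this proposition; it is simply quoted from \cite{Fd}, Proposition~2.2.7, as an imported result. So there is no in-paper argument to compare against.

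Your sketch is the standard route to this statement and is essentially correct. One small correction: you assert that ``the objects of $\Smod_H$ are nuclear Fr\'echet spaces,'' but the paper's convention (Section~\ref{notation_sec}) only requires smooth moderate-growth Fr\'echet representations, not nuclearity. Fortunately this does not matter for your argument: the relevant fact is that $\CS(\mathcal{U}_i)$ is nuclear Fr\'echet, and Grothendieck's theorem says that tensoring a strict short exact sequence of Fr\'echet spaces by a fixed nuclear Fr\'echet space preserves exactness. So nuclearity of the $V_j$ is not needed, and you should drop that clause. With that adjustment your local step goes through, and the globalization via the co-sheaf property (Proposition~\ref{co-sheaf prop}) and tempered partitions of unity is exactly the right mechanism.
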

The Schwartz induction also satisfies the following \textbf{Mackey isomorphism}.
\begin{proposition}[\cite{CS}, Proposition 7.4]
    Let $V_0\in\Smod_H$, and $V\in\Smod_G$. If either $V_0$ or $V$ is nuclear, then there is an isomorphism of $G$-representations, 
    \[
    \SInd_H^G(V_0)\widehat{\otimes} V\simeq \SInd_H^G(V_0\widehat{\otimes} V|_H).
    \]
\end{proposition}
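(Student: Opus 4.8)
The plan is to write down the ``untwisting'' map explicitly and then verify it is a topological isomorphism by reducing to the case of a trivial bundle over a Nash chart. Realizing $\SInd_H^G(V_0)$ as the space of Schwartz functions $f\colon G\to V_0$ with $f(hg)=(\delta_H/\delta_G)^{1/2}(h)\,\sigma(h)f(g)$ and $G$ acting by right translation, I would set
\[
\Phi(f\otimes v)(g):=f(g)\otimes\rho(g)v,
\]
where $\rho$ is the action on $V$. A short computation should show that $\Phi(f\otimes v)$ satisfies the equivariance defining $\SInd_H^G(V_0\widehat\otimes V|_H)$ and that $\Phi$ carries the diagonal $G$-action on the source to right translation on the target; its formal inverse $F\mapsto\bigl(g\mapsto(\mathrm{id}\otimes\rho(g)^{-1})F(g)\bigr)$ makes sense once one knows that twisting a Schwartz section by $g\mapsto\rho(g)^{\pm1}$ preserves Schwartz decay, which holds because the $G$-action on a moderate-growth representation obeys moderate-growth estimates against the defining seminorms. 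So $\Phi$ is a $G$-equivariant bijection on the algebraic tensor product, and what remains is the topology after completion.

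For the topological statement I would use that $G/H$ is a Nash manifold over which the tempered bundle attached to $V_0\otimes(\delta_H/\delta_G)^{1/2}$ is Nash-locally trivial \cite{AG}: cover $G/H$ by finitely many trivializing Nash-open sets $\sU_i$, with a tempered partition of unity subordinate to them (as in the proof of Proposition~\ref{co-sheaf prop}). Over each $\sU_i$ the Schwartz sections form $\CS(\sU_i)\widehat\otimes V_0$, and since $\CS(\sU_i)$ is nuclear and one of $V_0,V$ is nuclear, associativity and commutativity of the completed tensor product give $(\CS(\sU_i)\widehat\otimes V_0)\widehat\otimes V\simeq\CS(\sU_i)\widehat\otimes(V_0\widehat\otimes V)$, which is precisely the local form of $\Phi$. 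Then I would glue: by Proposition~\ref{co-sheaf prop} the global sections sit in an exact co-sheaf sequence assembled from the $\CS(\sU_i\cap\sU_j,\CE)$ and $\CS(\sU_i,\CE)$, and applying $-\widehat\otimes V$, which is exact on Fr\'echet spaces in this nuclear setting, transports the sequence to the corresponding one for $V_0\widehat\otimes V|_H$; compatibility of the local identifications then yields the global topological isomorphism, with $G$-equivariance already built into $\Phi$.

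The hard part will be the topological bookkeeping rather than any conceptual difficulty. One must ensure every completed tensor product in play is unambiguous (projective $=$ injective), which is exactly what nuclearity of $V_0$ or $V$ provides; one must check that multiplying a Schwartz section by the cocycle $g\mapsto\rho(g)^{\pm1}$ of a moderate-growth representation keeps it Schwartz, and this temperedness is the real content behind bijectivity of $\Phi$; and one must justify the exactness of $-\widehat\otimes V$ invoked in the gluing step. The equivariance computations and the local associativity/commutativity of $\widehat\otimes$ are routine.
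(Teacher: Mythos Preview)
The paper does not supply its own proof of this proposition: it is quoted verbatim as \cite[Proposition~7.4]{CS} and used as a black box, so there is nothing in the paper to compare your argument against. Your sketch is the standard ``untwisting'' proof of the Mackey isomorphism for Schwartz induction, and its ingredients (the explicit intertwiner $\Phi$, moderate growth to control the cocycle $g\mapsto\rho(g)^{\pm1}$, nuclearity to collapse projective and injective completions, local triviality plus the co-sheaf property to glue) are exactly what one expects; this is essentially how the result is established in \cite{CS}. The only place to be careful is the exactness of $-\widehat\otimes V$ in the gluing step: you should invoke that the co-sheaf sequence for Schwartz sections is not merely algebraically exact but topologically split (via a tempered partition of unity), so that tensoring with any Fr\'echet $V$ preserves it without needing a general exactness theorem.
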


Now, let $X$ be a Nash manifold equipped with a Nash group action by $G$, and $\CE$ be a tempered bundle over $X$. Suppose the $G$-action on $X$ has a unique open orbit $O$ or a unique closed orbit $Z$, then we use the following simplified notations:
\begin{equation} \label{open_close}
\CS(X,\CE)_o:=\CS(O,\CE); \qquad\CS(X,\CE)_c:=\CS_Z(X,\CE).\end{equation}
In particular, if the $G$-action on $X$ has exactly two orbits, an open orbit $O$ and a closed orbit $Z$, then there is an exact sequence
\[0\to \CS(X,\CE)_o\to \CS(X,\CE) \to \CS(X,\CE)_c\to 0. \]

We review the smooth Mackey induction theory developed by Du Cloux in \cite{Fd}, which studies the smooth representation theory of non-reductive groups. It is fundamental for the whole Bernstein-Zelevinsky theory that we pursue. 

First of all, \cite[Theorem 5.1]{Fd0} shows that there is a category equivalence between $\Smod_N$ and unitary representations by taking smooth vectors when $N$ is a connected and simply connected nilpotent Lie group. Hence, by Kirillov's orbit method, the irreducible smooth  representations of $N$ are one-one corresponding to the coadjoint orbits on $\Lie(N)^*$. Let $G$ be an almost linear Nash group with a nilpotent normal subgroup $N$. Then given $\nu\in\Lie(N)^*$,  the stabilizer in $G$ of the coadjoint orbit of $\nu$ is equal to $G^{\nu}\cdot N$, denoted by $H^{\nu}$. Let $V^{\nu}$ be the irreducible smooth representation of $N$ corresponding to $\nu$. Let $\CY(\nu)$ be the full subcategory in $\Smod_{H^{\nu}}$ consisting of representations $\sigma$ such that $\sigma|_{N}\simeq V^{\nu}\widehat{\otimes} E$ for some Fr\'echet space $E$. The following theorem is a direct consequence of~\cite[Corollary 4.3.3]{Fd}.
\begin{theorem}\label{Mackey theory}
Let $\sigma\in\CY(\nu)$. If $\pi$ is a subrepresentation of $\SInd_{H^{\nu}}^G(\sigma)$, then there is a subrepresentation $\sigma_0$ of $\sigma$, such that $\pi\simeq \SInd_{H^{\nu}}^G(\sigma_0)$. In particular, when $\sigma$ is irreducible, $\SInd_{H^{\nu}}^G(\sigma)$ is also irreducible.

Similarly, if $\sigma$ is an algebraically irreducible representation in $\CY(\nu)$, then $\SInd_{H^{\nu}}^G(\sigma)$ is also algebraically irreducible.
\end{theorem}
In this article, we shall only require the case where $N$ is abelian. However, the general theory becomes necessary when studying symplectic or unitary groups. 

We recall the theory of abstract Fr\'echet algebra, which will be applied to the rearrangement of coarse spectral filtration. Recall that given two Fréchet spaces $V_1$ and $V_2$,  $V_1\otimes_{\pi} V_2$ is defined as the underlying space $V_1\otimes  V_2$ equipped with strongest Fréchet topology such that the natural map $V_1\times V_2\to V_1\otimes  V_2$ is continuous. The $V_1\widehat{\otimes}_{\pi} V_2$ is defined as the completion of  $V_1\otimes_{\pi} V_2$.
\begin{definition}
    \begin{enumerate}
    \item We call a pair $(\CA, \alpha: \CA \widehat{\otimes}_{\pi} \CA \to \CA)$ a \textbf{Fréchet algebra} if $\CA$ is a Fréchet space and $\alpha$ is a continuous map satisfying the algebra axioms. We will always omit $\alpha$ in the notation.
    \item Let $\CA$ be a Fr\'echet algebra. We call a pair $(M, \alpha: \CA \widehat{\otimes}_{\pi} M \to M)$ a \textbf{Fréchet module} if $M$ is a Fréchet space and $\alpha$ is a continuous map satisfying the module axioms. The image of $\alpha$ equipped with quotient topology is denoted by $\CA\cdot M$.
    \item Let $(M,\alpha)$ be a Fr\'echet module of $\CA$. We call it \textbf{differentiable} if $\alpha$ is surjective and $\{m\in M\mid \CA\cdot m=0\}=\emptyset$. We call a Fr\'echet algebra differentiable if it is differentiable as a left module over itself.
    \item Let $(M,\alpha)$ be a Fr\'echet module of $\CA$. We call it \textbf{factorizable} if $\alpha$ restricting to $\CA\otimes M$ is surjective and $\{m\in M\mid \CA\cdot m=0\}=\emptyset$. We call a Fr\'echet algebra \textbf{factorizable} if it is differentiable and all differentiable modules are factorizable.
    \item We call a Fr\'echet algebra \textbf{hereditary} if any closed submodule of its differentiable module is differentiable.
     \item Let $\CA$ be a Fréchet algebra. A sequence $\{e_n\}_{n=1}^{\infty}$ in $\CA$ is called an \textbf{approximation to the identity} if, for every $a \in \CA$, the sequences $\{e_n \cdot a\}$ and $\{a \cdot e_n\}$ converge to $a$.
    \end{enumerate}
\end{definition}
The following are our prototypes for Fréchet algebras and Fréchet modules.
\begin{example}
 Let $G$ be a Nash group with a fixed Haar measure $dg$. Then $\CS(G)$ is a Fr\'echet algebra under convolution product. It is a standard fact that there is a category equivalence between $\Smod_G$ and the category of differentiable $\CS(G)$-module. Hence, by Diximier-Malliavin theorem, $\CS(G)$ is a hereditary and factorizable Fr\'echet algebra.
\end{example}

\begin{example}
    Let $X$ be a Nash manifold. Then $\CS(X)$ is a Fr\'echet algebra under point-wise multiplication. Let $\{K_n\}_{n=1}^{\infty}$ be an increasing sequence of open and relatively compact subset. Then $\{f_n\}_{n=1}^{\infty}\subset \CC_c^{\infty}(X)$, such that 
    \[
    \supp(f_n)\subset K_{n+1}  \text{ and } f_n(x)=1 \text{ for } x\in K_n,
    \]
     is an approximation to the identity. We can find a finite Nash open covering $\{U_i\}$ such that each $U_i$ is Nash isomorphic to a closed Nash sub-manifold of $\BR^d$ for some $d$. Then each $\CS(U_i)$ is quotient algebra of $\CS(\BR^d)$, which is hereditary and factorizable by Fourier transform. Consequently, $\CS(X)$ is also hereditary and factorizable by partition of unity.
\end{example}

From now on, we assume that all the Fr\'echet algebra is \textbf{nuclear}. We now present a lemma regarding the inverse limit of Fréchet modules.
\begin{lemma}\label{limit diff}
    Let $\CA$ be a hereditary and factorizable Fr\'echet algebra. Let $\{M_k,d_k: M_{k+1}\to M_k\}_{k=0}^{\infty}$ be an inverse system of Fr\'echet $\CA$-module such that $M_0=0$ and $d_k$ is surjective for any integer $k$.
    \begin{enumerate}
        \item Suppose that each $\ker d_k$ is a differentiable $\CA$-module, then $\varprojlim_k M_k$ is a differentiable $\CA$-module as well.
        \item Suppose that $\CA\cdot \Ker d_k=0$, then
        \[
        \CA\cdot \varprojlim_k M_k=0
        \]
    \end{enumerate}
\end{lemma}
\begin{proof}
    To prove the first statement, we first claim that each $M_k$ is differentiable. By induction, it suffices to show that for a short exact sequence of Fr\'echet $\CA$-modules
    \[
    0\lra N_1\lra N\stackrel{\varphi}{\lra} N_2\lra 0,
    \]
    if $N_1$ and $N_2$ are differentiable, then so is $N$. Let $n\in N$. Then $\varphi(n)=\sum_{i=1}^l a_in_i$ for some $a_i\in \CA$ and $n_i\in N_2$. Let $\widetilde{n_i}$ be a pre-image of $n_i$ under $\varphi$. Then 
    \[
    n-\sum_{i=1}^l a_i \widetilde{n_i}\in N_1.
    \]
    Hence, the claim follows from the fact that $N_1$ is differentiable. 

    To prove that $\varprojlim_k M_k$ is differentiable, we first prove that the inverse limit of the action map 
    \[
    \varprojlim_k \CA\otimes M_k\lra \varprojlim_k M_k
    \]
    is surjective. It suffices to show that in the following commutative diagram
    \begin{center}
        \begin{tikzcd}
            \CA\otimes M_k \ar[d,"\alpha_k"] & \CA\otimes M_{k+1} \ar[d,"\alpha_{k+1}"]\ar[l,"\id\otimes d_k"]&\\
            M_k & M_{k+1}\ar[l,"d_k"],
        \end{tikzcd}
    \end{center}
    if $\alpha_k(t)=d_{k}(m)$ for some $t\in  \CA\otimes M_k$ and $m\in M_{k+1}$, then there exists $\widetilde{t}\in \CA\otimes M_{k+1}$ such that $\id\otimes d_k(\widetilde{t})=t$ and $\alpha_{k+1}(\widetilde{t})=m$. Equivalently, it suffices to show $m\in \alpha_{k+1}\left((\id\otimes d_k) ^{-1}(t)\right)$. By definition,
    \[
    (\id\otimes d_k) ^{-1}(t)\supset  t_0 +\CA\otimes \Ker d_k,
    \]
    where $t_0$ is a pre-image of $t$ under $\id\otimes d_k$. Then it follows from the fact that $\Ker d_k $ is differentiable since $\CA$ is hereditary.
    
    Note that there is a natural injective map
    \[
    \varprojlim_k \CA\otimes M_k\lra \CA\widehat{\otimes} \varprojlim_k M_k.
    \]
    Consequently, $\varprojlim_k M_k$ is differentiable.

    To prove the second statement, it suffices to show that for a short exact sequence of Fr\'echet $\CA$-modules
    \[
    0\lra N_1\lra N\lra N_2\lra 0,
    \]
    if $\CA\cdot N_1=0$ and $\CA\cdot N_2=0$, then $\CA\cdot N=0$ as well. It follows from the fact that
    \[
  \CA\cdot N  =(\CA\cdot \CA)\cdot N.
    \]   
\end{proof}

The following lemma is a splitting result of Fr\'echet $\CA$-module.
\begin{lemma}\label{split}
    Let $\CA$ be a differentiable Fr\'echet algebra possessing an approximation to the identity. Let 
   \[
    0\lra M_1\lra M\stackrel{\varphi}{\lra} M_2\lra 0
    \]
     be a short exact sequence of Fr\'echet $\CA$-module. Suppose that $\CA\cdot M_1=0$ and $M_2$ is differentiable. Then there is a splitting of $\varphi$. 
\end{lemma}
\begin{proof}
   By the approximation to the identity, we know that $\CA\cdot M$ is differentiable. Hence, $\CA\cdot M\cap M_1=0$, which implies that the natural continuous map
   \[
   M_1\bigoplus \CA\cdot M\lra M
   \]
   is bijective. Consequently, it is a topological isomorphism by the open mapping theorem of Fr\'echet spaces.
\end{proof}

\subsection{Schwartz homology and Euler-Poincar\'e characteristic}
Let $G$ be an almost linear Nash group. The category $\Smod_G$ admits a homology theory known as Schwartz homology. That is, for $\pi\in\Smod_G$, we take a strong projective resolution $P_{\bullet}$, the homology $\rmh^{\CS}_i(G,\pi)$ is defined as the $i$-th homology of the complex equipped with the subquotient topology
\[
   \dots \lra (P_i)_G\lra (P_{i-1})_G\lra \dots .
\]
Here each $(P_i)_G$ is Fr\'echet, see \cite[Theorem 5.9]{CS}. For another representation $\tau\in \Smod_G$, we define the extension group $\Ext^i_G(\pi,\tau)$ as the $i$-th cohomology group of the complex
\[
  \dots \lra \Hom_G(P_{i-1},\tau)\lra \Hom_G(P_{i},\tau)\lra \dots .
\]
If $\tau$ is the trivial representation, then we equip the cohomology with the subquotient topology of the strong dual topology. As a locally convex topological vector space, it does not depend on the choice of strong projective resolution by the comparison theorem (see \cite[2.2.6]{Wei}). Note that if $\rmh_i^{\CS}(G,\pi)$ is Hausdorff and $\pi$ is nuclear, then we have
\[
\rmh_i^{\CS}(G,\pi)'\simeq \Ext^i_G(\pi,\BC),
\]
see \cite[Proposition 5.3.2]{AGS15b}. 

From now on, in this subsection, unless otherwise specified, we make the further assumption that  $G$ is reductive, 
\begin{equation}\label{cond_CW_nc}\tau\ \textbf{is Casselman-Wallach} \quad \text{and}\quad  \pi \ \textbf{is nuclear}.\end{equation}
Under this assumption, $\pi\widehat{\otimes}\tau^{\vee}$ is also nuclear by \cite[50.9]{Tr}. We say that $\pi$ satisfies the homological finiteness condition with respect to $\tau$ if  $\Ext^i_G(\pi\widehat{\otimes}\tau^{\vee},\BC)$ is a finite-dimensional vector space for every integer $i$. By \cite[Lemma A.1]{CHM}, this implies $\Ext^i_G(\pi\widehat{\otimes}\tau^{\vee},\BC)$ is Hausdorff, hence,
\[ \rmh_i^{\CS}(G,\pi\widehat{\otimes}\tau^{\vee})\simeq \Ext^i_G(\pi\widehat{\otimes}\tau^{\vee},\BC)'
\]
is Hausdorff as well. Note that by Koszul type resolution (see \cite{CS}, 7.3), $\Ext^i_G(\pi\widehat{\otimes}\tau^{\vee},\BC)$ is vanishing for sufficiently large  $i$. We define the \textbf{Euler-Poincar\'e characteristic} of $(\pi,\tau)$ as 
\begin{equation*}
    \EP_G(\pi,\tau):=\sum_i  (-1)^i\dim \Ext^i_G(\pi\widehat{\otimes}\tau^{\vee},\BC).
\end{equation*}
The homological finiteness condition and the vanishing property ensure the  sum above is finite.
\begin{remark}\label{iso-lr}
    Since $\Hom_G(-,\tau)$ is left exact in the category $\Smod_G$, we have 
    \[
        \Ext^0_G(\pi \widehat{\otimes} \tau^{\vee}, \BC) \simeq \Hom_G(\pi \widehat{\otimes} \tau^{\vee}, \BC) \simeq \Hom_G(\pi, \tau),
    \]
 where the second isomorphism comes from the fact that $\tau\simeq (\tau^{\vee})^{\vee}$ coincides with the image of the action map:
    \[
    \CS(G)\widehat{\otimes} (\tau^{\vee})'\lra  (\tau^{\vee})'.
    \]
    This implies the general isomorphism
    \[
        \Ext_G^i(\pi, \tau) \simeq \Ext^i_G(\pi \widehat{\otimes} \tau^{\vee}, \BC)
    \]
    as follows. Let $P_{\bullet}$ be a strong projective resolution of $\pi$. By \cite[Proposition 5.5]{CS}, $P_{\bullet} \widehat{\otimes} \tau^{\vee}$ then forms a strong projective resolution of $\pi \widehat{\otimes} \tau^{\vee}$. The isomorphism consequently follows from
    \[
        \Hom_G(P_i \widehat{\otimes} \tau^{\vee}, \BC) \simeq \Hom_G(P_i, \tau), \quad \forall i \in \BZ.
    \]
    However, unlike the $p$-adic case, Schwartz induction lacks a right adjoint functor. Consequently, defining the Euler-Poincar\'e characteristic in the above form provides greater  flexibility for calculation.
\end{remark}

We need the following result comparing Lie algebra homology and Schwartz homology. Let $K$ be the complexification of a maximal compact subgroup of an almost linear Nash group $G$.
\begin{proposition}[see \cite{CS}, Theorem 7.7]
    Let $\pi\in\Smod_G$. Then there is an isomorphism as topological vector space
    \begin{equation*}
        \rmh_i(\fkg,K;\pi)\simeq \rmh_i^{\CS}(G,\pi).
    \end{equation*}
\end{proposition}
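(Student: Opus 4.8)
The plan is to run the van Est--type comparison between relative Lie algebra homology and ``topological group homology''---here in the guise of Schwartz homology---by exploiting that $G$ has a contractible, symmetric-space-like quotient. I would fix a maximal compact subgroup $K_0\subset G$, so that $K$ is its complexification, and a $K_0$-stable complement $\mathfrak p$ of $\fkk$ in $\fkg$; then $X:=G/K_0$ is Nash-diffeomorphic to a Euclidean space, in particular Nash-contractible, and $\mathfrak p\cong\fkg/\fkk\cong T_{eK_0}X$ as $K_0$-modules.

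First I would resolve $\pi$ in $\Smod_G$ by a Koszul/de Rham complex over $X$: a complex $P_\bullet\to\pi$ whose terms are Schwartz inductions $P_i=\SInd_{K_0}^{G}\big(\wedge^i\mathfrak p\,\widehat{\otimes}\,\pi|_{K_0}\big)$ (up to the appropriate modular/orientation twist, which the normalization of $\SInd$ is designed to absorb), equipped with the Koszul differential coming from the $\fkg$-action and augmented over $\pi$; this is essentially the Koszul-type resolution of \cite[\S 7.3]{CS}. The two inputs I would need are: (a) each $P_i$ is a strong projective object of $\Smod_G$, since Schwartz inductions from the compact subgroup $K_0$ are projective in $\Smod_G$ (\cite{CS}); and (b) $P_\bullet\to\pi$ is exact as a complex of Fr\'echet spaces---a Poincar\'e lemma with Schwartz coefficients over the contractible Nash manifold $X$, which I would prove using an explicit Nash contraction of $X$ together with the co-sheaf/partition-of-unity technology for Schwartz functions recalled in Section~\ref{sec-Schwartz fun}. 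With (a) and (b), $P_\bullet$ computes $\rmh^{\CS}_\bullet(G,\pi)$.

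Next I would apply the $G$-coinvariants functor termwise. Shapiro's lemma for Schwartz homology gives $\rmh^{\CS}_i\big(G,\SInd_{K_0}^{G}W\big)\simeq\rmh^{\CS}_i(K_0,W)$, and since $K_0$ is compact the right-hand side vanishes for $i>0$ and equals $W_{K_0}\simeq W^{K_0}$ for $i=0$, an isomorphism of Fr\'echet spaces via the averaging projection. Hence the complex $(P_\bullet)_G$ is identified, as a complex of Fr\'echet spaces, with the relative Chevalley--Eilenberg complex $\big(\wedge^\bullet(\fkg/\fkk)\,\widehat{\otimes}\,\pi\big)^{K_0}$ carrying the Koszul differential, whose homology equipped with the subquotient topology is by definition $\rmh_\bullet(\fkg,K;\pi)$; tracking the subquotient topologies through each of these identifications yields the asserted topological isomorphism. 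Equivalently, one can avoid choosing a resolution: both $\rmh^{\CS}_\bullet(G,-)$ and $\rmh_\bullet(\fkg,K;-)$ are homological $\delta$-functors on $\Smod_G$ agreeing in degree $0$, and the Schwartz inductions $\SInd_{K_0}^{G}W$ are simultaneously acyclic for and adapted to both---the vanishing $\rmh_{>0}(\fkg,K;\SInd_{K_0}^{G}W)=0$ being the same Poincar\'e lemma over $X$---so the two $\delta$-functors coincide.

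The main obstacle is step (b): the topological Poincar\'e lemma, i.e.\ proving that the complex of Schwartz sections over the contractible Nash manifold $X=G/K_0$ is exact as a complex of Fr\'echet spaces, and that all the modular characters of $G$ and $K_0$ combine---thanks to the normalization of Schwartz induction---so that the augmentation is onto $\pi$ rather than a twist of it and the homological degrees match up. Everything after that is formal bookkeeping with Shapiro's lemma, exactness of Schwartz induction, and the comparison theorem for the topology on homology.
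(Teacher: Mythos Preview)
The paper does not prove this statement at all: it is simply quoted as \cite[Theorem~7.7]{CS} and used as a black box, so there is no ``paper's own proof'' to compare against. Your outline is essentially the argument of Chen--Sun in \cite{CS}: one takes the Koszul-type resolution $P_i=\SInd_{K_0}^{G}(\wedge^i(\fkg/\fkk)\widehat\otimes\pi)$ of \cite[\S7.3]{CS}, observes it is a strong projective resolution, and then identifies $(P_\bullet)_G$ with the relative Chevalley--Eilenberg complex via Shapiro's lemma and compactness of $K_0$. So your approach is correct and is the same as the cited reference; there is nothing further to compare with in this paper.
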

As pointed out by Dipendra Prasad, the Euler-Poincar\'e characteristic is a more natural and flexible invariant than $\dim \Hom_G(\pi,\tau)$ from some points of view. Similar to the $p$-adic case, it has the following basic properties.
\begin{proposition}\label{EP-prop}
    Let $G$ be a reductive almost linear Nash group, and let $\pi,\tau\in\Smod_G$ satisfying condition \eqref{cond_CW_nc}. Then:
    \begin{enumerate}
        \item If  
\[
  0\lra \pi_1\lra \pi\lra \pi_2\lra 0
\]
is an exact sequence in $\Smod$ and $\pi_j$ satisfies the homological finiteness condition with respect to $\tau$ for $j\in\{1,2\}$, then $\pi$ also satisfies the homological finiteness condition with respect to $\tau$ and
\[
\EP_G(\pi,\tau)=\EP_G(\pi_1,\tau)+\EP_G(\pi_2,\tau)
\]
\item The analogous property holds as (1) for variable $\tau$.
\item Assume moreover $\pi$ is Casselman-Wallach, then 
\[
 \Ext_G^i(\pi\widehat{\otimes} \tau^{\vee},\BC)\simeq \Ext_{\fkg,K}^i(\pi^K \otimes (\tau^K)^{\vee},\BC)\simeq\Ext^i_{\fkg,K}(\pi^{K},\tau^{K}).
\]
In particular, $\Ext_G^i(\pi\widehat{\otimes} \tau^{\vee},\BC)$ is finite-dimensional.
\item If $G$ has a non-compact center $Z_G$ and $\pi$ is Casselman-Wallach, then 
\[
\EP_G(\pi,\tau)=0.
\]
    \end{enumerate}
\end{proposition}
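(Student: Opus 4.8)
The four parts split into the formal additivity statements (1)--(2), the comparison-and-finiteness statement (3), and the central-character vanishing (4); I would prove them in that order, with (3) carrying essentially all the weight. For (1), tensoring $0\to\pi_1\to\pi\to\pi_2\to 0$ with the nuclear Fr\'echet space $\tau^{\vee}$ keeps it exact in $\Smod_G$, and in the Schwartz homology formalism of \cite{CS} a short exact sequence in the first argument induces a long exact sequence of the groups $\Ext^i_G(-,\BC)$. Homological finiteness of $\pi_1$ and $\pi_2$ makes the outer terms finite-dimensional in every degree, and they vanish for $i\gg 0$ by the Koszul-type resolution of \cite{CS}; the finite-dimensionality of $\Ext^i_G(\pi\widehat{\otimes}\tau^{\vee},\BC)$ and the additivity $\EP_G(\pi,\tau)=\EP_G(\pi_1,\tau)+\EP_G(\pi_2,\tau)$ then follow from the standard additivity of Euler characteristics along a bounded long exact sequence of finite-dimensional spaces. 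For (2) the only new ingredient is that the smooth contragredient is an exact contravariant functor on Casselman--Wallach representations, so $0\to\tau_2^{\vee}\to\tau^{\vee}\to\tau_1^{\vee}\to 0$ is exact; tensoring with the nuclear $\pi$ reduces (2) to the argument of (1).

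\textbf{Part (3).} I would first reduce both ends of the asserted chain of isomorphisms to Ext between $\pi$ and $\tau$: analytically $\Ext^i_G(\pi\widehat{\otimes}\tau^{\vee},\BC)\simeq\Ext^i_G(\pi,\tau)$ by Remark~\ref{iso-lr}, and algebraically $\Ext^i_{\fkg,K}(\pi^K\otimes(\tau^K)^{\vee},\BC)\simeq\Ext^i_{\fkg,K}(\pi^K,\tau^K)$ by the same adjunction performed inside the category of $(\fkg,K)$-modules (tensoring a projective resolution of $\pi^K$ with $(\tau^K)^{\vee}$ keeps it projective and exact, and $\Hom_{\fkg,K}(-\otimes(\tau^K)^{\vee},\BC)\simeq\Hom_{\fkg,K}(-,\tau^K)$ since $\tau^K$ is admissible). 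What remains is the comparison isomorphism $\Ext^i_G(\pi,\tau)\simeq\Ext^i_{\fkg,K}(\pi^K,\tau^K)$ for Casselman--Wallach $\pi,\tau$, together with finite-dimensionality of the right-hand side. For the comparison I would resolve $\pi$ in $\Smod_G$ by the finite Koszul-type resolution by Schwartz inductions from a minimal parabolic (\cite{CS}, Section 7.3), apply $\Hom_G(-,\tau)$, and use Frobenius reciprocity for Schwartz induction together with the Casselman--Wallach equivalence to identify the resulting finite complex with the one computing $\Ext^i_{\fkg,K}(\pi^K,\tau^K)$ from the analogous algebraic parabolic-Koszul resolution of $\pi^K$; in degree $0$ this is precisely $\Hom_G(\pi,\tau)\simeq\Hom_{\fkg,K}(\pi^K,\tau^K)$. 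Finite-dimensionality is then read off that finite complex, and is in any case classical for Ext between finite-length Harish--Chandra modules; it yields Hausdorffness via \cite[Lemma A.1]{CHM}. This comparison is the main obstacle: reconciling the analytic with the algebraic derived functors on Casselman--Wallach representations, and in particular keeping control of the Fr\'echet topologies on the $\Hom$-complexes, is where the genuine work lies, whereas everything around it is formal.

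\textbf{Part (4).} Set $M:=\pi^K\otimes(\tau^K)^{\vee}$; by (3) together with universal coefficients over $\BC$ one has $\dim\Ext^i_G(\pi\widehat{\otimes}\tau^{\vee},\BC)=\dim\rmh_i(\fkg,K;M)$ for every $i$. Since $Z_G$ is non-compact, choose a central subgroup $A_G\subset Z_G$ isomorphic to $\BR$ and a non-zero $X\in\Lie(A_G)_{\BC}$, and write $\fkg=\BC X\oplus\fkg''$ with $\fkg''$ a complementary ideal containing $\fkk$. Consider the Hochschild--Serre spectral sequence $\rmh_p(\BC X,\rmh_q(\fkg'',K;M))\Rightarrow\rmh_{p+q}(\fkg,K;M)$: the space $\rmh_q(\fkg'',K;M)$ is finite-dimensional, which is exactly the content of part (3) applied to a reductive group with Lie algebra $\fkg''$, so all pages are finite-dimensional. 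Decompose $\rmh_q(\fkg'',K;M)$ into generalized $X$-eigenspaces (a finite decomposition, $X$ being central); on an eigenspace with non-zero eigenvalue $X$ acts invertibly, so the two-term Koszul complex for $\BC X$ is acyclic, while on the remaining (nilpotent) part $\dim\rmh_0(\BC X,-)=\dim\rmh_1(\BC X,-)$ by rank--nullity. In either case the Euler characteristic of $\rmh_\bullet(\BC X,-)$ vanishes, hence $\sum_{p,q}(-1)^{p+q}\dim\rmh_p(\BC X,\rmh_q(\fkg'',K;M))=0$; since the Euler characteristic is preserved across the spectral sequence, $\EP_G(\pi,\tau)=\sum_i(-1)^i\dim\rmh_i(\fkg,K;M)=0$.
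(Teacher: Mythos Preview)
Your treatment of parts (1), (2) and (4) is essentially the same as the paper's (the paper does (4) via the Schwartz homology spectral sequence for $Z_G\subset G$ rather than in the $(\fkg,K)$ setting, but the idea is identical). The problem is in part (3).

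There is no ``Koszul-type resolution by Schwartz inductions from a minimal parabolic'' in \cite[Section 7.3]{CS}: that reference gives the relative Koszul resolution built from $\CS(G)\widehat{\otimes}\wedge^\bullet(\fkg/\fkk)$, which identifies Schwartz homology with $(\fkg,K)$-homology but is not a resolution by parabolic Schwartz inductions. More seriously, the Frobenius reciprocity you invoke does not exist in the direction you need: the paper explicitly flags as a fundamental obstacle that Schwartz induction has \emph{no} right adjoint in $\Smod_G$, so there is no formula for $\Hom_G(\SInd_{P^0}^G(\sigma),\tau)$ in terms of $P^0$-data. Your proposed identification of the analytic and algebraic $\Hom$-complexes therefore has no mechanism, and the ``genuine work'' you defer cannot be carried out along these lines.

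The paper circumvents this by working with homology rather than $\Ext$ from the start. It constructs the natural map
\[
\varsigma^i:\rmh_i(\fkg,K;\pi^K\otimes(\tau^K)^{\vee})\lra \rmh_i(\fkg,K;\pi\widehat{\otimes}\tau^{\vee})\simeq \rmh_i^{\CS}(G,\pi\widehat{\otimes}\tau^{\vee})
\]
and proves it is an isomorphism in two steps. First, by Casselman embedding one writes $\tau^{\vee}$ as a subobject of a principal series $I$ with quotient $J$, and a five-lemma/downward-induction argument on $i$ reduces to the case where $\tau^{\vee}$ is itself a principal series. Second, for $\tau^{\vee}=\Ind_{P^0}^G(\beta)$ one applies Mackey and Shapiro on \emph{both} sides---now legitimately, since Shapiro's lemma for Schwartz induction holds for \emph{homology}---to reduce to a comparison over $P^0$, and then a Hochschild--Serre spectral sequence plus the comparison theorem \cite[Theorem 5.2]{LLY21} at the $L^0$-level finishes. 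Finite-dimensionality is then read off on the $(\fkg,K)$ side, and one dualizes back to $\Ext$. This two-step reduction (Casselman embedding plus Shapiro for homology) is the missing idea in your sketch.
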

\begin{proof}
\noindent {For (1),(2):}    Since the proofs of (1) and (2) have no difference, we only prove (1) for simplicity. By homological finiteness
   \[
   \rmh_i^{\CS}(G,\beta\widehat{\otimes}\tau^{\vee})'\simeq \Ext^i_G(\beta\widehat{\otimes}\tau^{\vee},\BC) \quad \text{for every integer }i \text{ and }\beta=\pi_1,\pi_2 .
   \]
   Consequently, the result follows from the long exact sequence of Schwartz homology associated to the short exact sequence:
   \[
   0\lra \pi_1\widehat{\otimes}\tau^{\vee} \lra \pi\widehat{\otimes}\tau^{\vee}\lra \pi_2\widehat{\otimes}\tau^{\vee}\lra 0.
   \]
\noindent {For (3):} 
   First isomorphism in (3): Consider the map 
   \begin{equation}\label{comp-eq}
        \varsigma^i:\rmh_i(\fkg,K;\pi^K \otimes (\tau^K)^{\vee}) \lra \rmh_i(\fkg,K;\pi\widehat{\otimes} \tau^{\vee}) \simeq \rmh_i^{\CS}(G,\pi\widehat{\otimes} \tau^{\vee}),
   \end{equation}
   we claim this map is an isomorphism for every $i$. This is developed through following two steps.
   
   \noindent {\textbf{Step 1. Reduction to principal series.}} Assume that when $\tau^{\vee}$ is a principal series, $\varsigma^{i}$ is an isomorphism for each $i$. Let us prove the general case. Under this assumption, when $\tau^{\vee}$ is a generalized principal series, $\varsigma^i$ is also an isomorphism for each $i$. 
   
   For general $\tau^{\vee}$, by the Casselman embedding theorem, there exists a short exact sequence
   \[
   0\lra \tau^{\vee}\lra I\lra J\lra 0,
   \]
   where $I$ is a generalized principal series. Consider the commutative diagram of associated long exact sequence
   \[
   \begin{tikzcd}
       \rmh_{i+1}(\fkg,K;\pi^K \otimes J^K)\ar[r]\ar[d,"\varsigma_2^{i+1}"] & \rmh_{i}(\fkg,K;\pi^K \otimes (\tau^K)^{\vee}) \ar[r]\ar[d,"\varsigma^{i}"]& \rmh_{i}(\fkg,K;\pi^K \otimes I^K)\ar[d,"\varsigma_1^{i}"]\\
       \rmh_{i+1}(\fkg,K;\pi\widehat{\otimes} J)\ar[r] & \rmh_{i}(\fkg,K;\pi\widehat{\otimes} \tau^{\vee})\ar[r]& \rmh_{i}(\fkg,K;\pi\widehat{\otimes}I)
   \end{tikzcd}
   \]
   We argue by induction on $i$. When $i$ is large enough, by homology vanishing, $\varsigma^i$ is isomorphic. Assume that $\varsigma^i$ is isomorphic for every Casselman-Wallach representation $\pi,\tau$ when $i\geq k$. For $i=k-1$, by the above commutative diagram, $\varsigma^{i}$ is isomorphic since $\varsigma_2^{i+1}, \varsigma_1^i$ are  both isomorphisms. 
   
   \noindent {\textbf{Step 2. Proof for principal series.}} Let $\tau^{\vee}=\Ind_{P^0}^G(\beta)$, where $\beta$ is an irreducible finite-dimensional representation of $L^0$. Then we have 
   \[
  ( \tau^{\vee})^K \simeq P^{\fkg,K}_{\fkp^0,K^0}(\beta\otimes \delta_{P^0}^{-1/2})
   \]
   by the easy duality theorem \cite[Theorem 3.1]{KV} and the infinitesimal isomorphism theorem \cite[Proposition 11.47]{KV}. By Mackey isomorphism and Shapiro's lemma of both Schwartz homology and $(\fkg,K)$-homology, it suffices to show
   \[
    \varsigma_i:\rmh_i(\fkp^0,K^0;\pi^K\otimes\beta)\lra \rmh_i^{\CS}(P^0,\pi\widehat{\otimes}\beta) 
   \]
   is an isomorphism. We observe two homologies have spectral sequences corresponding through $\varsigma_i$ with following $E^{p,q}_2$-term
   \[
   \varsigma_i: \rmh_p(\fkl^0,K^0,\beta\otimes\rmh_q(\fku^0,\pi^K))\lra\rmh^{\CS}_p(L^0,\beta\widehat{\otimes}\rmh^{\CS}_q(U^0,\pi) ).
   \]
   Consequently, by comparison theorem (see for example \cite[Theorem 5.2]{LLY21}), $\varsigma_i$ is isomorphic at each $E_2^{p,q}$-term, hence, also isomorphic for the map~\eqref{comp-eq}. In particular, homologies in the map~\eqref{comp-eq} are finite dimensional. Therefore, we have
   \[
   \Ext_G^i(\pi\widehat{\otimes} \tau^{\vee},\BC)\simeq \rmh_i^{\CS}(G,\pi\widehat{\otimes} \tau^{\vee})^*\simeq \rmh_i(\fkg,K;\pi^K \otimes (\tau^K)^{\vee})^*\simeq \Ext_{\fkg,K}^i(\pi^K \otimes (\tau^K)^{\vee},\BC).
   \]
   The proof for the second isomorphism in (3) is similar as the smooth representations (see Remark~\ref{iso-lr}) , thus we omit it.
   
\noindent {For (4):} By the additive property in (1), we assume $\tau$ and $\pi$ are irreducible and $\omega_{\pi}=\omega_{\tau}$. Consider the spectral sequence of Schwartz homology
   \[
   E_2^{p,q}:=\rmh_p^{\CS}(G/Z_G,\rmh_q^{\CS}(Z_G,\pi\widehat{\otimes} \tau^{\vee}))\Rightarrow \rmh_{p+q}^{\CS}(G,\pi\widehat{\otimes} \tau^{\vee}),
   \]
   where 
   \[
   \rmh_p^{\CS}(G/Z_G,\rmh_q^{\CS}(Z_G,\pi\widehat{\otimes} \tau^{\vee}))\simeq \rmh_p^{\CS}(G/Z_G,\pi\widehat{\otimes} (\tau^{\vee}))\otimes\rmh_q^{\CS}(Z_G,\omega_{\pi}\otimes\omega_{\tau}^{-1}).
   \]
   Since $Z_G$ is not compact, we have
   \[
   \sum_{q}(-1)^q\dim \rmh_q^{\CS}(Z_G,\mathrm{triv})=0,
   \]
   which implies
\[
   \EP_G(\pi,\tau)= 
    \sum_{p}(-1)^p \dim \rmh_p^{\CS}(G/Z_G,\pi\widehat{\otimes} (\tau^{\vee}) )\sum_{q}(-1)^q \dim \rmh_q^{\CS}(Z_G,\omega_{\pi}\otimes\omega_{\tau}^{-1})=0.
\]

\end{proof}

We also have following Kunneth formula for extension groups and the  Euler-Poincar\'e characteristic.
\begin{proposition}
    Let $G_1,G_2$ be two reductive groups. Suppose that $E_i\in\Smod_{G_i}$ are nuclear and $F_i$ are Casselman-Wallach representation of $G_i$,  for $i=1,2$. Moreover, assume that $E_i$ satisfies the homological finiteness condition with respect to $F_i$, for $i=1,2$. Then we have an isomorphism 
    \[
    \Ext^i_{G_1\times G_2}((E_1\boxtimes E_2)\widehat{\otimes}(F_1^{\vee}\boxtimes F_2^{\vee}),\BC)\simeq \bigoplus_{p+q=i}\Ext^p_{G_1}(E_1\widehat{\otimes}F_1^{\vee},\BC)\otimes\Ext^q_{G_2}(E_2\widehat{\otimes}F_2^{\vee},\BC).
    \]
    Whence, we have
    \[
    \EP_{G_1\times G_2}(E_1\boxtimes E_2,F_1\boxtimes F_2)=\EP_{G_1}(E_1,F_1)\cdot \EP_{G_2}(E_2,F_2).
    \]
\end{proposition}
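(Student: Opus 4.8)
The plan is to reduce the statement to a Künneth formula for Schwartz homology, and that in turn to the elementary Künneth theorem over the field $\BC$. Put $V_j := E_j\widehat\otimes F_j^{\vee}$ for $j=1,2$; since $E_j$ is nuclear and the Casselman--Wallach representation $F_j^{\vee}$ is nuclear, each $V_j$ is a nuclear Fréchet $G_j$-representation, and the hypothesis that $E_j$ satisfies homological finiteness with respect to $F_j$ says exactly that $\Ext^i_{G_j}(V_j,\BC)$ --- equivalently $\rmh^{\CS}_i(G_j,V_j)$ --- is finite-dimensional for every $i$ (and vanishes for $i\gg 0$). In particular $\rmh^{\CS}_i(G_j,V_j)$ is Hausdorff, so $\rmh^{\CS}_i(G_j,V_j)'\simeq\Ext^i_{G_j}(V_j,\BC)$ by \cite[Proposition 5.3.2]{AGS15b}. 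Since $(E_1\boxtimes E_2)\widehat\otimes(F_1^{\vee}\boxtimes F_2^{\vee})\simeq V_1\widehat\otimes V_2$ as $G_1\times G_2$-representations, it suffices to prove the topological isomorphism
\[
\rmh^{\CS}_i\bigl(G_1\times G_2,\ V_1\widehat\otimes V_2\bigr)\ \simeq\ \bigoplus_{p+q=i}\rmh^{\CS}_p(G_1,V_1)\otimes\rmh^{\CS}_q(G_2,V_2)
\]
with finite-dimensional right-hand side; dualizing then gives the asserted $\Ext$-isomorphism, and the Euler--Poincaré identity follows on taking alternating sums of dimensions.

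To compute the left-hand side I would take finite-length strong projective (Koszul-type) resolutions $P^{(j)}_{\bullet}\to V_j$ in $\Smod_{G_j}$ as in \cite[7.3]{CS}, whose terms are summands of $\CS(G_j)\widehat\otimes U_j$ with $U_j$ nuclear, so that the coinvariant complexes $A^{(j)}_{\bullet}:=(P^{(j)}_{\bullet})_{G_j}$ are finite-length complexes of nuclear Fréchet spaces computing $\rmh^{\CS}_{\bullet}(G_j,V_j)$. Because $\CS(G_1\times G_2)\simeq\CS(G_1)\widehat\otimes\CS(G_2)$, the total complex $\Tot\bigl(P^{(1)}_{\bullet}\widehat\otimes P^{(2)}_{\bullet}\bigr)$ --- split exact with projective terms --- is again a strong projective resolution of $V_1\widehat\otimes V_2$ over $G_1\times G_2$ (compare \cite[Proposition 5.5]{CS}), and passing to coinvariants gives
\[
\rmh^{\CS}_{\bullet}\bigl(G_1\times G_2,\ V_1\widehat\otimes V_2\bigr)\ \simeq\ \rmh_{\bullet}\Bigl(\Tot\bigl(A^{(1)}_{\bullet}\widehat\otimes A^{(2)}_{\bullet}\bigr)\Bigr).
\]
Homological finiteness forces each $A^{(j)}_{\bullet}$ to be a \emph{strict} complex --- its homology is Hausdorff, hence the differentials have closed image --- with finite-dimensional homology.

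The heart of the proof is the topological Künneth computation for the double complex $A^{(1)}_{\bullet}\widehat\otimes A^{(2)}_{\bullet}$. Choosing in each degree a finite-dimensional subspace of the cycles of $A^{(2)}_{\bullet}$ projecting isomorphically onto $\rmh_q(A^{(2)}_{\bullet})$ produces a subcomplex $\iota\colon\rmh_{\bullet}(A^{(2)}_{\bullet})\hookrightarrow A^{(2)}_{\bullet}$ (with zero differential) that is a quasi-isomorphism. The crucial point --- and the only place the topology really enters --- is that $\id\widehat\otimes\iota$ still induces an isomorphism on total-complex homology: each $A^{(1)}_p$ is nuclear Fréchet, and $A^{(1)}_p\widehat\otimes(-)$ is exact on strict short exact sequences of Fréchet spaces (see \cite{Tr}, or the argument underlying \cite[Proposition 5.5]{CS}), so $\rmh_q\bigl(A^{(1)}_p\widehat\otimes A^{(2)}_{\bullet}\bigr)\simeq A^{(1)}_p\widehat\otimes\rmh_q(A^{(2)}_{\bullet})$; hence $\id\widehat\otimes\iota$ is an isomorphism on the $E_1$-pages of the vertical spectral sequences of the two double complexes, and therefore on all later pages and on the abutment. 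Since $\rmh_{\bullet}(A^{(2)}_{\bullet})$ carries the zero differential, $\Tot\bigl(A^{(1)}_{\bullet}\widehat\otimes\rmh_{\bullet}(A^{(2)}_{\bullet})\bigr)$ is a finite direct sum of shifted copies of $A^{(1)}_{\bullet}$, so its degree-$i$ homology is $\bigoplus_{p+q=i}\rmh_p(A^{(1)}_{\bullet})\otimes\rmh_q(A^{(2)}_{\bullet})$; all the identifications are natural via the external product, and both tensor factors are finite-dimensional by homological finiteness. This gives the displayed Künneth isomorphism, and with it the $\Ext$-isomorphism and the multiplicativity of $\EP$.

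I expect the main obstacle to be exactly this last step: making sure that passing from the algebraic tensor product to the completed one neither destroys the quasi-isomorphism nor obstructs the degeneration of the spectral sequence. Over a bare field this is the trivial Künneth theorem; the real work is to set up the reduction so that only nuclearity of the $A^{(1)}_p$ and strictness of the two complexes --- both guaranteed by nuclearity of the $V_j$ together with homological finiteness --- are ever used. A subsidiary point to check is that the external tensor product of strong projective resolutions over $G_1$ and $G_2$ is indeed a strong projective resolution over $G_1\times G_2$, with coinvariant complexes having nuclear terms.
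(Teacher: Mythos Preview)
Your proposal is correct and follows the same overall strategy as the paper: reduce to a K\"unneth isomorphism for Schwartz homology using the finite-dimensionality coming from homological finiteness, then dualize. The paper's proof, however, is a single citation: it invokes \cite[Theorem A.7]{Geng} as a black box for the K\"unneth formula
\[
\rmh_i^{\CS}(G_1\times G_2,\, V_1\widehat\otimes V_2)\ \simeq\ \bigoplus_{p+q=i}\rmh_p^{\CS}(G_1,V_1)\otimes\rmh_q^{\CS}(G_2,V_2),
\]
whereas you supply a direct argument --- tensoring Koszul-type resolutions, identifying the coinvariant complex of the product with the completed tensor of the coinvariant complexes, and then running a spectral-sequence comparison after splitting off the finite-dimensional homology of one factor. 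So your route is genuinely more self-contained: it essentially reproves the relevant case of the cited theorem. What you gain is independence from an external reference and a clear isolation of where nuclearity and strictness are used; what the paper gains is brevity. The two technical points you flag at the end --- that $\Tot(P^{(1)}_\bullet\widehat\otimes P^{(2)}_\bullet)$ is again a strong projective resolution over $G_1\times G_2$, and that coinvariants commute with the external completed tensor product on these projectives --- are exactly the ingredients packaged into \cite[Theorem A.7]{Geng}, and your outline handles them correctly.
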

\begin{proof}
 By the homological finiteness assumptions, we have 
 \[
 \rmh_p^{\CS}(G_j, E_j\widehat{\otimes}F_j^{\vee})\simeq \Ext^p_{G_j}(E_j\widehat{\otimes}F_j^{\vee},\BC)' 
 \]
 is finite-dimensional for every integer $p$ and $j=1,2$. Hence, by \cite[Theorem A.7]{Geng}, we have 
 \[
  \rmh_i^{\CS}(G_1\times G_2, (E_1\widehat{\otimes}F_1^{\vee})\boxtimes (E_2\widehat{\otimes}F_2^{\vee}))\simeq \bigoplus_{p+q=i}\rmh_p^{\CS}(G_1, E_1\widehat{\otimes}F_1^{\vee})\otimes \rmh_q^{\CS}(G_2, E_2\widehat{\otimes}F_2^{\vee})
 \]
 is finite-dimensional as well. The proposition now follows by simple calculation.
\end{proof}

\subsection{Mirabolic induction and Mackey induction}\label{induction section}
Let $\pi$ be a representation of $\GL_k$, and $\sigma$ be a representation of $M_{m}$, where $k+m=n$. Let $\sigma^{\flat}$ be a representation of $\ov{M_m}$. Embed $\GL_{k}$ into $\GL_n$ as the subgroup $\begin{pmatrix}
* & 0\\ 0 & I_{m}
\end{pmatrix}$, and $M_{m}$  as  $\begin{pmatrix}
I_{k} & 0\\ 0 & *
\end{pmatrix}$. The following conventions are freely used throughout the article.
\begin{enumerate}
    \item The \textbf{mirabolic induction} $\pi\times \sigma$ is defined as
    \[
    \SInd_{M_n\cap P_{k,m}}^{M_n} (\pi\boxtimes \sigma),
    \]
    where $\pi\boxtimes \sigma$ is a representation of $\GL_k\times M_m$, viewed as a representation of $M_n\cap P_{k,m}$ by trivial extension.
    \item The \textbf{opposite mirabolic induction} $\pi\mind \sigma$ for $M_n$ is defined as
    \[
    \SInd_{M_n\cap \ov{ P_{k,m}}}^{M_n} (\pi\boxtimes \sigma) 
    \]
    where $\pi\boxtimes \sigma$ is a representation of $\GL_k\times M_m$, and is viewed as a representation of $M_n\cap \ov{ P_{k,m}}$ by trivial extension.
    \item The \textbf{opposite mirabolic induction} $\pi\mind \sigma^{\flat}$ for $\ov{M_n}$ is defined as 
    \[
    \SInd_{\overline{M_n}\cap \ov{P_{k,m}}}^{\ov{M_n}} (\pi\boxtimes \sigma^{\flat}) 
    \]
    where $\pi\boxtimes \sigma^{\flat}$ is a representation of $\GL_k\times \ov{M_m}$, and is viewed as a representation of $\overline{M_n}\cap \ov{P_{k,m}}$ by trivial extension.
    \item The \textbf{Mackey induction} $I(\sigma)$ and \textbf{opposite Mackey induction} $\ov{I}(\sigma^{\flat})$ is defined as 
    \[
    I(\sigma):=\mathcal{S}\mathrm{Ind}_{H_{m+1,2}}^{M_{m+1}}(\sigma\boxtimes \psi_{m+1}) \text{ and } \ov{I}(\sigma^{\flat}):=\mathcal{S}\mathrm{Ind}_{\ov{H_{m+1,2}}}^{\ov{M_{m+1}}}(\sigma^{\flat}\boxtimes \psi_{m+1}).
    \]
    For different choices of $\psi$ in the definition of $\psi_{m+1}$, the (opposite) Mackey inductions are isomorphic. Moreover, when $\sigma$(resp. $\sigma^{\flat}$) is irreducible, $I(\sigma)$(resp. $\ov{I}(\sigma^{\flat})$) is also irreducible by Theorem~\ref{Mackey theory}.
    \item The \textbf{trivial extension} $E(\pi)$ is defined as a $M_{k+1}$-representation trivially extending $\pi$. The \textbf{opposite trivial extension} $\ov{E}(\pi)$ is defined as a $\ov{M_{k+1}}$-representation trivially  extending $\pi$.
\end{enumerate}

Now we turn to other classical groups case. Inside $G_n$, let $\GL_n$ be the standard Levi subgroup of $C_n$, that is,
\[ \left\{ \begin{pmatrix}
    A_n g^{-t}A_n & 0\\
    0 & g
\end{pmatrix}, g\in \GL_n\right\},\ \text{where}\ A_n=\left( \begin{smallmatrix}
& & 1\\ & \begin{sideways}$\ddots$\end{sideways} &\\ 1 &&
\end{smallmatrix}\right).
\]
Hence, $\GL_n\cap R_{n-1,1}\simeq P_{n-1,1}$. The following conventions are freely used throughout the article. Let $\sigma$ be a representation of $P_{n-1,1}$ and $\beta$ be a representation of $R_{n-2,1}$.
\begin{enumerate}
    \item The \textbf{parabolic induction} $M(\sigma)$ is defined as 
    \[
    \SInd_{C_n\cap R_{n-1,1}}^{R_{n-1,1}}(\sigma),
    \]
    where $\sigma$ is viewed as a representation of $C_n\cap R_{n-1,1}$ by trivial extension.
    \item The \textbf{opposite parabolic induction} $\ov{M}(\sigma)$ is defined as 
    \[
    \SInd_{\ov{C_n}\cap R_{n-1,1}}^{R_{n-1,1}}(\sigma),
    \]
    where $\sigma$ is viewed as a representation of $\ov{C_n}\cap R_{n-1,1}$ by trivial extension.
    \item The \textbf{Mackey induction} $I(\beta)$ is defined as
    \[
    \SInd_{R_{n-2,1}\ltimes E_n}^{R_{n-1,1}} (\beta\boxtimes \psi_n),
    \]
    Here, we embed $R_{n-2,1}$ into $R_{n-1,1}$ as the embedding of the subalgebas $\mathfrak{r}_{n-2,1}\subset \mathfrak{r}_{n-1,1}$:
    \[\left\{\begin{pmatrix}  x & * & * & *  \\  0 & * & * & *   \\   0 & * & * & *  \\ 0 & 0 & 0 & x  
    \end{pmatrix}\in \mathfrak{g}_{n-1} \ \middle|\ x\in \mathbf{k}\right\}\hookrightarrow \left\{\begin{pmatrix}
    x & 0 & 0 & 0 & 0 & 0 \\ 0 & x & * & * & * & 0 \\ 0
    & 0 & * & * & * & 0 \\ 0 & 0 & * & * & * & 0 \\ 0 & 0 & 0 & 0 & x & 0 \\ 0 & 0 & 0 & 0 & 0 & x
    \end{pmatrix}\in  \mathfrak{g}_n \ \middle|\ x\in \mathbf{k} \right\}. \]
    Then $R_{n-2,1}\ltimes E_n$ is the group of stabilizers of $\psi_n$ in $R_{n-1,1}$. When $\beta$ is irreducible, then $I(\beta)$ is also irreducible.
    \item Let $\pi$ be a representation of $G_{n-1}\times \GL_1$. The \textbf{trivial extension} $E(\pi)$ is defined as a $R_{n-1,1}$-representation trivially extends from $\pi$.
\end{enumerate}
For every induction, we use script ``u" to indicate the \textbf{un-normalized induction}. We have the following associative law for mirabolic induction and Mackey induction.
\begin{lemma}\label{inductive_arg0}
    Let $\pi$ be a representation of $\GL_n$, $\tau$ be a representation of $\GL_m$, and $\sigma$ be a representation of $M_m$. Then we have natural isomorphisms:
\begin{enumerate}
    \item $\pi\times E(\tau)\simeq E(\pi\times \tau)$ and $\pi\times I(\sigma)\simeq I(\pi\times \sigma)$;
    \item $\pi\mind \ov{I}^k\ov{E}(\tau)\simeq \ov{I}^k\ov{E}(\tau\times \pi)$.
\end{enumerate}
\end{lemma}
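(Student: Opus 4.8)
The plan is to unwind every functor in the statement ($\pi\times({-})$, $E$, $I$, and their opposite/transpose counterparts $\pi\mind({-})$, $\ov E$, $\ov I$) into a single normalized Schwartz induction $\SInd$ from an explicit subgroup of $P_n$ (respectively $\ov{P_n}$), and then to deduce each isomorphism from three formal inputs. The first is transitivity of Schwartz induction in stages, $\SInd^G_H\circ\SInd^H_K\simeq\SInd^G_K$ (see \cite{CS,Fd}). The second is a projection formula: inflation along the quotient map $\varpi\colon P_N\twoheadrightarrow\GL_{N-1}$ with kernel $V_N$ commutes with Schwartz induction from a subgroup $H\subset\GL_{N-1}$ and its preimage $\varpi^{-1}(H)$, because the canonical map $P_N/\varpi^{-1}(H)\to\GL_{N-1}/H$ is an isomorphism of Nash manifolds and pulling back a tempered bundle along it carries Schwartz sections to Schwartz sections (this follows from the co-sheaf formalism of Section~\ref{sec-Schwartz fun}). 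The third input is explicit matrix identifications of the subgroups involved, together with the observation that in each case the relevant group acts with a single orbit on the pertinent flag-type variety, so that only transitivity — not a full Mackey decomposition — is needed.

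For the first isomorphism in (1), set $n=k+m+1$ and write $\pi\times E(\tau)=\SInd^{P_n}_{P_n\cap P_{k,m+1}}\bigl(\pi\boxtimes E(\tau)\bigr)$. A direct inspection of matrices shows that $P_n\cap P_{k,m+1}$ coincides with the preimage $\varpi^{-1}(P_{k,m})$ of the standard parabolic $P_{k,m}\subset\GL_{n-1}$, and that under this identification $\pi\boxtimes E(\tau)$ is the inflation of $\pi\boxtimes\tau$ along $\varpi^{-1}(P_{k,m})\to P_{k,m}\to\GL_k\times\GL_m$. The projection formula then yields
\[
\pi\times E(\tau)\simeq\SInd^{P_n}_{\varpi^{-1}(P_{k,m})}\bigl(\mathrm{Infl}(\pi\boxtimes\tau)\bigr)\simeq\mathrm{Infl}\bigl(\SInd^{\GL_{n-1}}_{P_{k,m}}(\pi\boxtimes\tau)\bigr)=E(\pi\times\tau),
\]
provided the normalizing half-modular characters match: since $\delta_{P_n}$ restricts to $|\det|$ on $\GL_{n-1}$ and to $1$ on $V_n$, while $\delta_{\varpi^{-1}(P_{k,m})}$ restricts to $\delta_{P_{k,m}}\cdot|\det|$ on $P_{k,m}$ and to $1$ on $V_n$, the ratio $(\delta_{\varpi^{-1}(P_{k,m})}/\delta_{P_n})^{1/2}$ equals the inflation of $\delta_{P_{k,m}}^{1/2}$, so the normalized Schwartz induction on the left agrees on the nose with the normalized parabolic induction defining $\pi\times\tau$ on the right.

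The remaining isomorphisms run on the same machine. For $\pi\times I(\sigma)$ one unwinds $I(\sigma)=\SInd^{P_{m+1}}_{H_{m+1,2}}(\sigma\boxtimes\psi_{m+1})$, uses transitivity to realize $\pi\times I(\sigma)$ as $\SInd^{P_n}$ from a subgroup of the shape $(\GL_k\times H_{m+1,2})\ltimes U_{k,m+1}$ of the representation assembled from $\pi$, $\sigma$ and the character $\psi_{m+1}$, and then observes that the $\GL_k$-block, the unipotent $U_{k,m+1}$ and the $\psi$-character occupy disjoint matrix positions, so this subgroup and representation are exactly those defining $I(\pi\times\sigma)$; the modular-character check is as above. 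For part (2) we induct on $k$: it suffices to establish the base case $\pi\mind\ov E(\tau)\simeq\ov E(\tau\times\pi)$ together with the commutation $\pi\mind\ov I({-})\simeq\ov I(\pi\mind{-})$, and both are obtained from the two isomorphisms of (1) by transporting through the transpose-inverse $g\mapsto g^{-t}$, under which $\ov E$, $\ov I$ and $\pi\mind({-})$ are the images of $E$, $I$ and $\pi\times({-})$; the permutation-matrix conjugation implicit in this transport interchanges the two diagonal blocks of the Levi, which is precisely why the interior representation appears as $\tau\times\pi$ rather than $\pi\times\tau$.

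I expect the substantive work here to be bookkeeping rather than ideas. The main obstacles are: (i) the elementary but tedious matrix identifications of $P_n\cap P_{k,m+1}$, of $H_{m+1,2}$ and its relatives, and of their opposites, together with the orbit count guaranteeing a single orbit; (ii) matching the normalizing characters $\delta^{1/2}$ so that the two normalized inductions coincide exactly and not merely up to an unramified twist — this is the fiddliest point, and reduces to the $\delta$-computations on the unipotent radicals $V_n$, $U_{k,m}$ and on the $\GL$-Levi factors indicated above; and (iii) in part (2), correctly tracking the transpose, the accompanying permutation-matrix conjugation, and the resulting order of the factors in $\tau\times\pi$. The projection formula itself, if not already available in the cited references, is immediate from the Schwartz co-sheaf formalism recalled in Section~\ref{sec-Schwartz fun}.
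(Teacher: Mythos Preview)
Your proposal is correct and follows essentially the same route as the paper: both parts reduce to induction in stages for Schwartz induction, with the block-swap in (2) coming from conjugation by a permutation matrix (the paper conjugates $\pi\mind\tau$ directly, you package this as transport along $g\mapsto g^{-t}$, which amounts to the same thing). Your version is considerably more explicit than the paper's two-line proof --- the projection formula for inflation along $V_N$ and the $\delta^{1/2}$-matching are spelled out rather than absorbed into ``induction by stages'' --- but the substance is identical.
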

\begin{proof}
    (1) follows directly from induction by stages. For (2), by induction in stages, we have
    \[
    \pi\mind \ov{I}^k\ov{E}(\tau)\simeq \ov{I}^k\ov{E}(\pi\mind \tau).
    \]
    Conjugating by $\begin{pmatrix}
        0_{n\times m}& I_n \\
        I_{m} & 0_{m\times n}
    \end{pmatrix}$, we have the identification $\pi\mind \tau\simeq \tau\times \pi$.
\end{proof}

\section{Bernstein-Zelevinsky filtration}

In the study of branching problem about representations of general linear groups over non-Archimedean field, a key ingredient is the Bernstein-Zelevinsky filtration of the smooth representation, that is, as a representation of the mirabolic subgroup, it admits a finite filtration whose successive quotients are inductions from  its derivatives. This section discusses an analogous filtration of the Casselman-Wallach representation, also called Bernstein-Zelevinsky filtration, in the Archimedean case. Compared to the non-Archimedean case, the main difference is that we need to restrict ourselves to the case of Casselman-Wallach representation, and there are infinitely many composition factors as representation of the mirabolic subgroup.

In the context, we will use BZ-filtration to mean Bernstein-Zelevinsky filtration.

\subsection{Bernstein-Zelevinsky filtration for $\GL_n$}
 \label{BZ_gl-sec}
 
Let $\mathbf{k}=\mathbb{R}$ or $\mathbb{C}$, and $n=n_1+n_2$. Let $\pi$ be a representation of $\GL_{n_1}$, $\tau$ be a representation of $\GL_{n_2}$, and $\sigma$ be a representation of $M_{n_2}$. 

To use an inductive argument, we will study the relations between the representations, which are constructed from the same representation of a small subgroup but via different orders of the functors ``$E$" and ``$I$".  The relations are in Lemma \ref{inductive_arg0}, Lemma \ref{inductive_arg}, and Lemma \ref{open_orbit}. The methods to prove these lemmas are inspired by~\cite[Lemma 2.1]{S89}, where unitary Hilbert representations rather than smooth representations were considered. For smooth representations, Lemma~\ref{open_orbit} shows that $I(\pi\times\tau|_{M_{n_2}})\hookrightarrow\tau\overline{\times}E(\pi)$ is an embedding but not a surjection, while they are isomorphic in the setting of unitary Hilbert representations \cite[Lemma 2.1, (v)]{S89}. We emphasize that our proof is more canonical than \textit{loc.cit.} since our proof is independent of the coordinate choice.

We first recall the \textbf{Fourier transform} with respect to the unitary character $\psi$. Let $u$ denote the coordinate vector in the domain $\mathbf{k}^n$, and $\xi$ the coordinate vector in the codomain $(\mathbf{k}^n)^*$. Let $V$ be a Fr\'echet space. The Fourier transform $\CF : \CS(\mathbf{k}^n,V) \to \CS\left((\mathbf{k}^n)^*,V\right)$, defined by
\[
\CF_u(f)(\xi) := \int_{\mathbf{k}^n} f(u) \psi(\langle\xi,u\rangle)  du, \quad f \in \CS(\mathbf{k}^n,V),
\]
is an isomorphism of Fr\'echet spaces, where $du$ denotes the Euclidean measure.

\begin{lemma}\label{inductive_arg}
The representations $\sigma_1:=\pi\mind I(\sigma)$ and $\sigma_2:=I(\pi\mind \sigma)$ of $M_{n+1}$ are isomorphic to each other. 
\end{lemma}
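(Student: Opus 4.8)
\emph{Proof proposal.} The plan is to realize both $\sigma_1$ and $\sigma_2$ as Schwartz inductions from explicit subgroups of $P_{n+1}$ and then identify the two by a Fourier transform. First I would unwind both sides using transitivity of Schwartz induction (``induction in stages'', exactly as in the proof of Lemma~\ref{inductive_arg0}) together with the Mackey isomorphism for $\widehat{\otimes}$, rewriting $\sigma_1\cong\SInd_{J_1}^{P_{n+1}}(\rho_1)$ and $\sigma_2\cong\SInd_{J_2}^{P_{n+1}}(\rho_2)$. Writing elements of $\GL_{n+1}$ in block form with block sizes $n_1,\,n_2-1,\,1,\,1$, the representation $\rho_i$ is in both cases $\pi\boxtimes\sigma$ with the character $\psi$ placed on the $(3,4)$-entry and extended trivially on all remaining unipotent directions (up to the modular twist coming from normalized induction); write $V_\rho$ for its common underlying space. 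The subgroups $J_1,J_2$ are supported on the same block positions $(1,1),(2,2),(2,3),(3,3),(2,4),(3,4),(2,1)$ — with $\GL_{n_1}$ in position $(1,1)$, $\GL_{n_2-1}$ in $(2,2)$, and the blocks $(2,2),(2,3),(3,3)$ realizing the mirabolic $P_{n_2}$ — except that $J_1$ additionally contains the corner line $X$ in position $(3,1)$, whereas $J_2$ additionally contains the corner line $Y$ in position $(1,4)$, with $X\cong Y\cong\mathbf{k}^{n_1}$. Thus $J_1$ and $J_2$ share a common subgroup $J_0$ and $J_1=J_0\ltimes X$, $J_2=J_0\ltimes Y$.

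I would then pass to $\widetilde{J}:=\langle J_1,J_2\rangle=J_0\cdot X\cdot Y$. A short commutator computation shows $\widetilde{J}=J_1\ltimes Y=J_2\ltimes X$, that the $(3,4)$-line $Z$ (on which each $\rho_i$ restricts to the character $\psi$, and which lies in $J_0$) is central in the Heisenberg-type group $X\cdot Y\cdot Z$, and that the commutator map $X\times Y\to Z$ is the perfect pairing $(r,c)\mapsto rc$. By transitivity $\sigma_i\cong\SInd_{\widetilde{J}}^{P_{n+1}}\!\big(\SInd_{J_i}^{\widetilde{J}}(\rho_i)\big)$, so it suffices to construct an isomorphism of $\widetilde{J}$-representations $\SInd_{J_1}^{\widetilde{J}}(\rho_1)\cong\SInd_{J_2}^{\widetilde{J}}(\rho_2)$. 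Restricting sections to the section $Y$ (respectively $X$) of the quotient — legitimate for these almost linear Nash groups by the formalism of~\cite{CS} — identifies $\SInd_{J_1}^{\widetilde{J}}(\rho_1)$ with $\CS(Y)\widehat{\otimes}V_\rho$ and $\SInd_{J_2}^{\widetilde{J}}(\rho_2)$ with $\CS(X)\widehat{\otimes}V_\rho$; in these models $Y$ (respectively $X$) acts by translation, the opposite corner line acts by multiplication by the character dual to it under the pairing, $Z$ acts by the scalar $\psi$, and every remaining generator of $J_0$ acts pointwise in the $Y$- (respectively $X$-) variable through $\rho$.

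The intertwiner is the Fourier transform $\CF\colon\CS(Y)\widehat{\otimes}V_\rho\to\CS(X)\widehat{\otimes}V_\rho$ attached to $\psi$ and the perfect pairing $X\cong Y^{\vee}$ recalled just above the statement. Since $\CF$ exchanges translations with modulations, it matches the $X$-, $Y$- and $Z$-actions; since it acts trivially in the $V_\rho$-variable, it matches all generators of $J_0$ acting pointwise; and for the action of $\GL_{n_1}\subset J_0$ the change-of-variables Jacobian of $\CF$ supplies a factor $|\det|_{\mathbf{k}}$, which is precisely the discrepancy between the modular characters $\delta_{J_1}$ and $\delta_{J_2}$ (one scales the corner $X$ by $A\mapsto\,\cdot\,A^{-1}$, the other scales $Y$ by $A\mapsto A\,\cdot\,$). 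Hence $\CF$ is $\widetilde{J}$-equivariant, yielding $\SInd_{J_1}^{\widetilde{J}}(\rho_1)\cong\SInd_{J_2}^{\widetilde{J}}(\rho_2)$ and therefore $\sigma_1\cong\sigma_2$.

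The main obstacle is exactly this last normalization bookkeeping: one must carry the modular (half-density) twists of normalized Schwartz induction through both iterated inductions and verify that they differ on $J_0$ by precisely the Fourier Jacobian, so that $\CF$ itself — and not some $|\det|_{\mathbf{k}}$-twist of it — is the honest intertwiner; keeping the signs and $|\det|_{\mathbf{k}}$-powers straight is the delicate point. I also note that no conjugation inside $P_{n+1}$ can carry $J_1$ to $J_2$ (such an element would have to move the last column), which is why the Fourier transform is indispensable here and why the resulting argument is coordinate-free, in contrast to the unitary-Hilbert-space computation of~\cite[Lemma 2.1]{S89}.
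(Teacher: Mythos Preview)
Your proposal is correct and is exactly the paper's approach: your $J_1,J_2,\widetilde{J}$ are the paper's $Y_1,Y_2,Y$, and both arguments reduce via induction in stages to intertwining $\SInd_{J_1}^{\widetilde{J}}(\rho_1)$ with $\SInd_{J_2}^{\widetilde{J}}(\rho_2)$ by the Fourier transform on $\mathbf{k}^{n_1}$ --- the paper by an explicit coordinate computation of the two actions, you by packaging the same computation through the Heisenberg commutator pairing $X\times Y\to Z$. One small correction: the decompositions you write as semidirect products (e.g.\ $\widetilde{J}=J_1\ltimes Y$, $J_1=J_0\ltimes X$) are only set-theoretic product decompositions --- for instance conjugating $A\in\GL_{n_1}\subset J_1$ by a nontrivial $y\in Y$ produces a nonzero $(1,4)$-component, so $J_1$ is not normal in $\widetilde{J}$ --- but all your argument actually uses is that $Y$ (resp.\ $X$) is a Nash section of $J_1\backslash\widetilde{J}$ (resp.\ $J_2\backslash\widetilde{J}$), which holds and is precisely what the paper's realizations on $\Omega_1,\Omega_2$ encode.
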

\begin{proof}
Consider the subgroup $Y\subset M_{n+1}$,
\[\left\{\begin{pmatrix} a & 0  & b\\ c & d & e\\ 0 & 0  & 1\end{pmatrix} \ \middle| \ a\in \GL_{n_1}, b\in \mathbf{k}^{n_1\times 1}, c\in \mathbf{k}^{n_2\times n_1}, d\in M_{n_2}, e\in \mathbf{k}^{n_2\times 1}\right\}\]
and the subgroups of $Y$, 
\[Y_1=\left\{\begin{pmatrix} a & 0 & 0 \\ c & d & e\\ 0  & 0  & 1\end{pmatrix}\in Y\right\},\ Y_2=\left\{\begin{pmatrix} a & 0 & b\\ c' & d & e\\ 0  & 0  & 1\end{pmatrix}\in Y \ \middle| \ c'\in \begin{pmatrix} \mathbf{k}^{(n_2-1)\times n_1}\\ 0_{1\times n_1}\end{pmatrix}\right\}.\]
Taking the trivial extension to $c$, and the extension to $e$ by $\psi_{n_2}$,  one obtains the representation $(\pi\boxtimes \sigma \boxtimes \psi_{n_2})$ of $Y_1$. Let \[\gamma_1:=\SInd_{Y_1}^Y(\pi\boxtimes \sigma \boxtimes \psi_{n_2}).\]   
Taking the trivial extension to $c'$, and the extension to the last column by $\psi_{n}$,  one obtains the representation $(\pi\boxtimes \sigma \boxtimes \psi_{n})$ of $Y_2$. Let  \[\gamma_2:=\SInd_{Y_2}^Y(\pi\boxtimes \sigma \boxtimes \psi_{n}).\]
By induction in stages, $\sigma_i\simeq \SInd_{Y}^{M_{n+1}}(\gamma_i)$ for $i=1,2$.  To prove the lemma, it suffices to show $\gamma_1\simeq \gamma_2$.

Let 
\[y=\begin{pmatrix} a & 0 & b\\ c & d & e\\ 0 & 0 & 1\end{pmatrix}\in Y; \ c=\begin{pmatrix}
* \\ \mu
\end{pmatrix}, \mu\in \mathbf{k}^{1\times n_1}; \ e=\begin{pmatrix}
* \\ \lambda
\end{pmatrix}, \lambda\in \mathbf{k}.\]
The $\gamma_1$ can be realized on the space of Schwartz functions from 
\[\Xi_1=\left\{\begin{pmatrix} I_{n_1} & 0 & u\\ 0 & I_{n_2} & 0\\ 0 & 0 & 1\end{pmatrix}\ \middle| \  u\in \mathbf{k}^{n_1\times 1}\right\} \]
to the underlying space of $\pi\boxtimes \sigma$. 
And the action of $y$ on $f\in \mathcal{S}(\Xi_1,\pi\boxtimes\sigma)$ is given by
\[(\gamma_1(y)f)(u)=|\det(a)|_{\mathbf{k}}^{-\frac{1}{2}}\psi(\lambda-\mu\cdot a^{-1}\cdot (b+u))\big(\pi(a)\boxtimes \sigma(d)\big) f(a^{-1}(b+u)),\]

The $\gamma_2$ can be realized on the space of  Schwartz functions from 
\[\Xi_2=\left\{\begin{pmatrix} I_{n_1} & 0 & 0\\ c'' & I_{n_2} & 0\\ 0 & 0 & 1\end{pmatrix}\ \big\vert\  c''=\begin{pmatrix} 0_{(n_2-1)\times n_1}\\ v \end{pmatrix},  v \in \mathbf{k}^{1\times n_1}\right\}\]  
to the underlying space of $\pi\boxtimes \sigma$, and the action of $y$ on $h\in \mathcal{S}(\Xi_2,\pi\boxtimes\sigma)$ is given by
\[(\gamma_2(y)h)(v)=|\det(a)|_{\mathbf{k}}^{\frac{1}{2}}\psi(v\cdot b+\lambda)\big(\pi(a)\boxtimes \sigma(d)\big)h(\mu+v\cdot a).\]

Applying the Fourier transform to the variable $u$, and letting $\xi\in \mathbf{k}^{1\times n_1}$ be the dual variable after the Fourier transform, one has
\[\mathcal{F}_u\big(\gamma_1(y) f\big)(\xi)=|\det(a)|_{\mathbf{k}}^{\frac{1}{2}}\big(\pi(a)\boxtimes \sigma(d)\big)\CF_u(f)(\mu+\xi\cdot a)\cdot \psi(\xi\cdot b+\lambda).\]

This matches $\gamma_2$’s action under $\mathcal{F}_u$, that is, $\gamma_2\simeq \mathcal{F}_u\circ \gamma_1\circ \mathcal{F}_u^{-1}$. So $\gamma_1$ and $\gamma_2$ are isomorphic, and the lemma follows.
\end{proof}

Although our primary focus is on the BZ-filtration of $\pi|_{M_n}$ for a Casselman-Wallach representation $\pi$ of $\mathrm{GL}_n$, it is beneficial to consider representations of $M_n$ in a more general context. We therefore introduce a definition of the BZ-filtration for smooth representations of $M_n$. It will be shown that $\pi|_{M_n}$, for a Casselman-Wallach representation $\pi$ of $\mathrm{GL}_n$, admits a BZ-filtration.

\begin{definition}\label{def-BZ}
Let $\sigma$ be a smooth representation of $M_n$. We call the following datum \textbf{a level $\leq 1$ BZ-filtration} of $\sigma$:
A level $\leq 1$ filtration of $\sigma$ as in Definition \ref{def_fil}
such that 
\begin{itemize}
    \item Each $\sigma_{i,j}/\sigma_{i,j+1}$ is isomorphic to $I^{k_i}E(\pi_{i,j})$ for some $k_i$ (dependent on $i$ but independent of $j$) and irreducible representations $\pi_{i,j}$ of $\mathrm{GL}_{n-k_i-1}$,
    \item The real parts of the central characters satisfy $\mathrm{Re}(\omega_{\pi_{i,j}}) \leq \mathrm{Re}(\omega_{\pi_{i,j+1}})$ for all $j$. Moreover, for every $c \in \mathbb{R}$, there are finitely many $j$ such that $\mathrm{Re}(\omega_{\pi_{i,j}}) \leq c$.
\end{itemize}

For $r \geq 2$, we call the following datum \textbf{a level $\leq r$ BZ-filtration} of $\sigma$: A level $\leq r$ filtration of $\sigma$ as in Definition \ref{def_fil} such that the filtration on each $\sigma_{i,j}/\sigma_{i,j+1}$ is a level $\leq r-1$ BZ-filtration, and
\begin{itemize}
    \item Let $\Omega_{i,j}$ denote the set of $\mathrm{Re}(\omega_{\pi})$ with $I^kE(\pi)$ being the irreducible successive quotient in the filtration of $\sigma_{i,j}/\sigma_{i,j+1}$. Then, for each $i,j$,  $\min \Omega_{i,j} \leq \min \Omega_{i,j+1}$. Moreover, for every $c \in \mathbb{R}$, there are only finitely many $j$ such that $\min \Omega_{i,j} \leq c$.
\end{itemize}

We say that a representation $\sigma$ of $M_n$ admits a BZ-filtration if $\sigma$ admits a level $\leq r$ BZ-filtration for some finite $r \in \mathbb{Z}_{>0}$.
\end{definition}

The following lemma is useful to establish the existence of BZ-filtrations.
\begin{lemma}\label{ext_BZ}
Let $0\to \sigma^{\flat}\to\sigma\to \sigma^{\sharp}\to 0$ be an exact sequence of smooth representations of $M_n$. If both $\sigma^{\flat}$ and $\sigma^{\sharp}$ admit BZ-filtrations, then so does $\sigma$. If $\sigma$ admits a BZ-filtration, then so does $\sigma^{\flat}$. 
\end{lemma}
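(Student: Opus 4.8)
\emph{Plan.} I would prove the extension statement first, since it feeds into the second part; for the second part I would actually prove the sharper statement that a closed subrepresentation of a representation of $P_n$ carrying a level $\le r$ Bernstein--Zelevinsky filtration again carries one of level $\le r$. Two elementary facts recur. (a) A level $\le s$ Bernstein--Zelevinsky filtration is also a level $\le s'$ one for every $s' \ge s$ (pad it by a one-term outer layer and invoke Definition~\ref{def-BZ} recursively). (b) If $\beta = \varprojlim_j \beta/\beta_j$ topologically, the $\beta_j$ closed and decreasing with $\bigcap_j\beta_j = 0$, and $\beta' \subseteq \beta$ is closed, then $\beta' = \varprojlim_j \beta'/(\beta'\cap\beta_j)$ topologically: the image of the right side in $\beta$ is $\bigcap_j(\beta'+\beta_j)$, and for $v$ in it one writes $v = b_j' + c_j$ with $b_j'\in\beta'$, $c_j\in\beta_j$, observes that $(b_j')_j$ is Cauchy for the family $\{\beta_j\}$, hence converges in the closed subspace $\beta'$ to an element which must equal $v$.

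\emph{Extension.} The plan is to concatenate the given filtrations. Let $q\colon \sigma \to \sigma^{\sharp}$ be the projection. I would pull back the finite outer filtration of $\sigma^{\sharp}$ along $q$ and append to it the outer filtration of $\sigma^{\flat} = q^{-1}(0)$, then pull back the inner chains and, recursively, the lower-level filtrations in the same way. Since $q$ is an open surjection of Fr\'echet spaces it identifies each successive quotient coming from $\sigma^{\sharp}$ topologically with the corresponding one of $\sigma^{\sharp}$, while those coming from $\sigma^{\flat}$ are literally unchanged; because no inner chain mixes $\sigma^{\sharp}$-pieces with $\sigma^{\flat}$-pieces, the inverse-limit condition of Definition~\ref{def_fil} and the central-character conditions of Definition~\ref{def-BZ} are inherited verbatim. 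Using (a) to raise both filtrations to level $r = \max(r^{\flat}, r^{\sharp})$, the concatenation is a level $\le r$ Bernstein--Zelevinsky filtration of $\sigma$.

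\emph{Subrepresentation.} I would intersect the filtration of $\sigma$ with $\sigma^{\flat}$ at every level and induct on $r$. For $r = 1$: the successive quotients $I^k E(\pi_j)$ are irreducible (Mackey induction preserves irreducibility, \cite{Fd}), so $\sigma^{\flat}$ meets each of them in $0$ or in all of it; deleting the trivial steps leaves a subcollection of the $I^k E(\pi_j)$, so the monotonicity and discreteness of $\mathrm{Re}(\omega_{\pi_j})$ persist, and (b) gives the inverse-limit condition. For $r \ge 2$: by the extension statement, reduce to $\sigma$ having a single outer layer, i.e.\ an inner chain $\sigma = \sigma_0 \supset \sigma_1 \supset \cdots$ with $\bigcap_j\sigma_j = 0$, each $\sigma_j/\sigma_{j+1}$ of level $\le r-1$, and $m_j := \min\Omega_{0,j}$ increasing to $+\infty$. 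Intersecting with $\sigma^{\flat}$ and deleting trivial steps, the inductive hypothesis makes each nonzero piece a closed subrepresentation of $\sigma_j/\sigma_{j+1}$, hence of level $\le r-1$ \emph{uniformly in $j$}, with set of real parts contained in $\Omega_{0,j}$ and so with minimum $\ge m_j$, still diverging. These minima need not increase along the chain; but, being bounded below and diverging, the chain can be grouped greedily into consecutive finite blocks with strictly increasing blockwise minima, and each block --- a finite iterated extension of level $\le r-1$ pieces --- has level $\le r-1$ by the extension statement, with set of real parts the union of those of its members. The coarsened chain, together with (b), is a level $\le r$ Bernstein--Zelevinsky filtration of $\sigma^{\flat}$.

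\emph{Main obstacle.} The one genuinely delicate point --- and the reason the finiteness clauses sit in Definition~\ref{def-BZ} --- is that restriction to $\sigma^{\flat}$ can destroy the monotonicity of $\min\Omega_{i,j}$ along an inner chain: restriction may retain a deep, large-exponent piece of one layer but a shallow, small-exponent piece of the next. This is repaired by the greedy re-bracketing, which is legitimate precisely because the exponents are bounded below and tend to $+\infty$; the irreducibility of the $I^k E(\pi)$ supplied by \cite{Fd} is what prevents restriction from creating ill-behaved new subquotients at the bottom level in the first place. The remaining topological claims --- closed images and the $\varprojlim$-identifications --- are formal, following from (b) and from the openness of quotient maps of Fr\'echet spaces.
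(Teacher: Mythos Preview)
Your proposal is correct and follows the same strategy as the paper: concatenate the two filtrations for the extension claim, and intersect with $\sigma^{\flat}$ for the subrepresentation claim, with the inverse-limit identification being the only nontrivial topological point. Your argument (b) for that identification is equivalent to the paper's, which phrases it as a pair of mutually inverse embeddings $\varsigma_1,\varsigma_2$ between $\sigma_i^{\flat}/\sigma_{i+1}^{\flat}$ and $\varprojlim_j \sigma_i^{\flat}/\sigma_{i,j}^{\flat}$, using closedness of $\sigma_i^{\flat}/\sigma_{i+1}^{\flat}$ inside $\sigma_i/\sigma_{i+1}$.

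One difference worth noting: for level $r\ge 2$ you explicitly observe that intersecting with $\sigma^{\flat}$ may destroy the monotonicity $\min\Omega_{i,j}\le\min\Omega_{i,j+1}$ (a subset of $\Omega_{i,j}$ can have a larger minimum), and you repair this by re-bracketing the inner chain into finite blocks with strictly increasing block minima---legitimate precisely because the minima are bounded below and diverge. The paper's proof handles the level~$1$ case in detail (where monotonicity survives trivially, since the irreducible successive quotients are either kept or dropped) and then says ``by the similar argument'' for general $r$; your re-bracketing makes explicit what that similar argument must actually do. So your treatment of this point is more complete than the paper's sketch, but the underlying approach is the same.
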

\begin{proof}
Assume that $\sigma^{\flat}$ (resp. $\sigma^{\sharp}$) admits a level $\leq r'$ (resp. $\leq r''$)  BZ-filtration. By combining these two filtrations together, one obtains a level $\leq \max\{r',r''\}$ BZ-filtration of  $\sigma$. 

Conversely, first assume that $\sigma$ admits level $\leq 1$ BZ-filtration $\{\sigma_i, \sigma_{i,j}\}$. Then $\sigma_i^{\flat}=\sigma_i\cap \sigma^{\flat}$, and $\sigma_{i,j}^{\flat}=\sigma_{i,j}\cap \sigma^{\flat}$ are all closed subrepresentations. We claim that $\{\sigma_i^{\flat}, \sigma_{i,j}^{\flat}\}$ gives a level $\leq 1$  BZ-filtration of $\sigma^{\flat}$.

By Theorem~\ref{Mackey theory}, the successive quotient in the BZ-filtration of $\sigma$ is algebraically irreducible. Hence, it suffices to prove  $\sigma^{\flat}_i/\sigma^{\flat}_{i+1}\simeq \mathop{\varprojlim}\limits_j \sigma_i^{\flat}/\sigma_{i,j}^{\flat}$.  On the one hand, the image of the map 
\[\varsigma_1: \sigma_i^{\flat}/\sigma_{i+1}^{\flat}\hookrightarrow \sigma_i/\sigma_{i+1}\simeq \mathop{\varprojlim}\limits_j \sigma_i/\sigma_{i,j}\] 
is inside $\mathop{\varprojlim}\limits_j \sigma_i^{\flat}/\sigma_{i,j}^{\flat}$. On the other hand, since $\sigma^{\flat}_i/\sigma^{\flat}_{i+1}$ is closed in $\sigma_i/\sigma_{i+1}$, the image of the map
\[\varsigma_2:
\mathop{\varprojlim}\limits_j \sigma^{\flat}_i/\sigma^{\flat}_{i,j}\hookrightarrow \mathop{\varprojlim}\limits_j \sigma_i/\sigma_{i,j}\simeq \sigma_i/\sigma_{i+1}
\]
is inside $\sigma^{\flat}_i/\sigma^{\flat}_{i+1}$. Then the result follows from $\varsigma_1\circ \varsigma_2=\varsigma_2\circ \varsigma_1=id$.

In general, when $\sigma$ admits level $\leq r$ BZ-filtration, by the similar argument as above, one can also obtain a level $\leq r$ BZ-filtration of $\sigma^{\flat}$.

\end{proof} 

Recall that $P_{n_1,n_2}$ denotes the standard parabolic subgroup with Levi subgroup $\GL_{n_1}\times \GL_{n_2}$, and $\overline{P_{n_1,n_2}}$ denotes the opposite parabolic subgroup of $P_{n_1,n_2}$.
\begin{lemma}\label{open_orbit}
Let $\pi_2$ be a Casselman-Wallach representation of $\GL_{n_2}$, which admits a level $\leq r$ BZ-filtration, and let $\pi_1$ be a Casselman-Wallach representation of $\GL_{n_1-1}$.  Then the representation  $\sigma:= \pi_2\overline{\times}E(\pi_1)$ of $M_n$ admits a level $\leq r$ BZ-filtration.
\end{lemma}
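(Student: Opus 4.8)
The plan is to reduce, via the associativity machinery of Lemma~\ref{inductive_arg0} together with one extra ``mixed associativity'' identity, to the special case $n_1=1$; to treat that case by a Fourier transform along the unipotent radical (in the spirit of Lemma~\ref{inductive_arg}), which produces a two-term short exact sequence of $P_n$-representations whose two pieces each carry a level $\leq r$ Bernstein--Zelevinsky filtration; and then to reassemble using Lemma~\ref{ext_BZ}. I expect the heart of the matter, and the main obstacle, to be the precise analysis of the ``closed-orbit'' piece.

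\emph{Reduction to $n_1=1$.} Since $P_{n_1}\cap P_{n_1-1,1}=P_{n_1}$, induction in stages gives $E(\pi_1)\cong \pi_1\times E(\mathbf{1}_{P_1})$, and a further induction-in-stages argument combined with conjugation by the permutation matrix that interchanges the first two block-groups while fixing the last coordinate (exactly as in the proof of Lemma~\ref{inductive_arg0}(2)) yields the identity $\pi_2\overline{\times}E(\pi_1)\cong \pi_1\times\bigl(\pi_2\overline{\times}E(\mathbf{1}_{P_1})\bigr)$. It therefore suffices to prove: (i) that $\pi_2\overline{\times}E(\mathbf{1}_{P_1})\cong \SInd_{\GL_{n_2}}^{P_{m}}(\pi_2)$ (where $m:=n_2+1$, and this identification is immediate from the definition since $P_m\cap\overline{P_{n_2,1}}=\GL_{n_2}$) has a level $\leq r$ Bernstein--Zelevinsky filtration; and (ii) that the functor $\pi_1\times(-)$ preserves the property of having a level $\leq r$ Bernstein--Zelevinsky filtration. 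For (ii): Lemma~\ref{inductive_arg0}(1) sends a successive quotient $I^kE(\kappa)$ to $I^kE(\pi_1\times\kappa)$; the representation $\pi_1\times\kappa$ is a finite-length Casselman--Wallach representation of a general linear group, all of whose irreducible subquotients share the central character $\omega_{\pi_1}\omega_\kappa$, so refining each infinite chain of the filtration by the finite composition series of the various $\pi_1\times\kappa$ keeps the monotonicity of $\mathrm{Re}(\omega)$ and the ``finitely many below any bound'' property (the real parts are merely shifted by $\mathrm{Re}(\omega_{\pi_1})$), while exactness of Schwartz induction together with the finite length of all truncations preserves the inverse-limit condition of Definition~\ref{def_fil}, in the manner of Lemma~\ref{ext_Hausd}. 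Note also that $I(-)$ preserves level $\leq r$ Bernstein--Zelevinsky filtrations, since it merely raises each $I^kE(\cdot)$ to $I^{k+1}E(\cdot)$ without altering the $E$-inputs.

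\emph{The core case.} Realize $\SInd_{\GL_{n_2}}^{P_m}(\pi_2)$ as Schwartz sections of a tempered bundle $\CE$ over $\GL_{n_2}\backslash P_m\cong \mathbf{k}^{n_2}$, the identification being through the unipotent radical $V_m\cong\mathbf{k}^{n_2}$; on this model $\GL_{n_2}$ acts linearly up to a fixed $|\det|$-twist coming from the normalization, and $V_m$ acts by translations. Applying the Fourier transform $\CF$ along $V_m$ turns the $V_m$-action into an action by unitary characters, while $\GL_{n_2}$ now acts linearly on the dual space $\mathbf{k}^{n_2}$ with exactly two orbits, $\{0\}$ and $\mathbf{k}^{n_2}\setminus\{0\}$, both of which are stable under all of $P_m$. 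By the co-sheaf property of Schwartz functions (Proposition~\ref{co-sheaf prop}) this gives a short exact sequence of $P_m$-representations
\[
0\longrightarrow \CS\bigl(\mathbf{k}^{n_2}\setminus\{0\},\CF\CE\bigr)\longrightarrow \SInd_{\GL_{n_2}}^{P_m}(\pi_2)\longrightarrow \CS_{\{0\}}\bigl(\mathbf{k}^{n_2},\CF\CE\bigr)\longrightarrow 0 .
\]
Computing the stabilizer of a nonzero covector in $P_m$ (it is a conjugate of $H_{m,2}=P_{m-1}\ltimes V_m$, and the bundle restricts there to $\pi_2|_{P_{m-1}}\boxtimes\psi_m$) identifies the open piece with $I(\pi_2|_{P_{m-1}})=I(\pi_2|_{P_{n_2}})$; this is consistent with the non-surjective embedding $I(\pi\times\tau|_{P_{n_2}})\hookrightarrow\tau\overline{\times}E(\pi)$ highlighted in the text. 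Since $\pi_2|_{P_{n_2}}$ carries a level $\leq r$ Bernstein--Zelevinsky filtration by hypothesis and $I(-)$ preserves this, the open piece has a level $\leq r$ filtration. For the closed piece, Borel's lemma identifies $\CS_{\{0\}}(\mathbf{k}^{n_2},\CF\CE)$ with the inverse limit of its finite jet truncations; on the associated graded for the jet filtration $V_m$ acts trivially, so the $\ell$-th graded piece is the trivial extension $E\bigl(\operatorname{Sym}^\ell\bigl((\mathbf{k}^{n_2})^{*}\bigr)\otimes\pi_2\otimes(\text{fixed normalizing }|\det|\text{-twist})\bigr)$ of a finite-length Casselman--Wallach representation of $\GL_{n_2}$, and the change-of-variables Jacobian $|\det g|$ coming from the Fourier transform makes the real parts of the central characters of these pieces tend to $+\infty$ with $\ell$. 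Refining the jet filtration by the finite composition series of each $\operatorname{Sym}^\ell\otimes\pi_2$ therefore exhibits a level $\leq 1$ Bernstein--Zelevinsky filtration of the closed piece: the real part of the central character is constant within each jet degree and strictly increasing between degrees (so the monotonicity and ``finitely many below any bound'' axioms of Definition~\ref{def-BZ} hold), and completeness holds because each finite truncation has finite length. Applying Lemma~\ref{ext_BZ} to the displayed short exact sequence then gives a level $\leq\max\{r,1\}=r$ Bernstein--Zelevinsky filtration of $\SInd_{\GL_{n_2}}^{P_m}(\pi_2)$, hence of $\sigma=\pi_2\overline{\times}E(\pi_1)$.

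\emph{Expected difficulty.} The delicate step is the identification of the closed-orbit contribution: one must verify, by carefully tracking the Fourier transform and Borel's lemma, both that each graded piece of the jet filtration is genuinely a trivial extension $E(-)$ (so that it has exactly the shape $I^{0}E(\cdot)$ demanded by Definition~\ref{def-BZ}) and -- crucially -- that the normalizing $|\det|$-twist causes the real parts of the central characters to \emph{increase} with the jet degree, so that the infinite jet tower is an admissible level $\leq 1$ Bernstein--Zelevinsky filtration rather than violating the discreteness requirement. A secondary point requiring care is that the functors $\pi_1\times(-)$ and $I(-)$ interact correctly with the inverse limits appearing in Definition~\ref{def_fil}; this is handled by exactness of Schwartz induction together with the finite length of all the truncations involved, as in Lemmas~\ref{ext_Hausd} and~\ref{fil-lem}.
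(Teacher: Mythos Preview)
Your argument is correct, and the core mechanism---Fourier transform along the last-column variable, splitting into an open-orbit Mackey piece and a closed-orbit piece handled by Borel's lemma---is exactly the paper's. The one methodological difference is that you first reduce to $n_1=1$ via the identity $\pi_2\overline{\times}E(\pi_1)\cong\pi_1\times\bigl(\pi_2\overline{\times}E(\mathbf{1}_{P_1})\bigr)$ (valid by conjugation by the block-swap permutation, which lies in $P_n$), whereas the paper carries $\pi_1$ through the entire Fourier computation. The direct route forces the paper to build explicit intertwining operators $\mathcal{T}_r$ over affine charts $\Omega_r\subset\GL_{n_2}$ to identify the open piece as $I(\pi_1\times\pi_2|_{P_{n_2}})$; your reduction bypasses this, obtaining the same identification afterward via Lemma~\ref{inductive_arg0}(1). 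Your route is cleaner for this lemma; the paper's hands-on chart calculation, however, is the template it reuses for the classical-group filtration in Proposition~\ref{open-dis}.

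One small correction on the closed piece: the $\ell$-th jet carries $\mathrm{Sym}^\ell$ of the \emph{standard} representation (the paper's formula is $|\det|_{\mathbf{k}}^{1/2}\cdot\pi_2\otimes\mathrm{Sym}^j(\mathbf{k}^{n_2})$), and it is this factor---contributing $+\ell$ to the real part of the central character---that drives the real parts to $+\infty$; the Fourier Jacobian only produces the fixed $|\det|^{1/2}$ twist, independent of $\ell$.
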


\begin{proof}
The argument is similar to that of Lemma \ref{inductive_arg}. Firstly, we use Fourier transform to intertwine two representations of $Y$ which are induced from $Y_1$ and $Y_2$, respectively, where $Y$, $Y_1$ and $Y_2$ are defined as follows,
\begin{itemize} 
\item Subgroup $Y$ of $\GL_n$: 
\[Y:=\left\{\left(\begin{array}{c|c} \begin{matrix} a & b & 0 \\ d & e & 0 \end{matrix} & \begin{matrix} c \end{matrix}  \\ \hline \begin{matrix} g & h & i \\ 0 & 0 & 0 \end{matrix} & \begin{matrix} j \\ 1\end{matrix}\end{array}\right)\in \GL_n\ \middle| \ \begin{array}{l}  a\in \mathbf{k}^{(n_2-1)\times (n_2-1)}, e\in \mathbf{k}^{1\times 1}, \\ i\in \mathbf{k}^{(n_1-1)\times (n_1-1)}, c\in \mathbf{k}^{n_2\times 1} \\  b,d,g,j\ \text{are sub-matrices over}\ \mathbf{k} \end{array} \right\},\] 
\item Subgroups in $Y$: 
\[Y_1=\left\{\left(\begin{array}{c|c} \begin{matrix} a & b & 0 \\ d & e & 0 \end{matrix} & \begin{matrix} 0 \end{matrix}  \\ \hline \begin{matrix} g & h & i \\ 0 & 0 & 0 \end{matrix} & \begin{matrix} j \\ 1\end{matrix}\end{array}\right)\in Y\right\}, \quad Y_2=\left\{\left(\begin{array}{c|c} \begin{matrix} a & b & 0 \\ 0 & 1 & 0 \end{matrix} & \begin{matrix} c \end{matrix}  \\ \hline \begin{matrix} g & h & i \\ 0 & 0 & 0 \end{matrix} & \begin{matrix} j \\ 1\end{matrix}\end{array}\right)\in Y\right\}.\]
\end{itemize}

By trivial extension, one can get the representation $\pi_2\boxtimes E(\pi_1)\boxtimes 1$ of $Y_1$.  Let \[\zeta_1:=\mathcal{S}\mathrm{Ind}_{Y_1}^Y\left(\pi_2\boxtimes E(\pi_1)\boxtimes 1\right).\]
Then by induction in stages, $\sigma\simeq \SInd_{Y}^{M_n}(\zeta_1)$. To prove the statement, let us study $\zeta_1$ in detail.

The underlying space of $\zeta_1$ can be identified with the space of Schwartz functions from
\[\Xi:= \left\{\begin{pmatrix}
I_{n_2} & 0 & u\\ 0 & I_{n_1-1} & 0\\ 0 & 0 & 1
\end{pmatrix}\ \middle|\ u\in \mathbf{k}^{n_2}\right\}\simeq  \mathbf{k}^{n_2}\]
to the underlying space of $\pi_2\boxtimes \pi_1$, and the action of $y=\left(\begin{array}{c|c} \begin{matrix} a & b & 0 \\ d & e & 0 \end{matrix} & \begin{matrix} c \end{matrix}  \\ \hline \begin{matrix} g & h & i \\ 0 & 0 & 0 \end{matrix} & \begin{matrix} j \\ 1\end{matrix}\end{array}\right)\in Y$ on $f\in \mathcal{S}(\Xi,\pi_2\boxtimes \pi_1)$ is given by
\[\zeta_1(y)f(u)=|\det(A)|_{\mathbf{k}}^{-\frac{1}{2}}\cdot (\pi_2(A)\boxtimes \pi_1(i)) f(A^{-1}(u+c)), \ \text{where}\ A=\begin{pmatrix} a & b \\ d & e\end{pmatrix}.\]

Consider the Fourier transform of $\Xi$ with respect to the unitary character $\psi$ over the field $\mathbf{k}$. Let $\xi$ be the dual variables corresponding to $u$.
Then  \[\mathcal{F}_{u}\circ\zeta_1(y)\circ \mathcal{F}_{u}^{-1}(\widehat{f} )(\xi)=|\det(A)|_{\mathbf{k}}^{-\frac{1}{2}+1}\cdot (\pi_2(A)\boxtimes \pi_1(i)) \psi(\xi\cdot c)\widehat{f}(\xi\cdot A).\]  
Let $\widehat{\zeta}_1$ denote $\mathcal{F}_u\circ\zeta_1\circ \mathcal{F}_u^{-1}$. Under the action of  $\widehat{\zeta}_1$, $\mathcal{S}(\mathbf{k}^{n_2}\setminus\{0\},\pi_2\boxtimes \pi_1)$, which is a subspace of $\mathcal{S}(\mathbf{k}^{n_2},\pi_2\boxtimes \pi_1)\simeq  \mathcal{S}(\Xi,\pi_2\boxtimes \pi_1)$, is $Y$-stable, and the corresponding subrepresentation is denoted by $\widehat{\zeta}_1^{\flat}$. Hence, $\widehat{\zeta}_1$ admits a filtration 
\[0\to \widehat{\zeta}_1^{\flat}\to \widehat{\zeta}_1\to \widehat{\zeta}_1^{\sharp}\to 0,\]
where $\widehat{\zeta}_1^{\sharp}:=\widehat{\zeta}_1/\widehat{\zeta}_1^{\flat}$.
By Lemma \ref{ext_BZ}, it suffices to prove the statement for $\SInd_Y^{M_n}(\widehat{\zeta}_1^{\flat})$ and $\SInd_Y^{M_n}(\widehat{\zeta}_1^{\sharp})$. Let us deal with them separately.

\noindent \textbf{(i). BZ-filtration of $\SInd_Y^{M_n}(\widehat{\zeta}_1^{\flat})$.} We will show that it is isomorphic to the representation $\SInd_{Y}^{M_n}({\zeta}_2)$, which admits a BZ-filtration.  Let $w=\begin{pmatrix} 0 &I_{n_2} & 0\\ I_{n_1-1} & 0 & 0 \\ 0 & 0 & 1\end{pmatrix}$. Taking the extension to the last column of $Y$ by $^w\psi_n$, and trivial extension to the remaining part, one obtains the representation $(\pi_2|_{M_{n_2}}\boxtimes \pi_1)\boxtimes {}^w\psi_n$ of $Y_2$. Let \[\zeta_{2}:=\mathcal{S}\mathrm{Ind}_{Y_2}^{Y}((\pi_2|_{M_{n_2}}\boxtimes \pi_1)\boxtimes {}^w\psi_n).\]   
The underlying space of $\zeta_2$ is the space of Schwartz sections of $({\pi_2|_{M_{n_2}}}\boxtimes \pi_{1}\boxtimes {}^w\psi_n\otimes (\frac{\delta_{Y_2}}{\delta_{Y}})^{\frac{1}{2}})\times_{Y_2} Y$. 

Under conjugation by $w$, one has $ \mathcal{S}\mathrm{Ind}_{Y}^{M_n}\zeta_2\simeq I(\pi_1\times \pi_2|_{M_{n_2}})$. By assumption, $\pi_2|_{M_{n_2}}$ admits a level $\leq r$ BZ-filtration. By induction, one obtains a level $\leq r$  BZ-filtration of $I(\pi_1\times \pi_2|_{M_{n_2}})$, which is given by applying $I(\pi_1\times \bullet)$ to the BZ-filtration $\bullet$ of  $\pi_2|_{M_{n_2}}$.

It remains to show that $\widehat{\zeta}_1^{\flat}$ is isomorphic to $\zeta_{2}$ as $Y$-representations.

The underlying space of $\zeta_2$ can be described more explicitly as follows. Consider
the affine subsets of $\GL_{n_2}\simeq \begin{pmatrix} \GL_{n_2} & \\  & I_{n_1}\end{pmatrix}\subset Y$: 
\[\Omega_r:=\left\{a_{r,\xi}:=\begin{pmatrix}\ \begin{matrix} 
   0 & 0 & I_{n_2-r-1}\\ I_{r} & 0 & 0  \\  \hline \multicolumn{3}{c}{\xi} \end{matrix}\ 
\end{pmatrix}\ \middle|\ \xi=[\xi_1,\dots,\xi_{n_2}],\ \text{with} \ \xi_{r+1}\neq 0\right\},\]
for $0\leq  r\leq n_2-1$, which satisfies $Y=\bigcup_rY_2\cdot \Omega_r$. The Schwartz sections of the underlying space of $\zeta_2$ supported on $Y_2\cdot \Omega_r$ can be identified with $\mathcal{S}(\Omega_r,\pi_2\boxtimes \pi_1)$, and $\zeta_2$ is spanned by $\sum_r\mathcal{S}(\Omega_r,\pi_2\boxtimes \pi_1)$.

Define the intertwining operator $\mathcal{T}_r$ as follows,
\[\begin{aligned}\mathcal{T}_r:\mathcal{S}(\Omega_r,\pi_2&\boxtimes \pi_1) \to \mathcal{S}(\mathbf{k}^r\times \mathbf{k}^{\times} \times \mathbf{k}^{n_2-r-1},\pi_2\boxtimes \pi_{1}) \\ \CT_r( f)(\xi) &:= 
|\xi_{r+1}|_{\mathbf{k}}^{-\frac{1}{2}}\pi_2(a_{r,\xi})^{-1}f(a_{r,\xi}).\end{aligned}\]
Since $\pi_2$ is a moderate growth representation, this map is a well-defined.
Moreover, $\mathcal{T}_r=\mathcal{T}_{r'}$ over the intersection of Schwartz sections over $\Omega_r\cdot Y_2$ and $\Omega_{r'}\cdot Y_2$. Therefore, by the co-sheaf property of Schwartz functions, there is a well-defined topological linear isomorphism
\[
\bigcup_r \CT_r : \zeta_2 \to \mathcal{S}(\mathbf{k}^{n_2}\setminus\{ 0\}, \pi_2\boxtimes \pi_1).
\]
Furthermore,  the  intertwining operator satisfies
\begin{align*} 
& \mathcal{T}_r\circ \zeta_2(y)\circ \mathcal{T}_r^{-1}(f)(\xi)\\ = &\ \ |\xi|_{\mathbf{k}}^{-\frac{1}{2}}|\det(A')|_{\mathbf{k}}^{\frac{1}{2}}|\xi'|_{\mathbf{k}}^{\frac{1}{2}}
\cdot \psi(\xi\cdot c)\pi_2(a_{r,\xi})^{-1}(\pi_2(A')\boxtimes \pi_1(i))\pi_2(a_{r,\xi'}) f(\xi'),\end{align*}
where $\xi'=\xi\cdot A$ and $A'\in M_{n_2}$ satisfy $A'\cdot a_{r,\xi'}=a_{r,\xi}\cdot A$. Since \[
\pi_2(a_{r,\xi})^{-1}\pi_2(A')\pi_2(a_{r,\xi'})=\pi_2(A),\]
one has 
\[\mathcal{T}_r\circ \zeta_2(y)\circ \mathcal{T}_r^{-1}=\widehat{\zeta}_1(y),\forall y\in Y\] over $\mathcal{S}(\mathbf{k}^r\times  \mathbf{k}^{\times} \times \mathbf{k}^{n_2-r-1},\pi_2\boxtimes \pi_1)$. Hence, $\bigcup_r \mathcal{T}_r$ intertwines $\zeta_{2}$ and $\widehat{\zeta}_{1}^{\flat}$.

\noindent \textbf{(ii). BZ-filtration of $\SInd_{Y}^{M_n}(\widehat{\zeta}_1^{\sharp}$).} The underlying space of $\widehat{\zeta}_1^{\sharp}$  is $$\mathcal{S}_{\{0\}}(\mathbf{k}^{n_2},\pi_2\boxtimes \pi_1).$$ 
By Borel's lemma, it admits a natural decreasing filtration \[\widehat{\zeta}_1^{\sharp}=\widehat{\zeta}_{1,0}\supset \widehat{\zeta}_{1,1}\supset \widehat{\zeta}_{1,2}\supset \dots\] 
with $\widehat{\zeta}_1^{\sharp}\simeq \varprojlim\limits_j \widehat{\zeta}_1^{\sharp}/\widehat{\zeta}_{1,j}$ and  \[\widehat{\zeta}_{1,j}/\widehat{\zeta}_{1,j+1}\simeq (|\det|_{\mathbf{k}}^{\frac{1}{2}}\cdot \pi_2\otimes_{\BR} \mathrm{Sym}^j(\mathbf{k}^{ n_2}))\boxtimes E(\pi_1)\boxtimes 1,\]
where $\mathbf{k}^{n_2}$ is the natural representation of $\GL_{n_2}$. 

It follows that $\SInd_{Y}^{M_n}(\widehat{\zeta}_1^{\sharp})$ admits a decreasing filtration with successive quotients
\[E\left((|\det|_{\mathbf{k}}^{\frac{1}{2}}\cdot \pi_2\otimes_{\BR} \mathrm{Sym}^j(\mathbf{k}^{ n_2}))\overline{\times}  \pi_1\right).\]
Notice that $(|\det|_{\mathbf{k}}^{\frac{1}{2}}\cdot \pi_2\otimes_{\BR} \mathrm{Sym}^j(\mathbf{k}^{ n_2}))\overline{\times}  \pi_1$ has finite length. Taking a finer filtration if needed, one can get a level $\leq 1$ BZ-filtration of $\SInd_{Y}^{M_n}(\widehat{\zeta}_1^{\sharp})$. This finishes the proof of the Lemma.
\end{proof} 

\begin{theorem}\label{ind_BZ}
Let $\pi$ be the parabolically induced representation $\mathrm{Ind}_{P_{n_1,n_2}}^{\GL_n}(\pi_1\boxtimes \pi_2)$, where $\pi_i$'s are  Casselman-Wallach representations of $\GL_{n_i}$ such that $\pi_i|_{M_{n_i}}$ admits a level $\leq r_i$ BZ-filtration. Then $\pi|_{M_n}$ admits a level  $\leq \max\{r_1+r_2,r_2+1\}$ BZ-filtration. 
\end{theorem}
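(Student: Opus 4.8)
The plan is to compute the restriction $\mathrm{Ind}_{P_{n_1,n_2}}^{\GL_n}(\pi_1\boxtimes\pi_2)|_{P_n}$ using the geometric decomposition of the double coset space $P_{n_1,n_2}\backslash \GL_n / P_n$, exactly as in the classical Bernstein--Zelevinsky analysis, but carried out on the level of Schwartz inductions. The orbits of $P_n$ acting on the flag variety $P_{n_1,n_2}\backslash \GL_n$ are finite in number (indexed by the position of the last coordinate vector relative to the two blocks), giving rise to a finite filtration of $\pi|_{P_n}$ by closed subrepresentations whose successive quotients are Schwartz inductions of $P_n$ supported on a single orbit. The decisive point is that there are essentially two types of orbit contributions: the ``open'' one, where the last row meets the first block $\GL_{n_1}$, and the ``closed'' one, where it lies entirely in the second block. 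By induction in stages and the Mackey-type manipulations already set up (Lemma~\ref{inductive_arg0} and the Fourier-transform argument of Lemma~\ref{inductive_arg}), the open-orbit contribution is identified with something of the shape $\pi_2 \times \big(\pi_1|_{P_{n_1}}\big)$-type inductions, and the closed-orbit contribution with $\pi_1 \times \big(\pi_2|_{P_{n_2}}\big)$-type inductions.

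**Key steps, in order.** First, I would reduce to the case $n_1 = n_1$ with $\pi_1$ arbitrary Casselman--Wallach by writing down the $P_n$-orbit stratification of $P_{n_1,n_2}\backslash\GL_n$ and the resulting two-step (or few-step) filtration of $\pi|_{P_n}$ via Proposition~\ref{co-sheaf prop} (co-sheaf property of Schwartz sections along a closed orbit). Second, for the piece coming from the stratum where the mirabolic ``last vector'' lands in the $\GL_{n_1}$-block, I would apply Lemma~\ref{open_orbit}: that lemma precisely says that for $\pi_2$ admitting a level $\leq r_2$ Bernstein--Zelevinsky filtration and $\pi_1$ a Casselman--Wallach representation of $\GL_{n_1-1}$, the representation $\pi_2\,\overline{\times}\,E(\pi_1)$ of $P_n$ has a level $\leq r_2$ filtration; more carefully, I would need the variant where $\pi_1$ is first restricted and unfolded, producing inductions $I^{k}E(-)$ built from $\pi_1|_{P_{n_1}}$ (level $\leq r_1$) combined with $\pi_2|_{P_{n_2}}$ (level $\leq r_2$), whence this stratum contributes a level $\leq r_1 + r_2$ Bernstein--Zelevinsky filtration --- the two filtrations stack. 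Third, for the closed-orbit stratum, where the last vector lies in the $\GL_{n_2}$-block, induction in stages plus Lemma~\ref{inductive_arg0}(1) identifies the contribution as $\pi_1 \times \big(\pi_2|_{P_{n_2}}\big)$, and applying $I(\pi_1\times \bullet)$ to a level $\leq r_2$ Bernstein--Zelevinsky filtration of $\pi_2|_{P_{n_2}}$ (using associativity of mirabolic induction and the fact that $\pi_1\times(-)$ preserves irreducibility and central-character data up to the single new $\GL$-factor) yields a level $\leq r_2 + 1$ filtration --- the extra $+1$ arising from the need to refine $\pi_1\times\pi_{i,j}$, which is only of finite length, into irreducibles. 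Finally, I would invoke Lemma~\ref{ext_BZ} to glue: an extension of a representation admitting a level $\leq r_2+1$ filtration by one admitting a level $\leq r_1+r_2$ filtration admits a level $\leq \max\{r_1+r_2, r_2+1\}$ filtration, which is exactly the claimed bound.

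**Main obstacle.** The chief difficulty is bookkeeping the convergence/finiteness conditions in Definition~\ref{def-BZ} through the Fourier transform and the passage from $\pi_i$ to $\pi_i|_{P_{n_i}}$: one must verify that the real parts of central characters $\mathrm{Re}(\omega_{\pi_{i,j}})$ of the successive quotients still satisfy the monotonicity and finiteness-below-$c$ requirements after the Schwartz-induction and $\mathrm{Sym}^j$-twisting operations appearing in Lemma~\ref{open_orbit} (where the closed-orbit-at-zero part contributes an infinite tower of $\mathrm{Sym}^j(\mathbf{k}^{n_2})$-twists, shifting central characters by unbounded amounts but in the correct direction). A secondary subtlety is checking that the two strata genuinely assemble into a short exact sequence of $P_n$-representations with the closed stratum as a closed subrepresentation (so that Lemma~\ref{ext_BZ} applies), which is where the co-sheaf exactness of Schwartz sections (Proposition~\ref{co-sheaf prop}(2)) does the essential work. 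The level arithmetic itself --- getting $\max\{r_1+r_2, r_2+1\}$ rather than something worse --- is then a matter of carefully tracking which stratum needs the extra refinement step, and I expect that to be routine once the stratification and the two applications of Lemmas~\ref{open_orbit} and~\ref{inductive_arg0} are in place.
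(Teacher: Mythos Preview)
Your overall strategy matches the paper's proof: decompose by the two $P_n$-orbits on $P_{n_1,n_2}\backslash\GL_n$, handle the open-orbit piece $\pi^\flat\simeq \pi_2\,\overline{\times}\,\pi_1|_{P_{n_1}}$ via Lemmas~\ref{inductive_arg} and~\ref{open_orbit} to get level $\leq r_1+r_2$, handle the closed-orbit piece to get level $\leq r_2+1$, and glue with Lemma~\ref{ext_BZ}.

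There is, however, a genuine gap in your treatment of the closed orbit, and it is exactly the Archimedean subtlety. The closed-orbit quotient $\pi^\sharp$ is \emph{not} simply $\pi_1\times(\pi_2|_{P_{n_2}})$ as you write; by Borel's lemma it carries an infinite decreasing filtration with successive quotients
\[
\pi^\sharp_i/\pi^\sharp_{i+1}\;\simeq\;\bigl(|\det|_{\mathbf{k}}^{1/2}\cdot\pi_1\otimes_{\BR}\mathrm{Sym}^i(\mathfrak{s}^\vee)\bigr)\times\pi_2|_{P_{n_2}},
\]
the tower of normal derivatives along the closed orbit. You correctly recognize this $\mathrm{Sym}^j$-twisting mechanism inside Lemma~\ref{open_orbit}, but the same phenomenon occurs here at the top level. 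Consequently, the extra ``$+1$'' in $r_2+1$ comes from this outer Borel filtration layer, \emph{not} from refining the finite-length pieces $\pi_1\times\pi_{i,j}$ into irreducibles as you claim --- such refinements occur at the innermost level and do not raise the level count. Your identification would literally yield only level $\leq r_2$ for $\pi^\sharp$, which is too optimistic.

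A minor orientation issue: the open-orbit piece $\pi^\flat$ (Schwartz sections supported on the open set) is the \emph{sub}representation and $\pi^\sharp$ is the \emph{quotient}, not the reverse.
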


\begin{proof}   
By 
\[P_{n_1,n_2}\backslash \GL_n/M_n\simeq \left\{I_n,w=\begin{pmatrix}
0 & I_{n_2}\\ I_{n_1} & 0
\end{pmatrix}\right\},\] 
there are two $M_n$-orbits on $P_{n_1,n_2}\backslash G$, the open orbit of $w$ and the closed orbit of $I_n$.
This yields a filtration of $\pi|_{M_n}$ under the notation of \eqref{open_close}
\[0\to \pi_o\to \pi\to \pi_c\to 0,\] 
where $\pi_o\simeq \pi_2\overline{\times} \pi_1|_{M_{n_1}},$
and by Borel's lemma, $\pi_c$ admits an infinite decreasing filtration 
$$\pi_c=(\pi_c)_0\supset (\pi_c)_1\supset (\pi_c)_2\supset\dots$$
with $\pi_c=\varprojlim\limits_i (\pi_c)_0/(\pi_c)_i$, and
\begin{equation}\label{closed_cal}
    \begin{aligned} (\pi_c)_i/(\pi_c)_{i+1} & \simeq \big(|\det|_{\mathbf{k}}^{\frac{1}{2}}\cdot \pi_1 \otimes  \mathrm{Sym}^i(\mathfrak{s}^{\vee})\big)\times \pi_2|_{M_{n_2}}\end{aligned}
\end{equation}
where $\mathfrak{s}=\mathfrak{g}/(\mathfrak{p}_n+\mathfrak{p})$,  and $\mathfrak{s}^{\vee}$ carries the adjoint action of $\GL_{n_1}$.

To prove the statement, it suffices to show that both $\pi_o$ and $\pi_c$ admit BZ-filtrations of  such level. 

For $\pi_c$, by the assumption, $\pi_2|_{M_{n_2}}$ admits a level $\leq r_2$ BZ-filtration. By applying $ \big(|\det|_{\mathbf{k}}^{\frac{1}{2}}\cdot \pi_1 \otimes \mathrm{Sym}^i(\mathfrak{s}^{\vee})\big)\times \bullet$ to the filtration $\bullet$, then using Lemma \ref{inductive_arg0} and taking refinement if needed, one obtains a level $\leq r_2+1$ BZ-filtration of $\pi_c$.

For $\pi_o \simeq \pi_2\overline{\times}\pi_1|_{M_{n_1}}$, by the assumption, $\pi_1|_{M_{n_1}}$ admits level $\leq r_1$ BZ-filtration. For every successive quotient $I^{k}E(\widetilde{\pi})$ of this filtration of $\pi_1|_{M_{n_1}}$, 
we claim that $\pi_2\overline{\times} I^{k}E(\widetilde{\pi}) $ admits a level $\leq r_2$ BZ-filtration, which implies that $\pi_o$ admits a level $\leq r_1+r_2$ BZ-filtration.

To prove the claim, we use Lemma \ref{inductive_arg},  
\[\pi_2\overline{\times} I^{k}E(\widetilde{\pi})  \simeq  
I^k(\pi_2\overline{\times} E(\widetilde{\pi})).\]
By Lemma \ref{open_orbit}, $\pi_2\overline{\times} E(\widetilde{\pi})$
admits a level $\leq r_2$ BZ-filtration, since $\pi_2|_{M_{n_2}}$ admits a level $\leq r_2$ BZ-filtration. So does $I^k(\pi_2\overline{\times} E(\widetilde{\pi}))$. The claim now follows.

\end{proof}

\begin{theorem}\label{have_B-Z_fil}
Let $\pi$ be a Casselman-Wallach representation of $\GL_n$. Then $\pi|_{M_n}$ admits a BZ-filtration of level $\leq n$.
\end{theorem}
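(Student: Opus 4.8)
The plan is to prove this by induction on $n$, using Theorem~\ref{ind_BZ} for the inductive step and Casselman's subrepresentation theorem to supply the representations it is applied to. For the base case $n=1$ the mirabolic subgroup $P_1$ is trivial, so $\pi|_{P_1}$ is just a one-dimensional representation of the trivial group; it is isomorphic to $I^0E(\mathbf{1})$, where $\mathbf{1}$ is the (unique irreducible) trivial representation of $\GL_0$, and this constitutes a tautological level $\leq 1$ Bernstein-Zelevinsky filtration. In particular any character of $\GL_1(\mathbf{k})$, restricted to $P_1$, has a level $\leq 1$ Bernstein-Zelevinsky filtration.

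For the inductive step, assume the statement holds for all Casselman-Wallach representations of $\GL_m$ with $m<n$. Since a Casselman-Wallach representation has finite length, it admits a finite chain of closed subrepresentations with irreducible Casselman-Wallach successive quotients; combining this with the ``extension'' half of Lemma~\ref{ext_BZ} (whose proof shows that merging two filtrations does not increase the level past the maximum of the two) reduces us to the case where $\pi$ itself is irreducible. By Casselman's subrepresentation theorem, $\pi$ embeds into a principal series $\Ind_{B_n}^{\GL_n}(\chi_1\boxtimes\cdots\boxtimes\chi_n)$ for suitable characters $\chi_i$ of $\mathbf{k}^\times$, and this morphism of Casselman-Wallach representations has closed image by Corollary~\ref{closed image}, so $\pi|_{P_n}$ is a closed $P_n$-subrepresentation of $\Ind_{B_n}^{\GL_n}(\chi_1\boxtimes\cdots\boxtimes\chi_n)|_{P_n}$. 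By induction in stages, $\Ind_{B_n}^{\GL_n}(\chi_1\boxtimes\cdots\boxtimes\chi_n)\simeq\Ind_{P_{1,n-1}}^{\GL_n}(\chi_1\boxtimes\tau)$ with $\tau:=\Ind_{B_{n-1}}^{\GL_{n-1}}(\chi_2\boxtimes\cdots\boxtimes\chi_n)$ a Casselman-Wallach representation of $\GL_{n-1}$; by the inductive hypothesis $\tau|_{P_{n-1}}$ has a level $\leq n-1$ Bernstein-Zelevinsky filtration, and $\chi_1|_{P_1}$ has a level $\leq 1$ one. Applying Theorem~\ref{ind_BZ} with $n_1=1$, $n_2=n-1$, $r_1=1$, $r_2=n-1$ equips $\Ind_{P_{1,n-1}}^{\GL_n}(\chi_1\boxtimes\tau)|_{P_n}$ with a Bernstein-Zelevinsky filtration of level $\leq\max\{r_1+r_2,\,r_2+1\}=\max\{n,n\}=n$, and then the ``subrepresentation'' half of Lemma~\ref{ext_BZ} (which preserves the level) transfers this to the closed subrepresentation $\pi|_{P_n}$, completing the induction.

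I do not expect a serious obstacle here, since all the genuinely analytic input — the Fourier transform along the $P_n$-orbits, Borel's lemma, the co-sheaf properties — has already been packaged into Theorem~\ref{ind_BZ} and Lemmas~\ref{inductive_arg} and~\ref{open_orbit}. The two points needing a little attention are the numerology and the availability of the embedding theorem. For the numerology, one must check that Theorem~\ref{ind_BZ}'s level bound closes at exactly $n$ rather than drifting upward; this is precisely why the induction is organized along the chain of maximal parabolics $P_{1,n-1}$, so that the unavoidable ``$+1$'' coming from the trivial mirabolic $P_1$ is always absorbed by the $r_2+1$ term. For the embedding, one should confirm that Casselman's subrepresentation theorem does yield an embedding into a full principal series $\Ind_{B_n}^{\GL_n}(\chi_1\boxtimes\cdots\boxtimes\chi_n)$ of characters over both $\mathbf{k}=\BR$ and $\mathbf{k}=\BC$ — over $\BR$ this includes realizing, for instance, discrete-series constituents of $\GL_2(\BR)$ as submodules of reducible principal series.
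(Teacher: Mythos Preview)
Your proof is correct and follows essentially the same approach as the paper's own proof: reduce to irreducibles via Lemma~\ref{ext_BZ}, embed into a principal series via Casselman's subrepresentation theorem, then invoke Theorem~\ref{ind_BZ} (iteratively along the chain of $P_{1,n-1}$ parabolics) to obtain the level $\leq n$ bound, and restrict back using the subrepresentation half of Lemma~\ref{ext_BZ}. The paper's version is more terse, simply asserting that the principal series has level $\leq n$ without spelling out the induction or the numerology $\max\{r_1+r_2,r_2+1\}=n$ that you carefully verify.
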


\begin{proof}
By Lemma \ref{ext_BZ}, we can assume $\pi$ is irreducible. In this case, $\pi$ can be embedded into a principal series which is induced from Borel subgroup of $\GL_n$. By Theorem \ref{ind_BZ}, an inductive argument implies that the principal series has BZ-filtration of level $\leq n$. So does $\pi|_{M_n}$ by Lemma~\ref{ext_BZ}.
\end{proof}

\subsection{The restriction to maximal parabolic subgroup $P_{n-1,1}$}

Let us consider the Mackey induction theory for the group $P_{n-1,1}$. Note the nilpotent radical of $P_{n-1,1}$ is $V_n$, which is the same as that of $M_n$.
For $\psi_n\in \mathfrak{v}_n^*$,
 the stabilizer of $\psi_n$ in $P_{n-1,1}$ is 
 \[\left\{\begin{pmatrix} * & * & *\\ 0_{1\times (n-2)} & x & *\\ 0_{1\times (n-2)} & 0 & x\end{pmatrix}\ \middle|\  x\in \mathbf{k}^{\times}\right\}.\]
Embedding $P_{n-2,1}$ into $P_{n-1,1}$ as the subgroup
\[ \left\{\begin{pmatrix} * & * & 0_{(n-2)\times 1}\\ 0_{1\times (n-2)} & x & 0\\ 0_{1\times (n-2)} & 0 & x\end{pmatrix}\ \middle|\  x\in \mathbf{k}^{\times}\right\},\]
one has $\mathrm{Stab}_{P_{n-1,1}}(\psi_n)\simeq P_{n-2,1}\ltimes V_n$.

For a smooth representation $\sigma$ of $P_{n-2,1}$, let $I(\sigma)$ denote $\SInd_{P_{n-2,1}\ltimes V_n}^{P_{n-1,1}}\left( \sigma\otimes \psi_n\right)$. By Lemma~\ref{Mackey theory}, if $\sigma$ is (algebraically) irreducible, then so is $I(\sigma)$. For a smooth representation $\tau$ of $\GL_{n-1}\times \GL_1$, let $E(\tau)$ denote the representation of $P_{n-1,1}$ by trivially extending to $V_n$.

Given an irreducible representation $\pi$ of $\GL_n$, the center $\{c \cdot I_n \mid c \in \mathbf{k}^{\times}\}$ acts by the character $\omega_{\pi}$. Consequently, a filtration of $\pi|{M_n}$ induces a filtration of $\pi|{P_{n-1,1}}$ whose successive quotients are of the form $I^{k-1}E(\tau)$ for some positive integer $k$ and irreducible representation $\tau$ of $\GL_{n-k} \times \GL_1$.

\subsection{Bernstein-Zelevinsky filtration of quasi-split classical groups}\label{BZ-cl-sec}
Let $G_{n}$ be the quasi-split classical group defined  previously in Subsection~\ref{derivative-sec}. Recall that $G_{n-1}\times \GL_1 \subset G_n$ is the standard Levi subgroup of $R_{n-1,1}$, and let $\mathrm{GL}_{n}$ be the Levi subgroup of $C_{n}\subset G_{n}$. For the notation used for various representations of $R_{n-1,1}$, we refer the reader to Subsection~\ref{induction section}.

In order to study the filtration of Casselman-Wallach representation of $G_n$, it suffices to study the principal series by Casselman embedding theorem. Let $I$ be a principal series of $G_n$, which is viewed as parabolic induction $\Ind_{C_n}^{G_n}\pi$, where $\pi$ is a principal series of the Levi subgroup $\GL_n$ of $C_n$.

In this article, we deal with $G_n=\SO(n,n)$, whose $E_n$ is abelian. Hence, $\mathbf{k}=\mathbb{R}$ in the remaining of this subsection. In order to obtain a BZ-filtration of $I$, we observe that $R_{n-1,1}$ has a unique open orbit and a unique closed orbit on $C_n\backslash G_n$, which leads to 
\[
0\lra I_o\lra I|_{R_{n-1,1}}\lra I_c\lra 0.
\]
Here, by Borel's lemma, $I_c$ admits a decreasing filtration indexed by non-negative integers $j$ with successive quotients
\[
 M\left(\pi|_{P_{n-1,1}}\cdot |(\det)_{\GL_{n-1}}|^{-1/2}\cdot |(\det)_{\GL_1}^{n-1}|^{-1/2}\otimes\Sym^j (\mathfrak{f}_n)\right),
\]
where $\GL_{n-1}\times \GL_1$ is the Levi subgroup of $P_{n-1,1}$, and $\mathfrak{f}_n$ is the complexified Lie algebra of $F_n$ which carries the adjoint action of $P_{n-1,1}\subset R_{n-1,1}$. By Theorem~\ref{have_B-Z_fil}, $\pi|_{P_{n-1,1}}$ admits a BZ-filtration, which gives rise to a BZ-filtration of $I_c$. The successive quotients of this filtration have the form
\[
 I^{k-1}E( \Ind_{Q_{n-k}}^{G_{n-k}} (\beta))
\]
for some positive integer $k$ and irreducible $\GL_{n-k}$-representation $\beta$. 

On the other hand, the open orbit $I_o\simeq \ov{M} (\pi|_{P_{n-1,1}})$. The BZ-filtration of $\pi|_{P_{n-1,1}}$ leads to a filtration of $I_o$ with successive quotients of two types:
\begin{itemize} 
\item[(i)] $\ov{M}(E(\tau))$ for some irreducible $\GL_{n-1}\times \GL_1$-representation $\tau$, or
\item[(ii)] $\ov{M}(I(\sigma))$ for some $P_{n-2,1}$-representation $\sigma$.
\end{itemize}
Let us discuss these two cases separately.

\subsubsection{Spectrum decomposition of  $\ov{M}(E(\tau))$}

Recall that $A_{m}$ denotes the $m\times m$ anti-diagonal matrix with $1$ in entries. By conjugation of $w=\begin{pmatrix}
1 & 0 & 0  \\  0 & A_{2n-2} & 0\\  0 & 0 & 1
\end{pmatrix}$, since $w(g)=(g^{-1})^t$ for $g\in \GL_{n-1}$, $\ov{M}(E(\tau))$ is isomorphic to $\SInd_{C_{n-1}\cdot \GL_1\cdot F_n}^{R_{n-1,1}}(\tau^{\vee}\otimes 1)$. For convenience, we replace $\tau^{\vee}$ by $\tau$, and will work with $\sigma:=\SInd_{C_{n-1}\cdot \GL_1\cdot F_n}^{R_{n-1,1}}(\tau\otimes 1)$ from now on. 
\begin{proposition}\label{open-dis}
Let 
$$\sigma^{\flat}=I(M(\tau|_{P_{n-2,1}}))=\mathcal{S}\mathrm{Ind}_{P_{n-2,1}\cdot U_{n-1}\cdot E_{n}}^{R_{n-1,1}}(\tau|_{P_{n-2,1}}\otimes 1\otimes \psi_n).$$
Then one has an exact sequence as $R_{n-1,1}$-representation
\[0\to \sigma^{\flat}\to \sigma\to \sigma^{\sharp}\to 0\]
with $\sigma^{\sharp}$ having a decreasing filtration as $M_n$-representation
\[\sigma^{\sharp}=\sigma^{\sharp}_0\supset \sigma^{\sharp}_1\supset \sigma^{\sharp}_2\supset \dots\]
such that $\sigma^{\sharp}\simeq \varprojlim\limits_{i}\sigma^{\sharp}/\sigma_i^{\sharp}$, and $\sigma^{\sharp}_i/\sigma^{\sharp}_{i+1}$ is isomorphic to the trivial extension of $ \mathrm{Ind}_{C_{n-1}\cdot \GL_1}^{G_{n-1}\cdot \GL_1}(\tau_i)$,  where $\tau_i$ is a Casselman-Wallach representation of $\mathrm{GL}_{n-1}\times \GL_1$.

\end{proposition}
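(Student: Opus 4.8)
The plan is to adapt the proof of Lemma~\ref{open_orbit} to the orthogonal group, using that $E_n$ is abelian for $G_n=\SO(n,n)$. Put $Y:=Q_{n-1}E_n\subset M_n$; since $Q_{n-1}(E_n\cap U_n)\subset Y\subset M_n$ and $Q_{n-1}\cap E_n=\{1\}$, induction in stages gives $\sigma\simeq\SInd_Y^{M_n}(\zeta_1)$ with $\zeta_1:=\SInd_{Q_{n-1}(E_n\cap U_n)}^{Y}(\tau\otimes 1)$. The underlying space of $\zeta_1$ is a space of Schwartz functions on $(E_n\cap U_n)\backslash E_n\cong\mathbf{k}^{n-1}$ valued in the space of $\tau$ (twisted by the normalising half-density), with $E_n$ acting by translation along $\mathbf{k}^{n-1}$ and $Q_{n-1}=\GL_{n-1}U_{n-1}$ acting through $\GL_{n-1}$ on $\mathbf{k}^{n-1}$ by a twist of the natural representation.

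Now apply the Fourier transform $\CF$ along the $\mathbf{k}^{n-1}$-direction, obtaining $\widehat\zeta_1:=\CF\circ\zeta_1\circ\CF^{-1}$, on which $E_n$ acts by multiplication by the unitary character attached to the dual variable $\xi$. As $\GL_{n-1}$ has exactly two orbits on the $\xi$-space $\mathbf{k}^{n-1}$, namely $\{0\}$ and $\mathbf{k}^{n-1}\setminus\{0\}$, the co-sheaf property of Schwartz functions (Proposition~\ref{co-sheaf prop}) gives a short exact sequence $0\to\widehat\zeta_1^{\flat}\to\widehat\zeta_1\to\widehat\zeta_1^{\sharp}\to 0$ of $Y$-representations, where $\widehat\zeta_1^{\flat}$ consists of the Schwartz sections supported over the open orbit and $\widehat\zeta_1^{\sharp}$ is the quotient (the Schwartz functions ``supported at $\xi=0$''). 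Applying the exact functor $\SInd_Y^{M_n}$ produces the desired exact sequence $0\to\sigma^{\flat}\to\sigma\to\sigma^{\sharp}\to 0$ with $\sigma^{\flat}=\SInd_Y^{M_n}(\widehat\zeta_1^{\flat})$ and $\sigma^{\sharp}=\SInd_Y^{M_n}(\widehat\zeta_1^{\sharp})$.

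For the open part, the stabiliser in $Y$ of a point $\xi_0\neq 0$ is $P_{n-1}U_{n-1}E_n$, with $E_n$ acting through the character $\xi_0$, which is $G_{n-1}$-conjugate to $\psi_n$ (the stabiliser of $\psi_n$ in $G_{n-1}$ being $M_{n-1}$, hence $P_{n-1}U_{n-1}$ in $Q_{n-1}$). One identifies $\widehat\zeta_1^{\flat}$ with $\SInd_{P_{n-1}U_{n-1}E_n}^{Y}(\tau|_{P_{n-1}}\otimes 1\otimes\psi_n)$ by covering the open orbit with affine charts and patching the intertwining operators $\mathcal{T}_r$ of Lemma~\ref{open_orbit} --- each a product of a power of $|\det|_{\mathbf{k}}$ with a $\tau$-twist, well defined because $\tau$ has moderate growth --- using the co-sheaf property to glue; the Fourier transform shifts homogeneity by $n-1$, which is exactly what makes the $|\det|_{\mathbf{k}}^{\pm 1/2}$ normalisations of $M$ and $I$ fall into place. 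Induction in stages (together with $Q_{n-1}\cap M_{n-1}=P_{n-1}U_{n-1}$) then gives $\sigma^{\flat}\simeq\SInd_{P_{n-1}U_{n-1}E_n}^{M_n}(\tau|_{P_{n-1}}\otimes 1\otimes\psi_n)=I(M(\tau|_{P_{n-1}}))$. This orbit-and-patching bookkeeping, together with the tracking of the various modular and determinant characters, is the main obstacle; in the $\GL$ case these are the contents of Lemma~\ref{inductive_arg} and Lemma~\ref{open_orbit}, and here the orbit geometry of $\SO(n,n)$ must be worked out in their place.

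For the closed part, $\widehat\zeta_1^{\sharp}$ is the completed jet space of Schwartz functions along $\{\xi=0\}$; by Borel's lemma it has a decreasing filtration by order of vanishing with $\widehat\zeta_1^{\sharp}\simeq\varprojlim$ of its finite quotients, and the $i$-th graded piece is the Schwartz functions on the base $\{\xi=0\}$ tensored over $\BR$ with $\Sym^i$ of the natural representation of $\GL_{n-1}$, twisted by $|\det|_{\mathbf{k}}^{1/2}$. Since $E_n$ acts trivially here, applying $\SInd_Y^{M_n}$ turns the base direction into the parabolic induction $\Ind_{Q_{n-1}}^{G_{n-1}}$ --- Schwartz induction agreeing with smooth parabolic induction because $Q_{n-1}\backslash G_{n-1}$ is compact --- trivially extended along $E_n$. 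Hence $\sigma^{\sharp}\simeq\varprojlim_i\sigma^{\sharp}/\sigma^{\sharp}_i$ with $\sigma^{\sharp}_i/\sigma^{\sharp}_{i+1}\simeq E\big(\Ind_{Q_{n-1}}^{G_{n-1}}(\tau_i)\big)$, where $\tau_i:=|\det|_{\mathbf{k}}^{1/2}\,\tau\otimes_{\BR}\Sym^i(\mathbf{k}^{n-1})$ (up to a further $|\det|_{\mathbf{k}}$-power coming from the normalisation) is a finite-dimensional twist of $\tau$, hence a Casselman-Wallach representation of $\GL_{n-1}$; parabolic induction preserves the Casselman-Wallach property, which gives the stated form of the successive quotients and finishes the proof.
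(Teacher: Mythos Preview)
Your proposal is correct and follows essentially the same approach as the paper: both take $Y=Q_{n-1}E_n=M_n\cap Q_n$, realize the intermediate induction on $\CS(E_n\cap\GL_n,\tau)\simeq\CS(\mathbf{k}^{n-1},\tau)$, apply the Fourier transform, split into the open $\GL_{n-1}$-orbit (handled by patching intertwining operators $\mathcal{T}_i$ over affine charts, exactly as in Lemma~\ref{open_orbit}) and the closed orbit $\{0\}$ (handled by Borel's lemma). The only cosmetic difference is that the paper records the graded pieces as $|\det|_{\mathbf{k}}^{1/2}\,\tau\otimes_{\BR}\Sym^i((\mathbf{k}^{n-1})^{\vee})$ rather than $\Sym^i(\mathbf{k}^{n-1})$, which you already flag as a normalisation issue.
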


\begin{proof}
Consider the subgroup $Y=R_{n-1,1}\cap C_n$ of $R_{n-1,1}$. Since $Y\supset P_{n-2,1}\cdot U_{n-1}\cdot E_{n}$, by the induction in stages, it suffices to find a short exact sequence of representations of $Y$,
\[
 0\lra \gamma^{\flat}\lra \gamma\lra \gamma^{\sharp}\lra 0
\]
where $\gamma:=\mathcal{S}\mathrm{Ind}_{C_{n-1}\cdot (E_n\cap U_{n})}^{Y}(\tau\otimes 1)$,
\[\gamma^{\flat}:=\mathcal{S}\mathrm{Ind}_{M_{n-1}\cdot U_{n-1}\cdot E_{n}}^{Y}(\tau|_{M_{n-1}}\otimes 1\otimes \psi_n),\] 
and $\gamma^{\sharp}$ admits a filtration from which the filtration of $\sigma^{\sharp}$ can be induced. 

The $\gamma$ can be realized as the space of Schwartz functions from $E_{n}\cap \GL_n$ to the underlying space of $\tau$. Let $U_Y$ be the unipotent radical of $Y$, then  
$Y=\mathrm{GL}_{n-1}\cdot U_Y$.  Decompose 
\[U_Y= \left(F_n\cdot U_{n-1}\right)\cdot V_n.\] 
For $y\in Y$, one can write $y=y_1\cdot y_2\cdot y_3$ with $y_1\in \mathrm{GL}_{n-1}\times \GL_1$, $y_2\in F_n\cdot U_{n-1}$, and  $y_3\in V_n$. Note that the group $V_n$ is abelian and is isomorphic to $\mathbb{R}^{n-1}$ via the exponential map. Hence, one can apply the Fourier transform on $V_n$. 

Let $f \in \mathcal{S}(V_n,\tau)$. The action of $\gamma(y)$ is given by 
\[(\gamma(y)f)(x)=\big\vert\det\big(\mathrm{Ad}(y_1')|_{V_n}\big)\big\vert^{-\frac{1}{2}}\tau(y_1')f(x')\]
where $y_1'$ and $x'$ are determined by 
 $xy=y_1'\cdot y_2'\cdot x'$ with 
 \[x'\in V_n,\  y_1\in (\mathrm{GL}_{n-1}\times \GL_1),\ \text{and}\ y_2'\in   \left(F_n\cdot U_{n-1}\right).\]  
By direct computation, one has $y_1'=y_1$, and $x'=(y_1^{-1}xy_1)\cdot y_3$.

Apply the Fourier transform to $x\in V_n$, assume that $\xi\in (\mathbb{R}^*)^n$ is the dual variable of $x$.  One can get the representation $\widehat{\gamma}$ on $\mathcal{S}((\mathbb{R}^*)^n,\tau)$, which is given by
\[(\widehat{\gamma}(y)h)(\xi)=\big\vert\det\big(\mathrm{Ad}(y_1)|_{V_n}\big\vert^{\frac{1}{2}} \tau(y_1) h(\mathrm{Ad}(y_1)^{-1}\xi)\psi\big(\xi(\mathrm{Ad}(y_1)(y_3))\big).\]

Note that the $0\in (\mathbb{R}^*)^n$ is fixed by the action of $Y$ under $\widehat{\gamma}$.  Let us consider the subrepresentation $\widehat{\gamma}|_{(\mathbb{R}^*)^n\setminus \{0\}}$ of $Y$ consisting of Schwartz sections supported on $(\mathbb{R}^*)^n\setminus \{0\}$. We \textbf{claim} that it is isomorphic to $\gamma^{\flat}$.

Let $\Omega_i$ be the set of $\mathrm{GL}_{n-1}\times 1$ defined by
\[\Omega_i=\left\{ a_{i,\xi}:=\begin{pmatrix}\ \begin{matrix} 
   0 & 0 & I_{n-i-2}\\ I_{i} & 0 & 0  \\  \hline \multicolumn{3}{c}{\xi} \end{matrix}\ 
\end{pmatrix}\ \middle|\ \xi=[\xi_1,\dots,\xi_{n-1}], \ \text{with}\ \xi_{i+1}\neq 0\right\}\]
for $ 0\leq i\leq n-2$. Then $\mathrm{GL}_{n-1}\times \GL_1=\mathop{\bigcup}_i P_{n-2,1}\cdot \Omega_i$, and the underlying space of $\gamma^{\flat}$ is spanned by $\CS(\Omega_i,\tau), 0\leq i\leq n-2$.

Over $\Omega_i$, we define the isomorphism $\CT_i$ from $\mathcal{S}(\Omega_i,\tau)$ to the Schwartz functions $\mathcal{S}((\mathbb{R}^*)^i\times (\mathbb{R}^*\backslash \{0\})\times (\mathbb{R}^*)^{n-i-2},\tau)$ as follows,
\[\mathcal{T}_i(f)(\xi):=|\xi_{i+1}|^{-\frac{1}{2}}\tau(a_{i,\xi})^{-1}f(a_{i,\xi}).\]
The map $\mathcal{T}:=\bigcup_i \mathcal{T}_i$ defines an isomorphism from the underlying space of $\gamma^{\flat}$ to the underlying space of $\widehat{\gamma}|_{(\mathbb{R}^*)^n\setminus \{0\}}$.

Let us verify that $\mathcal{T}$ is actually a $Y$-morphism. Over $\Omega_i$,
\begin{align*} (\mathcal{T}\circ \gamma^{\flat}(y)\circ \mathcal{T}^{-1}f)(\xi) = &\  |\xi_{i+1}|^{-\frac{1}{2}} \big\vert\det\big(\mathrm{Ad}(y_1'')|_{(\mathfrak{gl}_{n-1}\times \mathfrak{gl}_1)/\mathfrak{p}_{n-2,1}}\big\vert^{-\frac{1}{2}} |\xi_{i+1}''|^{\frac{1}{2}} \\ & \cdot \tau(a_{i,\xi})^{-1}\tau( y_1'' )\tau(a_{i,\xi''})  \cdot \psi_n(\mathrm{Ad}(a_{i,\xi}\cdot y_1) y_3)f(a_{i,\xi''})\end{align*}
where $y_1''$ and $\xi''$ are determined by $y_1''\cdot a_{i,\xi''}=a_{r,\xi}\cdot y_1$ for some $y_1''\in P_{n-2,1}$. Hence, $\tau(a_{i,\xi})^{-1}\tau( y_1'' )\tau(a_{i,\xi''})=\tau(y_1)$, and $\xi''=\mathrm{Ad}(y_1)(\xi)$.  Therefore, 
$$\gamma^{\flat}(y)=\mathcal{T}^{-1}\circ\widehat{\gamma}(y)\circ\mathcal{T}$$
for every $y\in Y$.

At the point $0\in (\mathbb{R}^*)^n$, by Borel's lemma, one can get a filtration of $\gamma^{\sharp}$ with successive quotients as $(|(\det)_{\GL_{n-1}}|^{\frac{1}{2}}\cdot (\det)_{\GL_1}|^{-\frac{1}{2}} )\tau\otimes  \mathrm{Sym}^i(\mathfrak{v}_n)^{\vee}$ of $Y$, where $\mathfrak{v}_n$ carries the adjoint representation of $Y$. By induction in stages, one can get the statement.
\end{proof}

Inductively, we can apply the BZ-filtration of $\tau|_{M_n}$ and get a filtration of $I(M(\tau|_{M_n}))$.

\subsubsection{Spectrum decomposition of $\overline{M}(I(\sigma))$}

Recall that $A_n$ denotes the $n\times n$-matrix with $1$ on the anti-diagonal entries and $0$ elsewhere. To simplify the notation, for $g\in \GL_n$, let $\widetilde{g}$ denote $A_n\cdot (g^{-1})^t\cdot A_n$; for $y\in \mathfrak{gl}_n$, let $\widetilde{y}$ denote $A_n\cdot (-y^t)\cdot A_n$; for $x=[x_1,\dots,x_n]^t\in \mathbb{R}^n$, let  $\widetilde{x}$ denote $[-x_n,\dots,-x_1]$.

Recall that $\phi_n$ is the unitary character of $E_n$ defined as $\phi_n(x):=\psi(\langle x\cdot f_1,e_2+f_2\rangle)$. 
We define $\psi_n^{(i)}(x):=\psi(\langle x\cdot f_1,f_i\rangle)$ for $2\leq i\leq n$.

Let us introduce several useful intermediate subgroups. Let $Y=\overline{U}_{n-1}\cdot \GL_{n-1}\cdot\GL_1^{\triangle}\cdot E_n$  be the subgroup of $R_n$, where $\GL_1^{\triangle}:=\left\{\left(\begin{smallmatrix} t^{-1}I_n &  \\ & tI_n \end{smallmatrix}\right)\ \middle|\ t\in \mathbb{R}^*\right\}$. Consider the action of $Y$ on a subset of $E_n^*$,
\[(V_n^*\setminus\{0\})\times F_n^*,\] 
let $Y_1=\overline{U}_{n-1}\cdot M_{n-1}\cdot \GL_1^{\triangle}\cdot E_n$. Under the standard basis, its Lie algebra 
\[\mathfrak{y}_1=\left\{\begin{pmatrix}
-s & \widetilde{a}  & \widetilde{b} & 0\\ 0 & \widetilde{c} & 0 & b\\ 0 & d & c & a \\ 0 & 0 & 0 & s
\end{pmatrix}\in \mathfrak{g}_n\ \middle| \ c=\begin{pmatrix}
* & * \\ 0 & s
\end{pmatrix}, s\in \mathbb{R}  \right\}.\] 
One has
\begin{equation}\label{R-space}
   (V_n^*\setminus\{0\})\times F_n^*\simeq (\psi_n+F_n^*)\times_{Y_1} Y. 
\end{equation}

Let $\Xi$ denote the set $\psi_n+F_n^*$. Then $Y_1$ has two open orbits in $\Xi$ and one closed orbit. 
\begin{itemize}
\item[(i)] Open orbits: the orbit of $\phi_n$,  denoted by $\Xi_o^+$,  and the orbit of $\phi_n^{-}:=-\psi_n^{(2)}+\psi_n$, denoted by $\Xi_o^-$. Let $Y_2$ (resp. $Y_2^-)$ be the stabilizer of $\phi_n$(resp. $\phi_n^-)$ in $Y_1$.
\item[(ii)] Closed orbit: the orbit of $\psi_n$. Let $Y_3$ be the stabilizer of $\psi_n$ in $Y_1$. Then $Y_3=\overline{U}_{n-2}\cdot M_{n-1}\cdot \GL_1^{\triangle} \cdot V_n$.
\end{itemize}

The Lie subalgebra $\mathfrak{y}_2$ of $Y_2$ is 
\[ \left\{\begin{pmatrix}
0 & \widetilde{a} & \widetilde{b} & 0\\ 0 & \widetilde{c} & 0 & b\\ 0 & d & c & a \\ 0 & 0 & 0 & 0
\end{pmatrix}\in \mathfrak{y}_1\ \middle| \ c=\begin{pmatrix}
* & e \\ 0 & 0
\end{pmatrix}, e\in \mathbb{R}^{n-2}, d=\begin{pmatrix}
-e & *\\ 0 & -\widetilde{e}
\end{pmatrix}\right\}.\]
We embed $P_{n-2,1}$ into $Y_2$ as the Lie subgroup corresponding to the Lie subalgebra
\[ \left\{\begin{pmatrix}
-s & 0 & 0 & 0\\ 0 & \widetilde{c} & 0 & 0\\ 0 & d & c & 0 \\ 0 & 0 & 0 & s
\end{pmatrix}\in \mathfrak{y}_1\ \middle| \ c=\begin{pmatrix}
* & e \\ 0 & s
\end{pmatrix}, e\in \mathbb{R}^{n-2}, d=\begin{pmatrix}
-e & 0\\ 0 & -\widetilde{e}
\end{pmatrix}\right\}.\]

Given a representation $\sigma$ of $P_{n-2,1}$, by extension of $\phi_n$ and then trivial extension, one can obtain a $Y_2$-representation $\sigma\otimes \phi_n$, and let
\[\SInd_{Y_2}^{Y_1}(\sigma\otimes\phi_n)\] be the Mackey induction of $Y_1$. 

Similarly, one can also define $=\SInd_{Y_2^-}^{Y_1}(\sigma\otimes\phi_n^-)$.

Let $Y_0=\overline{U}_{n-1}\cdot M_{n-1}\cdot \GL_1^{\triangle} \cdot V_n$. By induction in stages, one has  
\[\ov{M}(I(\sigma))\simeq \SInd_{Y_1}^{M_n}\big(\SInd_{Y_0}^{Y_1}(\sigma\otimes \psi_n)\big),\]
where $\psi_n$ is a character of $V_n\simeq V_n$, and $\sigma\otimes \psi_n$ is regarded as the representation of $Y_0$ by trivial extension to $\overline{U}_{n-1}$.

\begin{proposition}\label{open-cts}
Retain the notation as above, the representation $\eta:=\SInd_{Y_0}^{Y_1}(\sigma\otimes \psi_n)$  of $Y_1$ can be realized as the space of Schwartz sections of a tempered bundle over the $Y_1$-space $\Xi$.

Moreover, the $Y_1$-representation $\eta$ has a filtration 
\[0\to \eta^{\flat}\to \eta\to \eta^{\sharp}\to 0,\] such that $\eta^{\flat}$ is isomorphic to  the direct sum of Mackey induction 
\[\SInd_{Y_2}^{Y_1}(\sigma\otimes\phi_n)\oplus \SInd_{Y_2^-}^{Y_1}(\sigma\otimes\phi_n^-),\] and $\eta^{\sharp}$ has an infinite filtration with successive quotients
\[\SInd_{Y_3}^{Y_1}\left(\sigma\otimes  \mathrm{Sym}^i(\mathfrak{s})\otimes \psi_n\right),\ \forall i\in \mathbb{Z}_{\geq 0}\]
where $\mathfrak{s}$ is the $Y_3$-representation on the complexified of the Lie subalgebra 
\[{\tiny \left\{\begin{pmatrix}
0 & 0  & \widetilde{b} & 0\\ 0 & 0 & 0 & b\\ 0 & 0 & 0 & 0 \\ 0 & 0 & 0 & 0
\end{pmatrix}\in \mathfrak{y}_1\ \middle| \ b=\begin{pmatrix}
*  \\ 0_{(n-2)\times 1}
\end{pmatrix} \right\}.\tiny}\]
\end{proposition} 

\begin{proof} 
The representation $\eta$ can be realized as the space of Schwartz functions from $F_n$ to the underlying space of $\sigma$. Let $g\in Y_1$, and write $g=upvt$ with $u\in \overline{U_{n-1}}$, $p\in M_{n-1}\cdot \GL_1^{\triangle}$, $v\in V_n$, $t\in F_n$. Given $f\in \mathcal{S}(F_n, \sigma)$,  the action of $\eta(g)$ is given  by
\begin{align*}(\eta(g)f)(x) = & \ \big\vert\det\left(\mathrm{Ad}(p)|_{\mathfrak{y}_1/\mathfrak{y}_0}\right)\big\vert^{-\frac{1}{2}}\cdot \psi_n\left(-\mathrm{Ad}(u)(x)+x+\mathrm{Ad}(up)(v)\right)\\ & \cdot \sigma(p)f(\mathrm{Ad}(p)^{-1}(x)+t).\end{align*}

Applying the Fourier transform to $x$ with respect to $\psi^{-1}$, letting $\xi$ be the real dual variable of $x$ and identify $\xi$ with the  unitary character $\left(x\mapsto \psi(\langle \xi,x\rangle)\right)$ of $F_n$, one has
\begin{align*}& \mathcal{F}_x\circ \eta(g)\circ \mathcal{F}_x^{-1}(\widehat{f})(\xi)\\ = &\  \big\vert\det\left(\mathrm{Ad}(p)|_{\mathfrak{y}_1/\mathfrak{y}_0}\right)\big\vert^{\frac{1}{2}}\cdot \psi\left(\langle\xi,\mathrm{Ad}(p)(t)\rangle\right)\cdot \psi_n\left(\mathrm{Ad}(up)(v)+\mathrm{Ad}(up)(t)-\mathrm{Ad}(p)(t) \right) \\ &
\cdot \sigma(p)\widehat{f}\left(\mathrm{Ad}(p)^{-1}(\xi)-\mathrm{Ad}(p)^{-1}(\psi_n)+\mathrm{Ad}(up)^{-1}(\psi_n)\right).\end{align*}
Since the action of  $\widehat{\eta}:= \mathcal{F}_x\circ \eta\circ \mathcal{F}_x^{-1}$ on the variable $\xi\in F_n^*$ matches with the action of $Y_1$ on $\psi_n+F_n^*$, the $\widehat{\eta}$ can be realized as the Schwartz sections of the bundle over $\psi_n+F_n^*$, that is $\mathcal{S}(\Xi,\sigma)$.

The representation $\beta=\SInd_{Y_2}^{Y_1}(\sigma\otimes\phi_n)$ can be realized as the space of Schwartz functions from 
\[{\tiny \Xi_Y:=\left\{a(s,y):= \begin{pmatrix}
1 & 0 & 0 & 0 \\ 0 & I_{n-1} & 0 & 0\\ 0 & B_y & I_{n-1} & 0 \\ 0 & 0 & 0 & 1
\end{pmatrix}\cdot  \begin{pmatrix}  \frac{1}{s}I_n & 0 \\ 0 &   sI_n \end{pmatrix} \ \middle|\  \renewcommand{\arraystretch}{1.3} \begin{array}{l}   B_y= \begin{pmatrix}y & 0_{(n-2)\times (n-2)} \\ 0 & \widetilde{y} \end{pmatrix}\\ s\in \mathbb{R}^{\times}, y=[y_1 \dots y_{n-2}]^t\in \mathbb{R}^{n-2}\end{array}\right\}}\] to the underlying space of $\sigma$.

Given $g=upvt\in  Y_1$ as above,  decompose $p=l_0\cdot u_0\cdot a(s_0,0)$, where $l_0\in \GL_{n-2}$, $u_0\in F_{n-1}$ and $s_0\in \mathbb{R}^{\times}$. Write $u=a(1,y_{u})\cdot u'$ with $y_u\in \mathbb{R}^{n-2}$ and $u'\in \overline{U_{n-2}}$. Write 
\[{\tiny u_0=\begin{pmatrix}
1 & 0 & 0 & 0 & 0 & 0\\ 0 & 1 & \widetilde{y_{u_0}} & 0 & 0 & 0\\   0 & 0 & I_{n-2}  & 0 & 0 & 0\\  0 & 0 &  0 & I_{n-2} & y_{u_0} & 0\\  0 &  0 & 0 & 0 & 1  & 0 \\ 0 &  0 & 0 & 0 &  0 & 1
\end{pmatrix}}.\]
The action of $\beta(g)$ is given by
\begin{align*}(\beta(g)h)(a(s,y))=& \big\vert\det\left(\mathrm{Ad}(p)|_{\mathfrak{y}_1/\mathfrak{y}_0}\right)\big\vert^{\frac{1}{2}} \phi_n\left(\mathrm{Ad}(a(s,y)g)(vt)\right)\\ & 
\cdot  \sigma(l_0u_0)h\left(  a(ss_0,l_0^{-1}(y+y_u)+y_{u_0})\right) \end{align*}

we define a map $d: \Xi_Y\to \Xi_o^+$ by 
\[d(a(s,y)):=a(s,y)^{-1}(\phi_n)=\psi_n+\frac{1}{s^2}\left(\psi_n^{(2)}-\sum_{i=1}^{n-2}y_{n-i-1}\psi_n^{(i+2)}\right).\] 
Define $\mathcal{T}:\mathcal{S}(\Xi_o^+,\sigma)\to \mathcal{S}(\Xi_Y,\sigma)$ by 
\[\mathcal{T}(f)(a(s,y)):=\sigma(a(s,0)) f(d(a(s,y))).\] 
Then $\mathcal{T}$ is an intertwining operator from $\widehat{\eta}|_{\Xi_o^+}$ to $\SInd_{Y_2}^{Y_1}(\sigma\otimes \phi_n)$. Similarly, $\widehat{\eta}|_{\Xi_o^-}\simeq \SInd_{Y_2^-}^{Y_1}(\sigma\otimes \phi_n)$. Let $\eta^{\flat}=\widehat{\eta}|_{\Xi_o^+\cup \Xi_o^-}$. 
By Borel's lemma, $\eta/\eta^{\flat}$ has an infinite decreasing filtration with successive quotients as the statement.

\end{proof}

Consequently, by isomorphism~\eqref{R-space}, $\ov{M}(I(\sigma))$ can be realized as Schwartz sections of a tempered bundle $\CE$ over 
\[
X:=\big((V_n^*\setminus\{0\})\times F_n^*\big)\times_Y R_{n-1,1}.
\]
Note that $R_{n-1,1}$ has a right action on $E_n^*$. This action will induce a Nash submersion
\[
\varphi:\big((V_n^*\setminus\{0\})\times F_n^*\big)\times_Y R_{n-1,1}\lra E_n^*\setminus \{0\}.
\]
In addition, by Proposition~\ref{open-cts}, $\fke_n$-action on $\CS(X,\CE)$ is given by
\begin{equation}\label{E_n-act}
  (\xi\cdot f)(x):=d\psi(1)\varphi(x)(\xi)\cdot f(x),\xi\in \fke_n
\text{ and } f\in\CS(X,\CE).   
\end{equation}


\subsection{Bernstein-Zelevinsky filtration of the degenerate principal series}
Let $n_i, 1\leq i\leq m$ be positive integers such that $\sum_{ i=1}^m n_i=n$. The following theorem concerns the infinitesimal characters of irreducible subquotients occurring in the BZ-filtration of some degenerate principal series. It will be used in Theorem~\ref{ext-van-thm}.
\begin{theorem}\label{central_char_of_fil}
Let $\pi=\prod_{i=1}^m\chi_{r_i,s_i}$ be a representation
of $\GL_n$, where $\chi_{r_i,s_i}$ is a character of $\GL_{n_i}$. Then there exists a  filtration of $\pi|_{M_n}$ with successive quotients being isomorphic to  $I^{k-1}E(\prod_{i=1}^m\tau_i)$ for some non-negative integer $k$, where $\tau_i$ is either 
 \begin{itemize}
\item[(i)] a $\GL_{n_i}$-representation $|\det|_{\mathbf{k}}^{\frac{1}{2}}\cdot \chi_{r_i,s_i}\otimes_{\BR} \mathrm{Sym}^l(\mathbf{k}^{n_i})$ for some $l\in \mathbb{N}$, where $\mathbf{k}^{n_i}$ is the standard representation of $\GL_{n_i}$; or
\item[(ii)] a $\GL_{n_i-1}$-representation $\chi_{r_i,s_i}|_{\GL_{n_i-1}}$. 
\end{itemize}
And each $\prod_{i=1}^m\tau_i$ satisfying (i) and (ii) shows up exactly once in the successive quotients. 
Moreover, by taking refinement to break the finite length representation $\prod_{i=1}^m\tau_i$ into irreducible ones, one can get the BZ-filtration of $\pi|_{M_n}$.
\end{theorem}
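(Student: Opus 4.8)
The plan is to argue by induction on $m$, retracing the geometric decomposition from the proofs of Theorem~\ref{ind_BZ} and Lemma~\ref{open_orbit} while recording the successive quotients produced at each step.

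\emph{Base case $m=1$.} Here $\pi=\chi_{r_1,s_1}$ is a character of $\GL_n=\GL_{n_1}$. Since $V_n$ is unipotent it acts trivially, so $\pi|_{P_n}$ factors through $P_n\twoheadrightarrow P_n/V_n\simeq\GL_{n-1}$ and equals $E\bigl(\chi_{r_1,s_1}|_{\GL_{n-1}}\bigr)=I^{0}E(\tau_1)$ with $\tau_1$ of type (ii); this is the only assignment of types for which $I^{k-1}E(\prod_i\tau_i)$ can even be a representation of $P_n$ (a type-(i) choice would force $E$ of a $\GL_n$-representation), so the statement holds with the trivial filtration.

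\emph{Inductive step.} Write $\pi=\chi_1\times\pi'$ with $\chi_1=\chi_{r_1,s_1}$ and $\pi'=\prod_{i=2}^m\chi_{r_i,s_i}$ a representation of $\GL_{m'}$, $m'=n-n_1$, to which the inductive hypothesis applies. Taking $\pi_1=\chi_1$, $\pi_2=\pi'$ in the proof of Theorem~\ref{ind_BZ} gives $0\to\pi^\flat\to\pi|_{P_n}\to\pi^\sharp\to0$. On the \textbf{closed orbit}, Borel's lemma (cf.\ \eqref{closed_cal}) filters $\pi^\sharp$ with $i$-th quotient $\bigl(|\det|_{\mathbf{k}}^{1/2}\chi_1\otimes_{\BR}\Sym^i(\mathbf{k}^{n_1})\bigr)\times\pi'|_{P_{m'}}$; applying the exact functor $\bigl(|\det|_{\mathbf{k}}^{1/2}\chi_1\otimes_{\BR}\Sym^i(\mathbf{k}^{n_1})\bigr)\times(-)$ to the filtration of $\pi'|_{P_{m'}}$ furnished by induction and pulling the product past $I^{k'-1}E$ via Lemma~\ref{inductive_arg0}(1), $\pi^\sharp$ realizes, each exactly once, all combinations in which index $1$ is of type (i) (with parameter $i$) while indices $2,\dots,m$ form an arbitrary admissible combination. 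On the \textbf{open orbit}, $\pi^\flat\simeq\pi'\overline{\times}\chi_1|_{P_{n_1}}=\pi'\overline{\times}E(\chi_1|_{\GL_{n_1-1}})$; the proof of Lemma~\ref{open_orbit} (with $\pi_2=\pi'$, $\pi_1=\chi_1|_{\GL_{n_1-1}}$) presents this, after a Fourier transform, as an extension whose open part is $I(\chi_1|_{\GL_{n_1-1}}\times\pi'|_{P_{m'}})$ and whose closed part is filtered by $E\bigl((|\det|_{\mathbf{k}}^{1/2}\pi'\otimes_{\BR}\Sym^j(\mathbf{k}^{m'}))\overline{\times}\chi_1|_{\GL_{n_1-1}}\bigr)$. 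Feeding the filtration of $\pi'|_{P_{m'}}$ through $I(\chi_1|_{\GL_{n_1-1}}\times(-))$ and again Lemma~\ref{inductive_arg0}(1), the open part contributes $I^{k'}E\bigl(\chi_1|_{\GL_{n_1-1}}\times\prod_{i=2}^m\tau_i\bigr)$ — all combinations with index $1$ of type (ii) and at least one of $2,\dots,m$ of type (ii); and after decomposing $\Sym^j(\mathbf{k}^{m'})$ along the flag $\mathbf{k}^{n_2}\subset\mathbf{k}^{n_2}\oplus\mathbf{k}^{n_3}\subset\cdots$ of the standard parabolic $P_{n_2,\dots,n_m}\subset\GL_{m'}$ (associated graded $\bigoplus_{j_2+\cdots+j_m=j}\Sym^{j_2}(\mathbf{k}^{n_2})\boxtimes\cdots\boxtimes\Sym^{j_m}(\mathbf{k}^{n_m})$) and using $\Ind(\sigma)\otimes W\simeq\Ind(\sigma\otimes W|_P)$, the closed part contributes, after refinement, $E\bigl(\chi_1|_{\GL_{n_1-1}}\times\prod_{i=2}^m(|\det|_{\mathbf{k}}^{1/2}\chi_i\otimes_{\BR}\Sym^{j_i}(\mathbf{k}^{n_i}))\bigr)$ — all combinations with index $1$ of type (ii) and all of $2,\dots,m$ of type (i). Stacking these filtrations (open part of $\pi^\flat$ above its closed part, then $\pi^\flat$ above $\pi^\sharp$) yields a filtration of $\pi|_{P_n}$ in which, partitioning by the type of index $1$, every assignment of types with at least one type-(ii) index occurs exactly once.

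\emph{The ``moreover''.} Each $\prod_i\tau_i$ is a finite-length $\GL_{n-k}$-representation, so a finite refinement breaks it into irreducibles; since the central character of $|\det|_{\mathbf{k}}^{1/2}\chi_i\otimes_{\BR}\Sym^{l}(\mathbf{k}^{n_i})$ has real part growing with $l$, the infinite chains of successive quotients of a fixed shape can be ordered to meet the monotonicity and local-finiteness requirements of Definition~\ref{def-BZ}, and reorganizing into the level structure of Definition~\ref{def_fil} produces a Bernstein-Zelevinsky filtration of $\pi|_{P_n}$. I expect the main obstacle to be the bookkeeping that enforces ``exactly once'', together with the appearance of $\Sym^j$ of the \emph{standard} representation of $\GL_{m'}$ inside the closed part of the open orbit, which must be split block-by-block along $P_{n_2,\dots,n_m}$ before it matches the product shape in the statement; one must also verify that the several inverse-limit completions created by repeated use of Borel's lemma are mutually compatible, so that all the pieces genuinely assemble into a single level $\le r$ filtration.
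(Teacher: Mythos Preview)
Your proposal is correct and follows essentially the paper's route: induction on $m$ via the open/closed decomposition of Theorem~\ref{ind_BZ}, combined with Lemmas~\ref{inductive_arg0} and~\ref{open_orbit}. The only difference is that you peel off the first factor $\chi_1$ while the paper peels off the last factor $\chi_m$; this merely relocates the block-by-block splitting of $\Sym^j$ --- you carry it out in the closed part of $\pi^\flat$, whereas in the paper it is the (implicit) step needed to put $\pi^\sharp$ into the claimed form.
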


\begin{proof}

We prove it by induction on $m$.
For $m=1$, it is trivially true. Assume that it holds for $m-1$, let us show it for $m$. Write $\pi$ as \[\mathrm{Ind}_{P_{n-n_m,n_m}}^{\GL_n}(\widetilde{\pi}\boxtimes \chi_{r_m,s_m}), \text{ where } \widetilde{\pi}:=\prod_{i=1}^{m-1}\chi_{r_i,s_i}.\]

As the proof of Theorem \ref{ind_BZ}, $\pi|_{M_n}$ admits a filtration 
$$0\to \pi^{\flat}\to \pi|_{M_n}\to \pi^{\sharp}\to 0.$$ 
By induction on $m$, one can get a filtration of $\widetilde{\pi}|_{M_{n-n_m}}$ as the statement. Therefore, one can get a filtration of 
$$\pi^{\flat}=|\det|_{\mathbf{k}}^{\frac{1}{2}}\cdot\chi_{r_i,s_i}\mind \widetilde{\pi}|_{M_{n-n_m}}$$ 
as the statement by Lemma~\ref{inductive_arg} and Lemma~\ref{open_orbit}. Moreover, as the proof~\eqref{closed_cal} of Theorem \ref{ind_BZ}, one can get a filtration of $\pi^{\sharp}$ as the statement. This finishes the proof of the statement.
\end{proof}

We give a concrete example, which will be used in the Example \ref{ext_nonzero}.

\begin{example}\label{exa-BZ}
Let $\pi$ be the representation $(\chi_{r_1,s_1})_{\GL_2}\times (\chi_{r_2,s_2})_{\GL_2}$ of $\GL_4(\mathbb{C})$. Then by Theorem \ref{central_char_of_fil}, $\pi$ admits a level $\leq 1$ BZ-filtration,
\[\pi=\sigma_0\supset \sigma_1 \supset \sigma_2 \supset 0 \]
with 
\begin{itemize}
\item[(i)] an infinite decreasing filtration  \begin{align*}\sigma_0=\sigma_{0,0}^{0}\supset\sigma_{0,0}^{1}\supset \dots\supset \sigma_{0,0}^{i_0}=\sigma_{0,1}^{0}\supset\sigma_{0,1}^{1}\supset\dots\supset \sigma_{0,1}^{i_1}=\sigma_{0,2}^{0}\supset\sigma_{0,2}^{1}\supset \dots  \supset \sigma_1,\end{align*}
where $\sigma_0/\sigma_1\simeq \varprojlim_j  \sigma_0/\sigma_{0,j}^{0}$, and 
\[\sigma_{0,j}^{0}/\sigma_{0,j+1}^{0}\simeq E\left(\big((\chi_{r_1,s_1})_{\GL_2} |\det|\otimes_{\mathbb{R}} \mathrm{Sym}^j(\mathbb{C}^2)\big)\times (\chi_{r_2,s_2})_{\GL_1}\right),\] and  $\sigma_{0,j}^{0}\supset\sigma_{0,j}^{1}\supset\dots\supset\sigma_{0,j}^{i_j}=\sigma_{0,j+1}^{0}$ is a finite refinement with irreducible successive quotients.
\item[(ii)] Similar to (i), an infinite decreasing filtration  
\[\sigma_1=\sigma_{1,0}\supset\sigma_{1,0}^{1}\supset \dots\supset \sigma_{1,0}^{*}=\sigma_{1,1}^{0}\supset\sigma_{1,1}^{1}\supset\dots\supset \sigma_{1,1}^{*}=\sigma_{1,2}^{0}\supset\sigma_{1,2}^{1}\supset \dots  \supset \sigma_1,\]
where $\sigma_1/\sigma_2\simeq \varprojlim_j  \sigma_1/\sigma_{1,j}^{0}$, and \[\sigma_{1,j}^{0}/\sigma_{1,j+1}^{0}\simeq E\left((\chi_{r_1,s_1})_{\GL_1}\times \big((\chi_{r_2,s_2})_{\GL_2} |\det|\otimes_{\mathbb{R}} \mathrm{Sym}^j(\mathbb{C}^2)\big)\right),\] 
and  $\sigma_{1,j}^{0}\supset\sigma_{1,j}^{1}\supset\dots\supset\sigma_{1,j}^{*}=\sigma_{1,j+1}^{0}$ is a finite refinement with irreducible successive quotients.
 
\item[(iii)] $\sigma_2\simeq IE\big((\chi_{r_1,s_1})_{\GL_1}\times (\chi_{r_2,s_2})_{\GL_1}\big)$, and a finite refinement of  $\sigma_2\supset 0$ with irreducible successive quotients.

\end{itemize}

\end{example}

\subsection{Opposite Bernstein-Zelevinsky filtration}
The group $\GL_n$ has an outer automorphism given by conjugate inversion, which leads us to consider restricting Casselman-Wallach representations to the opposite mirabolic subgroup $\ov{M_n}$. This should give us more information than just considering the restriction to the mirabolic subgroup. On the other hand, we observe that such an outer automorphism will induce an involution on the category of Casselman-Wallach representations, which is called MVW-involution. By Harish-Chandra character theory, we have the following well-known fact.
\begin{lemma}\label{lem-MVW}
    Let $\pi$ be an irreducible representation of $\GL_n$. Its MVW-involution is given by $\pi^{MVW}(g):=\pi(g^{-t})$. Then $\pi^{\vee}\simeq \pi^{MVW}$.
\end{lemma}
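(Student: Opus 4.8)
The plan is to prove that $\pi^{\vee} \simeq \pi^{MVW}$ for any irreducible Casselman-Wallach representation $\pi$ of $\GL_n(\mathbf{k})$ by invoking the classical fact that an irreducible admissible representation is determined (up to isomorphism) by its Harish-Chandra character, and then comparing the characters of the two representations in question. This is the standard argument, essentially due to Gelfand-Kazhdan over $p$-adic fields and transported to the Archimedean setting via Harish-Chandra's character theory.

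\begin{proof}[Proof sketch]
First I would reduce the statement to the level of Harish-Chandra modules: by the Casselman-Wallach equivalence recalled above, $\pi \mapsto \pi^K$ is faithful and exact, and it carries the operations $g \mapsto \pi(g^{-t})$ and contragredient to their $(\fkg,K)$-module counterparts, so it suffices to prove $(\pi^{\vee})^K \simeq (\pi^{MVW})^K$. Both sides are irreducible Harish-Chandra modules with the same infinitesimal character (the transpose-inverse automorphism and the contragredient both act on infinitesimal characters through the same Weyl-group element, namely the one inducing the Chevalley involution on $\GL_n$). By Harish-Chandra's theorem, a globally-defined, conjugation-invariant, locally $L^1$ eigendistribution that is the character of an irreducible admissible representation determines that representation; hence it is enough to show $\Theta_{\pi^{\vee}} = \Theta_{\pi^{MVW}}$ as distributions on $\GL_n(\mathbf{k})$.

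Next I would compute both characters explicitly. On the one hand, $\Theta_{\pi^{\vee}}(g) = \Theta_{\pi}(g^{-1})$, a standard identity for the contragredient. On the other hand, $\Theta_{\pi^{MVW}}(g) = \Theta_{\pi}(g^{-t})$, directly from the definition $\pi^{MVW}(g) = \pi(g^{-t})$. The key input is that for $\GL_n$ the elements $g^{-1}$ and $g^{-t}$ are conjugate in $\GL_n(\mathbf{k})$ — indeed $g$ and $g^{t}$ are always conjugate (they have the same rational canonical form, equivalently the same elementary divisors), so $g^{-1}$ and $g^{-t} = (g^{-1})^{t}$ are conjugate as well. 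Since $\Theta_{\pi}$ is a class function (invariant under conjugation), this yields $\Theta_{\pi}(g^{-1}) = \Theta_{\pi}(g^{-t})$ for all $g$, hence $\Theta_{\pi^{\vee}} = \Theta_{\pi^{MVW}}$, and the two irreducible representations coincide.

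The main obstacle, such as it is, is purely citational rather than conceptual: one must invoke the correct Archimedean versions of (a) the classification-by-character statement for irreducible admissible (equivalently Casselman-Wallach) representations of a real reductive group, and (b) the regularity of Harish-Chandra characters as locally integrable, conjugation-invariant distributions, so that evaluating ``$\Theta_\pi$ at $g^{-1}$'' and ``at $g^{-t}$'' makes sense and the class-function property can be applied. Both are in Harish-Chandra's work (or Wallach's book), and once they are in place the argument is immediate. One should also double-check compatibility of the $K$-finite-vectors functor with the relevant duality and twist, but this is routine given the functorial properties of parabolic induction and contragredient already used in the excerpt.
\end{proof}
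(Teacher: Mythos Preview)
Your argument is correct and is exactly the approach the paper has in mind: the paper does not give a proof but simply prefaces the lemma with ``By Harish-Chandra character theory, we have the following well-known fact,'' which is precisely the character-comparison argument you spell out. Your elaboration (reducing to Harish-Chandra modules, computing $\Theta_{\pi^{\vee}}(g)=\Theta_\pi(g^{-1})$ and $\Theta_{\pi^{MVW}}(g)=\Theta_\pi(g^{-t})$, and using that $g$ and $g^t$ are conjugate in $\GL_n$) fills in the details the paper leaves implicit.
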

\begin{remark}
    Observe that for other classical groups $G_n$, the opposite mirabolic subgroup $\ov{M_n}$ is inner conjugate to $M_n$. Hence, restriction to the opposite mirabolic subgroup will not provide more information.
\end{remark}

An axiomatic definition about \textbf{opposite BZ-filtration} is also one that we appreciate.
\begin{definition}\label{def-oppo-BZ}
Let $\sigma$ be a representation of $\ov{M_n}$. We call the following datum \textbf{a level $\leq 1$ opposite BZ-filtration} of $\sigma$: A level $\leq 1$ filtration of $\sigma$ as \ref{def_fil}
such that  
\begin{itemize}
    \item $\sigma_{i,j}/\sigma_{i,j+1}$ is isomorphic to  $\ov{I}^{k_i}\ov{E}(\pi_{i,j})$ for some $k_i$ (dependent on $i$ but independent on $j$) and irreducible representations $\pi_{i,j}$ of $\GL_{n-k_i-1}$, and 
     \item The real part of the central characters satisfies $\mathrm{Re}(\omega_{\pi_{i,j}})\geq \mathrm{Re}(\omega_{\pi_{i,j+1}})$ for every $j$. And for every $c\in \mathbb{R}$, there are finitely many $j$ such that $\mathrm{Re}(\omega_{\pi_{i,j}})\geq c$. 
\end{itemize} 
 
For $r\geq 2$, we call the following datum \textbf{a level $\leq r$ opposite BZ-filtration} of $\sigma$: A level $\leq r$ filtration of $\sigma$ as \ref{def_fil} such that the filtration on each $\sigma_{i,j}/\sigma_{i,j+1}$ is a level $\leq r-1$ opposite BZ-filtration, and

\begin{itemize} 
     \item Let $\Omega_{i,j}$ denote the set of real parts of the central characters of the irreducible successive quotients in $\sigma_{i,j}/\sigma_{i,j+1}$. Then for each $i,j$, $\max \Omega_{i,j}\geq \max \Omega_{i,j+1}$. Moreover, for every $c\in \mathbb{R}$, there exists only finitely many element $j$ with $\max \Omega_{i,j}\geq c$.
\end{itemize}

We say that a representation $\sigma$ of $\ov{M_n}$ admits an opposite BZ-filtration if $\sigma$ admits a level $\leq r$ opposite BZ-filtration for some finite $r\in \mathbb{Z}_{>0}$.
\end{definition}

Let $\pi$ be a Casselman-Wallach representation of $\GL_n$. Thus, by Theorem~\ref{have_B-Z_fil}, $\pi^{\vee}|_{M_n}$ admits a BZ-filtration of level $\leq n$. We realize $\pi$ and $\pi^{\vee}$ on  the same vector space by Lemma~\ref{lem-MVW}. Then the filtration is stable under $\pi(\ov{M_n})$-action since $(M_n)^{-t}=\ov{M_n}$. Moreover, suppose some successive subquotient is isomorphic to $I^{k-1}E(\tau)$ for some positive integer $k$ and irreducible $\GL_{n-k}$-representation $\tau$ under $\pi^{\vee}$-action. Then under $\pi$-action, it is isomorphic to $\ov{I}^{k-1}\ov{E}(\tau^{\vee})$. Consequently, we get the following result.

\begin{proposition}\label{have_oppoBZ-fil}
    Let $\pi$ be a Casselman-Wallach representation of $G_n$. Then $\pi|_{\ov{M_n}}$ admits an opposite BZ-filtration of level $\leq n$.
\end{proposition}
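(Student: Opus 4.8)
The plan is to obtain the opposite Bernstein--Zelevinsky filtration by transporting, through the transpose--inverse automorphism $\theta\colon g\mapsto {}^tg^{-1}$ of $\GL_n$, the ordinary Bernstein--Zelevinsky filtration of the contragredient $\pi^{\vee}$ furnished by Theorem~\ref{have_B-Z_fil}. Concretely, I would first reduce to the case where $\pi$ is irreducible: by the argument of Lemma~\ref{ext_BZ} (which adapts verbatim with $\ov{P_n}$, $\ov{I}$, $\ov{E}$ in place of $P_n$, $I$, $E$), the representations of $\ov{P_n}$ admitting a level $\le n$ opposite Bernstein--Zelevinsky filtration are closed under extensions, so a composition series of $\pi$ as a $\GL_n$-representation reduces us to its irreducible constituents, for which Lemma~\ref{lem-MVW} is directly available.

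So assume $\pi$ irreducible, realized on a space $V$; then $\pi^{\vee}$ is irreducible, and by Lemma~\ref{lem-MVW} we may realize it on the same $V$ via $\pi^{\vee}(g)=\pi(\theta(g))$. By Theorem~\ref{have_B-Z_fil}, $\pi^{\vee}|_{P_n}$ carries a Bernstein--Zelevinsky filtration $\{\sigma_i\supset\sigma_{i,j}\supset\cdots\}$ of level $\le n$. Since $\theta(P_n)=\ov{P_n}$, every $\pi^{\vee}(P_n)$-stable subspace of $V$ is $\pi(\ov{P_n})$-stable; thus the very same chains of closed subspaces, with the same topological isomorphisms $\sigma_i/\sigma_{i+1}\simeq\varprojlim_j\sigma_i/\sigma_{i,j}$ and the same recursive nesting, already constitute a level $\le n$ filtration of $\pi|_{\ov{P_n}}$ in the sense of Definition~\ref{def_fil}: nothing about the topology or the combinatorics changes, only the acting group is reinterpreted through $\theta$. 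It then remains to verify the two extra structural requirements of Definition~\ref{def-oppo-BZ}, which I would do by induction on the level $r$.

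The heart of the matter is identifying the successive quotients, and the key point is that $\theta$-conjugation intertwines $(\times,E,I)$ with $(\mind,\ov{E},\ov{I})$. From the matrix realizations of Section~\ref{induction section} one reads off that $\theta$ carries $P_{m+1}$, $P_{k+1}$ and the group $H_{m+1,2}$ to $\ov{P_{m+1}}$, $\ov{P_{k+1}}$ and $\ov{H_{m+1,2}}$, and sends the inducing character $\psi_{m+1}$ to a character of $\ov{H_{m+1,2}}$ which --- since the (opposite) Mackey induction is independent of the choice of $\psi$, as recorded in Section~\ref{induction section} --- yields an isomorphic opposite Mackey induction; on the Levi factor $\GL_{n-k}$ the restriction of $\theta$ is again transpose--inverse, so by Lemma~\ref{lem-MVW} it replaces an irreducible $\tau$ by $\tau^{\vee}$. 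Hence a successive quotient isomorphic to $I^{k-1}E(\tau)$ for $\pi^{\vee}$ becomes one isomorphic to $\ov{I}^{k-1}\ov{E}(\tau^{\vee})$ for $\pi$, with $\tau^{\vee}$ irreducible. Finally the numerical conditions convert correctly: since $\omega_{\tau^{\vee}}=\omega_{\tau}^{-1}$ and $\mathrm{Re}\,\chi_{\epsilon,s}:=\mathrm{Re}\,s$ is negated under inversion, each set $\Omega_{i,j}$ of real parts appearing in Definition~\ref{def-BZ} is replaced by $-\Omega_{i,j}$; thus ``finite minimum'', ``$\min\Omega_{i,j}\le\min\Omega_{i,j+1}$'' and the finiteness-from-below condition turn precisely into ``finite maximum'', ``$\max\Omega_{i,j}\ge\max\Omega_{i,j+1}$'' and the finiteness-from-above condition of Definition~\ref{def-oppo-BZ}, and likewise for the level $\le 1$ inequality on central characters. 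This exhibits a level $\le n$ opposite Bernstein--Zelevinsky filtration of $\pi|_{\ov{P_n}}$.

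The main obstacle I expect is purely bookkeeping: verifying carefully that $\theta$-conjugation exchanges $(I,E,\times)$ with $(\ov{I},\ov{E},\mind)$ as functors, keeping track of the normalization and modular-character factors in the Schwartz inductions and of the harmless change of $\psi_{m+1}$, and confirming that irreducibility and finite length are preserved throughout (which is automatic, $\theta$ being a group automorphism). Everything else is formal, once one notes that transporting a filtration along an automorphism that leaves the underlying topological vector space untouched alters neither its topology nor its combinatorial shape.
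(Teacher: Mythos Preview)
Your proposal is correct and follows essentially the same route as the paper: transport the Bernstein--Zelevinsky filtration of $\pi^{\vee}|_{P_n}$ (from Theorem~\ref{have_B-Z_fil}) through the transpose--inverse automorphism, identifying each successive quotient $I^{k-1}E(\tau)$ with $\ov{I}^{k-1}\ov{E}(\tau^{\vee})$ and observing that the real parts of central characters flip sign. Your explicit reduction to irreducible $\pi$ before invoking Lemma~\ref{lem-MVW} is in fact a small improvement over the paper's text, which applies that lemma to a general Casselman--Wallach $\pi$ without comment; alternatively one could bypass this by working with $\pi^{MVW}(g):=\pi(g^{-t})$ directly (Casselman--Wallach for any $\pi$) and applying Lemma~\ref{lem-MVW} only to the irreducible $\tau$'s in the successive quotients.
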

\begin{remark}\label{order_rem}
    Besides MVW-involution, one can write down an opposite BZ-filtration by a similar argument in section~\ref{BZ_gl-sec}. However, the order of these two filtrations is not always identical, see the example of self-dual discrete series in section~\ref{LLC-sec}.
    
\end{remark}
Observe the above proposition, we get a remark on Theorem~\ref{have_B-Z_fil}.
\begin{remark}
    In the proof of Theorem~\ref{have_B-Z_fil}, the BZ-filtration of irreducible representation comes from a subrepresentation structure. Actually, we can also realize the irreducible representation $\pi$ as a quotient of some Casselman-Wallach representation $I$ equipped with a BZ-filtration. Then $\pi^{\vee}\hookrightarrow I^{\vee}$ will inherit an opposite BZ-filtration. Thus we get a BZ-filtration on $\pi$. These two filtrations do not always coincide.
\end{remark}

\section{Twisted homology and Highest Derivative}\label{highest der}
The highest derivative is an important tool to study the branching law of general linear groups, see \cite[Theorem 4.3]{PWZ} for example. In this section, we show that the highest derivative of $\pi$ coincides with the bottom layer of the BZ-filtration. With such an idea, we can calculate the highest derivative for parabolically induced representations and prove some results similar to the $p$-adic case, which significantly extends the result of \cite[Theorem B]{AGS15b}.
\subsection{Twisted homology and highest derivative of $\GL_n$}
The following result is fundamental to the entire article. Let $\sigma$ be a representation of $M_{n-1}$, hence $I(\sigma)$ is a representation of $M_n$. We interpret $I(\sigma)$ as Schwartz sections of a tempered bundle $\CE$ over $H_{n,2}\backslash M_n$. 
\begin{proposition}\label{homology prop}
    We have the following result about the Lie algebra homology of $I(\sigma)$.
    $$
    \mathrm{H}_i(\fkv_n,I(\sigma)\otimes (-\psi_n))=\left\{\begin{aligned}
       & \sigma, \quad \text{ if } i=0\\
        & 0, \quad \text{ otherwise}
    \end{aligned}
    \right.
   $$
\end{proposition}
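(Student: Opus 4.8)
The plan is to compute $\rmh_i(\fkv_n, I(\sigma)\otimes(-\psi_n))$ directly from a geometric realization of $I(\sigma)$ as Schwartz sections over the $P_n$-space $H_{n,2}\backslash P_n$, using the fact that $\fkv_n$ is abelian. Recall $I(\sigma)=\SInd_{H_{n,2}}^{P_n}(\sigma\boxtimes\psi_n)$, so its underlying space is $\CS(H_{n,2}\backslash P_n,\CE)$ for the appropriate tempered bundle $\CE$. The key point is to understand how $V_n$ acts on $H_{n,2}\backslash P_n$: one identifies the double coset space $H_{n,2}\backslash P_n / V_n$ and finds that there is an \emph{open} $V_n$-orbit on which the restriction of the character $\psi_n$ (built into the bundle) to the stabilizer of $\fkv_n$ is nontrivial, so that on the open orbit, after twisting by $-\psi_n$, the $\fkv_n$-action is by a nonzero family of characters. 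The first step, then, is to produce the orbit stratification: I would write down explicit coset representatives, check that there are only two orbits (or finitely many), identify the open one, and observe that on the open orbit the Schwartz sections form a module on which $\fkv_n$ acts through multiplication by a submersion onto a space of characters whose value is generically nonzero, exactly as in the computation of $\fke_n$-action in equation~\eqref{E_n-act}.

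Next I would run the localization argument along this stratification. Using the co-sheaf property of Schwartz functions (Proposition~\ref{co-sheaf prop}), one gets a short exact sequence $0\to \CS(O,\CE)\to \CS(X,\CE)\to \CS_Z(X,\CE)\to 0$ with $O$ the open $V_n$-orbit and $Z$ its closed complement. On the open part, twisting by $-\psi_n$, the $\fkv_n$-action is multiplication by a nonvanishing (on $O$) real-analytic, in fact polynomial, function $\xi\mapsto \langle\text{something},\xi\rangle$; since division by a nonvanishing tempered function is an isomorphism on Schwartz space, the Koszul complex computing $\rmh_\bullet(\fkv_n,-)$ is exact in \emph{all} degrees on the open stratum, giving $\rmh_i(\fkv_n,\CS(O,\CE)\otimes(-\psi_n))=0$ for all $i$. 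On the closed stratum $Z$, which is the smaller $V_n$-orbit where the twisted action becomes trivial (or at least the normal-derivative filtration kicks in via Borel's lemma), I would use Borel's lemma to filter $\CS_Z(X,\CE)$ with successive quotients of the form (sections over $Z$)$\otimes \Sym^k(\text{normal bundle})$; on each such graded piece the $\fkv_n$-action is again by multiplication by characters, and one checks that $\rmh_0$ picks out precisely the lowest piece, yielding $\sigma$, while all higher homology vanishes. The long exact sequence in $\rmh_\bullet(\fkv_n,-)$ associated to the short exact sequence above then collapses, because the open-stratum term vanishes identically, to give $\rmh_0(\fkv_n,I(\sigma)\otimes(-\psi_n))\simeq\sigma$ and $\rmh_i=0$ for $i>0$. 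The Hausdorffness needed to treat the inverse limit from Borel's lemma follows from Lemma~\ref{ML-lem} and Lemma~\ref{fil-lem}, since each graded piece has the required vanishing and Hausdorffness.

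The main obstacle I expect is the careful bookkeeping on the closed stratum: one must verify that Borel's lemma produces a filtration whose successive quotients have \emph{vanishing} higher $\fkv_n$-homology and Hausdorff $\rmh_0$, and that only the bottom graded piece survives in $\rmh_0$ after the twist by $-\psi_n$; this is where the interplay between the character $\psi_n$, the normal derivatives, and the decomposition $H_{n,2}=\GL_{n-2}\cdot N_2\cdot(\text{matrix block})$ all has to be made precise. A secondary technical point is justifying that the "division by a nonvanishing polynomial" really gives an isomorphism of the relevant Schwartz/tempered-section spaces and that this is compatible with passing to the Koszul complex — but this should follow from the standard theory of tempered functions and Schwartz sections (Section~\ref{sec-Schwartz fun}) without difficulty. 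Once the stratification and the two local computations are in hand, assembling the long exact sequence is routine.
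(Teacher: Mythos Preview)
Your proposal has a genuine gap in its geometric setup. You plan to stratify $X=H_{n,2}\backslash P_n$ by $V_n$-orbits, but in fact $V_n\subset H_{n,2}$: writing $H_{n,2}$ in $(n-2,2)$-block form, any element of $V_n$ has $a=I_{n-2}$ and $u=\begin{pmatrix}1&*\\0&1\end{pmatrix}\in N_2$. Hence $V_n$ acts \emph{trivially} on $X$, the double coset space $H_{n,2}\backslash P_n/V_n$ is just $X$ itself, and there is no open/closed orbit stratification to run your long exact sequence on. The Mackey-style picture you sketch (open orbit with nontrivial stabilizer character versus closed orbit plus Borel filtration) simply does not apply here.

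What is true, and what the paper uses, is the observation you allude to in passing: since $V_n$ is normal in $P_n$ and contained in $H_{n,2}$, the action of $\fkv_n$ on a section $f\in\CS(X,\CE)$ is pointwise multiplication by the character ${}^x\psi_n$. The map $\varphi:X\to\fkv_n^*$, $x\mapsto{}^x\psi_n$, is an \emph{open embedding} onto $\fkv_n^*\setminus\{0\}$ (indeed $H_{n,2}=P_{n-1}\ltimes V_n$, so $X\simeq P_{n-1}\backslash\GL_{n-1}$, which is exactly the $\GL_{n-1}$-orbit of the nonzero functional $\psi_n$). After the twist by $-\psi_n$, the action becomes multiplication by $\tilde\varphi:=\varphi-\psi_n$, for which $0$ is a regular value with preimage the single point $\{\bar e\}$. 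Corollary~\ref{bun ver} then gives the result in one stroke: $\rmh_i=0$ for $i>0$ and $\rmh_0\simeq\CE_{\bar e}=\sigma$. No stratification, no Borel's lemma, no Mittag-Leffler argument is needed. Your proposed machinery on the ``closed stratum'' would, if one stratified instead by the vanishing locus of $\tilde\varphi$, amount to reproving Corollary~\ref{bun ver} by hand in this special case.
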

In the proof, we will use a variant of the following lemma concerning the homology of a family of representations.
\begin{lemma}[\cite{AGS15b}, Lemma 6.2.2]\label{lem:SimpHo}
Let $X$ be a Nash manifold and $\fkv$ be a complex abelian Lie algebra. Let $\varphi:X \to \fkv^*$ be a Nash map. This defines a map $\chi: \fkv \to \CT(X)$, where $\chi(v)(x)= \varphi(x)(v)$. Consider an action of $\fkv$ on $\CS(X)$ defined by $\pi(v)(f):= \chi(v) \cdot f$. Suppose that $0 \in \fkv^*$ is a regular value of $\varphi$. Then
\begin{enumerate}
\item $\rmh_i(\fkv,\CS(X))=0$ for $i>0$.
\item  Let $X_0:=\varphi^{-1}(0)$, which is smooth. Let $r$ denote the restriction map $r:\CS(X) \to \CS(X_0)$. Then $r$ induces an isomorphism $\rmh_0(\fkv,\CS(X)) \stackrel{\sim}{\lra} \CS(X_0)$.
\end{enumerate}
\end{lemma}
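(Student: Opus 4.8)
The plan is to prove parts (1) and (2) simultaneously by exhibiting the complex that computes $\rmh_\bullet(\fkv,\CS(X))$ as a resolution of $\CS(X_0)$, with the degree-$0$ augmentation equal to the restriction map $r$. Fix a basis $v_1,\dots,v_d$ of $\fkv$ and set $\chi_j:=\chi(v_j)$, a Nash function on $X$ with $\{\chi_1=\dots=\chi_d=0\}=X_0$; by \eqref{koszul} the homology $\rmh_\bullet(\fkv,\CS(X))$ is the homology of the Koszul complex of $\CS(X)$ associated with the family $(\chi_1,\dots,\chi_d)$. Once this augmented Koszul complex is shown to be exact, part (1) is read off in degrees $>0$ and part (2) in degree $0$; the topological assertions then come for free, because algebraic exactness of a Koszul complex at a given spot forces the image of the outgoing differential to coincide with the (closed) kernel of the incoming one, so the subquotient topology on each homology group is automatically the right Hausdorff one.

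First I would localize on $X$ via the co-sheaf formalism. Each term of the augmented complex is a finite direct sum of copies of $\CS(-)$ or of $\CS(-\cap X_0)$, and both $\sU\mapsto\CS(\sU)$ and $\sU\mapsto\CS(\sU\cap X_0)$ are co-sheaves on $X$ (the latter by transporting, along the closed embedding $X_0\hookrightarrow X$, the co-sheaf $\CS$ of Proposition~\ref{co-sheaf prop} on the Nash manifold $X_0$). For an open cover $\sU=\sU_1\cup\sU_2$ the co-sheaf property yields a short exact sequence of complexes $0\to C_\bullet(\sU_1\cap\sU_2)\to C_\bullet(\sU_1)\oplus C_\bullet(\sU_2)\to C_\bullet(\sU)\to 0$, hence a Mayer--Vietoris long exact sequence; since a Nash manifold has a finite semialgebraic atlas, an induction reduces exactness of the augmented complex to a basis of \emph{adapted} open Nash charts. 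There are two cases. If $\sU\cap X_0=\varnothing$, then some $\chi_j$ is nowhere zero on $\sU$, hence invertible as a Nash multiplier, so the Koszul complex of $\CS(\sU)$ is null-homotopic while $\CS(\sU\cap X_0)=0$, and exactness holds. If $\sU$ meets $X_0$, then — and this is exactly where $0$ being a regular value of $\varphi$ is used — the Nash submersion theorem lets me shrink $\sU$ so that $(\chi_1,\dots,\chi_d)$ realizes a submersion onto a ball $W$ about the origin and $\sU\cong W\times\sU_0$ with $\sU_0$ open in $X_0$ and $\chi_j$ identified with the linear coordinate $\xi_j$ on $W$.

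On such a chart $\CS(\sU)\cong\CS(W)\,\widehat{\otimes}\,\CS(\sU_0)$, and the Koszul complex of $\CS(\sU)$ for $(\xi_1,\dots,\xi_d)$ is $\bigl(\wedge^\bullet\fkv\otimes\CS(W),\ \text{multiplication by }\xi_\bullet\bigr)\,\widehat{\otimes}\,\CS(\sU_0)$. As $\CS(\sU_0)$ is nuclear Fr\'echet and the first factor will have closed differentials, tensoring commutes with passing to homology (K\"unneth for nuclear Fr\'echet complexes), so it suffices to treat the \emph{model case}: the Koszul complex of $\CS(W)$ for the coordinate functions $(\xi_1,\dots,\xi_d)$ resolves $\CS(\{0\})=\BC$, the augmentation being evaluation at $0$. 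For this I would invoke Hadamard's division lemma for Schwartz functions: multiplication by $\xi_1$ is injective on $\CS(W)$, and any $f\in\CS(W)$ with $f|_{\{\xi_1=0\}}=0$ equals $\xi_1 g$ where $g(\xi)=\int_0^1(\partial_{\xi_1}f)(t\xi_1,\xi_2,\dots,\xi_d)\,dt\in\CS(W)$. Iterating, $(\xi_1,\dots,\xi_d)$ is a regular sequence on $\CS(W)$ — $\xi_{k+1}$ is injective modulo $(\xi_1,\dots,\xi_k)\CS(W)$ since $\xi_{k+1}f$ vanishing on $\{\xi_1=\dots=\xi_k=0\}$ forces, by continuity, $f$ to vanish there, hence $f\in(\xi_1,\dots,\xi_k)\CS(W)$ — so the standard fact that the Koszul complex of a regular sequence is acyclic in positive degree with $\rmh_0=\CS(W)/(\xi_1,\dots,\xi_d)\CS(W)$, which equals the quotient $\CS(W)/\{f:f(0)=0\}\cong\BC$ via evaluation, settles the model case. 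Reassembling through the Mayer--Vietoris/co-sheaf gluing then yields the lemma.

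The heart of the argument, and its only genuinely analytic ingredient, is the model case, and within it the \emph{topological} refinement of Hadamard's lemma: multiplication by $\xi_1$ must be a topological isomorphism of $\CS(W)$ onto the closed subspace $\{f:f|_{\{\xi_1=0\}}=0\}$ (and likewise at each stage of the iteration), which I expect to require either explicit seminorm estimates for the division formula above or an appeal to the open mapping theorem once injectivity, surjectivity onto that subspace, and continuity are in hand. Everything else — the co-sheaf bookkeeping for the augmented complex, the finite Mayer--Vietoris induction, the Nash submersion normal form, and the K\"unneth formula used to strip off the fibre factor $\CS(\sU_0)$ — is routine and parallels the techniques already used in the proof of Proposition~\ref{co-sheaf prop}.
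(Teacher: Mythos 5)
Your overall skeleton (localize via the co-sheaf/\v{C}ech formalism, put $\varphi$ in a submersion normal form near $X_0$, invert a nonvanishing $\chi_j$ away from $X_0$, and settle a local model by a Koszul/regular-sequence argument) is the right kind of argument, and the paper itself offers no proof to compare against beyond the citation to \cite{AGS15b}. But the step you yourself identify as the heart of the matter is wrong as written. In your model case the transverse coordinates $\xi_j$ are \emph{complex} coordinates on a ball $W\subset\fkv^*$, i.e.\ $\xi_j=x_j+iy_j$ with $f$ only smooth, not holomorphic; for such a coordinate the Hadamard division claim fails. Concretely, on $W\subset\BC$ the function $f(\xi)=\overline{\xi}\,e^{-|\xi|^2}$ vanishes on $\{\xi_1=0\}$ but is not of the form $\xi_1 g$ with $g$ continuous (along rays $\overline{\xi}/\xi=e^{-2i\theta}$ has no limit at $0$); your integral formula $g=\int_0^1(\partial_{\xi_1}f)(t\xi_1,\dots)\,dt$ is the real-variable identity and, for a complex variable, omits the term $\overline{\xi_1}\int_0^1(\partial_{\overline{\xi_1}}f)(t\xi_1,\dots)\,dt$. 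The failure is not cosmetic: if the $\chi_j$ genuinely formed a complex coordinate system transverse to $X_0$ (so that $X_0$ has real codimension $2d$), the conclusion of the lemma itself would be false — the functional $f\mapsto(\partial_{\overline{\xi_1}}f)(0)$ annihilates $\sum_j\xi_j\CS(W)$ and even its closure, so $\rmh_0$ picks up antiholomorphic normal derivatives and is strictly larger than $\CS(X_0)$. Hence no repair of the division step is possible in the setting your normal form sets up. What makes the lemma true, and what every application in this paper uses, is that $\varphi$ takes values in a real form of $\fkv^*$ (the differentials of unitary characters: ${}^x\psi_n$ is purely imaginary on the real form of $\fkv_n$), so that for a real basis of that form the functions $\chi_j$ are $i$ times real-valued Nash functions, $X_0$ has real codimension $d=\dim_\BC\fkv$, and the local model is $d$ \emph{real} coordinates — there your Hadamard/regular-sequence argument (and equally the Fourier-transform route to the Schwartz Poincar\'e lemma) is valid. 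Your proof never invokes this, and as written it proves a statement that is false in the generality it assumes.

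Two smaller points in the localization step. First, your Mayer--Vietoris induction over a finite cover ultimately needs exactness of the augmented complex on all finite intersections of adapted charts; an intersection of two Case-2 charts is an open subset of a product $W\times\sU_0$ but not itself a product, so the model computation does not directly apply there and some additional device (a cover whose multi-intersections remain adapted, a relative version of the model case, or an explicit chain homotopy built from a tempered partition of unity) is needed — the paper's own Corollary~\ref{bun ver} sidesteps exactly this by quoting the lemma on each multi-intersection, which you cannot do. Second, in Case 1 the hypothesis $\sU\cap X_0=\varnothing$ only gives that at each point \emph{some} $\chi_j$ is nonzero; you must refine the cover so that a single fixed $\chi_j$ is nowhere zero on each chart before writing the contracting homotopy $h=\chi_j^{-1}v_j\wedge(\cdot)$ (this refinement is harmless, as in the paper's proof of Corollary~\ref{bun ver}, but it should be said). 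The main gap, however, is the complex-versus-real coordinate issue above.
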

Now we introduce a variant of this lemma, which will be used in the forthcoming proof.
\begin{corollary}[Bundle version]\label{bun ver}
Let $X$ be a Nash manifold and $\CE$ be a tempered Fr\'echet bundle over $X$. Let $\fkv$ be a complex abelian Lie algebra, and $\varphi:X\to \fkv^*$ be a Nash map. Assume either $0\notin \varphi(X)$ or $0 \in \fkv^*$ is a regular value of $\varphi$. Then, considering the $\fkv$-action on $\CS(X)$ as in Lemma~\ref{lem:SimpHo}, we have:
    \begin{enumerate}
\item $\rmh_i(\fkv,\CS(X,\CE))=0$ for $i>0$.
\item  Let $X_0:=\varphi^{-1}(0)$, which is smooth. Let $r$ denote the restriction map $r:\CS(X,\CE) \to \CS(X_0,\CE)$. Then $r$ induces an isomorphism $\rmh_0(\fkv,\CS(X,\CE)) \stackrel{\sim}{\lra} \CS(X_0,\CE)$.
\end{enumerate}
\end{corollary}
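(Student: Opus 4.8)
The plan is to reduce the bundle version to the trivial-line-bundle case of Lemma~\ref{lem:SimpHo} by a local triviality and partition-of-unity argument, exploiting the co-sheaf property of Schwartz sections (Proposition~\ref{co-sheaf prop}). First I would handle the easy case $0 \notin \varphi(X)$: then the multiplication operator $\chi(v_0)$ by some $v_0 \in \fkv$ is invertible on $\CS(X,\CE)$ because $\varphi(x)(v_0)$ is a nowhere-vanishing Nash function whose reciprocal is again tempered, so the Koszul complex computing $\rmh_\bullet(\fkv,\CS(X,\CE))$ is exact (contraction with $v_0$ gives a homotopy); in particular all homology vanishes and $X_0 = \varnothing$, consistent with the stated conclusion.

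The substantive case is when $0$ is a regular value of $\varphi$. Here I would argue that the statement is local on $X$ in the following sense: choose a finite Nash open cover $\{\sU_\alpha\}$ of $X$ such that $\CE|_{\sU_\alpha}$ is Nash-trivial, say $\CE|_{\sU_\alpha} \simeq \sU_\alpha \times F$ for a fixed Fr\'echet space $F$ (possible by the definition of a tempered Fr\'echet bundle). On each $\sU_\alpha$, $\CS(\sU_\alpha,\CE) \simeq \CS(\sU_\alpha)\,\widehat\otimes\, F$ as $\fkv$-modules, where $\fkv$ acts only on the first factor through $\varphi|_{\sU_\alpha}$; since $0$ remains a regular value of $\varphi|_{\sU_\alpha}$, Lemma~\ref{lem:SimpHo} together with exactness of $\widehat\otimes\, F$ (nuclearity, cf. \cite[50.9]{Tr}) gives $\rmh_i(\fkv,\CS(\sU_\alpha,\CE)) = 0$ for $i>0$ and $\rmh_0(\fkv,\CS(\sU_\alpha,\CE)) \simeq \CS(\sU_\alpha \cap X_0,\CE)$ via restriction. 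The same applies to all finite intersections $\sU_{\alpha_0}\cap\dots\cap\sU_{\alpha_p}$. Then I would run the Mayer--Vietoris/\v Cech spectral sequence associated to the co-sheaf exact sequence \eqref{co-sheaf exa}: the rows computing $\rmh_i(\fkv,-)$ of the \v Cech complex of $\CS(\cdot,\CE)$ are exact for $i>0$, and in degree $0$ the \v Cech complex of $\rmh_0(\fkv,\CS(\cdot,\CE)) \simeq \CS(\cdot \cap X_0,\CE)$ is again a co-sheaf resolution (Proposition~\ref{co-sheaf prop} applied to $X_0$), whose only cohomology is $\CS(X_0,\CE)$. This yields $\rmh_i(\fkv,\CS(X,\CE)) = 0$ for $i>0$ and $\rmh_0(\fkv,\CS(X,\CE)) \simeq \CS(X_0,\CE)$, with the isomorphism compatible with restriction because all the gluing maps are.

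The main obstacle I anticipate is topological bookkeeping rather than conceptual difficulty: one must verify that the maps in \eqref{co-sheaf exa} are strict (closed image) so that the long exact homology sequences and the spectral sequence behave well with the subquotient topologies, and that passing to $\fkv$-homology commutes with the finite direct sums and the completed tensor product $\widehat\otimes\, F$ at the level of topological vector spaces. These are exactly the kinds of facts the authors have been assembling (strictness via Corollary~\ref{closed image}-type arguments, nuclearity to commute $\widehat\otimes$ with homology, Hausdorffness of $\rmh_0$ from the regular-value hypothesis). I would alternatively, and perhaps more cleanly, avoid the full \v Cech machinery by inducting on the size of a trivializing cover: for a two-set cover $X = \sU_1 \cup \sU_2$, use the short exact co-sheaf sequence $0 \to \CS(\sU_1 \cap \sU_2,\CE) \to \CS(\sU_1,\CE)\oplus\CS(\sU_2,\CE) \to \CS(X,\CE)\to 0$ and its long exact homology sequence, together with the already-known vanishing on $\sU_1$, $\sU_2$, $\sU_1\cap\sU_2$ and Lemma~\ref{exa-haus-lem} (or Remark~\ref{rem_haus}) to control Hausdorffness of the outcome; then iterate.
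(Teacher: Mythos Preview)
Your treatment of the regular-value case is essentially the same as the paper's: both trivialize $\CE$ over a finite Nash open cover, invoke Lemma~\ref{lem:SimpHo} on each piece (you are in fact slightly more explicit than the paper about why $\widehat\otimes\,F$ preserves the conclusion), and then glue via the \v{C}ech resolution coming from the co-sheaf property of Schwartz sections.

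Your treatment of the case $0 \notin \varphi(X)$, however, has a genuine gap. You assert the existence of a single $v_0 \in \fkv$ for which $\varphi(x)(v_0)$ is nowhere vanishing on $X$, but the hypothesis $0 \notin \varphi(X)$ only guarantees that for each $x$ \emph{some} element of $\fkv$ pairs nontrivially with $\varphi(x)$, not that one element works uniformly. A concrete counterexample: take $\fkv = \BC^2$, $X = \BC^2 \setminus \{0\}$ (as a real Nash manifold), and $\varphi$ the inclusion into $\fkv^* \cong \BC^2$; then any nonzero $v_0 \in \fkv$ is a $\BC$-linear functional whose kernel is a complex line meeting $X$. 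The paper handles this case by choosing a decomposition $\fkv = \bigoplus_j \fkv_j$ into one-dimensional subalgebras, covering $X$ by the open sets $\varphi_j^{-1}(\fkv_j^* \setminus \{0\})$ (refined so that $\CE$ is also trivialized), and on each piece using the Hochschild--Serre spectral sequence to reduce to the one-dimensional abelian case, where your contraction argument does apply. You could equally well repair your argument by noting that a local $v_0$ exists on small enough Nash opens and then running your own \v{C}ech gluing; but as written, the global contraction is not available.
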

\begin{proof}
    Consider a finite open covering $\{U_\alpha\}_{\alpha\in I}$ of $X$ which trivializes $\CE$. 
    \begin{itemize}
        \item \textbf{Case 1:} $0\in\fkv^*$ is a regular value of $\varphi$.
    \end{itemize}
    When $J$ is a subset of $I$, we define
    \[ \CS_J:=\CS(\bigcap_{j\in J}U_j,\CE) \text{ and } S_J':=\CS\big(\bigcap_{j\in J}(U_j\cap X_0),\CE|_{X_0}\big).
    \]
    Since the Schwartz sections compose a co-sheaf by Proposition~\ref{co-sheaf prop}, we have the following \v{C}ech resolution of $\CS(X,\CE)$ and $\CS(X_0,\CE|_{X_0})$
    \[
    \begin{tikzcd}
      \ar[r]  & \bigoplus_{|J|=k} \CS_J\ar[r]\ar[d,"r_{k-1}"]& \dots\ar[r]& \bigoplus_{|J|=1} \CS_J\ar[r]\ar[d,"r_1"]& \CS(X,\CE)\ar[r]\ar[d,"r"]& 0\\
        \ar[r]  &\bigoplus_{|J|=k} \CS_J'\ar[r]& \dots\ar[r]& \bigoplus_{|J|=1} \CS_J'\ar[r]& \CS(X_0,\CE|_{X_0})\ar[r]& 0,
    \end{tikzcd}
    \]
    where $r_k$ is the sum of restriction map $r_J$ on each $S_J$.  For each subset $J$, by Lemma~\ref{lem:SimpHo}, we have 
    \[
    \rmh_i(\fkv,S_J)=0, i>0 \text{ and } r_J:\rmh_0(\fkv,S_J) \stackrel{\sim}{\lra} S_J'.
    \]
    Thereby, the upper horizontal line is an acyclic resolution, and after taking $\rmh_0$ on the upper horizontal line, we get the bottom horizontal line. The corollary hence follows.
        \begin{itemize}
        \item \textbf{Case 2:} The image of $\varphi$ does not contain $0\in\fkv^*$.
    \end{itemize}
Take a direct sum decomposition of $\fkv = \oplus_{j=1}^{\ell} \fkv_j$, where $\fkv_j$ is a one-dimensional subalgebra of $\fkv$. Let $\varphi_j$ be the composition of $\varphi$ and the restriction map:
    \[
     \varphi_j: X \stackrel{\varphi}{\lra} \fkv^* \lra \fkv_j^*.
    \]
Since $0 \notin \varphi(X)$, $\{\varphi_j^{-1}(\fkv_j \setminus \{0\})\}_{j=1}^{\ell}$ is an open covering of $X$. Hence, we can assume that for each $\alpha \in I$, there is an integer $1 \leq j(\alpha) \leq \ell$ such that
\[
U_{\alpha} \subset \varphi_{j(\alpha)}^{-1}(\fkv_{j(\alpha)} \setminus \{0\})
\]
by taking a suitable refinement of $\{U_{\alpha}\}_{\alpha \in I}$. By a similar argument in \textbf{Case 1}, it suffices to prove
\[
\rmh_i(\fkv, \CS(U_{\alpha}, \CE)) = 0
\]
for every $\alpha \in I$ and integer $i$. Furthermore, by the Hochschild-Serre spectral sequence, it suffices to prove
\[
\rmh_i(\fkv_{j(\alpha)}, \CS(U_{\alpha}, \CE)) = 0
\]
for every $\alpha \in I$ and integer $i$. Choosing a trivialization $\CE|_{U_\alpha} \simeq U_{\alpha} \times E$, where $E$ is a Fr\'echet space. It is equivalent to show the Koszul complex is exact:
\[
0 \lra (\fkv_{j(\alpha)} \otimes \CS(U_{\alpha})) \hat{\otimes} E \stackrel{m \otimes \id}{\lra} \CS(U_{\alpha}) \hat{\otimes} E \lra 0,
\]
where $m$ is defined by
\[
m(\xi \otimes f) := \varphi_{j(\alpha)}(\xi) \cdot f \text{ for } \xi \in \fkv_{j(\alpha)}, f \in \CS(U_{\alpha}).
\]
Consequently, $m \otimes \id$ is an isomorphism since for $\xi \neq 0$, $\varphi_{j(\alpha)}(\xi)$ is an everywhere non-zero Nash function.
\end{proof}

\begin{proof}[proof of Proposition~\ref{homology prop}]
   Let $X= H_{n,2}\backslash M_n$. Then we have an open embedding as Nash manifolds
   \[
   \varphi: X\lra \fkv_n^*\qquad x\mapsto {}^x\psi_n.
   \]
   Let $\widetilde{\varphi}:=\varphi-\psi_n$. Then $\widetilde{\varphi}$ is a Nash map on $X$ such that $0$ is a regular value. Moreover, $\widetilde{\varphi}^{-1}(0)=\{\ov{e}\}$, where $\ov{e}$ is the image of the identity element in $H_{n,2}\backslash M_n$. Since $V_n$ is a normal subgroup of $M_n$, the action of $\fkv_n$ on $I(\sigma)=\CS(X,\CE)$ is given by
   \[
   (\xi\cdot f)(x)=\widetilde{\varphi}(x)(\xi)f(x)\text{ for }\xi\in \fkv_n,f\in\CS(X,\CE) .
   \]
   Consequently, by Lemma~\ref{bun ver}, the Proposition follows.
\end{proof}
Proposition~\ref{homology prop} can be applied to show that the twisted homologies of irreducible representations occurring in the BZ-filtration are vanishing. 
\begin{corollary}\label{vanishing}Let $\tau$ be a representation of $\GL_{n-d}$, then
\begin{enumerate}
    \item    $L^iD^k(I^{d-1}E(\tau))=0 $
    for every integer $k$ and $i\geq 1$; 
    \item   $D^d(I^{d-1}E(\tau))=\tau$.
\end{enumerate}
Hence, if $\sigma$ is a representation of $M_n$ having a BZ-filtration, then $L^iD^k(\sigma)=0$ for every integer $k$ and $i\geq 1$.
\end{corollary}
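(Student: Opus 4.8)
The plan is to reduce everything to Proposition~\ref{homology prop}. Recall that by definition $D^k = \Phi\Psi^{k-1}$, $B^k = \Phi_0\Psi^{k-1}$, and $\Psi(\sigma) = |\det|_{\mathbf{k}}^{-1/2}\otimes \rmh_0(\fkv_\bullet, \sigma\otimes(-\psi_\bullet))$ (up to the normalizing twist), while $\Phi_0(\sigma) = \rmh_0(\fkv_\bullet,\sigma)$. Thus each of the functors $\Psi$, $\Phi_0$ is, up to a harmless character twist, a degree-$0$ Lie algebra homology with respect to $\fkv_m$ (twisted by $\psi_m$ for $\Psi$, untwisted for $\Phi_0$). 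The derived functors $L^i\Psi$, $L^i\Phi_0$ are then computed by the Koszul complex for $\fkv_m$; and $\Phi = \varprojlim_l \Phi_0(\,\cdot/\fkv^{\otimes l}\cdot)$ is handled by the Mittag-Leffler Lemma~\ref{ML-lem} once we control the $\Phi_0$-homology at each finite level. So the heart of the matter is to prove the vanishing and the identification for a single application of $\Psi$ or $\Phi_0$ to the representations $I^{d-1}E(\tau)$.

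First I would treat the base case. For $\Phi_0$ applied to $I(\rho)$ where $\rho$ is any representation of $P_{m-1}$: Proposition~\ref{homology prop} tells us $\rmh_i(\fkv_m, I(\rho)\otimes(-\psi_m)) = 0$ for $i>0$ and $\simeq\rho$ for $i=0$. I want the analogous statement for the \emph{untwisted} action, i.e.\ $\rmh_i(\fkv_m, I(\rho)) = 0$ for $i\geq 1$ and $B^1(I(\rho)) = \rmh_0(\fkv_m,I(\rho))$ identified. The point is that $I(\rho) = \SInd_{H_{m,2}}^{P_m}(\rho\boxtimes\psi_m)$ is realized as Schwartz sections of a tempered bundle $\CE$ over $X = H_{m,2}\backslash P_m$, and the $\fkv_m$-action is multiplication by the Nash map $\varphi: x\mapsto {}^x\psi_m$ — this is exactly the setting of Corollary~\ref{bun ver}. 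Since $V_m$ is normal in $P_m$, the character ${}^x\psi_m$ runs over the $P_m$-orbit of $\psi_m$ in $\fkv_m^*$, which is an open Nash submanifold of $\fkv_m^*$ not containing $0$. Hence $0\notin\varphi(X)$, and Case~2 of Corollary~\ref{bun ver} gives $\rmh_i(\fkv_m,\CS(X,\CE)) = 0$ for \emph{all} $i$, including $i=0$. Therefore $\Phi_0(I(\rho)) = 0$ and all $L^i\Phi_0(I(\rho)) = 0$. Since $B^k(I^{d-1}E(\tau)) = B^{k}$ applied to something of the form $I(\cdots)$ when $k\geq 1$ (one always has at least one $I$ to hit), and $D^k = \Phi\Psi^{k-1}$ only involves $\Phi_0$-type homology at the outermost step via $\Phi$, I will organize the induction around peeling off one $I$ at a time.

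The inductive structure is as follows. I claim that for $0\le j\le d-1$, $\Psi^{j}(I^{d-1}E(\tau)) \simeq I^{d-1-j}E(\tau\cdot|\det|^{\text{(twist)}})$ with \emph{all} higher derived functors $L^i\Psi^{j}$ vanishing, by repeated application of Proposition~\ref{homology prop}: indeed $\Psi(I(\rho)) = |\det|^{-1/2}\otimes\rmh_0(\fkv,I(\rho)\otimes(-\psi)) \simeq |\det|^{-1/2}\otimes\rho$ with $L^i\Psi(I(\rho))=0$ for $i\ge1$, using the relation $I(\pi\times\rho')\simeq\pi\times I(\rho')$-type compatibilities (Lemma~\ref{inductive_arg0}) to keep track of the $E(\tau)$ factor. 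Here one needs the Hochschild–Serre spectral sequence / composition-of-derived-functors argument: since each $L^i\Psi$ of an $I$-induced module vanishes in positive degree, the composite $L^i\Psi^j$ is concentrated in degree $0$. After $d-1$ applications of $\Psi$ we reach $\Psi^{d-1}(I^{d-1}E(\tau)) \simeq$ (a twist of) $E(\tau)$, which is just $\tau$ viewed as a $P_{n-d+1}$-module via trivial extension on $\fkv_{n-d+1}$. Then $\Phi_0(E(\tau)) = \rmh_0(\fkv_{n-d+1},\tau) = \tau$ (trivial action!) and $L^i\Phi_0(E(\tau)) = \rmh_i(\fkv_{n-d+1},\tau) = \wedge^i\fkv_{n-d+1}\otimes\tau\ne 0$ for $i>0$ — but crucially $\Phi$, not $\Phi_0$, appears in $D^d$: $D^d = \Phi\Psi^{d-1}$, and applying $\Phi = \varprojlim_l\Phi_0(\,\cdot\,/\fkv^{\otimes l}\,\cdot\,)$ to $E(\tau)$, every $\fkv^{\otimes l}$ with $l\ge1$ acts by zero, so the inverse system stabilizes and $\Phi(E(\tau)) = \tau$, with higher $L^i\Phi(E(\tau)) = 0$ by Lemma~\ref{ML-lem} applied to the (eventually constant) system. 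For $D^k$ with $k\neq d$: if $k<d$ then $\Psi^{k-1}(I^{d-1}E(\tau))$ is still of the form $I^{d-k}E(\cdots)$ with at least one $I$, so $\Phi$ of it involves $\Phi_0$ on an $I$-module, giving $0$ by the Case~2 computation above; if $k>d$, then $\Psi^{d}(I^{d-1}E(\tau)) = \Psi(E(\tau\text{-twist})) = 0$ since $\fkv$ acts trivially on $E(\cdot)$ so the $\psi$-twisted coinvariants vanish, and then everything downstream is zero. This yields (1) and (2).

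The final sentence — if $\sigma$ has a BZ-filtration then $L^iD^k(\sigma) = 0$ for $i\ge1$ — follows by dévissage along the filtration. The successive quotients are of the form $I^{k_i}E(\pi_{i,j})$, for which (1) gives $L^iD^k = 0$. One propagates vanishing up the filtration using the long exact sequences of the derived functors $L^\bullet D^k$ associated to short exact sequences, and the inverse-limit steps are handled by the Mittag-Leffler lemma exactly as in Lemma~\ref{ext_Hausd} and Lemma~\ref{fil-lem}: the relevant inverse systems of homology groups are stationary because at each finite stage the homology is a finite-length (indeed Casselman–Wallach, by the parallel results) object, or more elementarily because $L^iD^k$ of each successive quotient vanishes, so the connecting maps force the systems to be eventually zero. \textbf{The main obstacle} I anticipate is bookkeeping: correctly tracking the $|\det|_{\mathbf{k}}$-twists through the iterated $\Psi$'s, ensuring the compatibility isomorphisms of Lemma~\ref{inductive_arg0} are applied with the right groups, and — most delicate — justifying that the composition of derived functors $L^\bullet\Phi\circ L^\bullet\Psi^{k-1}$ collapses (a Grothendieck spectral sequence argument requiring that $\Psi$ sends the relevant modules to $\Phi$-acyclics), together with verifying the Mittag-Leffler hypotheses for the $\varprojlim$ defining $\Phi$ in the filtered setting.
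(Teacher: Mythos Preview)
Your inductive core---peeling off one $I$ at a time via Proposition~\ref{homology prop} and assembling the $L^i\Psi^{k-1}$ vanishing through a Hochschild--Serre spectral sequence---matches the paper's proof exactly. Where you diverge is at the outermost layer: you treat $\Phi$ as a potentially non-exact functor, worry about ``$L^i\Phi(E(\tau))$'' via Mittag-Leffler, and flag the Grothendieck spectral sequence for $L^\bullet\Phi\circ L^\bullet\Psi^{k-1}$ as your main obstacle.

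The paper sidesteps all of this with a single observation: \emph{$\Phi$ is an exact functor}. Once you know that, $L^iD^k = L^i(\Phi\Psi^{k-1}) = \Phi\circ L^i\Psi^{k-1}$ immediately (also $L^iD^1=0$), and everything reduces to showing $L^i\Psi^{k-1}(I^{d-1}E(\tau))=0$ for $i\ge 1$---which you already do correctly. Your anticipated obstacle simply evaporates. In particular, your invocation of Lemma~\ref{ML-lem} to justify ``$L^i\Phi(E(\tau))=0$'' is not quite on target (that lemma computes $\rmh_i(\fkh,\varprojlim V_k)$, not derived functors of the completion functor itself), and the Grothendieck spectral sequence hypothesis (``$\Psi$ sends modules to $\Phi$-acyclics'') becomes vacuous since every object is $\Phi$-acyclic. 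Your detour through Case~2 of Corollary~\ref{bun ver} to show $\Phi_0(I(\rho))=0$ is correct and pleasant, but the paper never needs it: once $L^i\Psi^{k-1}$ is known, one just writes down $\Psi^{k-1}(I^{d-1}E(\tau))$ explicitly as $I^{d-k}E(\tau)$, $E(\tau)$, or $0$, and applies the exact $\Phi$. Your d\'evissage-plus-Mittag-Leffler plan for the final ``Hence'' clause is fine and in the spirit of Lemma~\ref{fil-lem}.
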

\begin{proof}
    Assertion (2) follows directly from Proposition~\ref{homology prop}. For assertion (1), note that $\Phi$ is an exact functor, which implies $L^iD^k=\Phi\circ L^i\Psi^{k-1},k\geq 2$ and $L^iD^1=0$. Note that when $k=2$, $L^i\Psi(I^{d-1}E(\tau))=0$ for every $i\geq 1$ by Proposition~\ref{homology prop}. We use induction on $k$. Assume the statement holds for $k$, then for $k+1$, consider the Hochschild-Serre spectral sequence
    \[
  E_2^{p,q}:=  L^p\Psi \circ L^q\Psi^{k-1}\Rightarrow L^{p+q}\Psi^k.
    \]
    When $q\neq 0$, it follows from the induction. For $q=0$, we have
    \[
    \Psi^{k-1}(I^{d-1}E(\tau))=\begin{cases}
        I^{d-k}E(\tau)&d>k\\
        E(\tau)&d=k\\
        0& d<k,
    \end{cases}
    \]
    thus $L^p\Psi( \Psi^{k-1}(I^{d-1}E(\tau)))=0$ for $p\geq 1$.
\end{proof}

Recall the depth of $M_n$-representation in Definition~\ref{def-der-gl}. By Proposition~\ref{homology prop}, the representation $I^{k-1}E(\tau)$ is of depth $k$, where $\tau$ is a representation of $\GL_{n-k}$. Let $\sigma$ be a representation of $M_n$ having BZ-filtration. Then the depth of $\sigma$ is the maximal integer $k$ such that $I^{k-1}E(\tau)$ appears in the subquotient of the filtration for some $\tau$. Let $\pi$ be a Casselman-Wallach representation of $\GL_n$ with depth $d$, then \cite[Corollary 3.0.9(1)]{AGS15a} shows that $D^d(\pi)$ is a Casselman-Wallach representation of $\GL_{n-d}$. The following lemma is a direct consequence of Corollary~\ref{vanishing}.

\begin{lemma}\label{bottom fin}
Let $\pi$ be a Casselman-Wallach representation of $\GL_n$ such that $\text{depth}(\pi)=k_0$. Then the number of depth $k_0$ successive quotients in every BZ-filtration equals the length of $D^{k_0}(\pi)$.
\end{lemma}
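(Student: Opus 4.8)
The plan is to exploit the compatibility of the derivative functors $D^{k}$ with the Bernstein-Zelevinsky filtration, combined with the vanishing results of Corollary~\ref{vanishing}. First I would fix a level $\leq r$ Bernstein-Zelevinsky filtration of $\pi|_{P_n}$, as furnished by Theorem~\ref{have_B-Z_fil}. The key observation is that $D^{k_0}=\Phi\Psi^{k_0-1}$ is built out of functors that are well-behaved on this filtration: $\Phi$ is exact (as used in the proof of Corollary~\ref{vanishing}), and although $\Psi$ is only right exact in general, Corollary~\ref{vanishing} tells us that $L^i\Psi^{k_0-1}$ and $L^iD^{k_0}$ vanish on every successive quotient $I^{\ell-1}E(\tau)$ of the filtration. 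Moreover, for such a successive quotient one has $\Psi^{k_0-1}(I^{\ell-1}E(\tau))$ equal to $I^{\ell-k_0}E(\tau)$ if $\ell>k_0$, to $E(\tau)$ if $\ell=k_0$, and to $0$ if $\ell<k_0$ (this is the computation appearing in the proof of Corollary~\ref{vanishing}); consequently $D^{k_0}(I^{\ell-1}E(\tau))=\Phi\Psi^{k_0-1}(I^{\ell-1}E(\tau))$ equals $\tau$ exactly when $\ell=k_0$ and vanishes when $\ell<k_0$, while the case $\ell>k_0$ does not occur since $\mathrm{depth}(\pi)=k_0$.

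The next step is to propagate these facts up the filtration. Because $L^{1}D^{k_0}$ vanishes on every successive quotient, a standard long-exact-sequence induction (using Lemma~\ref{exa-haus-lem} or the argument of Lemma~\ref{ext_Hausd}/Lemma~\ref{fil-lem} to handle the passage to inverse limits in the infinite parts of the filtration, and the fact that $\Phi$ commutes with the relevant limits) shows that $D^{k_0}$ is exact on $\pi|_{P_n}$ relative to this filtration: applying $D^{k_0}$ to the filtration yields a filtration of $D^{k_0}(\pi)$ whose successive quotients are exactly the $D^{k_0}$ of the successive quotients of the original filtration. By the computation in the previous paragraph, the only nonzero contributions come from the depth-$k_0$ successive quotients $I^{k_0-1}E(\pi_{i,j})$, each contributing the irreducible $\GL_{n-k_0}$-representation $\pi_{i,j}$. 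Since $D^{k_0}(\pi)$ is a Casselman-Wallach representation by \cite[Corollary 3.0.9(1)]{AGS15a}, hence of finite length, the number of such depth-$k_0$ quotients is finite and equals the length of $D^{k_0}(\pi)$.

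I would then conclude that the count is independent of the chosen Bernstein-Zelevinsky filtration, since the length of $D^{k_0}(\pi)$ is an invariant of $\pi$ alone. The main obstacle I anticipate is the bookkeeping around the infinite (level $\leq 1$, and more generally level $\leq r$) parts of the filtration: one must check that applying $D^{k_0}=\Phi\Psi^{k_0-1}$ commutes with the inverse limits $\sigma_i/\sigma_{i+1}\simeq\varprojlim_j\sigma_i/\sigma_{i,j}$ and that no spurious contributions or convergence failures arise, which is where the topological lemmas (Lemma~\ref{ML-lem}, Lemma~\ref{ext_Hausd}, Lemma~\ref{fil-lem}) and the finiteness of $D^{k_0}(\pi)$ must be invoked carefully; the purely homological-algebra part of the argument is routine given Corollary~\ref{vanishing}.
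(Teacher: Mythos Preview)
Your proposal is correct and follows exactly the approach the paper has in mind: the paper states that the lemma ``is a direct consequence of Corollary~\ref{vanishing}'' without further proof, and what you have written is precisely the routine unpacking of that claim, combining the computation $D^{k_0}(I^{\ell-1}E(\tau))=\tau$ or $0$ from Corollary~\ref{vanishing} with the vanishing of $L^1D^{k_0}$ on successive quotients (again Corollary~\ref{vanishing}) and the finite-length input from \cite[Corollary 3.0.9(1)]{AGS15a}. The inverse-limit bookkeeping you flag as the main obstacle is indeed handled by Lemma~\ref{ML-lem} and Lemma~\ref{fil-lem} in the way you describe, and the paper evidently regards this as standard enough not to spell out.
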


In the following context, we denote the highest depth terms in the BZ-filtration as \textbf{bottom layer}. The following is the main theorem of this section, showing the highest derivative of product representations. Let $n_i,1\leq i\leq k$ be positive integers and $n:=\sum_{i=1}^k n_i$.
\begin{theorem}\label{high-thm}
    Let $\pi_i$ be Casselman-Wallach representations of $\GL_{n_i}$, then 
    \[
   \operatorname{s.s.} (\pi_1\times\dots\times\pi_k)^-\simeq\operatorname{s.s.} (\p_1^-\times\dots\times\pi_k^-)
    \]
\end{theorem}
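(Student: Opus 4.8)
The strategy is to reduce to the two-factor case $k=2$ by induction, and there to use the Bernstein-Zelevinsky filtration of $(\pi_1\times\pi_2)|_{P_n}$ together with Corollary~\ref{vanishing} and Lemma~\ref{bottom fin}. The key point is that the highest derivative $\pi^-$ of a Casselman-Wallach representation $\pi$ is (up to semisimplification) precisely the bottom layer of any Bernstein-Zelevinsky filtration of $\pi|_{P_n}$; this is essentially the content of Lemma~\ref{bottom fin}. So I want to track which irreducible successive quotients $I^{k-1}E(\tau)$ of maximal depth occur in the Bernstein-Zelevinsky filtration of $\pi_1\times\pi_2$, and compare with those of $\pi_1^-\times\pi_2^-$.

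First I would set up the induction: writing $\pi_1\times\dots\times\pi_k=(\pi_1\times\dots\times\pi_{k-1})\times\pi_k$ and using associativity of parabolic induction, it suffices to prove $\operatorname{s.s.}(\sigma\times\pi)^-\simeq\operatorname{s.s.}(\sigma^-\times\pi^-)$ for two Casselman-Wallach representations. Here I must be slightly careful: $\sigma^-$ and $\pi^-$ are representations of smaller general linear groups, and the product $\sigma^-\times\pi^-$ on the right is the normalized parabolic induction of the corresponding $\GL$, so everything stays inside the category of Casselman-Wallach representations. Second, recall from the proof of Theorem~\ref{ind_BZ} that $(\pi_1\times\pi_2)|_{P_n}$ has a filtration $0\to\pi^\flat\to(\pi_1\times\pi_2)|_{P_n}\to\pi^\sharp\to 0$ with $\pi^\flat\simeq\pi_2\bar\times\pi_1|_{P_{n_1}}$ and $\pi^\sharp$ built by Borel's lemma from $\big(|\det|_{\mathbf k}^{1/2}\cdot\pi_1\otimes_{\BR}\Sym^i(\mathfrak s^\vee)\big)\times\pi_2|_{P_{n_2}}$. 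The depths of the successive quotients coming from $\pi^\sharp$ are bounded by $\operatorname{depth}(\pi_2)$, while those from $\pi^\flat$ — after applying Lemma~\ref{inductive_arg} and Lemma~\ref{open_orbit} — have the form $I^k(\pi_2\bar\times E(\tilde\pi))$ with $\tilde\pi$ running over the bottom layer of $\pi_1|_{P_{n_1}}$, and so contribute depth up to $\operatorname{depth}(\pi_1)+\operatorname{depth}(\pi_2)$. Hence $\operatorname{depth}(\pi_1\times\pi_2)=\operatorname{depth}(\pi_1)+\operatorname{depth}(\pi_2)$, and the bottom layer comes entirely from the open-orbit piece $\pi^\flat$, more precisely from combining the bottom layers of $\pi_1$ and $\pi_2$.

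Third, I would make this precise: taking $D^{d_1+d_2}$ where $d_i=\operatorname{depth}(\pi_i)$, and using that $D^k=\Phi\Psi^{k-1}$ is built from exact functors composed with Lie algebra homology, the only contributions survive from the successive quotients of exactly maximal depth. Corollary~\ref{vanishing}(2) identifies $D^{d}(I^{d-1}E(\tau))=\tau$, and Corollary~\ref{vanishing}(1) kills all higher derived functors, so the computation of $\operatorname{s.s.}D^{d_1+d_2}$ is additive over the BZ-filtration and picks out exactly the depth-$(d_1+d_2)$ quotients. Using Lemma~\ref{inductive_arg0}, Lemma~\ref{inductive_arg} and Lemma~\ref{open_orbit}, the depth-$(d_1+d_2)$ quotients of $\pi^\flat$ are, up to semisimplification, $\bar{\times}$-products built from $E(\pi_1^-)$ and the top layer of $\pi_2$; unwinding $\bar\times$ versus $\times$ via the conjugation in Lemma~\ref{inductive_arg0}, these reassemble to the successive quotients of $(\pi_1^-\times\pi_2^-)|_{P_{n-d_1-d_2}}$'s trivial extension, yielding $\operatorname{s.s.}(\pi_1\times\pi_2)^-\simeq\operatorname{s.s.}(\pi_1^-\times\pi_2^-)$.

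**Main obstacle.** The hard part is the bookkeeping in the third step: one must argue that only the maximal-depth successive quotients survive under $D^{d_1+d_2}$, which requires controlling the derived functors $L^i\Psi$, $L^i\Phi$ applied to the infinite filtration $\pi^\sharp$ and to the pieces $I^k(\pi_2\bar\times E(\tilde\pi))$ of $\pi^\flat$. Here the passage to infinitely many composition factors of the mirabolic restriction (the topological subtlety flagged in the introduction) means one cannot simply add up a finite semisimplification; instead one uses Corollary~\ref{vanishing} to see that all lower-depth quotients contribute $0$ to $D^{d_1+d_2}$ and that the relevant homology is concentrated in degree $0$, together with the fact that $\Psi$ shifts depth by one. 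A secondary technical point is to verify that the identification $\pi_2\bar\times I^k E(\tilde\pi)\simeq I^k(\pi_2\bar\times E(\tilde\pi))$ from Lemma~\ref{inductive_arg} interacts correctly with $\Phi\Psi^{d_1+d_2-1}$; this reduces to repeated use of Proposition~\ref{homology prop} and the exactness of $\Phi$, but must be done with care about which $\GL$-factor the functors act on, exactly as in Remark~\ref{B^k rem}.
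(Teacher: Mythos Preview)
Your approach is correct and essentially the same as the paper's: both identify the highest derivative with the bottom layer of a BZ-filtration via Corollary~\ref{vanishing} and Lemma~\ref{bottom fin}, and both trace the bottom layer of $\pi_1\times\cdots\times\pi_k$ back to products of the bottom layers of the $\pi_i$. The paper is simply more direct: it treats all $k$ at once by citing Theorem~\ref{ind_BZ} (whose proof already encodes the open/closed orbit bookkeeping you redo by hand for $k=2$), and your ``main obstacle'' evaporates because Corollary~\ref{vanishing} gives $L^iD^k=0$ for $i\ge1$ on every successive quotient and Lemma~\ref{bottom fin} guarantees the bottom layer is finite, so $\operatorname{s.s.}D^{d_1+\cdots+d_k}$ is a finite sum with no derived-functor or inverse-limit subtleties.
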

\begin{proof}
Set $\pi=\pi_1\times\dots\times\pi_k$. Let $m_i$ be the depth of $\pi_i$ and let $m$ be the depth of $\pi$. Then $m=\sum m_i$ by Lemma \ref{bottom fin}. Assume that the depth $m_i$ successive quotients in BZ-filtration of $\pi_i$ are
$$\left\{I^{m_i-1}E(\tau_{i,j})\ |\ 1\leq j\leq r_i\right\}.$$
By Theorem \ref{ind_BZ}, the depth $m$ successive quotients in BZ-filtration of $\pi$ are $I^{m-1}E(\tau)$, where $\tau$ runs through all the irreducible composition factors of 
\[\left\{\prod_{i=1}^k\tau_{i,j_i}\ |\ 1\leq j_i\leq r_i\right\}.\] 

By Corollary \ref{vanishing}, one has 
\[\operatorname{s.s.} (\pi_1\times\dots\times\pi_k)^-\simeq (\operatorname{s.s.} \p_1^-)\times \dots\times(\operatorname{s.s.} \pi_k^-).\]
Since $(\operatorname{s.s.} \p_1^-)\times \dots\times(\operatorname{s.s.} \pi_k^-)\simeq\operatorname{s.s.} (\p_1^-\times\dots\times\pi_k^-)$, this finishes the proof of the statement.
\end{proof}
\subsection{Twisted homology of $\SO(n,n)$} Like the Casselman-Wallach representation of $\GL_n$, one always hopes that the twisted homology of the nilradical is Hausdorff and its higher homology vanishes. In addition, the information on the derivatives is helpful in obtaining the Euler-Poincar'e characteristic formula of $G_n$, especially when a powerful Kunneth formula is not available in the Archimedean case. Based on the BZ-filtration of $\SO(n,n)$, we have the following result.
\begin{theorem}\label{tw-homo-thm}
    Let $\pi$ be a Casselman-Wallach representation of $\SO(n,n)$. Then $\Upsilon(\pi)$ is Hausdorff and $L^i\Upsilon(\pi)=0$ for every integer $i\geq 1$.
\end{theorem}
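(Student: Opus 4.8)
The goal is to establish the Hausdorffness of $\Upsilon(\pi)$ and the vanishing of $L^i\Upsilon(\pi)$ for $i\geq 1$, where $\pi$ is a Casselman-Wallach representation of $\SO(n,n)$. The strategy is to reduce to a principal series by the Casselman embedding theorem and Lemma~\ref{ext_BZ}-style arguments for twisted homology (using the long exact sequence and Lemma~\ref{exa-haus-lem}), and then to analyze the Bernstein-Zelevinsky filtration of the principal series $I=\Ind_{Q_n}^{G_n}\pi'$ constructed in Section~\ref{BZ-cl-sec}. Recall from that section that $I|_{M_n}$ sits in an exact sequence $0\to I_o\to I|_{M_n}\to I_c\to 0$, where $I_c$ has a decreasing filtration with successive quotients of the form $M(\pi'|_{P_n}\cdot|\det|^{-1/2}\otimes\Sym^j)$, which in turn refine (via the BZ-filtration of $\pi'|_{P_n}$ from Theorem~\ref{have_B-Z_fil}) into terms $I^{k-1}E(\Ind_{Q_{n-k}}^{G_{n-k}}\beta)$; and $I_o\simeq\ov{M}(\pi'|_{P_n})$, which refines into terms of type (i) $\ov{M}(E(\tau))$ and type (ii) $\ov{M}(I(\sigma))$. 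Propositions~\ref{open-dis} and~\ref{open-cts} give the finer structure of these.

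\textbf{Key steps.} First I would reformulate the problem so that Lemma~\ref{fil-lem} applies: it suffices to show that every successive quotient $\beta$ appearing (after taking all the refinements above into a single level $\leq r$ filtration of $I|_{M_n}$) satisfies $\rmh_\ell(\fke_n,\beta\otimes(-\phi_n))=0$ for $\ell\geq1$ and $\rmh_0(\fke_n,\beta\otimes(-\phi_n))$ Hausdorff — here $\Upsilon$ is the $\phi_n$-twisted coinvariant functor, so one twists the $E_n$-action by $\phi_n^{-1}$ throughout. The computation then splits into cases matching the structure above. For the trivial-extension-type quotients $I^{k-1}E(\Ra)$ coming from $I_c$, the key input is a $\fke_n$-homology analogue of Proposition~\ref{homology prop}: since $E_n$ is abelian for $\SO(n,n)$, one realizes $I^{k-1}E(\cdots)$ as Schwartz sections of a tempered bundle over a Nash manifold on which the relevant moment map $\varphi-\phi_n$ either misses $0$ or has $0$ as a regular value, and invokes the bundle version Corollary~\ref{bun ver}; generically the orbit of the character $\phi_n$ is disjoint from the support locus, giving vanishing of all homology including degree $0$, while on the special locus one gets $\rmh_0$ equal to a Casselman-Wallach representation (hence Hausdorff). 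For the open-orbit pieces: $\ov{M}(E(\tau))$ is handled by Proposition~\ref{open-dis}, which presents it with a sub $\sigma^\flat=I(M(\tau|_{P_{n-1}}))$ and quotients that are trivial extensions of $\Ind_{Q_{n-1}}^{G_{n-1}}\tau_i$; again Corollary~\ref{bun ver} applied along $E_n\cap\GL_n$ (where $\phi_n$ restricts to a nontrivial character) computes the homology, and one checks the moment-map regularity. For $\ov{M}(I(\sigma))$, Proposition~\ref{open-cts} realizes it as Schwartz sections of a bundle $\CE$ over $X=\big(((E_n\cap\GL_n)^*\setminus\{0\})\times(E_n\cap U_n)^*\big)\times_R M_n$ with the explicit $\fke_n$-action \eqref{E_n-act}; here one needs that the map $\varphi:X\to E_n^*\setminus\{0\}$ composed with evaluation at $\phi_n$ (i.e. $x\mapsto\varphi(x)(\cdot)-\phi_n(\cdot)$) has $0$ as a regular value or misses $0$, so that Corollary~\ref{bun ver} yields vanishing in positive degrees and, in degree $0$, Schwartz sections over the fiber $\varphi^{-1}(\phi_n)$, which by \eqref{cal-fib} is $\SInd_{H^s E_n}^{M_n}((\SInd\sigma)\otimes\phi_s)$ restricted appropriately — a Schwartz induction from a Casselman-Wallach-type datum, hence Hausdorff, and one further decomposes along the submersion $\widetilde\Theta$ to reduce the count to something finite-length. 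Finally, Lemma~\ref{spec-haus} (or directly Lemma~\ref{fil-lem}) assembles the vanishing and Hausdorffness over the whole filtration.

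\textbf{Main obstacle.} I expect the principal difficulty to be the $\ov{M}(I(\sigma))$ case: unlike $\GL_n$, the mirabolic of $\SO(n,n)$ has uncountably many coadjoint orbits and genuine continuous spectrum, so the ``filtration'' of $I_o$ along these orbits is not a discrete filtration in the sense of Definition~\ref{def_fil} but rather a Schwartz-section decomposition over a positive-dimensional parameter space $\psi_n+\mathbf{k}\cdot\phi_n^{(2)}$. One must verify carefully that the moment map for the $\phi_n$-twist is transverse to $0$ on the total space $X$ — the character $\phi_n$ attached to the Bessel datum involves the anisotropic vector $Z$ and lies in a particular orbit, so checking that $\varphi^{-1}(\phi_n)$ is a clean submanifold of $X$ (and computing its fiber via \eqref{cal-fib}) is where the real work lies. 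A secondary subtlety is the bookkeeping needed to organize all the refinements — the BZ-filtration of $\pi'|_{P_n}$, then the induced filtrations of $I_c$ and $I_o$, then the further splittings from Propositions~\ref{open-dis} and~\ref{open-cts} — into a genuine level $\leq r$ filtration whose successive quotients all fall into the finitely many computable types, so that Lemma~\ref{fil-lem} can be applied cleanly; the depth/growth conditions in Definition~\ref{def-BZ} ensure the inverse limits behave, but matching them against the $\fke_n$-homology stationarity hypotheses of Lemma~\ref{ML-lem} requires attention.
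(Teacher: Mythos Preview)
Your proposal follows essentially the same architecture as the paper's proof: reduce to principal series, use the $M_n$-filtration of $I|_{M_n}$ built in Section~\ref{BZ-cl-sec} via Propositions~\ref{open-dis} and~\ref{open-cts}, invoke Lemma~\ref{fil-lem}, and compute the twisted $\fke_n$-homology of each successive quotient using Corollary~\ref{bun ver}. Two points where your analysis is slightly off compared to the paper:

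\textbf{(1) No ``special locus'' in Case~1.} For every quotient of the form $I^{d-1}E(\Ind_{Q_{n-d}}^{G_{n-d}}\tau)$ (including the $d=1$ trivial extensions from $I_c$ and from the $\sigma^{\sharp}$ part of $\ov{M}(E(\tau))$, and the pieces of $\sigma^{\flat}=I(M(\tau|_{P_{n-1}}))$ after refinement), the moment map into $\widehat{E_n}$ lands in the $G_{n-1}$-orbit of $\psi_n$, which consists of characters attached to \emph{isotropic} directions. The Bessel character $\phi_n$ is attached to the \emph{anisotropic} vector $Z$, so it is never in this orbit. Hence Corollary~\ref{bun ver} (Case~2) gives $\rmh_i(\fke_n,\,\cdot\,\otimes(-\phi_n))=0$ for all $i$, including $i=0$. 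There is no special locus to worry about here.

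\textbf{(2) The ``main obstacle'' is already handled.} For $\ov{M}(I(\sigma))$, the map $\varphi:X\to E_n^*\setminus\{0\}$ is constructed in Section~\ref{BZ-cl-sec} as a Nash \emph{submersion} (it is the map induced by the $M_n$-action on $E_n^*$), so every value --- in particular $\phi_n$ --- is automatically regular, and Corollary~\ref{bun ver} applies without further transversality checks. The continuous parameter $\mathbf{k}\cdot\phi_n^{(2)}$ causes no trouble: writing $Z=X_2+sY_2$ identifies $\phi_n$ with $\phi_s$ for a specific $s$, and the observation $X_0=\varphi^{-1}(\phi_n)\subset\widetilde{\Theta}^{-1}(\phi_s)$ then gives $\rmh_0(\fke_n,\ov{M}(I(\sigma))\otimes(-\phi_n))\simeq\SInd_{R_1^s\cap G_{n-1}}^{H^s}\sigma$ directly from~\eqref{cal-fib}, which is visibly Hausdorff.
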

\begin{proof}
    By the same argument as \cite[Proof of Theorem A, p50]{AGS15b}, the statement is reduced to the case of principal series.
    
    Let $I$ be a principal series of $G_n$. Following the notation of Section~\ref{BZ-cl-sec}, we realize $I$ as a representation induced from the Siegel parabolic subgroup $C_n$. The $R_{n-1,1}$-orbits on $C_n\backslash G_n$ lead to following short exact sequence
    \[
    0\lra I_0\lra I|_{R_{n-1,1}}\lra I_c\lra 0.
    \]
    We prove the statement for both $I_o$ and $I_c$. By Proposition~\ref{open-dis} and Proposition~\ref{open-cts}, both $I_c$ and $I_o$ admit filtrations of $M_n$. By Lemma~\ref{fil-lem}, it suffices to prove the statement for successive quotients in the filtration. 
    
   \noindent{\textbf{Case 1.} } The successive quotient is isomorphic to $I^{d-1}E(\Ind_{Q_{n-d}}^{G_{n-d}} \tau)$ for some irreducible representation $\tau$ of $\GL_{n-d}$. By Corollary~\ref{bun ver}, we have
      $$
    \mathrm{H}_i(\fke_n,I^{d-1}E(\Ind_{Q_{n-d}}^{G_{n-d}} \tau)\otimes (-\phi_n))=0
   $$
   for every integer $i$ and positive integer $d$.  
   
  \noindent{ \textbf{Case 2.}} The successive quotient is isomorphic to $\ov{M}(I(\sigma))$, where $\sigma$ is a representation of $M_{n-1}$. We follow the notation in subsection~\ref{BZ-cl-sec}. Since the $\fke_n$-action is given by~\eqref{E_n-act}, by Corollary~\ref{bun ver}, we have
   \[
  \rmh_i(\fke_n,\ov{M}(I(\sigma))\otimes(-\phi_n))= \rmh_i(\CS(X,\CE)\otimes(-\phi_n))\simeq\begin{cases}
      \CS(X_0,\CE|_{X_o}) &\text{ for }i=0\\
      0 &\text{ for }i\neq 0,
  \end{cases}
   \]
   where $X_0:=\varphi^{-1}(\phi_n)$. We observe that $X_0=\Omega_0$ as the proof of Proposition \ref{open-cts}, so 
   \[\rmh_0(\mathfrak{e}_n,\ov{M}(I(\sigma))\otimes(-\phi_n))=\SInd_{P_{n-2,1}\cdot \overline{U_{n-2}}}^{P_{n-2,1}\cdot G_{n-2}}(\sigma).\]
\end{proof}

\section{Generic representations of general linear groups}
In this section, we study the irreducible generic representation under Langlands parameterization and the BZ-filtration of relative discrete series.
\subsection{Local Langlands correspondence for $\GL_n$}\label{LLC-sec}
Let $W_{\mathbf{k}}$ be the Weil group of the Archimedean local field $\mathbf{k}$, that is
\[
W_{\mathbf{k}}:=\begin{cases}
    \BC^{\times} &\text{ for } \mathbf{k}=\BC\\
    \BC^{\times}\bigsqcup j\BC^{\times} &\text{ for } \mathbf{k}=\BR,
\end{cases} 
\]
where $j^2=-1$ and $jzj^{-1}=\ov{z}$. The local Langlands correspondence states that there is a one-to-one correspondence between the irreducible representations of $\GL_n(\mathbf{k})$ and isomorphism classes of $n$-dimensional semi-simple $W_{\mathbf{k}}$-representations. 
\begin{itemize}
    \item Let $\mathbf{k}=\BC$. Each irreducible representation of $W_{\mathbf{k}}$ is a character. Its corresponding $\GL_1(\BC)$-representation is the character itself.
    \item Let $\mathbf{k}=\BR$. Each irreducible representation $W_{\mathbf{k}}$ is either a character or a two-dimensional representation taking following form
    \[
    \kappa_{k,s}:=\Ind_{\BC^{\times}}^{W_{\BR}} \chi_{k,s},k\in\BZ_{\geq 1},s\in\BC.
    \] 
    Such a two dimensional representation will correspond to a relative discrete series of $\GL_2(\BR)$ which we will describe more concretely. For a character of $W_{\BR}$, it always descends to a character of $\BR^{\times}$, and its corresponding $\GL_1(\BR)$-representation is the character itself.
\end{itemize}
Let $k\geq 1$ be a positive integer. Consider the reducible principal series $I(k):=\chi_{\epsilon_{k-1},-\frac{k}{2}}\times \chi_{0,\frac{k}{2}} $ of $\GL_2(\BR)$, where $\epsilon_{k-1}$ is the parity of $k-1$. It fits into a short exact sequence
\begin{equation}\label{DS equ}
     0\lra V_k\lra \chi_{\epsilon_{k-1},-\frac{k}{2}}\times \chi_{0,\frac{k}{2}}\lra D_k\lra 0,
\end{equation}
where $V_k$ consisting of degree $<k$ polynomial functions when restricted to $\ov{N_2}$. Hence $V_k\simeq \Sym^{k-1} V_{\mathrm{std}}\cdot|\det|^{-\frac{k-1}{2}}$, where $V_{\mathrm{std}}$ refers to the standard representation of $\GL_2(\BR)$. Furthermore, $D_k$ is the unique relative discrete series of $\GL_2(\BR)$ with central character $\chi_{\epsilon_{k-1},0}$ and infinitesimal character $(-k,k)$. 

Under the local Langlands correspondence, the Weil group representation $\kappa_{k,s}$ will correspond to $D_{k,s}:=D_{k}\cdot |\det|^s$. We define the \textbf{real part} of $D_{k,s}$ as 
\[
\mathrm{Re}\, D_{k,s}:=\mathrm{Re}\,  s.
\]
Let $(\pi_i)_{1\leq i\leq r}$ be a set consisting of characters of $\GL_1$ and relative discrete series of $\GL_2$. The \textbf{standard module} is the parabolic induction $\pi_1\times \dots\times \pi_r$ such that 
\[
\mathrm{Re}(\pi_1)\geq \dots\geq \mathrm{Re}(\pi_r).
\]
The standard module has a unique irreducible quotient, which is called the \textbf{Langlands quotient}.
Let $\kappa=\oplus_{i=1}^r\kappa_i$ be an $n$-dimensional semi-simple $W_{\mathbf{k}}$-representation such that $\kappa_i$ is irreducible. Then the irreducible $\GL_n$ representation corresponding to $\kappa$ is the Langlands quotient of $\pi:=\pi_1\times\dots\times \pi_r$, where $\pi_i$ corresponds to $\kappa_i$ under some rearrangement of $(\kappa_i)$ making $\pi$ to be a standard module.

For applications to extension vanishing results for irreducible generic representations, it is desirable to describe the BZ-filtration of relative discrete series. By short exact sequence~\ref{DS equ}, we first describe the BZ-filtration of $I(k)$. It admits a level $\leq 1$ BZ-filtration:
\[
 I(k)|_{P_2}=\sigma_0\supset \sigma_1\supset\sigma_2\supset 0,
\]
such that $\sigma_0/\sigma_1=I(k)_c$ corresponds to the unique closed orbit of $P_2$ on $B_2\backslash \GL_2$, and $\sigma_1=I(k)_o$ corresponds to the unique open orbit. Moreover, each subquotient $\sigma_i/\sigma_{i+1}$ admits a decreasing filtration.
\begin{itemize}
    \item $\sigma_2$ is irreducible and isomorphic to the Gelfand-Graev representation $I(\BC)$. 
    \item $\sigma_1/\sigma_2$ admits an infinite decreasing filtration
    \[
    \sigma_1=\sigma_{1,0}\supset \sigma_{1,1}\supset \dots \supset \sigma_2
    \]
    such that $\sigma_{1,i}/\sigma_{1,i+1}\simeq E(|\det|^{\frac{k+1}{2}}\cdot (\det)^i)$ for $i\in\BZ_{\geq 0}$.
    \item $\sigma_0/\sigma_1$ admits an infinite decreasing filtration 
    \[
    \sigma_0=\sigma_{0,0}\supset \sigma_{0,1}\supset \dots \supset \sigma_1
    \]
    such that $\sigma_{0,i}/\sigma_{0,i+1}\simeq E(|\det|^{\frac{k-1}{2}}\cdot (\det)^{i-(k-1)})$ for $i\in\BZ_{\geq 0}$.
\end{itemize}
Consequently, the relative discrete series $D_k$ admits a level $\leq 1$ BZ-filtration:
\begin{equation}\label{DS_pos_fil}
     D_k|_{P_2}=\sigma_0'\supset \sigma_1\supset\sigma_2\supset 0,
\end{equation}
where $\sigma_1$ coincides with the subrepresentation occurring in $I(k)$.
In addition,  $\sigma'_0/\sigma_1$ admits an infinite decreasing filtration 
    \[
    \sigma_0'=\sigma'_{0,0}\supset \sigma'_{0,1}\supset \dots \supset \sigma_1
    \]
    such that $\sigma'_{0,i}/\sigma'_{0,i+1}\simeq E(|\det|^{\frac{k-1}{2}}\cdot (\det)^{i+1})$ for $i\in\BZ_{\geq 0}$.
Likewise, $D_k$ also admits a level $\leq 1$ opposite BZ-filtration:
\[
D_k|_{\ov{P_2}}= \ov{\sigma_0}\supset \ov{\sigma_1}\supset \ov{\sigma_2}\supset 0,
\]
where $\ov{\sigma_1}= I(k)_o$ corresponds to the unique open orbit of $\ov{P_2}$ on $B_2\backslash\GL_2$ and $\ov{\sigma_0}/\ov{\sigma_1}$ admits an infinite decreasing filtration
\[
\ov{\sigma_0}=\ov{\sigma_{0,0}}\supset \ov{\sigma_{0,1}}\supset \dots \supset \ov{\sigma_1}
\]
such that $\ov{\sigma_{0,i}}/\ov{\sigma_{0,i+1}} \simeq E(|\det|^{\frac{k-1}{2}}\cdot (\det)^{-i-k})$ for $i\in\BZ_{\geq 0}$. Note that here we directly use the Mackey theory of $\ov{P_2}$ on $B_2\backslash\GL_2$, see also Remark~\ref{order_rem}.

\subsection{Irreducibility of standard module}\label{irr_stand}

In this subsection, notation follows from section~\ref{notation_sec} and section~\ref{LLC-sec}. It is well-known that an irreducible representation of $\GL_n$ is generic if and only if its standard module is irreducible. In this subsection, we describe these irreducible standard modules, see \cite{S77} for details.
\begin{itemize}
    \item Let $\mathbf{k}=\BC$. For principal series of $\GL_n(\mathbb{C})$
    \[\pi=\prod_{i=1}^n\chi_{m_i,s_i},m_i\in \mathbb{Z}, s_i\in \mathbb{C},\]
    it is irreducible if and only if 
\[s_i-s_j\notin \frac{|m_i-m_j|}{2}+\mathbb{Z}_{>0}, \ \forall  i\neq j.\]
\item Let $\mathbf{k}=\BR$. For parabolic induction of $\GL_n(\mathbb{R})$
\[\pi=\prod_{i=1}^m \chi_{\epsilon_i,s_i}\times \prod_{j=1}^lD_{k_j,t_j}, n=m+2l,\]
where $\epsilon_i\in\{0,1\},s_i\in\BC$ and $D_{k_j,t_j}$ is the relative discrete series defined in section~\ref{LLC-sec}, it is irreducible if and only if 
\begin{enumerate}
    \item $s_i-s_{i'}\notin  |\epsilon_i-\epsilon_{i'}|-1+2\mathbb{Z}_{>0}$, \ $\forall i\neq i'$;
    \item  $|s_i-t_j|\notin \frac{k_j}{2}+\mathbb{Z}_{>0}$, \ $\forall 1\leq i\leq m,1\leq j\leq l$;
    \item  $t_j-t_{j'}\notin \frac{|k_j-k_{j'}|}{2}+\mathbb{Z}_{>0}$, $\forall j\neq j'$.
\end{enumerate}

\end{itemize}

We need the following lemma in the proof of Theorem~\ref{ext-van-thm}.
\begin{lemma}\label{irr-lem}
    Let $D_{k_1,t_1},D_{k_2,t_2}$ be two discrete series of $\GL_2(\BR)$, and $\chi_{\epsilon,s}$ be a character of $\GL_1(\BR)$.
    \begin{enumerate}
        \item Assume $\frac{k_1}{2}+t_1-s\in \BZ_{>0}$, then $D_{k_1,t_1}\times \chi_{\epsilon,s}$ is irreducible only if $t_1-\frac{k_1}{2}\leq s$.
        \item Assume $\frac{k_1}{2}+t_1-(t_2-\frac{k_2}{2})\in \BZ_{>0}$, then $D_{k_1,t_1}\times D_{k_2,t_2}$ is irreducible only if $t_1-\frac{k_1}{2}\leq t_2-\frac{k_2}{2}$ or $t_2+\frac{k_2}{2}\geq \frac{k_1}{2}+t_1$.
        \item Assume $t_1-\frac{k_1}{2}-(t_2+\frac{k_2}{2})\in \BZ_{>0}$, then $D_{k_1,t_1}\times D_{k_2,t_2}$ is reducible.
    \end{enumerate}
\end{lemma}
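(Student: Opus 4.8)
The strategy is to reduce everything to the irreducibility criterion for standard modules of $\GL_n(\BR)$ recalled at the beginning of this subsection, and then exhibit, in each failing case, an explicit quotient or subrepresentation that obstructs irreducibility. The key observation is that the discrete series $D_{k,t}$ sits in the short exact sequence~\eqref{DS equ} (twisted by $|\det|^t$), so that $D_{k,t}$ is a quotient of the principal series $\chi_{\epsilon_{k-1},t-\frac{k}{2}}\times\chi_{0,t+\frac{k}{2}}$ with kernel $V_k\cdot|\det|^t$, a finite-dimensional representation. This lets us translate statements about $D_{k_i,t_i}\times(\cdots)$ into statements about genuine principal series, where the criterion of Speh is directly applicable.

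For part (3), the plan is the cleanest: the hypothesis $t_1-\frac{k_1}{2}-(t_2+\frac{k_2}{2})\in\BZ_{>0}$ is exactly a violation of condition (3) in the $\mathbf{k}=\BR$ irreducibility list for the standard module $D_{k_1,t_1}\times D_{k_2,t_2}$ (after possibly swapping the two factors so that the exponents are arranged in the Langlands order). Here one must be slightly careful that $D_{k_1,t_1}\times D_{k_2,t_2}$ is itself the standard module, i.e.\ that $\mathrm{Re}\,t_1\ge\mathrm{Re}\,t_2$ can be assumed up to the isomorphism $D_{k_1,t_1}\times D_{k_2,t_2}\simeq D_{k_2,t_2}\times D_{k_1,t_1}$ in the appropriate range; since parabolic induction in the Grothendieck group is symmetric, reducibility of one product forces reducibility of the other, so this is harmless. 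For parts (1) and (2), which are stated as necessary conditions (``irreducible only if''), I would argue contrapositively: assuming the displayed inequalities fail, I produce a proper subquotient. In case (1), if $t_1-\frac{k_1}{2}>s$ together with $\frac{k_1}{2}+t_1-s\in\BZ_{>0}$, then resolving $D_{k_1,t_1}$ via~\eqref{DS equ} exhibits $D_{k_1,t_1}\times\chi_{\epsilon,s}$ as a quotient of a $\GL_3(\BR)$ principal series whose exponent differences violate conditions (1)/(2), and a short computation with the kernel $V_{k_1}\cdot|\det|^{t_1}\times\chi_{\epsilon,s}$ (a finite-dimensional times a character, hence semisimple and easily analyzed) shows the reducibility descends to $D_{k_1,t_1}\times\chi_{\epsilon,s}$ itself rather than being absorbed into the kernel. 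Case (2) is the same idea applied twice, resolving both discrete series factors and tracking the four resulting exponent pairs $t_i\pm\frac{k_i}{2}$; the two alternatives ``$t_1-\frac{k_1}{2}\le t_2-\frac{k_2}{2}$'' and ``$t_2+\frac{k_2}{2}\ge\frac{k_1}{2}+t_1$'' are precisely the two ways to avoid a ``linked'' pair in the sense of the principal-series criterion.

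The main obstacle I anticipate is bookkeeping rather than conceptual: when one resolves $D_{k_1,t_1}$ (and $D_{k_2,t_2}$) inside the larger induced representation, one must confirm that the reducibility one reads off from Speh's criterion for the ambient principal series is genuinely inherited by the subquotient $D_{k_1,t_1}\times(\cdots)$ and is not an artifact living entirely in the finite-dimensional kernel $V_{k_i}\cdot|\det|^{t_i}$. Concretely, one needs that the composition factor responsible for reducibility has highest derivative (or infinitesimal character, or central character) incompatible with those appearing in the kernel piece. This can be handled either by comparing infinitesimal characters—the kernel $V_{k_i}$ has infinitesimal character $(-(k_i-1)/2,\dots)$-type data distinct from the ``extra'' constituent when the linkage inequality is violated—or, more in the spirit of this paper, by comparing the bottom layer of the Bernstein--Zelevinsky filtration: Theorem~\ref{central_char_of_fil} and Lemma~\ref{bottom fin} pin down the highest derivative of each product of characters/discrete series, and a mismatch there certifies that the relevant subquotient is not contained in the kernel. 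I would use the infinitesimal character comparison as the primary tool since it is the shortest, falling back on the BZ-filtration description only if a degenerate coincidence of infinitesimal characters arises.
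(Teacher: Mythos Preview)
Your treatment of part~(3) matches the paper's. For parts~(1) and~(2), however, you take an unnecessarily circuitous route. The irreducibility criterion in Section~\ref{irr_stand} is already formulated for products containing discrete series factors: conditions~(2) and~(3) there speak directly about $\chi_{\epsilon_i,s_i}\times D_{k_j,t_j}$ and $D_{k_j,t_j}\times D_{k_{j'},t_{j'}}$. The paper simply checks by elementary arithmetic that the hypothesis together with the negation of the conclusion violates one of these. For~(1): if $p:=\tfrac{k_1}{2}+t_1-s\in\BZ_{>0}$ and $t_1-\tfrac{k_1}{2}>s$, then $t_1-s=\tfrac{k_1}{2}+(p-k_1)$ with $p-k_1>0$, so $|s-t_1|\in\tfrac{k_1}{2}+\BZ_{>0}$, violating condition~(2). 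For~(2): if both displayed inequalities fail, say with $k_1>k_2$, then $t_1-t_2=\tfrac{k_1-k_2}{2}+(p-k_1)$ and the failed inequalities force $p-k_1>0$, violating condition~(3). No resolution of $D_{k,t}$ into a principal series is required.

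Your proposed descent from the ambient $\GL_3$ or $\GL_4$ principal series also contains a gap. The primary tool you name---comparing infinitesimal characters to separate the ``extra'' constituent from the kernel $V_{k_1}|\det|^{t_1}\times(\cdots)$---cannot work: every subquotient of a representation with a fixed infinitesimal character has that same infinitesimal character, and in particular $V_{k_1}$ and $D_{k_1}$, being subquotients of the same $I(k_1)$, are indistinguishable this way. (Also, $V_{k_1}\times\chi_{\epsilon,s}$ is not semisimple in general, contrary to what you assert.) Your fallback via highest derivatives could in principle be made to work, but it is far more effort than the two-line arithmetic the paper actually uses.
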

\begin{proof}
    \begin{enumerate}
        \item Let $\frac{k_1}{2}+t_1-s=p$ be a positive integer. Suppose $t_1-\frac{k_1}{2}>s$, then 
        \[t_1-s=p-\frac{k_1}{2}=\frac{k_1}{2}+(p-k_1),
        \]
        and $(p-k_1)>0$. Hence the result follows from irreducibility criterion.
        \item Let $\frac{k_1}{2}+t_1-(t_2-\frac{k_2}{2})=p$ be a positive integer. Suppose 
        \begin{equation}\label{ineq}
                    t_1-\frac{k_1}{2}> t_2-\frac{k_2}{2} \text{ and } t_2+\frac{k_2}{2}< \frac{k_1}{2}+t_1.
        \end{equation}
        Without loss of generality, we assume $k_1>k_2$. Then 
        \[
        t_1-t_2=p-\frac{k_1+k_2}{2}=\frac{k_1-k_2}{2}+(p-k_1) .
        \]
        The first inequality in~\eqref{ineq} shows that $t_1-t_2>\frac{k_1-k_2}{2}$, which implies $p-k_1$ is a positive integer. Thus the result follows from irreducibility criterion.
        \item The third statement follows directly from irreducibility criterion.
    \end{enumerate}
\end{proof}

\section{Bernstein-Zelevinsky filtration of unitary representations}
For general linear groups over a $p$-adic field, Bernstein proposed a unitarity criterion for irreducible representations, see \cite[section 7.3]{Ber84}. In this section, our main result is a similar necessary condition in the Archimedean case, see Theorem~\ref{uni-crit}. Our approach is based on the classification of unitary dual, which is different from the $p$-adic case since the theory of $\ell$-sheaves is not available. We first recall the classification of the unitary dual due to D. Vogan, see \cite{Vog86} as well.
\begin{itemize}
    \item Let $\mathbf{k}=\BC$. Every irreducible unitary representation of $\GL_n(\BC)$ is a product of following two kinds of representations:
    \begin{enumerate}
        \item unitary characters $\chi_{k,s}$, where $k\in\BZ,s\in\sqrt{-1}\BR$, and
        \item complementary series $\chi(|\det|^s\times |\det|^{-s})$, where $\chi$ is a unitary character and $0< s<1$.
    \end{enumerate}
    \item Let $\mathbf{k}=\BR$. Every irreducible unitary representation of $\GL_n(\BR)$ is a product of following four kinds of representations:
    \begin{enumerate}
        \item The Speh representations $\chi\cdot \delta(m)$ indexed by an unitary character $\chi$ and an integer $m$, which we will explain in more detail;
        \item The unitary characters $\chi_{k,s}$, where $k\in\{0,1\}$ and $s\in\sqrt{-1}\BR$;
        \item The Stein complementary series $\chi(|\det|^s\times |\det|^{-s})$, where $\chi$ is a unitary character and $0< s<\frac{1}{2}$;
        \item The Speh complementary series $\chi(\delta(m)|\det|^s\times\delta(m)|\det|^{-s}) $, where $\chi\delta(m)$ is a Speh representation and $0< s<\frac{1}{2}$.
    \end{enumerate}
\end{itemize}

To prove Theorem~\ref{uni-crit}, we realize an irreducible unitary representation as a product of characters and Speh representations. We first use Theorem~\ref{central_char_of_fil} to prove the case when the irreducible unitary representation is a product of characters. 

\begin{lemma}\label{uni-cen-char}
Let $\pi$ be an irreducible unitary $\GL_n$-representation of depth $d$, which is also a degenerate principal series. For every irreducible subquotient $I^{k-1}E(\tau)$ in the BZ-filtration of $\pi|_{M_n}$ satisfying $k \neq d$ (where $\tau$ denotes an irreducible representation of $\GL_{n-k}$), we have $\mathrm{Re}\,   \omega_{\tau} >0$.
\end{lemma}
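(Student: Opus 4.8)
The plan is to use the explicit structure of irreducible unitary degenerate principal series of $\GL_n$ together with the combinatorial description of the Bernstein-Zelevinsky filtration provided by Theorem~\ref{central_char_of_fil}. Since $\pi$ is an irreducible unitary degenerate principal series, by Vogan's classification it must be a product $\pi = \prod_{i=1}^m \chi_{r_i,s_i}$ of characters of $\GL_{n_i}$ such that the only source of unitarity is either purely imaginary exponents or complementary series pairs $\chi(|\det|^s \times |\det|^{-s})$ with $|\,\mathrm{Re}\,s\,|$ bounded by $\tfrac{1}{2}$ (for $\mathbf{k}=\BR$) or $1$ (for $\mathbf{k}=\BC$). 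First I would normalize: since the full character-product degenerate principal series is irreducible and unitary, the central exponents $\mathrm{Re}\,s_i$ lie in a controlled window around $0$, and in particular each $\mathrm{Re}\,s_i > -\tfrac{1}{2}$.

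Next I would invoke Theorem~\ref{central_char_of_fil}, which describes a filtration of $\pi|_{P_n}$ whose successive quotients are of the form $I^{k-1}E\big(\prod_{i=1}^m \tau_i\big)$, where each $\tau_i$ is either of type (i), namely $|\det|_{\mathbf{k}}^{1/2}\cdot\chi_{r_i,s_i}\otimes_{\BR}\Sym^l(\mathbf{k}^{n_i})$ (so its ``central exponent'' contribution is $\mathrm{Re}\,s_i + \tfrac{1}{2} + (\text{contribution of }\Sym^l \ge 0)$, strictly positive real part once we use $\mathrm{Re}\,s_i>-\tfrac12$), or of type (ii), namely $\chi_{r_i,s_i}|_{\GL_{n_i-1}}$ (with central exponent contribution simply $\mathrm{Re}\,s_i$, which could be slightly negative). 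The key point is that the integer $k$ equals the number of indices $i$ for which $\tau_i$ is of type (ii): each such index contributes exactly one unit to the ``$\Phi_0$-then-$\Psi$'' depth reduction, while type (i) factors retain their full $\GL_{n_i}$. The depth $d$ of $\pi$ is $m$ (all factors of type (ii)), corresponding to the bottom layer; so the hypothesis $k \neq d$ forces at least one index $i_0$ to be of type (i). Then I would refine $\prod \tau_i$ into irreducible pieces and compute $\omega_\tau$ for any irreducible constituent $\tau$: its real part is the sum $\sum_{i \in \text{type (i)}}(\mathrm{Re}\,s_i + \tfrac12 + c_i) + \sum_{i \in \text{type (ii)}}\mathrm{Re}\,s_i$ with $c_i \ge 0$, and I need this sum to be strictly positive.

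The main obstacle — and the place where the unitarity hypothesis does real work — is controlling the negative contributions $\sum_{i\in\text{type (ii)}}\mathrm{Re}\,s_i$ against the positive gains $\sum_{i\in\text{type (i)}}(\mathrm{Re}\,s_i+\tfrac12)$. Here I would argue that since the global parabolic induction $\prod_i \chi_{r_i,s_i}$ is irreducible and unitary, the exponents $\mathrm{Re}\,s_i$ cannot be too spread out: the complementary-series constraints and the irreducibility criteria of Section~\ref{irr_stand} pin $|\mathrm{Re}\,s_i - \mathrm{Re}\,s_j|$ below $1$ whenever the corresponding segments interact, and unitarity (self-duality of $\pi$ up to conjugation, i.e.\ the exponents come in $\pm$ pairs or are imaginary) forces $\sum_i n_i \,\mathrm{Re}\,s_i = 0$ up to the relevant normalization. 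Combining: the deficit from type (ii) indices is at most the number of type (ii) indices times $\tfrac12$ (strictly less, using $\mathrm{Re}\,s_i > -\tfrac12$), while the presence of at least one type (i) index contributes a strict surplus of at least $\tfrac12 + \mathrm{Re}\,s_{i_0} > 0$; a careful bookkeeping of which $s_i$ can simultaneously be negative, using that negative exponents must be compensated by matched positive ones among the type (i) factors by the pairing structure of the unitary dual, yields $\mathrm{Re}\,\omega_\tau > 0$. I expect the bulk of the work to be this estimate, handled separately in the $\mathbf{k}=\BC$ and $\mathbf{k}=\BR$ cases, the latter requiring also the bound from the Stein complementary series window $0<s<\tfrac12$.
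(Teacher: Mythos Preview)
Your approach is essentially the paper's: both invoke Theorem~\ref{central_char_of_fil} to enumerate the successive quotients $I^{k-1}E(\prod_i\tau_i)$, observe that $k$ equals the number of type-(ii) indices so that $k\ne d=m$ forces at least one type-(i) factor, and then exploit the unitary structure (purely imaginary exponents plus complementary-series pairs) to conclude $\mathrm{Re}\,\omega_\tau>0$.

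Two corrections are needed. First, your central-character formula omits the block sizes: for $\tau_i$ of type (i) (a $\GL_{n_i}$-representation) the contribution to $\mathrm{Re}\,\omega_\tau$ is $n_i\bigl(\tfrac12+\mathrm{Re}\,s_i\bigr)$ plus a nonnegative integer coming from $\Sym^l$, while for $\tau_i$ of type (ii) (a $\GL_{n_i-1}$-representation) it is $(n_i-1)\,\mathrm{Re}\,s_i$. Second, and more seriously, your global deficit/surplus estimate does not close. The deficit from a single type-(ii) index can be as negative as $-(n_i-1)t_j$, not merely $-\tfrac12$, and a lone type-(i) index with $\mathrm{Re}\,s_{i_0}$ close to $-\tfrac12$ yields arbitrarily small surplus; the irreducibility criteria of Section~\ref{irr_stand} give no useful bound on $|\mathrm{Re}\,s_i-\mathrm{Re}\,s_j|$ here and play no role.

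The paper carries out precisely the ``careful bookkeeping'' you allude to, but \emph{group-by-group} rather than globally. One arranges $\pi=\prod_i\chi_{r_i,s_i}$ so that the first $m_1$ factors are unitary ($\mathrm{Re}\,s_i=0$) and the remaining factors come in complementary pairs with real exponents $\pm t_j$, $0<t_j<\tfrac12$, and equal block sizes. Each unitary factor standing alone contributes $0$ (if type (ii)) or at least $n_i/2>0$ (if type (i)). Each complementary pair contributes: exactly $0$ when both members are type (ii), since the $\pm t_j$ cancel; a positive integer when both are type (i); and at least $n_{m_1+2j}\cdot\tfrac12-t_j>0$ in the mixed case. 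Every group therefore contributes $\ge 0$, and the group containing a type-(i) index contributes strictly positively, so the total is $>0$. This is exactly the pairing argument you had in mind --- it just needs to be executed pair-by-pair rather than via a crude global bound.
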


\begin{proof}
Let $m_1$ be the number of unitary characters in $\pi$, and $m_2$ be the number of complementary series in $\pi$. By rearranging the characters, we write $\pi$ as $\prod_{i=1}^m \chi_{r_i,s_i}$ such that 
\begin{itemize}
\item $s_i\in \sqrt{-1}\mathbb{R}$ when $1\leq i\leq m_1$,
\item $r_{m_1+2j-1}=r_{m_1+2j}$ and $s_{m_1+2j-1}-t_j=s_{m_1+2j}+t_j\in \sqrt{-1}\mathbb{R}$ for some $0<t_j<\frac{1}{2}$, when $1\leq j\leq m_2$.
\end{itemize}
Following the notations of Theorem \ref{central_char_of_fil}, when $1\leq i\leq m_1$, 
\[
\mathrm{Re}\,   \omega_{\tau_i} =0 \text{ or } \mathrm{Re}\,  \omega_{\tau_i} \in n_i\cdot\frac{1}{2}+\BZ_{\geq 0}
\]
since $\chi_{r_i,s_i}$ is a unitary character. When $1\leq j\leq m_2$, there are three cases about $\tau_{m_1+2j-1}\times \tau_{m_1+2j}$. 
\begin{itemize}
\item Both $\tau_{m_1+2j-1}$ and $\tau_{m_1+2j}$ are in case (i) of Theorem~\ref{central_char_of_fil}. Then 
\[
\mathrm{Re}\,   \omega_{\tau_{m_1+2j-1}}+\mathrm{Re}\,   \omega_{\tau_{m_1+2j}}  \in \BZ_{\geq 0}.
\]
\item One of $\tau_{m_1+2j-1}$ and $\tau_{m_1+2j}$ is in case (i) of Theorem \ref{central_char_of_fil}. Then 
\[
\mathrm{Re}\,   \omega_{\tau_{m_1+2j-1}}+\mathrm{Re}\,   \omega_{\tau_{m_1+2j}} \geq  n_{m_1+2j}\cdot \frac{1}{2}-t_{j} >0
\]
since $t_j<
\frac{1}{2}$.
\item Both $\tau_{m_1+2j-1}$ and $\tau_{m_1+2j}$ are in case (ii) of Theorem \ref{central_char_of_fil}. Then
\[
\mathrm{Re}\,   \omega_{\tau_{m_1+2j-1}}+\mathrm{Re}\,   \omega_{\tau_{m_1+2j}}=0.
\]
\end{itemize}

\end{proof}

In the rest of this chapter, we will describe the Speh representations and their BZ-filtration, which are also the building blocks for representations in Arthur type. The Speh representation $\delta(m,n)$ of $\GL_{2n}(\BR)$ is the unique irreducible submodule of
\[
\chi_{\epsilon_{m-1},\frac{m}{2}}\times \chi_{0,-\frac{m}{2}},
\]
where $\chi_{\epsilon_{m-1},\frac{m}{2}}$ and $\chi_{0,-\frac{m}{2}}$ are characters of $\GL_n(\BR)$, see \cite{SaSt90}. When $n$ is clear from context or is not important, we will simply denote $\delta(m)$.  Observing the associated variety, \cite[section 4]{AGS15a} proves that
\begin{equation}\label{speh der}
      \delta(m,n)^-=\delta(m,n-1) 
\end{equation}
and for $ 0< s<\frac{1}{2}$,
\[
(\delta(m,n)|\det|^s\times\delta(m,n) |\det|^{-s})^-=\delta(m,n-1)|\det|^s\times\delta(m,n-1) |\det|^{-s}.
\]
Actually, by Theorem~\ref{high-thm}, the second point follows from the first point. In order to better investigate the positivity in the BZ-filtration of Speh representations, we prefer another inductive realization. Like $p$-adic case, the Speh representation $\delta(m,n)$ is the unique irreducible submodule of 
\[
 D_m|\det|^{\frac{n-1}{2}}\times D_m|\det|^{\frac{n-3}{2}}\times \dots \times D_m|\det|^{\frac{1-n}{2}},
\]
hence is the unique irreducible submodule of
\begin{equation}\label{speh}
   \Pi:=\delta(m,n-1)|\det|^{\frac{1}{2}}\times D_m|\det|^{\frac{1-n}{2}}. 
\end{equation}
We realize $\Pi$ as a tempered bundle on $P_{2n-2,2}\backslash \GL_{2n}$ and describe the irreducible subquotient in its BZ-filtration that lies in $\delta(m,n)$ as well. Since $M_{2n}$-action has a unique open orbit and a unique closed orbit on $P_{2,2n-2}\backslash \GL_{2n}$, 
\[
 0\lra \Pi_o\lra \Pi|_{M_{2n}}\lra \Pi_c\lra 0,
\]
where 
\[
\Pi_o= D_m|\det|^{\frac{1-n}{2}} \mind |\det|^{\frac{1}{2}}\delta(m,n-1)|_{M_{2n-2}}.
\]
By~\eqref{speh der}, $\delta(m,n-1)|_{M_{2n-2}}$ admits a BZ-filtration with bottom layer $IE(\delta(m,n-2))$. the Bernstein–Zelevinsky filtration of $\Pi_0$ contains a subquotient isomorphic to
\[
 I(D_m|\det|^{\frac{1-n}{2}} \mind E(|\det|^{\frac{1}{2}}\delta(m,n-2)).
\]
By Lemma~\ref{open_orbit}, it has a BZ-filtration with a successive quotient isomorphic to 
\[
 IE(\delta(m,n-1)\hookrightarrow IE(|\det|^{\frac{1}{2}}\delta(m,n-2)\times D_m|\det|^{\frac{2-n}{2}}).
\]
This is the bottom layer in the BZ-filtration of $\delta(m,n)$, and other terms in the BZ-filtration of $\delta(m,n)$ have depth one. 

Now we use an inductive argument to show that Theorem~\ref{uni-crit} holds for Speh representations $\delta(m,n)$. When $n=1$, the Speh representations are discrete series; hence, the result follows from the discussion in section~\ref{LLC-sec}. We assume Theorem~\ref{uni-crit} holds for $\delta(m,n-1)$, and proceed to prove the statement for $\delta(m,n)$. Each depth one term in $\Pi_c$ has the form
\[
E\left((\delta(m,n-1)|\det|\otimes_{\BR} \Sym^j(\BR^{2n-2}))\times \chi|\det|^{\frac{1-n}{2}}\right)
\]
for some positive integer $j$ such that $\chi$ is a character of $\GL_1$ and $E(\chi)$ is a successive quotient in the BZ-filtration of $D_m$. Here, $\BR^{2n-2}$ is the standard representation of $\GL_{2n-2}(\BR)$. Consequently,
\[
\mathrm{Re}\,  \omega_{\widetilde{\tau}}\geq 2n-2+\frac{1-n}{2}+\mathrm{Re}\,  \omega_{\tau} >0
\]
since $\mathrm{Re}\,  \omega_{\tau}\geq 1$ by argument in section~\ref{LLC-sec}, where 
$$\widetilde{\tau}=\left(\delta(m,n-1)|\det|\otimes_{\BR} \Sym^j(\BR^{2n-2})\right)\times \chi|\det|^{\frac{1-n}{2}}.$$
On the other hand, the depth one term in $\Pi_o$ has the form
\[
E\left(\tau|\det|^{\frac{1}{2}}\times( D_m|\det|^{\frac{2-n}{2}}\otimes_{\BR}\Sym^j(\BR^{2} ))\right),
\]
for some positive integer $j$ such that $E(\tau)$ is a depth one term in the BZ-filtration of $\delta(m,n-1)$. Here, $\BR^{2}$ is the standard representation of $\GL_{2}(\BR)$. Consequently,
\[
\mathrm{Re}\,  \omega_{\widetilde{\tau}}= \mathrm{Re}\,  \omega_{\tau} +\frac{2n-3}{2}+ \frac{2-n}{2}\times 2 >0
\]
by the inductive assumption, where $\widetilde{\tau}=\tau|\det|^{\frac{1}{2}}\times\left( D_m|\det|^{\frac{2-n}{2}}\otimes_{\BR}\Sym^j(\BR^{2} )\right)$. Therefore, by a similar argument to Lemma~\ref{uni-cen-char}, we get the following Lemma, which completes the proof of Theorem~\ref{uni-crit} together with Lemma~\ref{uni-cen-char}.
\begin{lemma}
    Let $\pi$ be an irreducible unitary $\GL_n$-representation of depth $d$, which is also a product of Speh representations and Speh complementary series. For every irreducible subquotient $I^{k-1}E(\tau)$ in the BZ-filtration of $\pi|_{M_n}$ satisfying $k \neq d$ (where $\tau$ denotes an irreducible representation of $\GL_{n-k}$), we have $\mathrm{Re}\,   \omega_{\tau} >0$.
\end{lemma}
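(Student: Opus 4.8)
The plan is to reduce the assertion for a product of Speh factors to the single-factor positivity just established, in exact parallel with the way Lemma~\ref{uni-cen-char} is deduced from Theorem~\ref{central_char_of_fil}. By Vogan's classification of the unitary dual recalled above, I would write $\pi\simeq \pi_1\times\dots\times\pi_r$ where each $\pi_i$ is either a Speh representation $\delta(m_i,n_i)$ or a Speh complementary series $\delta(m_i,n_i)|\det|^{t_i}\times\delta(m_i,n_i)|\det|^{-t_i}$ with $0<t_i<\tfrac12$; applying Theorem~\ref{ind_BZ} once more to split each complementary series into its two twisted factors, I may assume each $\pi_i$ is a twisted Speh representation $\delta(m_i,n_i)|\det|^{u_i}$ with $|u_i|<\tfrac12$, the nonzero $u_i$ occurring in opposite-sign pairs that will be tracked together. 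Iterating Theorem~\ref{ind_BZ}, together with Lemmas~\ref{inductive_arg0}, \ref{inductive_arg} and \ref{open_orbit} exactly as in the proof of Theorem~\ref{central_char_of_fil}, produces a Bernstein--Zelevinsky filtration of $\pi|_{P_n}$ whose successive quotients have the form $I^{k-1}E(\tau)$ with $\tau$ an irreducible constituent of a product $\theta_1\times\dots\times\theta_r$, where for each $i$ the datum $(\theta_i,\kappa_i)$ is of one of two kinds: a \emph{whole slot}, $\kappa_i=0$ and $\theta_i$ a constituent of $|\det|_{\mathbf{k}}^{1/2}\cdot\pi_i\otimes_{\mathbf{k}}\mathrm{Sym}^{l_i}(\mathbf{k}^{N_i})$ for some $l_i\ge 0$ (with $N_i=\deg\pi_i$); or a \emph{derivative slot}, $\kappa_i\ge 1$ and $I^{\kappa_i-1}E(\theta_i)$ a successive quotient of $\pi_i|_{P_{N_i}}$ in the Bernstein--Zelevinsky filtration of the twisted Speh representation $\pi_i$ recorded just before this lemma.

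I would then estimate $\mathrm{Re}\,\omega_{\theta_i}$ slot by slot. For a whole slot, $\mathrm{Re}\,\omega_{\theta_i}=\mathrm{Re}\,\omega_{\pi_i}+\tfrac{N_i}{2}+l_i=N_i(\tfrac12+u_i)+l_i$, which is strictly positive since $u_i>-\tfrac12$. For a derivative slot one invokes the recursive analysis of the Bernstein--Zelevinsky filtration of $\delta(m,n)$ carried out just before this lemma, using that the derivative functors commute with $|\det|^{u}$-twists and using Theorem~\ref{high-thm}: if $\kappa_i$ equals the depth $d_i$ of $\pi_i$ then $\theta_i=\pi_i^-=\delta(m_i,n_i-1)|\det|^{u_i}$, with $\mathrm{Re}\,\omega_{\theta_i}=(N_i-2)u_i$, which vanishes for an untwisted factor and cancels against the contribution $(N_i-2)(-u_i)$ of its complementary partner; if $\kappa_i<d_i$ then that analysis gives $\mathrm{Re}\,\omega_{\theta_i}>0$ — indeed the explicit lower bound $\mathrm{Re}\,\omega_{\theta_i}\ge 1$ for an untwisted Speh factor dominates any $|u|$-twist, so that the total contribution of a complementary pair of slots stays $\ge 0$, and is $>0$ as soon as at least one of the two slots is not at the bottom layer.

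Finally I would assemble these estimates. Central characters multiply under mirabolic and parabolic induction, so $\mathrm{Re}\,\omega_\tau=\sum_i\mathrm{Re}\,\omega_{\theta_i}$. By Theorem~\ref{high-thm} and Lemma~\ref{bottom fin}, $\mathrm{depth}(\pi)=\sum_i d_i=d$ and the only successive quotient of depth $d$ is the bottom layer, for which every slot is a derivative slot with $\kappa_i=d_i$; hence $k=d$ precisely when all slots are bottom-layer derivative slots. Thus if $k\ne d$ there is a slot $i_0$ which is a whole slot or a non-bottom derivative slot, contributing $\mathrm{Re}\,\omega_{\theta_{i_0}}>0$, while the remaining slots contribute $\ge 0$ once complementary partners are paired as above; therefore $\mathrm{Re}\,\omega_\tau>0$. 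Together with Lemma~\ref{uni-cen-char} this completes the proof of Theorem~\ref{uni-crit}.

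The step I expect to be the main obstacle is the bookkeeping of the iterated filtration: propagating the $\pi^\flat/\pi^\sharp$ alternative of Theorem~\ref{ind_BZ} through the $r$-fold product so that every successive quotient is genuinely governed by one independent slot per factor, and matching the overall index $k$ with $\sum_i\kappa_i$ so that the equivalence ``$k=d$ if and only if every slot is at the bottom layer'' is rigorous. A subsidiary difficulty is the complementary-series pairing: a bottom-layer derivative slot of a single twisted factor $\delta(m,n)|\det|^{-t}$ can have negative real part, so it must be kept paired with its $|\det|^{t}$ partner, and it is precisely here that the explicit positive lower bounds coming out of the single-Speh computation are needed.
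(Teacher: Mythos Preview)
Your proposal follows exactly the route the paper takes: the paper's entire argument for this lemma is the sentence ``by a similar argument to Lemma~\ref{uni-cen-char}'', and what you have written is a faithful expansion of that sentence --- split the product into twisted Speh factors, iterate Theorem~\ref{ind_BZ} with Lemmas~\ref{inductive_arg0}, \ref{inductive_arg}, \ref{open_orbit} to obtain a slot-by-slot Bernstein--Zelevinsky filtration, estimate each slot using the single-Speh computation carried out just before the lemma, and sum while keeping complementary-series twists paired. Your two flagged obstacles (the bookkeeping that matches the total index $k$ with $\sum_i\kappa_i$, and the pairing of $\pm t$ twists) are precisely the places where Lemma~\ref{uni-cen-char}'s argument has to be adapted.

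One specific claim to revisit: you assert ``the explicit lower bound $\mathrm{Re}\,\omega_{\theta_i}\ge 1$ for an untwisted Speh factor''. The paper's inductive analysis of $\delta(m,n)$ only proves $\mathrm{Re}\,\omega_{\theta_i}>0$ for non-bottom slots; the phrase ``$\mathrm{Re}\,\omega_\tau\ge 1$'' in the paper refers only to the depth-one terms of $D_m$ feeding into $\Pi_o$, and even there the filtration in Section~\ref{LLC-sec} gives terms with $\mathrm{Re}\,\omega=(m-1)/2$, which is $0$ when $m=1$. In your pairing step, the critical case is one $|\det|^{+t}$ factor at its bottom layer (contributing $(2n-2)t$) against a $|\det|^{-t}$ factor at a non-bottom depth-one slot (contributing $c-(2n-1)t$), giving total $c-t$; for this to be positive for all $t\in(0,\tfrac12)$ you need $c\ge\tfrac12$, not the $c>0$ that the single-Speh analysis directly yields. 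So the $m=1$ corner of the complementary-series case deserves an explicit check rather than an appeal to a uniform ``$\ge 1$'' bound.
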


\section{The restriction to maximal parabolic subgroup}\label{res-max-sec}

Given a Casselman-Wallach representation $\pi$ of $\GL_n$, by BZ-filtration, the restriction of $\pi$ to the mirabolic subgroup (or the parabolic subgroup $P_{n-1,1}$) decomposes discretely. 
In general,  it will be shown that the restriction of $\pi$ to every maximal parabolic subgroup admits a filtration with successive quotients being the Mackey inductions. 

\subsection{Coarse spectral filtration}
Let $G$ be a real reductive group, and $P=LU$ be a parabolic subgroup with Levi decomposition, such that $U$ is abelian. We first define a family of representations of $P$ that generalizes the trivial extension and Mackey induction of $M_n$. For every $\phi\in\widehat{U}$, let $S_{\phi}$
 be the stabilizer subgroup of $P$-action on $\phi$. Then we have decomposition
 \[
 S_{\phi}=(S_{\phi}\cap L)\ltimes U.
 \]
 For a representation $\sigma$ of $S_{\phi}\cap L$, define the induction
 \[
 I_{\phi}(\sigma):=\SInd_{S_{\phi}}^{P}(\sigma\boxtimes \phi).
 \]
 We call such representations \textbf{geometrical Mackey inductions}.
 
 In particular, when $P=P_{n-k,k}$, we have $U_{n-k,k}\simeq\Hom_\mathbf{k}( \mathbf{k}^{k},\mathbf{k}^{n-k})$ and
  \[\Hom_\mathbf{k}( \mathbf{k}^{n-k},\mathbf{k}^{k})\stackrel{\simeq}{\lra}\widehat{ U_{n-k,k}} \quad  x\mapsto \left(u\mapsto \psi(\tr (x\circ u)\right).
 \]
 Hence, the $L$-orbit $\widehat{ U_{n-k,k}}$ on is determined by rank. Specifically, for
\[ x=\begin{pmatrix}\ \begin{matrix} I_{n-k}\ \Big| & \begin{matrix} A & C\\ B & D\end{matrix} \\ \hline 0 & I_k\end{matrix}\ \end{pmatrix}, \begin{array}{ll} A\in \mathbf{k}^{(n-k-l)\times k}, & C\in \mathbf{k}^{(n-k-l)\times (k-l)},\\ B\in \mathbf{k}^{l\times l}, & D\in \mathbf{k}^{l\times (k-l)},\end{array}\]   we choose the standard $\psi^{n,k}_l(x):=\psi(\mathrm{trace}(B))$. Let $S^{n,k}_l$ be the stabilizer of $\psi^{n,k}_l$. When $k$ is clear from the context, we will omit $k$ in the above notations for simplicity. Moreover, for a representation $\sigma$ of $L\cap S_l^n$, the geometric Mackey induction $I_{\psi_l^n}(\sigma)$ will simply be denoted as $I_l(\sigma)$. For the trivial extension $I_0(\sigma)$, if $\begin{pmatrix}
aI_{n-k} & \\ & a^{-1}I_k
\end{pmatrix}$ acts on $\sigma$ by scalar $a^c$ for every $a\in \mathbb{R}_{>0}$, then let $\omega_{\sigma}$ denote this exponent $c$.

\begin{proposition}\label{rest_max}
Given a principal series $\pi$ of $\GL_n$, for every maximal parabolic subgroup $P_{n-k,k}$, the restriction $\pi|_{P_{n-k,k}}$ admits a filtration as in definition~\ref{def_fil}, where each successive quotient is a geometrical Mackey induction satisfying the following:
\begin{itemize}
\item[(i)] When a successive quotient of the filtration is of the form $I_0(\sigma)$, then $\sigma$ is an irreducible Casselman-Wallach representation.
\item[(ii)] The set $\{\mathrm{Re}(\omega_{\sigma})\ |\ I_0(\sigma)\ \text{is a successive quotient of the filtration}\}$ has a finite minimal value.
\end{itemize}
\end{proposition}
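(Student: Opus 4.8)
The plan is to mimic the orbit-decomposition strategy used throughout Section~3, now organized by the rank stratification of $\widehat{U_{n-k,k}} \simeq \Hom_{\mathbf{k}}(\mathbf{k}^{n-k},\mathbf{k}^{k})$ under the $\GL_{n-k}\times\GL_k$-action. First I would reduce to the case where $\pi$ is the full principal series $\Ind_{B_n}^{\GL_n}(\chi)$ for a character $\chi$ of the diagonal torus, written as $\pi = \Ind_{P_{n-k,k}}^{\GL_n}(\pi_1\boxtimes\pi_2)$ with $\pi_i$ principal series of $\GL_{n-k}$, $\GL_k$ respectively (using induction in stages). Then $\pi|_{P_{n-k,k}}$ is realized as Schwartz sections of a tempered bundle over $P_{n-k,k}\backslash\GL_n \simeq \overline{U_{n-k,k}}$, on which $P_{n-k,k}$ acts. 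The key geometric input is that the $P_{n-k,k}$-orbits on this space are indexed by the rank $l$ of the corresponding element (this is the affine Bruhat-type decomposition for the maximal parabolic, dual to the rank stratification of $\widehat{U_{n-k,k}}$ recalled just before the proposition). This gives a finite filtration of $\pi|_{P_{n-k,k}}$ by closed subrepresentations whose successive quotients are the Schwartz sections supported on a single rank-$l$ locus. Here the orbit of rank $0$ is closed and the orbit of full rank $\min(n-k,k)$ is open.

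Next, for each rank stratum I would apply a Fourier transform along the fibre directions (exactly as in Lemma~\ref{inductive_arg} and Lemma~\ref{open_orbit}) to convert the Schwartz-section space on that orbit into a geometrical Mackey induction $I_{\psi_l^n}(\sigma)$ for an appropriate $\sigma$, where $\sigma$ is itself built by parabolic induction from the Levi data of $\pi_1,\pi_2$ restricted to the stabilizer $L\cap S_l^n$. For the closed (rank $0$) stratum, Borel's lemma produces a further infinite decreasing filtration indexed by $j\geq 0$ whose successive quotients are trivial extensions $I_0(\sigma_j)$, where $\sigma_j$ involves a symmetric power twist $\Sym^j(\mathfrak{s}^{\vee})$ of a shift of $\pi_1\boxtimes\pi_2$; refining these finite-length pieces into irreducibles gives (i), and the shift $|\det|^{1/2}$ together with the $\Sym^j$-twist forces $\mathrm{Re}(\omega_\sigma)$ to grow in $j$, giving the finite-minimum condition (ii). For the positive-rank strata, the stabilizer $L\cap S_l^n$ has noncompact unipotent part and $\sigma$ is induced from a proper parabolic, so these contribute genuine (non-trivial-extension) geometrical Mackey inductions and impose no constraint of type (ii). One also needs the analogue of Lemma~\ref{ext_BZ} in this coarse setting, namely that the desired filtration behaves well under the short exact sequences coming from open-closed decompositions — this follows by the same intersection-and-limit argument as in the proof of Lemma~\ref{ext_BZ}.

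I expect the main obstacle to be the bookkeeping in the Fourier-transform step for the intermediate rank strata: one must identify precisely the stabilizer subgroup $S_l^n$, the character $\psi_l^n$, and check that after Fourier transform along the correct coordinate block the $P_{n-k,k}$-action on the Schwartz sections over the rank-$l$ locus matches the action on $\SInd_{S_l^n}^{P_{n-k,k}}(\sigma\boxtimes\psi_l^n)$ — this is the multi-rank generalization of the single computation done in Lemma~\ref{open_orbit}, and the co-sheaf property of Schwartz functions (Proposition~\ref{co-sheaf prop}) is needed to glue the local trivializations $\mathcal{T}_r$ over the different affine charts of the orbit. A secondary subtlety is ensuring the infinite Borel filtration on the closed stratum is compatible with the topological inverse-limit condition in Definition~\ref{def_fil}, which is handled exactly as in equation~\eqref{closed_cal} of Theorem~\ref{ind_BZ}. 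The irreducibility claim in (i) reduces, as usual, to taking a finite refinement of the finite-length representations $|\det|_{\mathbf{k}}^{1/2}\cdot(\pi_1\boxtimes\pi_2)\otimes_{\BR}\Sym^j(\mathfrak{s}^{\vee})$ into irreducible constituents.
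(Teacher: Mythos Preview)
Your strategy is genuinely different from the paper's, and the central step has a gap. The paper does \emph{not} use the Bruhat decomposition of $P_{n-k,k}\backslash\GL_n$ under $P_{n-k,k}$. Instead it argues by double induction on $k$ and $n$: one writes $\pi=\tau\times\chi$ with $\tau$ a principal series of $\GL_{n-1}$ and $\chi$ a character of $\GL_1$, and uses the two $P_{n-k,k}$-orbits on $P_{n-1,1}\backslash\GL_n$. The closed piece is handled by Borel's lemma together with the inductive hypothesis on $k$ and the isomorphism $I_l(\sigma)\times\chi\simeq I_l(\sigma\times\chi)$. The open piece is $\chi\,\overline{\times}\,(\tau|_{P_{n-k-1,k}})$; after invoking the inductive hypothesis on $n$ one is reduced to showing that $\chi\,\overline{\times}\,I_l(\beta)$ itself has a coarse spectral filtration. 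This is the heart of the proof: a one-variable Fourier transform (along a single row of $U_{n-k,k}$) shows that $\chi\,\overline{\times}\,I_l(\beta)$ decomposes into an $I_{l+1}$ piece (from the complement of a linear subspace in the dual) and an infinite Borel tower of $I_l$ pieces (from the subspace). For $l=k$ there is a separate short argument. The point is that each inductive step changes the rank by at most one, so the Fourier transform is always along a vector, exactly as in Lemmas~\ref{inductive_arg} and~\ref{open_orbit}.

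The gap in your plan is the claim that the Schwartz sections supported on the rank-$l$ Bruhat cell become, after ``a Fourier transform along the fibre directions'', a single geometrical Mackey induction $I_l(\sigma)$. First, $P_{n-k,k}\backslash\GL_n$ is the Grassmannian, not $\overline{U_{n-k,k}}$; only the open cell is affine. Second, and more seriously, the $U_{n-k,k}$-action on Schwartz sections over a Bruhat cell is by right translation on the base, not by a character; to see characters one must Fourier-transform along a full copy of $U$ (or $\overline U$), and the resulting stratification of $\widehat U$ by rank then produces pieces of \emph{every} rank $0\le l\le\min(k,n-k)$, not just one. In particular, already the open Bruhat cell contributes infinitely many $I_0$ terms (via Borel's lemma at the origin of $\widehat U$), so your assertion that only the closed cell contributes trivial-extension terms, and hence your verification of~(ii), does not go through. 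The one-column Fourier computations of Lemmas~\ref{inductive_arg} and~\ref{open_orbit} have no direct matrix-valued analogue; the paper's inductive organisation is precisely what allows those one-variable computations to be reused.
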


The filtration described in the above proposition will be called the \textbf{coarse spectral filtration}. The term ``coarse" indicates that the successive quotients in the filtration are not necessarily irreducible. 

For simplifying the notation, we introduce the following inductions which will be freely used in the following two sections. Let $\chi$ be a character of $\GL_1$, $\sigma$ be a representation of $P_{n-m-1,m}$ and $\tau$ be a representation of $P_{n-m,m-1}$, where $m$ is a positive integer such that $m<n-1$. Let $\beta$ be a representation of $S_l^{n-1}$.
\begin{enumerate}
    \item Induction $\chi\mind\sigma $ is defined as
    \[
    \SInd_{\ov{P_{1,n-1}}\cap P_{n-m,m}}^{P_{n-m,m}}(\chi\boxtimes\sigma)
    \]
    where $\chi\boxtimes\sigma$ is a representation of $\GL_1\times P_{n-m-1,m}$ and is viewed as a representation of $\ov{P_{1,n-1}}\cap P_{n-m,m}$ by trivial extension.
    \item Induction $\tau\times \chi$ is defined as 
    \[
    \SInd_{P_{n-1,1}\cap P_{n-m,m}}^{P_{n-m,m}}(\tau\boxtimes\chi)
    \]
     where $\tau\boxtimes\chi$ is a representation of $P_{n-m,m-1}\times \GL_1$ and is viewed as a representation of $P_{n-1,1}\cap P_{n-m,m}$ by trivial extension.
     \item We view $\beta\boxtimes \chi$ as a representation of $S_l^n\cap P_{n-1,1}$ by trivial extension. Then we define the induction $\beta\times 
     \chi$ as
     \[
     \SInd^{S_l^n}_{S_l^n\cap P_{n-1,1}}( \beta\boxtimes \chi).
     \]
\end{enumerate}

\begin{proof}[Proof of Proposition~\ref{rest_max}]
Let us prove Proposition \ref{rest_max} by induction on $k$ and $n$. For $k=1$, it is by BZ-filtration.

Let $\pi=\tau\times\chi$, where $\tau$ is a principal series of $\GL_{n-1}$. Considering the  $P_{n-k,k}$-orbit on $P_{n-1,1}\backslash \GL_n$, as before, one has the exact sequence 
\[0\to \pi_o\to \pi|_{P_{n-k,k}} \to \pi_c\to 0.\] 

By Borel filtration, one obtains a filtration of $\pi_c$ such that each successive quotient is of the form 
$$\SInd_{P_{n-1,1}\cap P_{n-k,k}}^{P_{n-k,k}}\left((\tau|_{P_{n-k,k-1}} \boxtimes \chi)\otimes \mathrm{Sym}^i\big(\mathfrak{gl}_n/(\mathfrak{p}_{n-1,1}+\mathfrak{p}_{n-k,k})\big)^{\vee}\right),i\in \mathbb{Z}_{\geq 0},$$
 where $\mathfrak{gl}_n/(\mathfrak{p}_{n-1,1}+\mathfrak{p}_{n-k,k})$ admits the adjoint action of $P_{n-1,1}\cap P_{n-k,k}$.

By induction on $k$, $\tau|_{P_{n-k,k-1}}$ admits a filtration with successive quotients being geometrical Mackey inductions, $I_l(\sigma)$. Then the result follows from the following isomorphism
\[
I_l(\sigma)\times \chi\simeq  I_l(\sigma\times \chi).
\]
The properties (i) and (ii) follow from the inductive assumption and the property of the Borel's filtration. 

On the other hand, one has $\pi_o\simeq \chi\overline{\times}(\tau|_{P_{n-k-1,k}})$. By induction on $n$, $\tau|_{P_{n-k-1,k}}$ admits a filtration with successive quotients being geometrical Mackey inductions and satisfying (i) and (ii). Let us first show that the representation $\chi\overline{\times}I_l(\beta)$ admits a filtration with successive quotients being geometrical Mackey inductions. Here, $\beta$ is a representation of $S_{n-1}^l\cap (\GL_{n-k-1}\times\GL_k)$.
Write $P_{n-k,k}$ as
\[\left\{\begin{pmatrix} a & b & c \\ d & e & f \\ 0 & 0 & g 
\end{pmatrix}\in P_{n-k,k}\ \middle| \ a\in \mathbf{k}, e\in \mathbf{k}^{(n-k-1)\times (n-k-1)}, g\in \mathbf{k}^{k\times k}\right\} \]

Consider the subgroup
\[Y:=\left\{\begin{pmatrix} a & 0 & c \\ d & e & f \\ 0 & 0 & g 
\end{pmatrix}\ \middle|\ e=\begin{pmatrix} * & *\\ 0_{l\times (n-k-1-l)} & t\end{pmatrix}, g=\begin{pmatrix} t & *\\ 0_{(k-l)\times l} & *\end{pmatrix}\right\}, \]
Then $Y\cap (1\times P_{n-k-1,k})=S_{n-1}^l$.
Consider the representation of $Y$ induced from the subgroup $Y_1:=\left\{\begin{pmatrix} a & 0 & 0 \\ d & e & f \\ 0 & 0 & g 
\end{pmatrix}\in Y\right\}$, $\gamma:=\SInd_{Y_1}^Y(\chi\otimes (\beta\otimes \psi_l^{n-1}))$. The representation can be realized as the space of Schwartz functions from 
\[Y_1\backslash Y\simeq \left\{\begin{pmatrix} 1 & 0 & x \\ 0 & I_{n-k-1} & 0 \\ 0 & 0 & I_k
\end{pmatrix}\ \middle|\ x\in \mathbf{k}^{k}\right\}\] 
to the underlying space of  $\beta$.
By
\[\begin{pmatrix} 1 & 0 & x \\ 0 & I_{n-k-1} & 0 \\ 0 & 0 & I_{k}
\end{pmatrix}\begin{pmatrix} a & 0 & c \\ d & e & f \\ 0 & 0 & g 
\end{pmatrix}=\begin{pmatrix} a & 0 & 0 \\ d & e & f-da^{-1}(c+xg) \\ 0 & 0 & g 
\end{pmatrix}\begin{pmatrix} 1 & 0 & a^{-1}(c+xg) \\ 0 & I_{n-k-1} & 0 \\ 0 & 0 & I_{k}
\end{pmatrix},\]
the action of $p=\begin{pmatrix} a & 0 & c \\ d & e & f \\ 0 & 0 & g 
\end{pmatrix}\in Y$ is given by
\[(\gamma(p)h)(x)=|a|_{\bf{k}}^{-\frac{k}{2}}\chi(a)\cdot \beta(\begin{pmatrix}e & 0 \\  0 & g 
\end{pmatrix})\cdot  \psi_l^{n-1}(\begin{pmatrix}I_{n-k-1} & e^{-1}(f-da^{-1}(c+xg)) \\  0 & I_k 
\end{pmatrix}) h(a^{-1}(c+xg)).\] 
Denote the action of $p$ after applying the Fourier transform (using $\psi^{-1}$) to the variable $x$ by
\[
\widehat{\gamma}(p)(\widehat{h}):=\mathcal{F}_x\circ\gamma(p)\circ\mathcal{F}_x^{-1}(\widehat{h}),
\]
where $\widehat{h}$ is a Schwartz function on the Fourier domain. Namely, we have
\[(\widehat{\gamma}(p)\widehat{h})(y)=\chi(a)\cdot \beta(\begin{pmatrix}e & 0 \\  0 & g 
\end{pmatrix})\psi_{l}^{n-1}(fg^{-1})\psi(cg^{-1}y)\cdot |a|_{\mathbf{k}}^{\frac{k}{2}}|g|_{\mathbf{k}}^{-\frac{1}{2}}\widehat{h}(g^{-1}ya+d''),\] 
where $d''\in \mathbf{k}^{k}$ with $d''=g^{-1}\begin{pmatrix}
d' \\  0_{(k-l)\times 1} 
\end{pmatrix}$ and $d'\in \mathbf{k}^{l}$ such that $d=\begin{pmatrix}
* \\ d'
\end{pmatrix}$. This action keeps the closed subspace $\mathbf{k}^l\times0^{(k-l)}$ of $\mathbf{k}^k$. Thus, there exists a short exact sequence
\[
0\lra \widehat{\gamma}|_{\mathbf{k}^k\setminus (\mathbf{k}^l\times 0^{k-l})} \lra \widehat{\gamma}\lra \widehat{\gamma}^{\sharp}\lra 0,
\]
where $ \widehat{\gamma}|_{\mathbf{k}^k\setminus (\mathbf{k}^l\times 0^{k-l})}$ consists of Schwartz sections supported on $\mathbf{k}^k\setminus (\mathbf{k}^l\times 0^{k-l})$.
\medskip

\textbf{(1). Filtration of $\SInd_{Y}^{P_{n-k,k}}( \widehat{\gamma}|_{\mathbf{k}^k\setminus (\mathbf{k}^l\times 0^{k-l})})$.} When $0\leq l\leq k-1$,
consider another representation $\eta$ of $Y$ which is induced from $\chi\boxtimes \beta\cdot |a|_{\mathbf{k}}^{\frac{k}{2}}\otimes(\psi_l^n\cdot \widetilde{\psi})$ of 
\[Y_2:=\left\{\begin{pmatrix} a & 0 & c \\ d & e & f \\ 0 & 0 & g 
\end{pmatrix}\ \middle|\ e=\begin{pmatrix} * & *\\ 0_{l\times (n-k-1-l)} & t\end{pmatrix}, g=\begin{pmatrix} t & d' & *\\ 0_{1\times l} &  a &  *\\ 0_{(k-l-1)\times l} &  0_{(k-l-1)\times 1} &  *\end{pmatrix}\right\} \]
where $ \widetilde{\psi}$ takes the value $\psi(c_{l+1})$ if $cg^{-1}=(c_1,\dots,c_k)$.
Then the induced representation $\eta$ can be realized as the sum of  the Schwartz functions from the affine spaces $A_r$ to  the underlying space of $\beta$, where 
\[A_r=\left\{\begin{pmatrix}
I_{n-k} & 0\\ 0 & a_r
\end{pmatrix}\ \middle|\ a_r= w_l^{-1}\cdot \begin{pmatrix} z \Bigg\vert \begin{matrix}  I_r & 0_{r\times (k-1-r)} \\ 0_{1\times r} &  0_{1\times (k-1-r)} \\  0_{(k-1-r)\times r} & I_{k-1-r}\end{matrix}\end{pmatrix}^{-1}, z\in \mathbf{k}^{r}\times \mathbf{k}^{\times}\times \mathbf{k}^{k-1-r}\right\},\] for $l\leq r\leq k-1$, where
 $w_l=\begin{pmatrix}
& 1  & \\  I_{l-1} & & \\ && I_{k-1-l}
\end{pmatrix}$. Over $A_r$, the action of $p\in Y$ is given by
\[(\eta(p)\widetilde{h})(z)=\chi(a)\cdot \beta(\begin{pmatrix}e & 0 \\  0 & \widetilde{g} 
\end{pmatrix})\psi_l^{n-1}(\widetilde{f}\widetilde{g}^{-1}) \widetilde{\psi}(\widetilde{c})\widetilde{h}(\widetilde{z}),\]
where $\widetilde{\cdot}$ are determined by
$\begin{pmatrix}
I_{n-k} & 0\\ 0 & a_r
\end{pmatrix}p=p_2\begin{pmatrix}
I_{n-k} & 0\\ 0 & \widetilde{a}_r
\end{pmatrix}$ with $p_2=\begin{pmatrix} a & 0 & \widetilde{c} \\ d & e & \widetilde{f} \\ 0 & 0 & \widetilde{g} 
\end{pmatrix}\in Y_2$ and $\widetilde{a}_r$ corresponds to $ \widetilde{z}$.

Define the intertwining operator $\mathcal{T}_r$ on $\CS(A_r,\beta)$ by
\[\CT_r(\widetilde{h})(z):= \beta(a_r)^{-1}\widetilde{h}(z).\]
One can verify that $\mathcal{T}_r$ and $\CT_{s}$ coincide on $\CS(A_r\cap A_s,\beta)$ for $r\neq s$. Moreover, $\CT:=\bigcup_r\CT_r$ intertwines $\eta$ and $ \widehat{\gamma}|_{\mathbf{k}^k\setminus (\mathbf{k}^l\times 0^{k-l})}$, that is, $\mathcal{T}\circ \widehat{\gamma}|_{\mathbf{k}^k\setminus (\mathbf{k}^l\times 0^{k-l})}= \eta\circ \mathcal{T}$.

Let  $w=\begin{pmatrix}
0& I_{n-k-1}& 0 \\ 1 & 0 & 0\\ 0 & 0 & I_k 
\end{pmatrix}$, then $\SInd_{{}^wY}^{P_{n-k,k}}({}^w\eta)$ is of the form $I_{l+1}(\cdot )$, so is  $\SInd_{Y}^{P_{n-k,k}}( \widehat{\gamma}|_{\mathbf{k}^k\setminus (\mathbf{k}^l\times 0^{k-l})})$.

When $l=k$, 
consider the subgroup $Y_3$ of $Y$
\[\left\{\begin{pmatrix} a & 0 & c \\ d & e & f \\ 0 & 0 & g 
\end{pmatrix}\ \middle|\ d=\begin{pmatrix} * \\ 0_{k\times 1}\end{pmatrix}\right\}. \]
The representation $\eta':=\SInd_{Y_3}^Y(|g|_{\mathbf{k}}^{-1}\cdot \chi\boxtimes \beta\cdot \psi_l^n)$ can be realized as the space of Schwartz functions from  
\[\left\{\begin{pmatrix} 1 & 0 & 0 \\ * & I_{n-k-1} & 0 \\ 0 & 0 & I_k 
\end{pmatrix}\ \middle|\ *=\begin{pmatrix} 0_{n-2k-1} \\ z\end{pmatrix}, z\in \mathbf{k}^{k}\right\} \]
to the underlying space of $\beta$. 
One can check directly that $\widehat{\gamma}$ is isomorphic to $\eta'$. Hence, the $\SInd_{Y}^{P_{n-k,k}}( \widehat{\gamma})$ also is of the form $I_k(\cdot)$ since $\SInd_{Y}^{P_{n-k,k}}(\eta')$ is.  

\textbf{(2). Filtration of $\SInd_{Y}^{P_{n-k,k}}\widehat{\gamma}^{\flat}$.} Over $\mathbf{k}^l\times 0^{k-l}$, by Borel's Lemma, one can get a filtration of $\widehat{\gamma}^{\sharp}$  with successive quotients of the form 
\[I_l(\SInd_{S_l^n\cap Y_2}^{S_l^n}(|a|_{\mathbf{k}}^{-\frac{l}{2}}\chi\otimes \beta\otimes_{\mathbb{R}} (\mathrm{Sym}^i(\mathbf{k}^l)^{\vee})),\]
where $\mathbf{k}^l$ is equipped with the adjoint representation of $S_l^n\cap Y_2$ on the Lie subalgebra 
\[\begin{pmatrix}
0_{1\times (n-l)} & \mathbf{k}^l\\ 0_{(n-1)\times (n-l)} & 0_{(n-1)\times l}
\end{pmatrix}.\]

For the properties (i) and (ii) of $\pi_o$, note that the term $I_0(\cdot)$ only shows up in the Borel's filtration of $ \widehat{\gamma}$ at $y=0$ in the case of $l=0$. Now (i) and (ii) follow from the induction and the property of  Borel's filtration.
\end{proof}

\subsection{Nilpotent invariants and comparison to $L^2$-theory}
In the next section, we will show that Proposition~\ref{rest_max} is a fundamental step in proving the Casselman-Wallach property of homology of the Jacquet functor. On the other hand, in this subsection, we will give some evidence and propose some conjectures stating that the coarse spectral filtration is related to some nilpotent invariants of the representation. Hence, it is desirable to prove its existence in a general setting. 

From now on, in this subsection, $P=LU$ is a parabolic subgroup of a real reductive group $G$ such that its unipotent radical $U$ is abelian and $\widehat{U}$ has finitely many $L$-orbits.
\begin{conjecture}\label{coa-conj}
Let $\pi$ be a Casselman-Wallach representation of $G$,  the restriction of $\pi$ to $P$ admits a filtration with each successive quotient being geometrical Mackey induction. 
\end{conjecture}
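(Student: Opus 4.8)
The plan is to reduce to the case of a minimal principal series and then run a geometric induction that mirrors the proofs of Theorem~\ref{ind_BZ} and Proposition~\ref{rest_max}, with the abelian nilradical $U$ playing the role that $V_n$ played for $\GL_n$. First I would record that the class $\mathcal{F}_P$ of smooth $P$-representations admitting a filtration (in the sense of Definition~\ref{def_fil}) whose successive quotients are geometrical Mackey inductions is closed under extensions, under inverse limits of the kind allowed in Definition~\ref{def_fil}, and under passage to closed subrepresentations; the arguments are the same as in Lemma~\ref{ext_BZ}, using that the intersection operation behaves well with the $\varprojlim$'s. By Casselman's embedding theorem it then suffices to treat $\pi=\Ind_{P_0}^G(\chi)$ for $P_0=L_0U_0$ a minimal parabolic and $\chi$ a character of $L_0$.

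Restricting to $P$ and using the finitely many $P$-orbits on the flag variety $P_0\backslash G$, one gets a finite $P$-stable filtration of $\pi|_P$ whose open-orbit piece is a Schwartz induction $\SInd_{P\cap{}^{w}P_0}^{P}(\,\cdot\,)$ from an intersection subgroup, and whose remaining pieces, by Borel's lemma applied along the conormal directions of the non-open orbits, are inverse limits of Schwartz inductions of the same shape twisted by symmetric powers $\Sym^i$ of conormal bundles. Since $\mathcal{F}_P$ is closed under the relevant operations, the problem is reduced to showing that each single-orbit building block $\SInd_{P\cap{}^{w}P_0}^{P}(\xi)$ lies in $\mathcal{F}_P$.

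The core of the proof is a general \emph{Fourier-transform-and-stratification} engine, extracted from the $\GL_n$ computations in Lemma~\ref{inductive_arg}, Lemma~\ref{open_orbit}, and Proposition~\ref{rest_max}. If a $P$-representation is realized as Schwartz sections $\CS(X,\CE)$ of a tempered bundle over a Nash $P$-manifold $X$, and the abelian normal subgroup $U$ acts, after Fourier transform in $U$, through a Nash moment map $\mu:X\to\widehat{U}$ — so that $U$ acts by the character $\mu(x)$ at $x$, exactly as in \eqref{E_n-act} and Corollary~\ref{bun ver} — then one stratifies $X$ by the preimages under $\mu$ of the finitely many $L$-orbits on $\widehat{U}$. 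Over a fixed open $L$-orbit through $\phi$, the co-sheaf gluing of Proposition~\ref{co-sheaf prop} together with the change-of-variables trick of Proposition~\ref{rest_max} identifies the sections with a geometrical Mackey induction $I_{\phi}(\sigma)$ for a representation $\sigma$ of $S_{\phi}\cap L$, which is already a successive quotient of the desired type; over the closed locus $\mu^{-1}(\overline{O}\smallsetminus O)$ Borel's lemma produces a filtration with $\Sym^i$-twists whose successive quotients are again Schwartz inductions over the smaller data $\mu^{-1}(O')$, to which the engine is applied recursively. To make the recursion close up I would formulate the statement for the larger class of almost linear Nash groups generated by the passage $P=LU\rightsquigarrow S_{\phi}\cap L$, and prove the transitivity/associativity isomorphisms for geometrical Mackey inductions under induction in stages, generalizing Lemma~\ref{inductive_arg0} and the identity $I_l(\sigma)\times\chi\simeq I_l(\sigma\times\chi)$. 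The induction is on $\dim G$, with base case $U$ trivial (i.e.\ $P=G$), where $\pi$ itself is the geometrical Mackey induction attached to the trivial character.

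I expect the main obstacle to be precisely the bookkeeping around the non-reductive stabilizers $S_{\phi}\cap L$: unlike in $\GL_n$, where $S_{\phi}\cap L$ is transparently a product of a smaller $\GL$-parabolic with a mirabolic-type factor, in general one must verify that the recursion stays within a controlled, finite class of groups and that each such group again carries an abelian (or at least suitably stratified) unipotent radical with finitely many orbits on its dual. Here the classification of parabolics with abelian nilradical (the cominuscule parabolics) and the structure of the $L$-action on $\widehat{U}$ should supply the needed closure, but checking it uniformly is the delicate point. A secondary difficulty is the analytic one of carrying out the stratification of $\mu^{-1}(\overline{O})$ and the Borel-lemma expansions without the explicit affine charts available over $\GL_n$; this ought to be manageable via Proposition~\ref{co-sheaf prop} and Corollary~\ref{bun ver}, provided one verifies the regular-value hypotheses for $\mu$ along each stratum. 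Once these two points are settled, the induction on $\dim G$ yields the conjecture.
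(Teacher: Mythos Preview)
The statement you are trying to prove is labeled a \emph{Conjecture} in the paper, and the paper does not supply a proof. The only instance the paper establishes is Proposition~\ref{rest_max}, which handles $G=\GL_n$ with $P=P_{n-k,k}$, and that argument proceeds by explicit matrix-coordinate computations---writing down the subgroups $Y,Y_1,Y_2,Y_3$ by hand, computing the Fourier-transformed action $\widehat{\gamma}(p)$ on $\mathbf{k}^k$, identifying the invariant subspace $\mathbf{k}^l\times 0^{k-l}$, and building the intertwining operators $\CT_r$ over explicit affine charts $A_r$. There is no general mechanism in the paper for arbitrary $(G,P)$ with abelian $U$.

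Your outline is a reasonable strategy, but it is not a proof, and you have correctly located the gap yourself. The recursion step $P=LU\rightsquigarrow S_{\phi}\cap L$ leaves the class of reductive groups with cominuscule parabolics: $S_{\phi}\cap L$ is typically non-reductive, and there is no reason its representations again sit over a base with an abelian normal subgroup having finitely many orbits on its unitary dual. In the $\GL_n$ case the paper exploits the accident that $S_l^{n,k}\cap L$ contains a smaller $\GL_{n-k-1}\times\GL_k$- or $P_{n-k,k-1}$-type block, so one can recurse on $n$ and $k$ within the same family; no such closure is known in general. Your proposal to ``formulate the statement for the larger class of almost linear Nash groups generated by the passage'' is not a resolution---it defers the difficulty to proving a strictly stronger conjecture for a class of groups that is not explicitly described. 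A second, smaller gap: the claim that $\mathcal{F}_P$ is closed under passage to closed subrepresentations (your analogue of Lemma~\ref{ext_BZ}) is not obvious here, because the successive quotients $I_\phi(\sigma)$ with $\phi\neq 0$ are typically of infinite length and a closed $P$-subrepresentation of $I_\phi(\sigma)$ need not again be of the form $I_\phi(\sigma')$; in the $\GL_n$ BZ setting this worked because the building blocks $I^kE(\tau)$ with $\tau$ irreducible are themselves irreducible.
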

For example, when $G=G_n$, the Siegel subgroup $P=Q_n$ satisfies the condition of the above conjecture. We hope that the conjecture will indicate some aspects of the branching law of the symmetric pair $(G_n,\GL_n)$.

We introduce various nilpotent invariants attached to a representation $\pi$ of $P$.
\begin{itemize}
    \item \textbf{Spectral orbits.}
\end{itemize}
The set of spectral orbits consists of the $L$-orbits on $\widehat{U}$ for which there exists (and thus for any) an element $\phi$ in the orbit satisfying
\[
\pi / \langle u \cdot v - \phi(u)v \mid u \in U, v \in \pi \rangle \neq 0.
\]
When $\pi$ is a Casselman-Wallach representation of $G$ and admits a coarse spectral filtration, \textit{i.e.} Conjecture~\ref{coa-conj} holds, then the spectral orbits coincide with the orbits appearing in the successive quotients of the filtration (see the proof of Corollary~\ref{eq-spec-sup}). Moreover, when $\pi$ is a Casselman-Wallach representation of $G$, the zero orbit is always a spectral orbit since the surjective map
\[
 \pi/\fku\pi \lra \pi/\fku^0\pi, \ \text{where}\ \pi/\fku^0\pi\neq 0.
\]
We use $\mathrm{SO}_U(\pi)$ to denote the union of spectral orbits of $\pi$. We have the following basic conjecture.
\begin{conjecture}
    Let $\pi$ be a Casselman-Wallach representation of $G$. Then $\SO_U(\pi)$ is a closed subset of $\widehat{U}$.
\end{conjecture}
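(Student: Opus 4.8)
The plan is to prove that $\SO_U(\pi)$ is closed by combining Conjecture~\ref{coa-conj} (the existence of a coarse spectral filtration) with a geometric analysis of which $L$-orbits in $\widehat U$ can arise. First I would reduce to the case where $\pi$ is irreducible: by the additivity of the ``twisted coinvariants'' functor $\pi\mapsto \pi/\langle u\cdot v-\phi(u)v\rangle$ along short exact sequences (right exactness), a spectral orbit of $\pi$ is a spectral orbit of some irreducible subquotient, and there are finitely many of these, so $\SO_U(\pi)$ is a finite union and it suffices to handle each irreducible constituent. For irreducible $\pi$, I would invoke Conjecture~\ref{coa-conj} to obtain a filtration of $\pi|_P$ whose successive quotients are geometrical Mackey inductions $I_\phi(\sigma)$, and then show (as referenced in the proof of Corollary~\ref{eq-spec-sup}) that the spectral orbits of $\pi$ are exactly the $L$-orbits $[\phi]$ occurring among these successive quotients — this uses the computation of $\rmh_0(\fku,\cdot)$ of a Mackey induction, analogous to Proposition~\ref{homology prop} and Corollary~\ref{bun ver}, to see that $I_\phi(\sigma)$ contributes nonzero twisted coinvariants precisely at the orbit of $\phi$ (and its degenerations are controlled).

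The next step is the geometric heart of the argument. Since $\widehat U$ has only finitely many $L$-orbits, its orbit poset is finite, and the closure of any orbit is a union of orbits of strictly smaller dimension. So to prove $\SO_U(\pi)=\bigcup_{[\phi]\in\Sigma}[\phi]$ (a finite union of orbits) is closed, it suffices to show that $\Sigma$ is \emph{downward closed} in the orbit poset: if $[\phi]\in\Sigma$ and $[\phi']\subset\overline{[\phi]}$, then $[\phi']\in\Sigma$. The mechanism for this should be that a geometrical Mackey induction $I_\phi(\sigma)$, when one takes twisted coinvariants at a character $\phi'$ in the boundary of the orbit of $\phi$, does not vanish — the point being that the tempered bundle realizing $I_\phi(\sigma)$ over the orbit $[\phi]\times_{S_\phi}P$ has nonzero ``Borel-type'' normal jets along the boundary stratum $[\phi']$, so that $\rmh_0(\fku, I_\phi(\sigma)\otimes(-\phi'))\neq 0$. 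Concretely, for $\GL_n$ with $P=P_{n-k,k}$, the orbits are indexed by rank $l$, the closure relation is $l'\le l$, and one checks directly from the formula for $I_l(\sigma)$ (as in the proof of Proposition~\ref{rest_max}) that taking coinvariants at a rank-$l'$ character with $l'\le l$ yields a Borel filtration with nonzero successive quotients; the general case should follow the same pattern since $U$ abelian and $\widehat U$ a finite-orbit $L$-variety behave like a vector space with a linear group action.

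The main obstacle I expect is precisely this boundary non-vanishing claim: one must rule out accidental cancellation in $\rmh_0(\fku, I_\phi(\sigma)\otimes(-\phi'))$ when $\phi'$ degenerates $\phi$. Unlike the clean situation of Proposition~\ref{homology prop}, where $\tilde\varphi$ has a single regular fiber, here the relevant Nash map $x\mapsto (\varphi(x)-\phi')$ need not have $0$ as a regular value over the whole space, and one is forced into a stratified computation using Borel's lemma on the non-transverse locus — exactly the ``obstacles from normal derivatives'' flagged in the introduction. A secondary subtlety is that $\SO_U(\pi)$ is defined via the plain twisted coinvariant functor $\rmh_0$, not via all higher $\rmh_i$, so I cannot directly transport Euler-characteristic or exactness arguments; I would need to argue at the level of $\rmh_0$ alone, which is only right exact. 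A clean way around both difficulties, if available, is to prove the boundary non-vanishing for the \emph{standard} geometrical Mackey inductions $I_l(\sigma)$ appearing in Proposition~\ref{rest_max} by the explicit Fourier-transform realizations already in that proof, and then bootstrap: any Casselman-Wallach $\pi$ embeds in a principal series, whose coarse spectral filtration is known unconditionally by Proposition~\ref{rest_max}, giving $\SO_U(\pi)\subseteq \SO_U(\text{principal series})$, and the reverse containment for the constituents one cares about. I would present the downward-closedness of $\Sigma$ as the key lemma and flag the transversality/normal-derivative analysis as the step requiring the most care.
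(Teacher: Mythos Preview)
The statement is labeled as a \textbf{Conjecture} in the paper and is not proved there; the paper only remarks that the special case $G=\GL_n$, $\pi$ a degenerate principal series, can be verified by a calculation similar to Proposition~\ref{rest_max}. So there is no paper-proof to compare against.

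More importantly, your proposed mechanism contains a genuine error. Your ``geometric heart'' is the claim that $I_\phi(\sigma)$ has \emph{nonzero} twisted coinvariants at a boundary character $\phi'\in\overline{\CO_\phi}\setminus\CO_\phi$. This is false, and the paper establishes exactly the opposite in the proof of Corollary~\ref{eq-spec-sup}, equation~\eqref{geo_mac_homo}: for any $\phi'\notin\CO_\phi$ (boundary points included), $\rmh_i(\fku,I_\phi(\sigma)\otimes(-\phi'))=0$ for all $i$. The point is that $I_\phi(\sigma)$ is realized as Schwartz sections over the \emph{open} orbit $S_\phi\backslash P\cong\CO_\phi$, and the embedding $\varphi:S_\phi\backslash P\hookrightarrow\fku^*$ has image missing $\phi'$; Corollary~\ref{bun ver} (Case 2) then forces all homology to vanish. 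There are no ``Borel-type normal jets along the boundary'' because Schwartz sections on an open orbit do not see the boundary at all --- that is the whole content of Lemma~\ref{supp_Mac} combined with the homology computation.

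Consequently, downward-closedness of the set $\Sigma$ of orbits appearing in a coarse spectral filtration cannot be read off from individual Mackey inductions. It is a statement about the global structure of $\pi|_P$: one must argue that whenever a successive quotient $I_\phi(\sigma)$ with large orbit occurs, the filtration \emph{also} contains some $I_{\phi'}(\sigma')$ for each smaller orbit. In the degenerate-principal-series case the paper alludes to, this happens for structural reasons visible in the explicit recursion of Proposition~\ref{rest_max} (the Borel filtrations at the closed strata manufacture the lower-rank pieces), not because any single $I_l(\sigma)$ degenerates. Your bootstrap via embedding into a principal series does not rescue the argument either: it yields only $\SO_U(\pi)\subseteq\SO_U(I)$, whereas what you need is downward-closedness for $\pi$ itself. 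The conjecture remains open in the paper, and your proposal as written does not close it.
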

For instance, when $G=\GL_n$ and $\pi$ is a degenerate principal series, the conjecture follows from a calculation similar to that in Proposition~\ref{rest_max}.

\begin{itemize}
    \item \textbf{Smooth support $\supp_{U}(\pi)$.}
\end{itemize}
The Fr\'echet space $\CS(U)$ has two natural Fr\'echet algebra structures: one is the convolution $(\CS(U),*)$, the other one is the pointwise multiplication $(\CS(U),\bullet)$. Moreover, under the Fourier transform, we have a natural isomorphism of Fr\'echet algebras
\[
(\CS(U),*)\simeq (\CS(\widehat{U}),\bullet).
\]
Fix a Haar measure $du$ on $U$. Then the smooth moderate growth representation $\pi$ is a differentiable $(\CS(U),*)$-module by
\[
 f\cdot v:=\int_U f(u)u\cdot vdu \text{ for } f\in \CS(U) \text{ and } v\in\pi.
\]
Therefore, it is a module of $(\CS(\widehat{U}),\bullet)$ as well. Let $\CI_{\pi}\subset \CS(\widehat{U})$ be the closed annihilated ideal of $\pi$. Then we define the smooth support of $\pi$ as the complementary subset of the maximal open subset $\Omega\subset \widehat{U}$ such that $\CS(\Omega)\subset \CI_{\pi}$. Since $\pi$ is a $P$-representation, $\supp_{U}(\pi)$ is $L$-invariant. The following lemma can be proven by directly verifying the definition, and we leave the details to the reader.
\begin{lemma}\label{supp_Mac}
    Let $\sigma$ be a representation of $S_{\phi}\cap L$, where $\phi\in\widehat{U}$. Then 
    \[
    \supp_U(I_{\phi}(\sigma))=\ov{\CO_{\phi}},
    \]
    where $\CO_{\phi}$ is the $L$-orbit of $\phi$.
\end{lemma}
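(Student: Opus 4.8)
The plan is to unwind both sides of the claimed equality $\supp_U(I_\phi(\sigma)) = \overline{\CO_\phi}$ directly from the definition of smooth support, exploiting the fact that Schwartz induction turns the $(\CS(U),*)$-module structure into a geometric statement about Schwartz sections over the base $S_\phi\backslash P$. The key observation is that $I_\phi(\sigma) = \SInd_{S_\phi}^P(\sigma\boxtimes\phi)$ is, as a $U$-representation (equivalently, as a module over $(\CS(U),*)\simeq(\CS(\widehat U),\bullet)$), built from copies of the character $\phi$ twisted along the $P$-action; since $U$ is abelian and normal in $P$, the $U$-action on the fiber over a point $p\in S_\phi\backslash P$ is via the character ${}^p\phi$. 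Concretely, I would first identify the underlying space of $I_\phi(\sigma)$ with $\CS$-sections of a tempered bundle over $S_\phi\backslash P$, and then push forward along the map $S_\phi\backslash P \to \widehat U$, $p\mapsto {}^p\phi$, whose image is exactly the orbit $\CO_\phi$.

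First I would record the elementary fact that for a single character: if $\pi$ is (a completion of) copies of a unitary character $\phi\in\widehat U$ on which $\CS(\widehat U)$ acts by pointwise multiplication at the point $\phi$, then $\CI_\pi = \{ f\in\CS(\widehat U) : f(\phi)=0\}$, so $\supp_U$ is the single point $\{\phi\}$. Next, for the induced representation, I would use the co-sheaf property of Schwartz functions (Proposition~\ref{co-sheaf prop}) together with a finite cover of $S_\phi\backslash P$ trivializing the bundle: on each chart $I_\phi(\sigma)$ looks like $\CS(\text{open set})\,\widehat\otimes\,(\text{fiber})$, and the $(\CS(\widehat U),\bullet)$-action on the chart over a neighborhood of $p$ is multiplication by $f\mapsto (\xi\mapsto f({}^{p}\phi \cdot (\text{local coordinate})))$, i.e.\ pullback of $f$ along the orbit map composed with evaluation. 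Hence a Schwartz function $f\in\CS(\widehat U)$ annihilates $I_\phi(\sigma)$ if and only if its pullback to $S_\phi\backslash P$ vanishes identically, which happens precisely when $f$ vanishes on the image of the orbit map, namely $\CO_\phi$. Since $\CI_\pi$ is by definition closed and $\supp_U(\pi)$ is the complement of the largest open $\Omega$ with $\CS(\Omega)\subset\CI_\pi$, and since $\CS(\Omega)$ consists of Schwartz functions supported away from $\overline{\CO_\phi}$ exactly when $\Omega\cap\overline{\CO_\phi}=\emptyset$, we conclude $\supp_U(I_\phi(\sigma)) = \overline{\CO_\phi}$. The $L$-invariance is automatic since $\phi$ was chosen in the orbit and the construction is $P$-equivariant.

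The main technical point — the one the paper is content to leave to the reader but which I would want to check carefully — is the interchange between the Schwartz-induction realization and the $(\CS(U),*)$-module structure: one must verify that the integrated action $f\cdot v = \int_U f(u)\,u\cdot v\,du$ on a Schwartz section, after Fourier transform, is genuinely fiberwise multiplication by the pulled-back function, with no boundary or convergence subtlety, and that ``$f$ annihilates every local section'' is equivalent to ``$f$ vanishes on $\CO_\phi$'' rather than merely on its interior or closure. This is where the non-compactness of the orbit and the behaviour of Schwartz functions near the boundary $\overline{\CO_\phi}\setminus\CO_\phi$ enter: one needs that a Schwartz function on $\widehat U$ vanishing on the locally closed submanifold $\CO_\phi$ automatically vanishes on $\overline{\CO_\phi}$ (true by continuity) and, conversely, that $\CS$-sections supported on an open set $\Omega$ disjoint from $\overline{\CO_\phi}$ do annihilate $I_\phi(\sigma)$ (true since their Fourier transforms vanish on a neighborhood of $\CO_\phi$). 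Both are routine given the machinery of Sections~\ref{sec-Schwartz fun}, so I expect the proof to be short once the realization is set up correctly; the only real care is bookkeeping the modular characters in $\SInd$, which do not affect the support.
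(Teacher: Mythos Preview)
Your proposal is correct and follows exactly the route the paper intends: the paper explicitly says the lemma ``can be proven by directly verifying the definition'' and leaves the details to the reader, and your argument does precisely that. The one simplification you could make is that the local trivializations and co-sheaf machinery are not really needed here --- since $U$ is normal in $P$, the integrated $\CS(U)$-action on a section $f\in\CS(S_\phi\backslash P,\CE)$ is globally given by $(h\cdot f)(x)=h({}^x\phi)\,f(x)$ for $h\in\CS(\widehat U)$ (this is the formula the paper records just below, in the proof of Corollary~\ref{eq-spec-sup}), so the annihilator is immediately $\{h:h|_{\CO_\phi}=0\}$ without passing to charts.
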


This lemma has a direct corollary.
\begin{corollary}\label{eq-spec-sup}
    Let $\pi$ be a Casselman-Wallach representation of $G$. Assume that Conjecture~\ref{coa-conj} holds, then 
    \[
    \ov{\SO_U(\pi)}=\supp_U(\pi).
    \]
\end{corollary}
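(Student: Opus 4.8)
The statement to be proved is Corollary~\ref{eq-spec-sup}: assuming Conjecture~\ref{coa-conj}, for a Casselman-Wallach representation $\pi$ of $G$ we have $\ov{\SO_U(\pi)}=\supp_U(\pi)$. The plan is to combine the coarse spectral filtration of $\pi|_P$ with Lemma~\ref{supp_Mac} and with the behaviour of $\supp_U$ and of spectral orbits in short exact sequences. Throughout I write the successive quotients of the coarse spectral filtration of $\pi|_P$ (provided by Conjecture~\ref{coa-conj}) as geometrical Mackey inductions $I_{\phi_\alpha}(\sigma_\alpha)$, indexed by some countable set, with associated $L$-orbits $\CO_{\phi_\alpha}\subset\widehat U$.

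\textbf{Step 1: reduce both sides to the orbits occurring in the filtration.} First I would show that $\SO_U(\pi)=\bigcup_\alpha \CO_{\phi_\alpha}$. The twisted coinvariant functor $\pi\mapsto \pi/\langle u\cdot v-\phi(u)v\rangle$ at a character $\phi$ is right exact, and for a geometrical Mackey induction $I_{\phi'}(\sigma)$ it is nonzero precisely when $\phi$ lies in the $L$-orbit of $\phi'$ — this is a local computation with Schwartz sections along the orbit stratification, essentially the same mechanism used in Proposition~\ref{homology prop} and Corollary~\ref{bun ver}. Using the long exact sequences (right exactness plus the structure of the filtration, whose tails are topological inverse limits) one sees that $\phi$ is a spectral orbit of $\pi$ iff it lies in $\CO_{\phi_\alpha}$ for some $\alpha$. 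Hence $\ov{\SO_U(\pi)}=\ov{\bigcup_\alpha \CO_{\phi_\alpha}}=\bigcup_\alpha\ov{\CO_{\phi_\alpha}}$, the last equality because $\widehat U$ has only finitely many $L$-orbits, so the union is finite and closure commutes with it.

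\textbf{Step 2: compute $\supp_U(\pi)$ from the filtration.} By Lemma~\ref{supp_Mac}, $\supp_U(I_{\phi_\alpha}(\sigma_\alpha))=\ov{\CO_{\phi_\alpha}}$. The key auxiliary fact is that $\supp_U$ is well behaved in exact sequences: if $0\to A\to B\to C\to 0$ is exact in $\Smod_P$ with all three nuclear, then $\supp_U(B)=\supp_U(A)\cup\supp_U(C)$ — indeed the annihilator ideal $\CI_B=\CI_A\cap\CI_C$ because a Schwartz function on $\widehat U$ annihilates $B$ iff it annihilates both $A$ and $C$ (here one uses that $\CS(\Omega)$ is an ideal and that the $(\CS(\widehat U),\bullet)$-module structure is compatible with the maps). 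One also needs that passing to the inverse limit over the infinite part of the filtration does not enlarge or shrink the support beyond the closure of the union of the pieces; this follows since each $\CS(\Omega)\subset\CI_{\pi/F^j}$ for all $j$ implies $\CS(\Omega)\subset\CI_\pi$ by the inverse-limit description of $\pi$, and conversely. Combining, $\supp_U(\pi)=\ov{\bigcup_\alpha\CO_{\phi_\alpha}}$, which matches Step 1 and finishes the proof.

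\textbf{Main obstacle.} The delicate point is the passage through the \emph{infinite} parts of the coarse spectral filtration: the filtration is only a ``level $\le r$'' filtration in the sense of Definition~\ref{def_fil}, with inverse-limit quotients, so I must check that neither $\SO_U$ nor $\supp_U$ is affected by the completion — e.g. that a character $\phi$ with $\phi\notin\bigcup_\alpha\ov{\CO_{\phi_\alpha}}$ cannot sneak into $\supp_U(\pi)$ through the limit, and that no spectral orbit is lost. This requires exploiting the finiteness condition built into the filtration (for each $c$, only finitely many successive quotients have $\min\Omega_{i,j}\le c$, and $\widehat U$ has finitely many orbits anyway) together with a Mittag-Leffler type argument as in Lemma~\ref{ext_Hausd} and Lemma~\ref{fil-lem}. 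Everything else is a routine unwinding of definitions, and indeed the statement of the corollary already signals that the proof is short once Lemma~\ref{supp_Mac} and the filtration are in hand.
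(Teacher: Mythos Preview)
Your proposal is correct and follows the same two ingredients the paper uses: Lemma~\ref{supp_Mac} for $\supp_U$ of each geometrical Mackey induction, and the twisted-homology computation for $\SO_U$ of each piece, glued through the coarse spectral filtration. The emphasis is inverted, however: the paper's proof is almost entirely devoted to carrying out the local computation you defer to (``essentially the same mechanism as Proposition~\ref{homology prop} and Corollary~\ref{bun ver}''), namely establishing
\[
\rmh_0(\fku,I_\phi(\sigma)\otimes(-\phi))\simeq\sigma \quad\text{and}\quad \rmh_i(\fku,I_\phi(\sigma)\otimes(-\phi'))=0\ \text{for all }i,\ \phi'\notin\CO_\phi,
\]
via Corollary~\ref{bun ver} (the point being that $0$ need not be a regular value of $\varphi-\phi$, so one has to split off a complementary subalgebra $\fkw$). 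Conversely, the paper leaves the filtration bookkeeping you spell out in Steps~1--2 entirely implicit. One small caution: in Step~1 your phrase ``right exactness plus the structure of the filtration'' undersells what is needed --- to push the vanishing of twisted coinvariants through the inverse-limit tails you must use that \emph{all} $\rmh_i$ of the off-orbit pieces vanish (not just $\rmh_0$), exactly as in Lemma~\ref{fil-lem}, which you do cite later; make sure that full vanishing is stated when you write it up.
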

\begin{proof}
By Lemma~\ref{supp_Mac}, it suffices to prove that 
   \begin{equation}\label{geo_mac_homo}
        \rmh_0(\fku,  I_{\phi}(\sigma)\otimes (-\phi))\simeq \sigma \text{ and } \rmh_i(\fku,I_{\phi}(\sigma)\otimes (-\phi'))=0
   \end{equation}
   for every integer $i$ and character $\phi'\notin \CO_{\phi}$, where $\phi,\phi'\in \widehat{U}$ and $\sigma$ is a representation of $S_{\phi}\cap L$. Consider the embedding of Nash manifolds:
   \[
   \varphi: S_{\phi}\backslash P\lra \fku^* \quad x\mapsto {}^x\phi.
   \]
   Note that $I_\phi(\sigma)$ can be realized as Schwartz sections of a tempered bundle $\CE$ over $S_{\phi}\backslash P$ such that the $\fku$-action is given by
   \[
  ( \xi\cdot f)(x):=\varphi(x)(\xi)\cdot f(x), \quad f\in\CS(S_{\phi}\backslash P,\CE)  \text{ and } \xi\in\fku.
   \]
   Therefore, the second assertion in~\eqref{geo_mac_homo} follows from Corollary~\ref{bun ver}. 

   On the other hand, by the covering technique, it suffices to prove for an open neighborhood $U$ of $\ov{e}\in S_{\phi}\backslash P$ that
   \[
   \rmh_0(\fku,\CS(U,\CE)\otimes (-\phi))\simeq\sigma.
   \]
   Here, $\ov{e}$ is the image of the identity element. Let $\fkw$ be the subalgebra of $\fku$ such that $\fkw^*$ is the image of $d\varphi_{\ov{e}}$. Consequently, there exists an open neighborhood $U$ of $\ov{e}$ such that $0\in \fkw^*$ is a regular value of
   \[
   \varphi_{\fkw}:U \xrightarrow{\varphi-\phi} \fku^* \lra \fkw^*.
   \]
   By Corollary~\ref{bun ver}, this implies
   \[
   \rmh_i(\fkw,\CS(U,\CE)\otimes (-\phi))\simeq \begin{cases}
      \CE_0= \sigma &\text{ for } i=0,\\
      0 &\text{ for }i\neq 0.
   \end{cases}
   \]
   Thus, we get 
   \[\rmh_0(\fku,\CS(U,\CE)\otimes (-\phi))=\rmh_0\left(\fku/\fkw,\rmh_0(\fkw,\CS(U,\CE)\otimes (-\phi))\right)\simeq \sigma.
   \]
\end{proof}

\begin{itemize}
    \item \textbf{Wavefront set}.
\end{itemize}

Let $\pi$ be a Casselman-Wallach representation of $G$. By the Casselman's embedding theorem, $\pi$ can be continuously embedded into a Hilbert generalized principal series. Take the closure $\ov{\pi}$ of $\pi$ in this Hilbert space. It is a Hilbert globalization of $\pi$. In the sense of \cite{How81}, we view $\ov{\pi}$ as a $U$-representation and define the wavefront set of $\pi$ as $\WF_U(\ov{\pi})$. It is a $L$-invariant closed subset of $\widehat{U}$, and is independent of the choice of Hilbert globalization. We have the following comparison between the wavefront set and the smooth support.
\begin{lemma}
    Let $\ov{\sigma}$ be a separable Hilbert globalization of $\sigma\in \Smod_P$. Then 
    \[
    \Supp_U(\sigma)= \WF_U(\ov{\sigma}).
    \]
\end{lemma}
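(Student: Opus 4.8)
The plan is to identify both $\Supp_U(\sigma)$ and $\WF_U(\ov\sigma)$ with the same closed $L$-invariant subset of $\widehat U$ by characterizing membership in the complement in parallel ways. Recall that $\widehat U$ is identified with $\sqrt{-1}\fku^*\simeq\fku$ once a Haar measure is fixed. For a point $\phi\in\widehat U$, $\phi\notin\Supp_U(\sigma)$ means there is an open neighborhood $\Omega\ni\phi$ with $\CS(\Omega)\subset\CI_\sigma$, i.e. every Schwartz function on $\widehat U$ supported near $\phi$ annihilates $\sigma$ as a $(\CS(\widehat U),\bullet)$-module; equivalently, via Fourier transform, every $f\in\CS(U)$ whose Fourier transform is supported in $\Omega$ acts by zero under convolution $f\cdot v=\int_U f(u)\,u\cdot v\,du$. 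On the other hand, $\phi\notin\WF_U(\ov\sigma)$, in the sense of \cite{How81}, means exactly that $\ov\sigma$ admits a $C^\infty$-vector (or by a density argument, a Hilbert-space vector) whose matrix coefficients have no singularity at $\phi$ in the appropriate microlocal sense, which again translates into the vanishing of $f\cdot v$ for $f\in\CS(U)$ with $\widehat f$ supported near $\phi$, acting on the Hilbert globalization.

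First I would set up the precise dictionary. Since $\sigma$ is a smooth moderate-growth representation of the abelian group $U$, it is a non-degenerate module over the Fréchet algebra $(\CS(U),*)\simeq(\CS(\widehat U),\bullet)$, and $\CI_\sigma\subset\CS(\widehat U)$ is the closed annihilator ideal; $\Supp_U(\sigma)$ is by definition the complement of the largest open set $\Omega$ with $\CS(\Omega)\subset\CI_\sigma$, i.e. the hull of $\CI_\sigma$ in the spectrum $\widehat U$ of $(\CS(\widehat U),\bullet)$. For the Hilbert globalization $\ov\sigma$, one uses that $\ov\sigma$ is a $*$-representation of the commutative $C^*$-algebra $C_0(\widehat U)$ (by extending the $\CS(\widehat U)$-action), so it has a spectral measure on $\widehat U$; the wavefront set $\WF_U(\ov\sigma)$ in the sense of Howe coincides with the support of this spectral measure. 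The two supports then match because the $\CS(\widehat U)$-module $\sigma$ is dense in $\ov\sigma$ and the $\CS(\widehat U)$-action is continuous: a Schwartz function on $\widehat U$ vanishes on $\sigma$ iff it vanishes on $\ov\sigma$, since $\sigma$ is dense and the action is by bounded operators on $\ov\sigma$. Hence the hull of $\CI_\sigma$ equals the support of the spectral measure of $\ov\sigma$.

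Second, I would address the three technical points that make this work: (a) that the action of $(\CS(\widehat U),\bullet)$ on $\sigma$ extends continuously to an action of $C_0(\widehat U)$ on $\ov\sigma$ by bounded operators — this follows from moderate growth of $\sigma$ together with the fact that $\ov\sigma$ is obtained by completing inside a Hilbert generalized principal series, on which the unitary $U$-action is bounded, so the integrated action of $L^1(U)$ is bounded and factors through $C_0(\widehat U)$; (b) that $\sigma\subset\ov\sigma$ is a dense $\CS(\widehat U)$-submodule, which is immediate since $\sigma$ is by construction dense in its Hilbert globalization; and (c) that Howe's notion of $\WF_U$ for a $U$-representation, restricted to the abelian group $U$, is exactly the support of the Plancherel/spectral measure of the unitary $U$-representation $\ov\sigma$ — this is essentially the definition in \cite{How81} in the abelian case and requires only unwinding. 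Independence of the choice of Hilbert globalization then follows because the smooth support $\Supp_U(\sigma)$ is manifestly intrinsic to $\sigma$.

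The main obstacle I anticipate is point (c): making the identification of Howe's wavefront set with the support of the spectral measure completely rigorous, and in particular checking that passing from the smooth Fréchet module $\sigma$ to its Hilbert completion does not enlarge or shrink the relevant support — i.e. that $\CI_\sigma$ (closed annihilator in $\CS(\widehat U)$) and the kernel of the $C_0(\widehat U)$-action on $\ov\sigma$ have the same hull. This is where separability of $\ov\sigma$ is used (to get a well-behaved spectral measure and disintegration) and where one must be careful that the closure operation in $\CS(\widehat U)$ versus in $C_0(\widehat U)$ produces ideals with the same zero set; the key is that both ideals are regular (their hull is their support) because $\widehat U$ is a manifold and the algebras separate points and vanish at infinity, so Urysohn-type arguments in each algebra identify the hull with the topological support. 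Once these points are in place the proof is a short formal argument, as the paper's phrasing ("can be proven by directly verifying the definition") suggests.
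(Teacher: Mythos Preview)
Your approach differs substantially from the paper's and has a genuine gap. The paper does not pass through a spectral measure. Instead it introduces an intermediate set
\[
\BS_{\ov{\sigma}}:=\ov{\bigcup_{T\in \CL_1(\ov{\sigma})} \supp \widehat{\tr(T)}},\qquad \tr(T)(u):=\tr(T\circ\ov\sigma(u)),
\]
observes that $\BS_{\ov\sigma}$ is $L$-invariant hence conic, and cites \cite[Proposition~2.1]{How81} to get $\BS_{\ov\sigma}=\WF_U(\ov\sigma)$. It then proves $\BS_{\ov\sigma}=\supp_U(\sigma)$ directly: the inclusion $\BS_{\ov\sigma}\subset\supp_U(\sigma)$ uses density of $\sigma$ in $\ov\sigma$ (as you also argue), while the reverse inclusion uses the elementary fact that for any nonzero bounded operator $\varphi$ on a separable Hilbert space there is a positive trace class $T$ with $\tr(T\varphi)\neq 0$. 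This trace-class formulation works for arbitrary bounded Hilbert representations of $U$; no unitarity is needed.

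Your argument, by contrast, hinges on the $U$-action on $\ov\sigma$ being unitary: you need a projection-valued spectral measure and a factorization of the integrated action through $C_0(\widehat U)$. Neither is available for a general Hilbert globalization. In a Hilbert globalization the operators $\ov\sigma(u)$ are bounded for each $u$, but $\|\ov\sigma(u)\|$ typically grows polynomially in $u$, so the integrated action of $L^1(U)$ need not even be defined, let alone factor through the $C^*$-algebra $C_0(\widehat U)$. Your point (a) slides over this by writing ``the unitary $U$-action is bounded,'' which assumes what is not given. Likewise your point (c), that Howe's $\WF_U$ in the abelian case is ``essentially the definition'' of the spectral support, is not Howe's definition: the characterization in \cite{How81} is via (trace functions of) smoothing operators, which is precisely what the paper invokes. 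Without either unitarity or a replacement for the spectral measure (e.g.\ an Arveson-spectrum argument, which you do not supply), the identification $\WF_U(\ov\sigma)=\supp E$ is unavailable, and the proof does not close.
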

\begin{proof}
    Let $\CL_1(\ov{\sigma})$ be the Banach ideal of bounded operators consisting of trace class operators. Define the continuous bounded function $\tr(T)$ on $U$ by
    \[
    \tr(T)(u):=\tr(T\circ \ov{\sigma}(u)) \text{ for } T\in \CL_1(\ov{\sigma}) .   \]
    We regard it as a distribution as well. Define a closed subset of $\widehat{U}$ as follows:
    \[
    \BS_{\ov{\sigma}}:=\ov{\bigcup_{T\in \CL_1(\ov{\sigma})} \supp \widehat{\tr(T)}},
    \]
    where $\widehat{\tr(T)}$ refers to the Fourier transform of $\tr(T)$. Note that $\BS_{\ov{\sigma}}$ is $L$-invariant. In particular, it is conic. Hence, by the argument in \cite[Proposition 2.1]{How81}, we have
    \[    \BS_{\ov{\sigma}} =\WF_U(\ov{\sigma}).
    \]
    Therefore, it suffices to prove 
    \[
    \BS_{\ov{\sigma}}=\supp_U(\sigma).
    \]
    On the one hand, since $\sigma$ is dense in $\ov{\sigma}$, $\BS_{\ov{\sigma}}\subset \supp_U(\sigma)$ by the definition. On the other hand, for every nonzero bounded operator $\varphi$, by taking a specific orthonormal basis such that $\langle \varphi(v),v\rangle\neq 0$ for some $v$ in the chosen basis , we can find a positive trace class operator $T$ such that $\tr(T\circ \varphi)\neq 0$. Consequently, we have the inverse containment $\supp_U(\sigma)\subset \BS_{\ov{\sigma}}$.
    \end{proof}
    
We would like to mention that these invariants have a close relation to the nilpotent invariants of $G$. On the one hand, a spectral orbit corresponds to a degenerate Whittaker model in the sense of \cite{GGS17}. For $\phi \in \widehat{U}$, we can find a semisimple element $h \in \Lie(G)$ such that $L=Z_G(h)$, the eigenvalues of $\ad(h)$ lie in $\mathbb{Q}$ and $\ad^*(h)(\phi) = -2\phi$. Such an element is unique modulo $\Lie(Z_G)$. We regard $\phi$ as an element of $\Lie(G)^*$ that is trivial on $\Lie(\ov{P})$; hence, $(h, \phi)$ is a Whittaker pair. The celebrated result \cite[Theorem A]{GGS17} establishes the connection between degenerate Whittaker models and generalized Whittaker models. 

On the other hand, in general, the wavefront set of $U$ and the wavefront set of $G$ have a partial relation, which leads to the following corollary; see \cite[Proposition 1.5]{How81} for details.

\begin{corollary}
    Let $\pi$ be a Casselman-Wallach representation of $G$. Let $\mathrm{pr}$ be the natural projection map from the linear dual of Lie algebra $\Lie(G)^*$ to $\widehat{U}$. Then
    \[
    \mathrm{pr}(\WF_G(\ov{\pi})) \subset \supp(\sigma).
    \]
\end{corollary}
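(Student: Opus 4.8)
The statement to prove is: for a Casselman--Wallach representation $\pi$ of $G$, with $\mathrm{pr}:\Lie(G)^*\to\widehat{U}$ the natural restriction map, one has $\mathrm{pr}(\WF_G(\ov\pi))\subset\supp_U(\sigma)$, where $\sigma=\pi|_P$ (and $\ov\pi$ denotes a Hilbert globalization). The plan is to reduce everything to the comparison established in the immediately preceding lemma, namely $\supp_U(\sigma)=\WF_U(\ov\sigma)$, together with Howe's restriction principle for wavefront sets under passage to a closed subgroup. First I would fix a separable Hilbert globalization $\ov\pi$ of $\pi$ obtained via Casselman's embedding theorem; its restriction to $U$ is a Hilbert globalization $\ov\sigma$ of $\sigma=\pi|_P$ in the sense required by the lemma, so $\WF_U(\ov\pi)=\WF_U(\ov\sigma)=\supp_U(\sigma)$, where the last equality is exactly the previous lemma. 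Thus the corollary is equivalent to the purely geometric-analytic inclusion
\[
\mathrm{pr}\big(\WF_G(\ov\pi)\big)\subset \WF_U(\ov\pi).
\]

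The inclusion displayed above is precisely the content of \cite[Proposition 1.5]{How81}: if $H\subset G$ is a closed subgroup and $\mathrm{pr}:\Lie(G)^*\to\Lie(H)^*$ is the transpose of the Lie algebra inclusion, then for any unitary (or Hilbert) representation $\rho$ one has $\mathrm{pr}(\WF_G(\rho))\subset\WF_H(\rho|_H)$; intuitively, restricting to $H$ can only lose regularity, so the singular spectrum projects into the singular spectrum of the restriction. Applying this with $H=U$ (so $\Lie(H)^*=\mathfrak u^*$, and $\widehat U$ is identified with $\sqrt{-1}\,\mathfrak u^*\simeq\mathfrak u^*$ as in the conventions section) gives $\mathrm{pr}(\WF_G(\ov\pi))\subset\WF_U(\ov\pi)$. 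Chaining this with the lemma's identification finishes the argument. The remaining bookkeeping is to check that the map called $\mathrm{pr}$ in the corollary is the same projection $\Lie(G)^*\to\widehat U$ as Howe's restriction map under the identification $\widehat U\simeq\mathfrak u^*$ used throughout the section, and that the Hilbert globalization chosen for $\pi$ genuinely restricts to one of the Hilbert globalizations of $\sigma$ admissible in the previous lemma (separability is preserved, and boundedness/continuity of the $U$-action is automatic).

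The main obstacle, modest as it is, lies in matching conventions rather than in any substantive new estimate: one must be careful that the sign/normalization conventions for wavefront sets of $U$ used in the preceding lemma (via the set $\BS_{\ov\sigma}$ built from Fourier transforms of $\mathrm{tr}(T\circ\ov\sigma(\cdot))$) agree with those used in Howe's \cite[Proposition 1.5]{How81}, and that the independence of $\WF_U(\ov\pi)$ from the choice of Hilbert globalization (already noted in the text) licenses using the single globalization $\ov\pi$ on both sides. Once these identifications are in place the corollary is immediate, so I would present it as a short two-step deduction: (1) invoke the previous lemma to replace $\supp_U(\sigma)$ by $\WF_U(\ov\pi)$; (2) invoke \cite[Proposition 1.5]{How81} to obtain $\mathrm{pr}(\WF_G(\ov\pi))\subset\WF_U(\ov\pi)$, and conclude.
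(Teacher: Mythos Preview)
Your proposal is correct and matches the paper's approach exactly: the paper does not write out a proof but simply points to \cite[Proposition 1.5]{How81} in the sentence preceding the corollary, and your two-step deduction (replace $\supp_U(\sigma)$ by $\WF_U(\ov\pi)$ via the preceding lemma, then apply Howe's restriction inclusion) is precisely the intended argument.
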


\begin{example}
For an irreducible representation $\pi$ of $G$ with a specific wavefront set, the above relation together with Corollary~\ref{eq-spec-sup} can be employed to show that the wavefront set is contained within the generalized Whittaker model. For example, let $G = \GL_{2n}$ and let $\pi$ be an irreducible representation such that the maximal orbit in its wavefront set is $(\underbrace{2,\dots,2}_\ell,\underbrace{1,\dots,1}_{2n-2\ell})$ for some $0 \leq \ell \leq n$. Let $P = P_{n,n}$. Then the maximal $L$-orbit in $\mathrm{pr}(\WF_G(\ov{\pi}))$ is of rank $\ell$. Consequently, $\pi$ has a degenerate Whittaker model corresponding to this $L$-orbit. By \cite[Theorem A]{GGS17}, $\pi$ has a generalized Whittaker model corresponding to the nilpotent orbit of an element in this $L$-orbit, which is exactly $(\underbrace{2,\dots,2}_\ell,\underbrace{1,\dots,1}_{2n-2\ell})$.
\end{example}

From now on, we assume $\pi \in \widehat{G}$ and $G=G_n$ is a classical group introduced in subsection~\ref{derivative-sec}. Consider the Siegel parabolic subgroup $P$ when $G$ is of Type II and $P = P_{[\frac{n}{2}], n-[\frac{n}{2}]}$ when $G = \mathrm{GL}_n$. By direct integral theory, $\ov{\pi}$ as a $U$-representation is determined by a projection-valued measure $\mu_{\pi}$ on $\widehat{U}$.

Let $\beta$ be an $L$-invariant subset of $\widehat{U}$. Define $\widehat{G}_{\beta}$ to be the subset of $\widehat{U}$ consisting of representations $\pi$ such that
\[
\supp(\mu_{\pi}) \subset \beta.
\]
If $\beta$ is the union of open $L$-orbits on $\widehat{U}$, then we will simply denote $\widehat{G}_o$. We use $\mathscr{S}_c$ to denote the set of non-open orbits. Then we have the following disjoint union decomposition for $\widehat{G}$:
\[
\widehat{G} = \widehat{G}_o \bigsqcup \left( \bigsqcup_{\beta \in \mathscr{S}_c} \widehat{G}_{\beta} \right),
\]
see \cite[Theorem 3.1]{Li} for Type II classical groups and \cite[Theorem 3.6]{Sca90} for $\GL_n$. By the correspondence of the unitary representation and the imprimitive system, we have $\ov{\supp(\mu_{\pi})}=\supp_U(\pi)$. 
\begin{definition}
Let $\pi$ be an irreducible representation of $G$. We call it a low rank representation if $\supp_U(\pi)$ does not contain any open $L$-orbit.
\end{definition}

In \textit{loc. cit.}, the unitary low rank representations are explicitly constructed as theta lifts from a dual pair in the stable range. However, for general low rank representations, such a straightforward classification does not hold. For instance, there exist irreducible finite-dimensional representations of $\GL_n$ that cannot be realized as theta lifts from a dual pair in the stable range. However, it is still hopeful to prove some partial results.
\begin{conjecture}
Let $(G', G)$ be a Type I dual pair in the stable range with $G'$ being the smaller one. If $\tau$ is an irreducible representation of $G'$, then $\theta(\tau)$ is a low rank representation of $G$.
\end{conjecture}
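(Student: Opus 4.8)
The plan is to bound the smooth support $\supp_U(\theta(\tau))$ from above and show it contains no open $L$-orbit, which is exactly the definition of a low rank representation. By Archimedean Howe duality, $\theta(\tau)$ is the unique irreducible Hausdorff quotient of the big theta lift $\Theta(\tau)=\rmh_0^{\CS}(G',\,\omega\,\widehat{\otimes}\,\tau^{\vee})$, the topological $G'$-coinvariants of $\omega\,\widehat{\otimes}\,\tau^{\vee}$ with its residual $G$-action, where $\omega$ is the smooth oscillator representation of the ambient metaplectic group restricted to $G\times G'$. Being a cokernel, $\Theta(\tau)$ is a $G$-equivariant continuous quotient of $\omega\,\widehat{\otimes}\,\tau^{\vee}$, and hence so is $\theta(\tau)$. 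The closed annihilator ideal in $\CS(\widehat U)$ only grows under such quotients, so $\supp_U$ is monotone; and since $U\subset G$ commutes with $G'$ and acts trivially on $\tau^{\vee}$, nuclearity gives $\supp_U(\omega\,\widehat{\otimes}\,\tau^{\vee})=\supp_U(\omega|_U)$. Therefore
\[
\supp_U(\theta(\tau))\ \subseteq\ \supp_U(\omega|_U).
\]

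It remains to compute $\supp_U(\omega|_U)$, for which I would use the mixed model of $\omega$ adapted to the parabolic $P=LU$ (recall $U$ is abelian and $\widehat U$ has finitely many $L$-orbits, stratified by rank). When $G$ is the isometry group of the $\epsilon$-Hermitian space $V$ and $X\subset V$ is the maximal isotropic subspace cutting out the Siegel parabolic, one realizes $\omega\cong\CS(X\otimes V')$, with $U$ acting by $u\cdot f(y)=\psi\big(\mu(y)(u)\big)f(y)$, where $\mu\colon X\otimes V'\to\widehat U$ is the moment map built from the $\epsilon$-Hermitian form on $V'$; its image is the determinantal subvariety $\{\mathrm{rank}\le\dim V'\}\subset\widehat U$. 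When $G=\GL_n$ (so $G'=\GL_m$, $m\le n$), after a partial Fourier transform adapting the polarization to $P_{[n/2],n-[n/2]}$ one gets $\omega\cong\CS(\mathbf{k}^{m}\otimes\mathbf{k}^{[n/2]})$ with $U$ acting through $(Z_1,\zeta)\mapsto Z_1^{t}\zeta$, of image $\{\mathrm{rank}\le m\}$. In both cases, directly from the definition of the smooth support via the $\CS(\widehat U)$-module structure, $\supp_U(\omega|_U)$ equals this (Zariski, hence topologically) closed rank variety. The stable-range hypothesis forces the strict inequality $\dim V'<\dim X$ (resp.\ $m<[n/2]$), so $\{\mathrm{rank}\le\dim V'\}$ (resp.\ $\{\mathrm{rank}\le m\}$) is a proper closed $L$-invariant subset of $\widehat U$, which thus contains no open $L$-orbit. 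Combining with the displayed inclusion gives that $\theta(\tau)$ is low rank.

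I expect the genuinely delicate points to be the soft ones in the first paragraph rather than the moment-map geometry: (i) making precise the monotonicity of $\supp_U$ under the topological coinvariant functor $\rmh_0^{\CS}(G',-)$ and under passage to the irreducible quotient -- this should follow from the nondegenerate $\CS(U)$-module (equivalently $\CS(\widehat U)$-module) picture together with nuclearity and the cokernel presentation of coinvariants, but the Hausdorffness subtleties for non-unitary $\tau$ must be handled -- and (ii) knowing that in the stable range $\Theta(\tau)\neq 0$ and $\theta(\tau)$ is Casselman--Wallach, so that $\supp_U(\theta(\tau))$ is the invariant defined in the text; this is classical in the stable range (Li, Kudla). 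One should also record the elementary case-by-case verification of the strict numerical inequality for the various types of dual pairs (orthogonal, symplectic, unitary, and the type II pairs $(\GL_m,\GL_n)$). A variant of the last two steps, bypassing the explicit model, replaces the computation of $\supp_U(\omega|_U)$ by the known description of the $G$-wavefront set $\WF_G(\overline{\theta(\tau)})$ in the stable range -- contained in the closure of the ``rank $\dim V'$'' nilpotent orbit, by results of Przebinda and of G\'omez--Zhu on the transfer of generalized Whittaker models -- together with the sharp form of the comparison between $\WF_G$ and $\supp_U$ underlying the corollary deduced above from \cite{How81}.
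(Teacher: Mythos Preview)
The paper does not prove this statement: it is stated as a \textbf{Conjecture} with no accompanying argument, so there is no paper proof to compare your proposal against.

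That said, your outline is the natural strategy and is essentially the smooth-category version of the argument behind Li's unitary result cited just before the conjecture. The monotonicity of $\supp_U$ under continuous surjections follows at once from the definition via the annihilator ideal, and your computation of $\supp_U(\omega|_U)$ as the closure of the image of the moment map $\mu$ is correct: in the mixed model the $\CS(\widehat U)$-action becomes pointwise multiplication by $h\mapsto h\circ\mu$, so the annihilator is exactly the Schwartz functions vanishing on $\mathrm{image}(\mu)$, giving $\supp_U(\omega|_U)=\overline{\mathrm{image}(\mu)}=\{\mathrm{rank}\le\dim V'\}$. The points that keep this from being a finished proof are precisely the ones you flag: for general (non-unitary) $\tau$ one must check that the smooth $\Theta(\tau)$ is a genuine Hausdorff quotient of $\omega\widehat\otimes\tau^{\vee}$ in $\Smod_G$ so that the monotonicity step applies, and one must pin down what ``stable range'' means for the type~II pair $(\GL_m,\GL_n)$ so as to guarantee the strict inequality $m<[n/2]$ (at $2m=n$ with $n$ even the rank-$\le m$ locus is all of $\widehat U$). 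Your alternative route via $\WF_G$ is weaker than the direct one here: the paper only records the inclusion $\mathrm{pr}(\WF_G(\overline\pi))\subset\supp_U(\pi)$, not equality, so bounding $\WF_G$ alone does not bound $\supp_U$.
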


\subsection{Rearrangement of the filtration}
This subsection applies the abstract results of Subsection~\ref{sec-Schwartz fun} to rearrange the coarse spectral filtration. This process will separate the successive quotients, isolating those of the form $I_0$ as a quotient and those of the form $I_l$ (for $l \neq 0$) as a subrepresentation. It is a crucial ingredient for the proof of Theorem~\ref{intro-CW} and the general canonical filtration in~\cite{CWYZ}.
\begin{proposition}\label{rearrange prop}
    Let $\pi$ be a principal series representation of $\GL_n$. Then the restriction $\pi|_{P_{n-k,k}}$ admits a filtration
    \[
    \pi|_{P_{n-k,k}} = \sigma_0 \supset \sigma_1 \supset \dots \supset \sigma_m = 0
    \]
    as in Definition~\ref{def_fil} such that:
    \begin{enumerate}
        \item it satisfies the condition in Proposition~\ref{rest_max};
        \item there exists an integer $s$ with $0 \leq s \leq m$ such that the successive quotients in $\sigma_0/\sigma_s$ are of the form $I_0$, while the successive quotients in $\sigma_s/\sigma_m$ are of the form $I_l$ for some $l \neq 0$.
    \end{enumerate}
\end{proposition}
\begin{proof}
    Let $\CA$ be the commutative Fr\'echet algebra $\CS(\widehat{U_{n-k,k}}\setminus\{0\})$. It is stable under the co-adjoint action of $P_{n-k,k}$. Hence, the result follows from Lemma~\ref{limit diff} and Lemma~\ref{split}.
\end{proof}

\section{Casselman-Wallach property of functor $B^k$}
In this section, we apply the BZ-filtration to give an affirmative answer to an open question in \cite[3.1.(1)]{AGS15a}. In fact, our result is a generalization of the open question. We show that for a Casselman-Wallach representation $\pi$ of $\GL_n$, the derived functor $L^iB^k(\pi)$ is a Casselman-Wallach representation of $\GL_{n-k}$. This property is critical for proving the homological branching law in the next section. We provide a proof sketch in the first subsection.
\begin{theorem}\label{haus thm}
    Let $\pi$ be a Casselman-Wallach representation of $\GL_n$, then $L^iB^k(\pi)$ is a Casselman-Wallach representation of $\GL_{n-k}$ for every integer $0\leq k\leq n$ and integer $i$. In particular, $L^iB^k(\pi)$ is Hausdorff.
\end{theorem}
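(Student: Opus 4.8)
The plan is to deduce Theorem~\ref{haus thm} from the stronger assertion of Theorem~\ref{intro-CW} for the maximal parabolic $P_{n-k,k}$, using the reorganization of $B^k$ supplied by Remark~\ref{B^k rem}, and to prove that assertion by feeding the coarse spectral filtration of Proposition~\ref{rest_max} into the category $\CC(\fkg,L)_f$ developed in Section~\ref{category C}.

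First I would make the standard reductions. Since $B^k$, $B^k_0$ and $B^k_-$ differ only by a twist by a one-dimensional character $|\det|_{\mathbf k}^{\pm 1/2}$ of $\GL_{n-k}$ — which preserves the Casselman--Wallach property — it is enough to treat $L^iB^k_0$. By Remark~\ref{B^k rem} we have $B^k_0 = (\Psi_0^{k-1})_{\GL_k}\circ \rmh_0(\mathfrak u_{n-k,k},-)$, so by the Grothendieck spectral sequence for this composite the graded pieces of $L^iB^k_0(\pi)$ are subquotients of $L^p(\Psi_0^{k-1})_{\GL_k}\bigl(\rmh_q(\mathfrak u_{n-k,k},\pi)\bigr)$. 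The operator $(\Psi_0^{k-1})_{\GL_k}$ is, up to $|\det|_{\mathbf k}^{1/2}$-shifts, the iterated highest-derivative functor on the $\GL_k$-factor (twisted homology along $\fkn_k$ with a generic character); by Corollary~\ref{vanishing} applied to $\GL_k$ (using Theorem~\ref{have_B-Z_fil}) together with the preservation of the Casselman--Wallach property by the highest derivative \cite{AGS15a}, this functor is exact and carries Casselman--Wallach representations of $\GL_{n-k}\times\GL_k$ to Casselman--Wallach representations of $\GL_{n-k}$. Hence the theorem reduces to showing that $\rmh_q(\mathfrak u_{n-k,k},\pi)$ is Casselman--Wallach of $\GL_{n-k}\times\GL_k$ for all $q$ — i.e.\ to Theorem~\ref{intro-CW} for $P=P_{n-k,k}$ — and by the reduction in \cite[Proof of Theorem~A]{AGS15b} (the same one invoked in the proof of Theorem~\ref{tw-homo-thm}) I may assume $\pi$ is a principal series.

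For $\pi$ a principal series, write $L=\GL_{n-k}\times\GL_k$ and $\fku=\mathfrak u_{n-k,k}$. Restricting to $P_{n-k,k}$, Proposition~\ref{rest_max} gives a (generally infinite) coarse spectral filtration whose successive quotients are geometrical Mackey inductions $I_l(\sigma)$. Realizing $I_l(\sigma)$ as Schwartz sections of a tempered bundle over $S_l\backslash P_{n-k,k}$ ($S_l$ the stabilizer of the character $\psi_l$), the $\fku$-action is multiplication by the Nash map $x\mapsto{}^x\psi_l$, whose image is the orbit $\CO_{\psi_l}$; for $l\ge 1$ this image avoids $0$, so Corollary~\ref{bun ver} (Case~2) gives $\rmh_i(\fku,I_l(\sigma))=0$ for all $i$. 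A spectral-sequence argument (Lemma~\ref{spec-haus} and its neighbours) then shows these layers contribute nothing, so $\rmh_i(\fku,\pi)\cong\rmh_i(\fku,\Pi_0)$, where the \emph{trivial extension spectrum} $\Pi_0$ is the subquotient of $\pi|_{P_{n-k,k}}$ assembled from the $l=0$ layers $E(\sigma)$. On $\Pi_0$ the radical $\fku$ acts pro-nilpotently, and property (ii) of Proposition~\ref{rest_max} (only finitely many of the exponents $\omega_\sigma$ have real part below any given bound) forces each $\Pi_0/\fku^j\Pi_0$ to be Casselman--Wallach of $L$, i.e.\ \eqref{CW_assump} holds. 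Applying the Casselman--Jacquet functor $\CJFk$ (Proposition~\ref{surj_CJ} providing surjectivity of $\Pi_0\to\varprojlim_j\Pi_0/\fku^j\Pi_0$) and carrying along the ambient $\U(\fkg)$-action, $\CJFk(\Pi_0)$ lands in $\CC(\fkg,L)$; having an infinitesimal character it lies in $\CC(\fkg,L)_f$ by Lemma~\ref{finite length lem}. Since Casselman--Jacquet completion does not change $\fku$-homology, $\rmh_i(\fku,\Pi_0)\cong\rmh_i(\fku,\CJFk(\Pi_0))$, which is Casselman--Wallach by Proposition~\ref{homo-CW}. The Hausdorffness and finite-length conclusions are then transported back to $\pi$ — and afterwards through the $\GL_k$-step of the previous paragraph — using Corollary~\ref{closed image} for the connecting maps together with Lemmas~\ref{exa-haus-lem}, \ref{ext_Hausd}, \ref{fil-lem} and \ref{spec-haus}.

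I expect the delicate point to be exactly this last passage: turning the infinite coarse spectral filtration into a single finite-length object of $\CC(\fkg,L)_f$. One must justify discarding the $l\ge1$ Mackey layers at the level of the \emph{topological} homology (not merely as abstract vector spaces), verify that the pro-nilpotent $\fku$-action on $\Pi_0$ genuinely satisfies \eqref{CW_assump} — this is where the growth control in Proposition~\ref{rest_max}(ii) is indispensable and where the $\CC(\fkg,L)$-machinery substitutes for the $p$-adic $\ell$-sheaf argument — and keep track of all Fréchet topologies through the inverse limits and spectral sequences so that ``finite length $+$ Hausdorff'' upgrades to ``Casselman--Wallach''. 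Note that, unlike $L^iD^k$, the functor $L^iB^k$ need not vanish in positive degrees — already $L^iB^k(I^{k-1}E(\tau))=\wedge^i\fkv_{n-k+1}\otimes\tau$ — so the collapse of an a priori infinite-length answer to a finite-length one is forced by the $(\fkg,L)$-module structure rather than by any naive vanishing, and this is precisely what makes $\CC(\fkg,L)_f$ the right tool.
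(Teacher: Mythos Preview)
Your overall strategy --- reduce to $\rmh_i(\fku_{n-k,k},\pi)$, reduce to principal series, use the coarse spectral filtration, and push into $\CC(\fkg,L)_f$ via the Casselman--Jacquet functor --- is exactly the paper's, but you have skipped the step that makes it run: an \emph{induction on $n$}. Two concrete gaps arise from this.

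First, your object $\Pi_0$ ``assembled from the $l=0$ layers $E(\sigma)$'' is not well-defined, and even if it were it would carry only a $P_{n-k,k}$-action, not a $\U(\fkg)$-action: the coarse spectral filtration of Proposition~\ref{rest_max} is produced by Fourier transforms that destroy the ambient $\fkg$-structure. But Proposition~\ref{homo-CW} (and the whole finite-length argument in $\CC(\fkg,L)_f$) uses the $\ov{\fku}$-action essentially, so one cannot apply it to $\CJFk(\Pi_0)$. The paper avoids this by applying $\CJFk$ to the \emph{closed-orbit piece} $\pi_c$ of $\pi=\tau\times_u\chi$ realized on $P_{n-1,1}\backslash\GL_n$: since $\pi_o$ (sections supported on the open $P_{n-k,k}$-orbit) is $\fkg$-stable, $\pi_c$ is a genuine $(\fkg,L)$-module and $\CJFk(\pi_c)$ inherits the full $\fkg$-action.

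Second, your verification of \eqref{CW_assump} is circular. To apply Proposition~\ref{surj_CJ} one needs $\sigma/\fku^j\sigma$ Casselman--Wallach for all $j$, in particular $\fku^j\sigma$ closed --- i.e.\ Hausdorffness of $\rmh_0(\fku,\sigma)$ --- which is precisely (a case of) the theorem. Property~(ii) of Proposition~\ref{rest_max} only says the set $\{\mathrm{Re}\,\omega_\sigma\}$ has a finite minimum, not that finitely many lie below each bound; more importantly, even the stronger statement would not give closedness of $\fku^j\pi$ without further input, because the $I_0(\sigma)$-layers have nonvanishing higher $\fku$-homology $\wedge^l\fku\otimes\sigma$, so neither Lemma~\ref{fil-lem} nor a naive spectral-sequence collapse applies. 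The paper breaks this circularity as follows: write $\pi=\tau\times_u\chi$ with $\tau$ a principal series of $\GL_{n-1}$ and use induction on $n$. For the open orbit one proves $\rmh_i(\fku_{n-k,k},\pi_o)\simeq\chi\mind_u\rmh_i(\fku_{n-k-1,k},\tau)$ (Proposition~\ref{Jacquet-open}), which is Casselman--Wallach by the inductive hypothesis. For $\pi_c$ one uses the Borel filtration, whose layers are of the form $\tau_j|_{P_{n-k,k-1}}\times_u\chi_j$; the inductive hypothesis on $\GL_{n-1}$ then shows $\rmh_i(\fku,(\pi_c)_j/(\pi_c)_{j+1})$ is Casselman--Wallach for $i=0,1$, whence Lemma~\ref{ext_Hausd} gives Hausdorffness of $\rmh_0(\fku,\pi_c)$ and, iterating, closedness of each $\fku^j\pi_c$ --- i.e.\ \eqref{CW_assump} for $\pi_c$. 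Only then does one apply $\CJFk$ to $\pi_c$ and invoke Lemma~\ref{CJ-dist} to identify the kernel with the $l\ge1$ Mackey layers. In short, the coarse spectral filtration is used to analyze $\Ker(\pi_c\to\CJFk(\pi_c))$ \emph{after} \eqref{CW_assump} has been established by induction on $n$, not as a substitute for that induction.
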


\subsection{Sketch of the proof}\label{sketch subsection}
Our proof proceeds in the following steps.
\begin{itemize}
    \item \textbf{Step 1}: We reduce the problem to prove that for every principal series $\pi$, $\rmh_i(\fku_{n-k,k},\pi)$ is Casselman-Wallach for every integer $i$ and $0<k<n$. From now on, following the notation in subsection~\ref{category C}, the parabolic subgroup $P$ we concern in this section is $P_{n-k,k}$, with the standard Levi subgroup $L=\GL_{n-k}\times\GL_k$ and the unipotent radical $U=U_{n-k,k}$.
    \item \textbf{Step 2}: We realize the principal series as $\pi=\tau\times_u\chi$, where $\tau$ is a principal series of $\GL_{n-1}$ and $\chi$ is a character. Equivalently, it is the space of Schwartz sections of a tempered bundle on $P_{n-1,1}\backslash \GL_n$. The $P_{n-k,k}$ has a unique open orbit and a unique closed orbit on $P_{n-1,1}\backslash \GL_n$, which leads to a short exact sequence
    \[
    0\lra \pi_o\lra \pi|_{P_{n-k,k}}\lra \pi_c\lra 0.
    \]
     By Lemma~\ref{exa-haus-lem}, it suffices to prove that $\rmh_i(\fku_{n-k,k},\pi_o)$ and $\rmh_i(\fku_{n-k,k},\pi_c)$ are Casselman-Wallach $\GL_{n-k}\times \GL_k$-representations.  We proceed by induction on $n$. The base case $n=2$ follows from the comparison theorem for minimal parabolic subgroups, see for example \cite[Theorem 5.2]{LLY21}. Assuming the statement for $n-1$, we prove it for $n$. 
    \item \textbf{Step 3}: For $\pi_o\simeq \chi\mind_u \tau|_{P_{n-k-1,k}}$, we establish the isomorphism 
    \[
    \rmh_i(\fku_{n-k,k},\pi_o)\simeq \chi\mind_u\rmh_i(\fku_{n-k-1,k},\tau).
    \]
    The Casselman-Wallach property for $\rmh_i(\fku_{n-k,k},\pi_o)$ then follows from the induction hypothesis on $n$.
    \item \textbf{Step 4}: We analyze $\pi_c$. Note that $\pi_c$ and $\pi$ have the same infinitesimal character. We first establish the result for the case $k=1$, although this case can also be proved using the argument for general $k$. In this case, we can directly compute $\pi_c$ via its strong dual and demonstrate that $\pi_c \in \CC(\mathfrak{g}, L)_f$. The underlying reason that $\pi_c\in\CC(\fkg,L)$ is that the BZ-filtration of $\pi_c$ is composed of trivial extension spectrum. For general $k$, we apply the Casselman-Jacquet functor to eliminate the non-trivial extension spectrum. 
    
   For general $k$, note that $\pi_c$ admits a decreasing Borel filtration indexed by non-negative integers
    \[
    \pi_c=(\pi_c)_0\supset (\pi_c)_1\supset \dots .
    \]
    We show that $\rmh_i(\fku_{n-k,k},(\pi_c)_j/(\pi_c)_{j+1})$ is Casselman-Wallach for $i=0$ and every non-negative integer $j$ by the induction hypothesis on $n$. Therefore, $\rmh_0(\fku_{n-k,k},\pi_c)$ is Hausdorff by Lemma~\ref{ext_Hausd}. Inductively, we demonstrate that $\pi_c/\fku_{n-k,k}^\ell\pi_c$ is Casselman-Wallach for every positive integer $\ell$. Then by Proposition~\ref{surj_CJ}, we have short exact sequence 
    \[
     0\lra \Ker \varphi \lra \pi_c \stackrel{\varphi}{\lra} \CJF(\pi_c)\lra 0.
    \]
    By Proposition~\ref{rest_max}, $\pi_c$ admits a coarse spectral filtration. We show that the induced filtration on $\Ker\varphi$ does not contain trivial extension spectrum since the weight of trivial extension terms has a lower bound.
    Consequently, we have 
    \[
    \rmh_i(\fku,\pi_c)\simeq  \rmh_i(\fku,\CJF(\pi_c))
    \]
    for every integer $i$, and $\rmh_i(\fku,\CJF(\pi_c))$ is Casselman-Wallach since $\CJF(\pi_c)$ belongs to $\CC(\fkg,L)_f$, see subsection~\ref{category C}.
\end{itemize}

\subsection{Reduction}
We start proving Theorem~\ref{haus thm} by reducing the problem to the case of principal series. Fix an integer $k$ with $1\leq k\leq n$.
\begin{lemma}\label{red to prin}
    Suppose that for every principal series $I$ of $\GL_n$ and every integer $i$, $L^iB^k(I)$ is Casselman-Wallach. Then for every Casselman-Wallach representation $\pi$ of $\GL_n$ and every integer $i$, $L^iB^k(\pi)$ is Casselman-Wallach.
\end{lemma}
\begin{proof}
    Every generalized principal series admits a filtration whose successive quotients are principal series. Thus, by Lemma~\ref{exa-haus-lem},  $L^iB^k(J)$ is Casselman-Wallach for every generalized principal series $J$ of $\GL_n$ and every integer $i$. By the Casselman embedding theorem, we have a resolution of $\pi$ by generalized principal series:
    \[
    \pi \lra J_0\lra J_1\lra \dots
    \]
    Consider the $\fkv_{n-k+1}$-Koszul resolution $P_{i,\bullet}$ of each $\Psi^{k-1}(J_i)$, we get a double complex $P_{\bullet,\bullet}$. Since $\Psi$ is exact, we have
    \[
    \rmh_i(\Tot(P_{\bullet,\bullet}))\simeq L^iB^k(\pi).
    \]
     On the other hand, we have a decreasing filtration $\CF^{\bullet}$ of total complex 
    \[
    \CF^j=F^j(\Tot(P_{\bullet,\bullet})):= \Tot(P_{\geq j,\bullet}).
    \]
    Since the Koszul resolution is finite length, we can find a large enough $m$, such that for every $i$,
    \[
     \rmh_i(\CF^0/\CF^m)\simeq L^i B^k(\pi).
    \]
    
Note that $\rmh_i(\CF^j/\CF^{j+1})\simeq L^iB^k(J_{j})$ is Casselman-Wallach for every $j$. Thus, inductively, we consider the exact sequence of complexes:
    \begin{equation*}
        0\lra \CF^j/\CF^{j+1}\lra \CF^{j-r+1}/\CF^{j+1}\lra\CF^{j-r+1}/\CF^{j}\lra 0 .
    \end{equation*} 
    By Remark~\ref{rem_haus}, we can show $\rmh_i(\CF^{j-r+1}/\CF^{j+1})$ is Casselman-Wallach for every integer $i,j$ and $r\geq 1$.
\end{proof}

On the other hand, by Remark~\ref{B^k rem}, we have $L^iB^k_0(\pi)= \Psi_0^{k-1}\rmh_i(\fku_{n-k,k},\pi)$ since $\Psi_0$ is exact. Let $\beta$ be an irreducible representation of $\GL_{n-k}\times\GL_k$, then $\beta\simeq \beta_1\widehat{\otimes}\beta_2$, where $\beta_1$ is an irreducible representation of $\GL_{n-k}$ and $\beta_2$ is an irreducible representation of $\GL_k$. Therefore, 
\[
 \Psi^{k-1}_0(\beta)=\beta_1\otimes \Psi_0^{k-1}(\beta_2),
\]
which is a Casselman-Wallach representation of $\GL_{n-k}$ since $\Psi_0^{k-1}(\beta_2)$ is finite dimensional. Consequently, to prove Theorem~\ref{haus thm}, we need only to prove that $\rmh_i(\fku_{n-k,k},\pi)$ is a Casselman-Wallach representation for every integer $i$ and every principal series $\pi$.

\subsection{Open orbit}
In this subsection, we prove \textbf{step 3} in the subsection~\ref{sketch subsection}. Let $m$ and $k$ be two positive integers such that $n=k+m$ and $k<n-1$.
\begin{proposition}\label{Jacquet-open}
    Let $\chi$ be a character of $\GL_1$, and $\sigma$ be a representation of $P_{m-1,k}$. If $\rmh_i(\fku_{m-1,k},\sigma)$ is Hausdorff for every integer $i$, then for every integer $i$, we have a natural isomorphism as $\GL_{n-k}\times \GL_k$-representations
    \[
    \rmh_i(\fku_{n-k,k},\chi\mind_u\sigma)\simeq \chi\mind_u \rmh_i(\fku_{m-1,k},\sigma).
    \]
\end{proposition}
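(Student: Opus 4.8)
The idea is to split $\fku_{n-k,k}$ into two abelian pieces: a "transversal'' piece $\fkv$ along which the induction $\chi\mind_u$ lives, and the smaller unipotent $\fku_{m-1,k}$ itself. First I would realize $\chi\mind_u\sigma=\SInd_{\overline{P_{1,n-1}}\cap P_{n-k,k}}^{P_{n-k,k}}(\chi\boxtimes\sigma)$ as the space of Schwartz sections $\CS(X,\CE)$ of an explicit tempered bundle $\CE$ over a Nash manifold of the form $X\simeq(\overline{P_{1,m-1}}\backslash\GL_m)\times\mathbf{k}^{k}$, where $m=n-k$ and $\GL_m=\GL_{n-k}$; the $\mathbf{k}^{k}$-factor is exactly the part of $(\overline{P_{1,n-1}}\cap P_{n-k,k})\backslash P_{n-k,k}$ not seen in the $\GL_m$-direction. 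This is a coordinate computation of the same kind already carried out in Lemma~\ref{inductive_arg}, Lemma~\ref{open_orbit} and Proposition~\ref{open-cts}, and it only requires choosing a convenient section. I write $\fku_{n-k,k}=\fkv\oplus\fkw$, where $\fkw:=\fku_{m-1,k}$ is the lower block and $\fkv$ is a $\GL_1\times\GL_k$-stable complement (the top $1\times k$ block, an abelian subalgebra isomorphic to $\mathbf{k}^{k}$). From the realization one checks directly that $\fkv\subset\fkp_{n-k,k}$ acts on $\CS(X,\CE)$ purely by translation along the $\mathbf{k}^{k}$-factor of the base.

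\textbf{Killing the $\fkv$-direction.} Applying the Fourier transform along that $\mathbf{k}^{k}$-factor, the algebra $\fkv$ now acts by multiplication by the tautological linear coordinate $\varphi\colon X\to\fkv^{*}$, for which $0$ is a regular value. By the bundle version of the simple homology lemma, Corollary~\ref{bun ver}, this gives $\rmh_{q}(\fkv,\chi\mind_u\sigma)=0$ for $q>0$ and identifies $\rmh_{0}(\fkv,\chi\mind_u\sigma)$ topologically with the restriction of the Schwartz sections to $\varphi^{-1}(0)\cong\overline{P_{1,m-1}}\backslash\GL_m$. Since $\fku_{n-k,k}$ is abelian, $\fkv$ is an ideal, so the Hochschild--Serre spectral sequence for $\fkv\triangleleft\fku_{n-k,k}$ with quotient $\fkw$ reads $\rmh_{p}(\fkw,\rmh_{q}(\fkv,\chi\mind_u\sigma))\Rightarrow\rmh_{p+q}(\fku_{n-k,k},\chi\mind_u\sigma)$; the vanishing above collapses it to the single row $q=0$, yielding $\rmh_{i}(\fku_{n-k,k},\chi\mind_u\sigma)\simeq\rmh_{i}(\fkw,\rmh_{0}(\fkv,\chi\mind_u\sigma))$.

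\textbf{The residual $\fkw$-action.} It remains to understand how $\fkw=\fku_{m-1,k}$ acts on $\rmh_{0}(\fkv,\chi\mind_u\sigma)$. The essential point is that the "cross terms'' — by which $\Ad$ of the base coordinate sends $\fkw$ into $\fkv$ — become, after the Fourier transform, multiplication by the $\fkv^{*}$-coordinate, hence vanish identically on the zero fibre $\varphi^{-1}(0)$. Therefore on $\rmh_{0}(\fkv,\chi\mind_u\sigma)$ the algebra $\fkw$ acts fibrewise, the action on each fibre being the $\fku_{m-1,k}$-action coming from $\sigma$ (twisted by the base point exactly as in the realization of $\chi\mind_u$). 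Consequently the $\fkw$-Koszul complex of $\rmh_{0}(\fkv,\chi\mind_u\sigma)$ is $\CS(\overline{P_{1,m-1}}\backslash\GL_m)\widehat{\otimes}(-)$ applied to the $\fku_{m-1,k}$-Koszul complex of $\sigma$. This is where the hypothesis is used: $\CS(M)\widehat{\otimes}(-)$ is exact on complexes of nuclear Fr\'echet spaces and preserves Hausdorffness of the homology, and because $\rmh_{i}(\fku_{m-1,k},\sigma)$ is assumed Hausdorff — hence a genuine smooth moderate-growth representation of $\GL_{m-1}\times\GL_{k}$ — one obtains $\rmh_{i}(\fkw,\rmh_{0}(\fkv,\chi\mind_u\sigma))\simeq\chi\mind_u\rmh_{i}(\fku_{m-1,k},\sigma)$ with matching subquotient topologies, where on the right $\chi\mind_u$ denotes $\SInd_{\overline{P_{1,m-1}}\times\GL_k}^{\GL_m\times\GL_k}(\chi\boxtimes(-))$. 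Naturality in $\sigma$ is immediate since every step is functorial.

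\textbf{Main obstacle.} I expect the bulk of the work to be the last step: pinning down the residual $\fkw$-action precisely — that on the zero fibre it is exactly the fibrewise action defining $\chi\mind_u$ on $\rmh_{i}(\fku_{m-1,k},\sigma)$, with no leftover twist or normalization discrepancy — together with the topological bookkeeping required to commute $\fkw$-homology past $\CS(\overline{P_{1,m-1}}\backslash\GL_m)\widehat{\otimes}(-)$. The explicit bundle realization of Step~1 and the verification that $\fkv$ acts by translation are routine, but must be done with a carefully chosen section so that the $\fku_{n-k,k}$-action takes the clean form needed to invoke Corollary~\ref{bun ver}.
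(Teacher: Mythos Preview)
Your core idea is right and matches the paper's: split $\fku_{n-k,k}=\fkv\oplus\fkw$ with $\fkw=\fku_{m-1,k}$ and $\fkv$ the top $1\times k$ row, then collapse a Hochschild--Serre spectral sequence via vanishing of $\rmh_{>0}(\fkv,-)$. The gap is in your global set-up: the base $X=(\overline{P_{1,n-1}}\cap P_{n-k,k})\backslash P_{n-k,k}$ is \emph{not} a product $(\overline{P_{1,m-1}}\backslash\GL_m)\times\mathbf{k}^k$, and $\fkv$ does \emph{not} act globally by translation.

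Concretely, a coset in $X$ is determined by the first row $(\alpha\mid\beta)\in(\mathbf{k}^m\setminus\{0\})\times\mathbf{k}^k$ of a representative, modulo diagonal $\mathbf{k}^\times$-scaling; thus $X$ fibres over $\overline{P_{1,m-1}}\backslash\GL_m\cong\mathbb{P}^{m-1}(\mathbf{k})$ as a nontrivial $\mathbf{k}^k$-bundle. Right translation by $\exp(\xi)$ with $\xi\in\fkv$ corresponding to $v\in\mathbf{k}^{1\times k}$ sends $[(\alpha,\beta)]\mapsto[(\alpha,\beta+\alpha_1 v)]$, which is \emph{trivial} on the locus $\alpha_1=0$. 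Hence there is no global Fourier transform converting the $\fkv$-action into multiplication by a Nash map $\varphi\colon X\to\fkv^*$ with $0$ a regular value, and Corollary~\ref{bun ver} cannot be invoked as you propose. The ``residual $\fkw$-action'' step then inherits the same defect: without a global product there is no single $\CS(\overline{P_{1,m-1}}\backslash\GL_m)\widehat{\otimes}(-)$ through which to push the $\fkw$-Koszul complex.

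The paper supplies exactly the missing local-to-global layer. It passes to the \v{C}ech cover $\{U\cdot w_i\}_{1\le i\le m}$ of $X$ (with $U$ the unipotent radical of $P_{1,n-1}$ and $w_i=(1\,i)$), on each chart of which your product description and translation action \emph{are} valid, and glues via the co-sheaf property of Schwartz sections. Rather than run one global Hochschild--Serre, it packages the argument as: (Step~1) reduce to $i=0$ by showing $\chi\mind_u P_\bullet$ is $\fku_{n-k,k}$-acyclic for any $\fku_{m-1,k}$-projective resolution $P_\bullet$ of $\sigma$, checking acyclicity chart by chart with your $\fkv$/$\fkw$ spectral sequence; (Step~2) prove the $i=0$ isomorphism via an explicit integral map $\Gamma$, compared across the \v{C}ech diagram. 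So your plan is essentially the local computation embedded in the paper's proof; what is missing is the \v{C}ech bookkeeping that makes it global.
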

\begin{proof}
\textbf{Step 1.} This statement can be reduced to $i=0$. Assume that it holds for $i=0$, let us show the statement for $i>0$. We first show if $P_{\bullet}$ is a $U_{m-1,k}$-strong projective resolution of $\sigma$, then $\chi\mind_u P_{\bullet}$ is a $\fku_{n-k,k}$-acyclic resolution of $\chi\mind_u \sigma$. The $\chi\mind_u \sigma$ is realized as Schwartz sections of some tempered bundle $\CE$ over 
$$X:=\ov{P_{1,n-1}}\cap P_{m,k}\backslash P_{m,k}.$$ 
Note that $X$ is a fiber bundle 
\[
 X\lra X/U_{m,k}\simeq \ov{P_{1,m-1}}\backslash \GL_{m}
\]
such that the fiber is isomorphic to $\mathbf{k}^{k}$. Let $U$ be the unipotent radical of $P_{1,n-1}$. Then $\{U\cdot w_i\mid w_i=(1,i),1\leq i\leq m\}$ is an affine open covering of $X$, such that $X$ trivializes over each $U\cdot w_i$. Here $(1,i)$ is the corresponding permutation matrix. Let $I$ be a subset of $\{1,\dots,m\} $, we define
\[
\CS_I:= \CS(\bigcap_{i\in I} U\cdot w_i,\CE).
\]
Since the Schwartz functions over a Nash manifold compose a co-sheaf, we have \v{C}ech resolution
\[
\lra  \bigoplus_{|I|=k} \CS_I\lra \bigoplus_{|I|=k-1} \CS_I\lra \dots\lra \bigoplus_{|I|=1} \CS_I\lra \CS(X,\CE)\lra 0
\]

To show that each of $\chi\mind_u P_{\bullet}$ is $\fku_{n-k,k}$-acyclic, it is equivalent to show that when $\sigma$ is a relative projective object, 
\begin{itemize}
\item[(i)] $\rmh_i(\fku_{n-k,k},\CS_I)=0$ for $|I|\geq 1$ and $i\geq 1$. Thus the \v{C}ech resolution is acyclic, and we can use it to compute $\rmh_i(\fku_{n-k,k},\chi\mind_u\sigma)$.
\item[(ii)] $\rmh_l(\rmh_0(\fku_{n-k,k},\oplus_{|I|=\bullet}\CS_I))=0$ for $l\geq 1$. 
\end{itemize}

For (i), we prove $\rmh_i(\fku_{n-k,k},\CS(U,\CE))=0$ for $i\geq 1$. Other cases are exactly the same. By spectral sequence, we have
\[
\rmh_p(\fku_{m-1,k},\rmh_q(\fku_{n-k,k}\cap \fku,\CS(U,\CE)))\Rightarrow \rmh_{p+q} (\fkv_n,\CS(U,\CE)).
\]
Here $\fku_{m-1,k}$ is embedded as a subalgebra 
$\begin{pmatrix}
   0& & 0_{1\times k}\\
    & 0_{(m-1)\times(m-1)}& *\\
    & & 0_{k\times k}
\end{pmatrix}$ of $\fku_{n-k,k}$. As a $\fku_{n-k,k}\cap \fku$ representation, we have $\CS(U,\CE)\simeq \CS(V,\CE)\widehat{\otimes} \CS(U_{n-k,k}\cap U)$. Here $V$ is the unipotent radical of $P_{1,m-1}$. Hence 
\begin{equation*}
    \rmh_q(\fku_{n-k,k}\cap \fku,\CS(U,\CE))\simeq \CS(V,\CE)
\end{equation*}
when $q=0$, and otherwise is zero. In addition, when $q=0$, such an isomorphism intertwines the $\fku_{m-1,k}$-action, where $\fku_{m-1,k}$-action on $\CS(V,\CE)$ is only on the fiber of $\CE$. Since $\sigma$ is projective,
\[
 \rmh_p(\fku_{m-1,k},\rmh_0(\fku_{n-k,k}\cap \fku,\CS(U,\CE)))\simeq \CS(V,\rmh_p(\fku_{m-1,k},\CE))=0
\]
when $p\geq 1$. 

For (ii), we realize $\chi\mind_u \rmh_0(\fku_{m-1,k},\sigma)$ as Schwartz sections of some tempered bundle $\CE'$ over $X':=\ov{P_{1,m-1}}\backslash \GL_{m}$. And $\CS_I'$ is defined as
\[
 \CS(\bigcap_{i\in I} V\cdot w_i,\CE').
\]
Then we have $\rmh_0(\fku_{n-k,k},\CS_I)\simeq \CS_I'$ since $\rmh_0(\fku_{m-1,k},\sigma)$ is Hausdorff, which implies $\chi\mind_u \sigma$ is acyclic.

Consequently, since $\rmh_i(\rmh_0(\fku_{n-k,k},P_{\bullet}))$ is Hausdorff,
\[
\rmh_i(\fku_{n-k,k},\chi\mind_u\sigma)\simeq  \rmh_i(\rmh_0(\fku_{n-k,k},\chi\mind_u P_{\bullet})) \simeq \rmh_i(\chi\mind_u\rmh_0(\fku_{n-k,k}, P_{\bullet}))
\] 
is Hausdorff, where the second isomorphism is our assumption. Then the result follows from 
\[
\chi\mind_u \rmh_i(\fku_{m-1,k},\sigma)\simeq  \chi\mind_u\rmh_i(\rmh_0(\fku_{n-k,k}, P_{\bullet})) \simeq \rmh_i(\chi\mind_u\rmh_0(\fku_{n-k,k}, P_{\bullet})).
\]
where the second isomorphism follows from the exactness of parabolic induction. 
\textbf{Step 2.} We prove the statement when $i=0$. There is a natural map 
\[
 \Gamma:\chi\mind_u\sigma\lra  \chi\mind_u \rmh_0(\fku_{m-1,k},\sigma)
\]
given by 
\[
f\mapsto (g\mapsto \ov{\int_{U_{n-k,k}}f(gv)dv}), \ f\in\chi\mind_u\sigma,\  g\in \GL_{n-k},
\]
and $\overline{\bullet}$ is the projection of $\chi\boxtimes \sigma$ to $\chi\boxtimes \rmh_0(\fku_{m-1,k},\sigma)$. It is easy to verify that $\Gamma$ factors through $\rmh_0(\fku_{n-k,k},\chi\mind_u\sigma)$, which we still denote by $\Gamma$ and is surjective by definition. By calculation in step 1, we have the following commutative diagram
\begin{center}
\begin{adjustbox}{scale=0.95}
$
\begin{tikzcd}
    \rmh_0(\fku_{n-k,k},\bigoplus_{|I|=2}\CS_I )\ar[r]\ar[d,"\simeq"] &\rmh_0(\fku_{n-k,k},\bigoplus_{|I|=1} \CS_I  )\ar[r]\ar[d,"\simeq"]& \rmh_0(\fku_{n-k,k},\CS(X,\CE))\ar[r]\ar[d,"\Gamma"]& 0\\
    \bigoplus_{|I|=2}\CS_I '\ar[r]&\bigoplus_{|I|=1} \CS_I' \ar[r]&\CS(X',\CE')\ar[r]& 0 .
\end{tikzcd}
$
\end{adjustbox}
\end{center}

The horizontal lines of the commutative diagram are both exact, and the first two vertical lines are isomorphisms. Hence the last vertical line is an isomorphism as well.
\end{proof}

When $k=n-1$, we have $\pi_o\simeq \chi\mind \tau$. We can realize $\pi_o$ as $\CS(U_{1,n-1},\chi\boxtimes\tau)$ such that the $U_{1,n-1}$-action is given by translation. Consequently, at this time,
\[
\rmh_i(\fku_{1,n-1},\pi_o)=\begin{cases}
    \chi\boxtimes \tau, &\text{ for }i=0 \\
    0, &\text{ otherwise}.
\end{cases}
\]

\subsection{Closed orbit for $k=1$}
In this subsection, we assume $k=1$ and prove that $\rmh_i(\fku,\pi_c)$ is Casselman-Wallach for $\pi=\tau\times_u\chi$. The key point is that the strong dual of $\pi_c$ is relatively easy to calculate. In addition, $\pi_c$ is nuclear Fr\'echet, hence reflexive( see \cite[Appendix A]{CHM}). By dualizing $\pi_c'$, we observe that $\pi_c$ lies in category $\CC(\fkg,L)_f$, in which we can apply the general result Proposition~\ref{homo-CW}.\\
 
Before discussing $\pi_c'$, we recall some functional analysis. We equip $\U[[\fku]]$ with inverse limit topology, which is nuclear Fr\'echet. Moreover, by checking the definition, we will find that the strong topology and weak topology coincide in
 \[
 \Hom_{cts}(\U[[\fku]]',\BC)\simeq\Hom_{cts}(\U(\ov{\fku}),\BC),
 \]
where the isomorphism comes from the Killing form.
 Moreover, the weak dual of $\tau'$ is metrizable when $\tau$ is a Casselman-Wallach representation of $L$. Hence by \cite[Theorem 34.1]{Tr}, we have 
 $\U[\ov{\fku}]\otimes \tau'$ is complete under $\epsilon$-topology. In other words,
 \[
 \U[\ov{\fku}]\otimes \tau'= \U[\ov{\fku}]\widehat{\otimes} \tau'.
 \]
 Here, we do not distinguish $\epsilon$-completion or projective completion since $\U[\ov{\fku}]$ and $\tau'$ are nuclear. The following theorem holds for general real reductive group $G$ and parabolic subgroup $P$.
\begin{lemma}
Let $\tau$ be a Casselman-Wallach representation of $L$. Let $\pi=\mathrm{Ind}_P^G(\tau)$ be the induced representation of $G$. Consider $P$-orbit on $P\backslash G$, and let $\pi_c$ be defined as in section~\ref{sec-Schwartz fun}. Then we have
\[\pi_c'\simeq \U(\mathfrak{g})\widehat{\otimes}_{U(\mathfrak{p})} \tau'\] as topological $\U(\mathfrak{g})$-modules, where $\tau'$ is regarded as $\mathfrak{p}$-module by trivial extension on $\mathfrak{u}$.
\end{lemma}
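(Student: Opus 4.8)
The strategy is to compute the strong dual of $\pi_c = \mathrm{Ind}_P^G(\tau)_c$ directly, using the Borel filtration that presents $\pi_c$ as a topological inverse limit, and then dualize. Recall that $\pi_c$ is the quotient $\mathcal{S}(G/P,\CE)/\mathcal{S}((G/P)\setminus \ov{e}P,\CE)$ of Schwartz sections, where $\CE$ is the tempered bundle attached to $\tau$ twisted by the appropriate half-density; by Borel's lemma it has a decreasing filtration $\pi_c=(\pi_c)_0\supset (\pi_c)_1\supset\cdots$ with $\pi_c\simeq \varprojlim_j \pi_c/(\pi_c)_j$ and successive quotients
\[
(\pi_c)_j/(\pi_c)_{j+1}\simeq \Sym^j(\fku^*)\otimes \tau\otimes(\text{twist}),
\]
where $\fku=\mathfrak{g}/\mathfrak{p}$ (identified via $\ov{\fku}$, the opposite nilradical) is the normal bundle at the closed point. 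As a topological vector space, $\pi_c/(\pi_c)_{j}\simeq \U(\ov{\fku})^{<j}\otimes \tau$ with the Euclidean-tensor-product Fréchet topology, matching exactly the presentation of $\CV(\tau)$ in Definition~\ref{def of cate}'s vicinity (the formal Verma module), up to the trivial-extension twist. So the first step is to make this identification of $\pi_c$ with (a twist of) a formal Verma module precise, including that the $\U(\fkg)$-action is the obvious one via left multiplication on $\U(\fkg)\otimes_{\U(\ov{\fkp})}\tau$.

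\textbf{Key steps.} First I would set up the Borel filtration of $\pi_c$ and verify that the $j$-th quotient is $\U(\ov{\fku})^{<j}\otimes\tau$ with its natural Fréchet topology and $\fkg$-module structure coming from the jets of sections along the closed orbit. Second, I would dualize: since $\pi_c\simeq\varprojlim_j \pi_c/(\pi_c)_j$ is a countable inverse limit of nuclear Fréchet spaces with surjective transition maps, its strong dual is $\pi_c'\simeq\varinjlim_j (\pi_c/(\pi_c)_j)'$, a strict inductive limit (an $LF$-space / $DF$-space). Using the functional-analysis preliminaries quoted just before the statement — that $\U[\ov{\fku}]\otimes\tau' = \U[\ov{\fku}]\widehat{\otimes}\tau'$ is complete under $\epsilon$-topology because $\tau$ is Casselman-Wallach and everything in sight is nuclear — one identifies $(\U(\ov{\fku})^{<j}\otimes\tau)'\simeq \U(\ov{\fku})^{<j,*}\otimes\tau'$, and the dual of the projection to $\Sym$-layers becomes the inclusion $\U(\ov{\fku})^{<j}\hookrightarrow \U(\ov{\fku})^{<j+1}$ on the $\U(\fkg)$-module side (this is where the Killing-form identification $\U[[\fku]]'\simeq\U(\ov{\fku})$ enters). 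Taking the inductive limit produces $\varinjlim_j \U(\ov{\fku})^{<j}\otimes\tau' = \U(\ov{\fku})\otimes\tau'$, and since $\ov{\fkp}=\ov{\fku}\oplus\fkl$ with $\tau'$ a module over $\fkl$ extended trivially on $\ov{\fku}$, the PBW theorem gives $\U(\ov{\fku})\otimes\tau'\simeq\U(\fkg)\otimes_{\U(\ov{\fkp})}\tau'$; the completion then upgrades this to $\U(\fkg)\widehat{\otimes}_{\U(\ov{\fkp})}\tau'$. Third, I would check the $\U(\fkg)$-equivariance of the resulting isomorphism: the $G$-action on $\pi_c$ dualizes to a $G$-action on $\pi_c'$ whose derived $\fkg$-action must be shown to agree with left multiplication on $\U(\fkg)\widehat{\otimes}_{\U(\ov{\fkp})}\tau'$. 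This is a matter of unwinding how $\fkg$ acts on jets of sections — the Leibniz rule for differentiating along $G$ at the closed point — and comparing with the coproduct/multiplication in $\U(\fkg)$.

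\textbf{Main obstacle.} The routine but fiddly part is bookkeeping the half-density twists and the passage between $\fku=\mathfrak{g}/\mathfrak{p}$ and $\ov{\fku}$: one must be careful that the normal-bundle jets along the closed $P$-orbit are naturally $\Sym^\bullet(\ov{\fku})$ (not $\Sym^\bullet(\fku)$) after the relevant identifications, and that the modular character factors $(\delta_P/\delta_G)^{1/2}$ appearing in normalized induction land correctly on the dual side; getting a sign or a twist wrong here would produce the wrong $\U(\ov{\fkp})$-module structure on $\tau'$. The genuinely substantive point, and the one I expect to require the most care, is the topological identification of the strong dual of the inverse limit with the corresponding inductive limit and the verification that $\U(\ov{\fku})\widehat{\otimes}\tau'$ is already complete — i.e. that no further completion is needed beyond the algebraic $\U(\fkg)\otimes_{\U(\ov{\fkp})}\tau'$ followed by the $\fku$-adic completion built into $\widehat{\otimes}$. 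This rests squarely on nuclearity and the metrizability of the weak dual of a Casselman-Wallach representation, which is exactly what the preliminary paragraph before the statement was arranged to supply, so the obstacle is more about assembling these inputs cleanly than about any missing ingredient.
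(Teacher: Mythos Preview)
Your approach via the Borel filtration and dualization is genuinely different from the paper's, which instead constructs an explicit map $\vartheta:\U(\fkg)\otimes\tau'\to\pi'$ by $(x\otimes y)(f):=\frac{d}{dt}\big|_{t=0}y(f(\exp(tx)^{-1}))$, verifies directly that it descends over $\U(\fkp)$ (using the equivariance property $f(p^{-1}g)=\tau(p^{-1})f(g)$), cites \cite[Lemma~2.4]{LLY21} for bijectivity, and finishes with the open mapping theorem for dual nuclear Fr\'echet spaces. The paper's map is manifestly $\U(\fkg)$-equivariant from its definition, so the module structure comes for free.

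Your proposal contains a bookkeeping error that matters: the PBW identification you write, $\U(\ov{\fku})\otimes\tau'\simeq\U(\fkg)\otimes_{\U(\ov{\fkp})}\tau'$, is false. PBW with $\fkg=\ov{\fku}\oplus\fkp$ gives $\U(\fkg)\otimes_{\U(\fkp)}\tau'\simeq\U(\ov{\fku})\otimes\tau'$, whereas $\U(\fkg)\otimes_{\U(\ov{\fkp})}\tau'\simeq\U(\fku)\otimes\tau'$. The lemma is stated for $\fkp$, not $\ov{\fkp}$, and the two produce genuinely different $\fkg$-modules. This confusion propagates back to your filtration layers: the successive quotients of $\pi_c$ are $\Sym^j((\fkg/\fkp)^*)\otimes\tau\simeq\Sym^j(\fku)\otimes\tau$, not $\Sym^j(\ov{\fku})\otimes\tau$; after dualizing you get $\U(\ov{\fku})\otimes\tau'$, which is $\U(\fkg)\otimes_{\U(\fkp)}\tau'$.

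More substantively, your third step --- checking $\U(\fkg)$-equivariance by ``unwinding how $\fkg$ acts on jets'' --- is where the actual content of the lemma lives, and you have not given an argument for it. The Borel filtration is only $P$-equivariant, so the identification $\pi_c\simeq\U[[\fku]]\widehat{\otimes}\tau$ that your first two steps produce is a priori only an isomorphism of topological $L$-modules (and indeed this is exactly what the paper claims, and no more, in the sentence immediately following the proof). Promoting it to a $\fkg$-module isomorphism requires analyzing how $\ov{\fku}$ moves across filtration layers; when you unwind that carefully you are led to precisely the paper's map $\vartheta$, which is nothing other than ``differentiate along $G$ and evaluate at the identity coset.'' The paper's route is therefore more economical: it builds $\fkg$-equivariance in from the outset and outsources the vector-space bijectivity to the literature, rather than establishing the vector-space isomorphism first and leaving the module structure as an unfinished obligation.
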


\begin{proof}
We first define a map $\vartheta:\U(\mathfrak{g})\otimes \tau'\to \pi'$ by
\[x\otimes y(f):=\frac{d}{dt}\big\vert_{t=0}y(f(\exp(tx)^{-1})), \ x\otimes y\in \fkg\otimes \tau', f\in \pi. \]
The support of distribution $\vartheta(x\otimes y)$ is contained in the closed orbit. Moreover, for every $z\in \fkp$, one has 
\[xz\otimes y (f)=\frac{d}{dt}\big\vert_{t=0}y(f(\exp(tz)^{-1}\exp(tx)^{-1}))=x\otimes zy(f),\] 
where the last equality follows from $f(\exp(tz)^{-1}g)=\tau(\exp(tz)^{-1})f(g)$. Consequently, $\vartheta$ descends to a continuous map, which we still denote by $\vartheta$
\[
   \vartheta:\U(\mathfrak{g})\widehat{\otimes}_{\U(\fkp)} \tau'\to \pi_c'.
\]
By \cite[Lemma 2.4]{LLY21}, $\vartheta$ is a bijection. Hence $\vartheta$ is a topological isomorphism by the open mapping theorem for dual nuclear Fr\'echet space.

\end{proof}

Since $\pi_c$ is reflexive, we have isomorphism as topological $L$-representations by the Killing form
\[
 \pi_c\simeq (\U(\ov{\fku})\widehat{\otimes} \tau')'\simeq \U[[\fku]]\widehat{\otimes} \tau.
\]
We observe that $\pi_c$ falls in category $\CC(\fkg,L)$. In addition, $\pi_c$ has infinitesimal character since $\pi$ has, which implies the following lemma by Lemma~\ref{finite length lem}.

\begin{lemma}
$\pi_c$ is an object in category $\CC(\fkg,L)_f$. 
\end{lemma}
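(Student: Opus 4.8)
The statement to prove is that $\pi_c$ is an object in $\CC(\fkg,L)_f$, where $k=1$, $\pi=\tau\times_u\chi$, $L=\GL_{n-1}\times\GL_1$, and $U=U_{n-1,1}$. The plan is to combine the two lemmas that have just been established with the finite-length criterion of Lemma~\ref{finite length lem}. First I would recall that the preceding lemma identifies $\pi_c'\simeq \U(\fkg)\widehat{\otimes}_{\U(\fkp)}\tau'$ as a topological $\U(\fkg)$-module, and that, because $\pi_c$ is a nuclear Fr\'echet space hence reflexive, dualizing back via the Killing form gives $\pi_c\simeq \U[[\fku]]\widehat{\otimes}\tau$ as a topological $L$-representation (with compatible $\U(\fkg)$-action). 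This presentation is exactly the shape of a formal Verma module $\CV(\tau)$, so I would verify directly that $\pi_c$ satisfies conditions (i), (ii), (iii) of Definition~\ref{def of cate}: the $\ov{\fku}$-action on the $\fkz_L$-finite part is locally finite (this is visible from the PBW filtration of $\U[[\fku]]$, as in the discussion following Proposition~\ref{surj_CJ}), each generalized $\fkz_L$-weight space is a Casselman-Wallach representation of $L$ of the form $\U(\fku)^{(m)}\otimes\tau$ for a finite-dimensional weight piece, and the inverse-limit topology of $\U[[\fku]]\widehat{\otimes}\tau$ supplies precisely the topological decomposition demanded by (ii) and (iii). Hence $\pi_c\in\CC(\fkg,L)$.

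Next I would invoke the hypothesis that $\pi$ has an infinitesimal character. Since $\pi=\Ind_{P_{n-1,1}}^{\GL_n}(\tau\boxtimes\chi)$ is a parabolic induction of a representation with infinitesimal character, $\pi$ itself has one, say $\chi_{\lambda}$; and $\pi_c$, being built as an iterated extension of twists of $\tau\boxtimes\chi$ (by Borel's lemma, the successive quotients are of the form $(|\det|^{1/2}\tau\otimes\Sym^i)\boxtimes(\chi(\det)^i)$, cf.~the proof of Theorem~\ref{ind_BZ}), inherits this same infinitesimal character. Alternatively, and more cleanly, the isomorphism $\pi_c\simeq \U[[\fku]]\widehat{\otimes}\tau = \CV(\tau)$ together with the formula for the infinitesimal character of a formal Verma module given in Subsection~\ref{category C} shows $\pi_c$ has infinitesimal character $\chi_{\overline{\lambda'-\rho+\rho_{\fkl}}}$ where $\chi_{\lambda'}$ is the infinitesimal character of $\tau\boxtimes\chi$. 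Either way, $\pi_c$ has a single infinitesimal character. Then Lemma~\ref{finite length lem} applies verbatim: any object of $\CC(\fkg,L)$ with an infinitesimal character lies in $\CC(\fkg,L)_f$. This gives the conclusion.

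The main obstacle I anticipate is not the finite-length step — that is a direct citation — but rather ensuring that the topological identifications are airtight: namely that the weak and strong dual topologies agree on the relevant Hom-spaces (this was flagged in the paragraph before the previous lemma), that $\U[\ov{\fku}]\otimes\tau'$ is already complete so that $\widehat{\otimes}$ introduces no ambiguity, and that the open mapping theorem for dual nuclear Fr\'echet spaces legitimately upgrades the algebraic bijection of \cite[Lemma 2.4]{LLY21} to a topological isomorphism. Once those functional-analytic points are granted (as they are in the two lemmas just proved), the identification $\pi_c\cong\CV(\tau)$ is immediate and conditions (i)--(iii) of Definition~\ref{def of cate} become routine verifications on the PBW-filtered space $\U[[\fku]]\widehat{\otimes}\tau$. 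I would therefore write the proof in two short sentences: first cite the reflexivity-plus-Killing-form identification to place $\pi_c$ in $\CC(\fkg,L)$, then cite Lemma~\ref{finite length lem} using the infinitesimal character of $\pi$ to conclude $\pi_c\in\CC(\fkg,L)_f$.
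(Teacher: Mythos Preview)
Your proposal is correct and follows essentially the same approach as the paper: identify $\pi_c\simeq \U[[\fku]]\widehat{\otimes}\tau$ as a topological $L$-representation to place it in $\CC(\fkg,L)$, then invoke the infinitesimal character of $\pi$ together with Lemma~\ref{finite length lem} to obtain finite length. One small caveat: the identification $\pi_c\simeq \U[[\fku]]\widehat{\otimes}\tau$ that the paper establishes is only as topological $L$-representations (via the Killing form), not a priori as full $(\fkg,L)$-modules, so your ``alternative'' route to the infinitesimal character via the formal Verma formula is not quite justified---stick with your primary argument that $\pi_c$ is a $\U(\fkg)$-quotient of $\pi$ (since $\pi_o$ is stable under differential operators) and hence inherits the infinitesimal character of $\pi$, which is exactly what the paper does.
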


\begin{corollary}\label{closed orb CW}
For every integer $i$, $\rmh_i(\mathfrak{\fku},\pi_c)$ is a Casselman-Wallach $L$-representation.  
\end{corollary}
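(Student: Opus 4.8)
\textbf{Proof of Corollary \ref{closed orb CW}.}
The plan is to combine the two lemmas immediately above with the general Casselman-Wallach homology result proven in Section~\ref{category C}. First I would recall that the preceding lemma identifies $\pi_c$ as an object in $\CC(\fkg,L)_f$: indeed $\pi_c$ is reflexive (being a nuclear Fr\'echet space, see \cite[Appendix A]{CHM}), the identification $\pi_c\simeq \U[[\fku]]\widehat{\otimes}\tau$ as topological $L$-representations places $\pi_c$ in $\CC(\fkg,L)$ by verifying conditions (i)--(iii) of Definition~\ref{def of cate}, and the fact that $\pi=\Ind_{P_{n-1,1}}^{\GL_n}(\tau)$ has an infinitesimal character forces $\pi_c$ to have the same infinitesimal character, so that finite length follows from Lemma~\ref{finite length lem}.

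Once $\pi_c\in\CC(\fkg,L)_f$ is in hand, the corollary is an immediate application of Proposition~\ref{homo-CW}, which asserts precisely that $\rmh_i(\fku,V)$ is a Casselman-Wallach representation of $L$ for any $V\in\CC(\fkg,L)_f$ and any integer $i$. Here $\fku=\fku_{n-1,1}$ and $L=\GL_{n-1}\times \GL_1$. This finishes the proof.

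There is essentially no remaining obstacle at this stage: all the substantive work has been done in establishing the structure of $\CC(\fkg,L)_f$ (in particular the flatness of $\U[[\fku]]$ over $\U(\fku)$ used in the proof of Proposition~\ref{homo-CW}) and in computing the strong dual $\pi_c'\simeq \U(\fkg)\widehat{\otimes}_{\U(\fkp)}\tau'$, which is the input that lets us recognize $\pi_c$ concretely as a completed tensor product. The only point one must be slightly careful about is that the isomorphism $\pi_c\simeq \U[[\fku]]\widehat{\otimes}\tau$ is genuinely an isomorphism of $(\fkg,L)$-modules and not merely of $L$-representations, so that the $\fku$-action used in computing $\rmh_i(\fku,\pi_c)$ is the correct one; this is guaranteed because the Killing-form duality intertwines the $\U(\fkg)$-actions, as recorded in the discussion preceding the lemma on $\pi_c'$.
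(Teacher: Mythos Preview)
Your proof is correct and follows exactly the paper's approach: the preceding lemma shows $\pi_c\in\CC(\fkg,L)_f$, and then Proposition~\ref{homo-CW} applies directly. One minor inaccuracy in your final paragraph: the paper only records $\pi_c\simeq \U[[\fku]]\widehat{\otimes}\tau$ as an isomorphism of topological $L$-representations, not of $(\fkg,L)$-modules; but this is immaterial, since the $\fku$-action used in $\rmh_i(\fku,\pi_c)$ is the intrinsic one on $\pi_c$, and the identification is needed only to verify the axioms of $\CC(\fkg,L)$.
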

\begin{proof}
It is a direct consequence of Proposition~\ref{homo-CW}.
\end{proof}

\subsection{Closed orbit for general $k$}
For $\pi=\tau\times_u\chi$, we now consider the unique closed orbit of $P_{n-k,k}$ and its corresponding representation $\pi_c$. By Borel's Lemma, $\pi_c$ admits a decreasing filtration indexed by non-negative integers,
    \[
    \pi_c=(\pi_c)_0\supset (\pi_c)_1\supset \dots ,
    \]
such that for every $j$, 
\[
(\pi_c)_j/(\pi_c)_{j+1}\simeq \tau_j|_{P_{n-k,k-1}}\times_u \chi_j
\]
for some principal series $\tau_j$ of $\GL_{n-1}$ and some character $\chi_j$.  The proof of the following lemma is exactly the same as Step 2 of Proposition~\ref{Jacquet-open}.
\begin{lemma}
   Let $s$ and $m$ be positive integers with $s+m+1=n$. Let $\sigma$ be a representation of $P_{s,m}$, and let $\chi$ be a character. Suppose $\rmh_0(\fku_{s,m},\sigma)$ is Hausdorff, then the quotient map 
    \[
   \sigma\times_u\chi \lra \rmh_0(\fku_{s,m},\sigma)\times_u\chi
    \]
    induces an isomorphism of $\GL_s\times \GL_{m+1}$-representations
    \[
   \rmh_0(\fku_{s,m+1} ,\sigma\times_u\chi )\simeq \rmh_0(\fku_{s,m},\sigma)\times_u\chi.
    \]
    \end{lemma}
By the induction hypothesis on $n$, the above lemma implies that 
$$\rmh_0(\fku_{n-k,k},(\pi_c)_j/(\pi_c)_{j+1})$$ 
is Casselman-Wallach for every non-negative integer $j$. Therefore, $\rmh_0(\fku_{n-k,k},\pi_c)$ is Hausdorff by Lemma~\ref{ext_Hausd}, which is equivalent to that $\fku_{n-k,k}\pi_c$ is closed in $\pi_c$. 

On the other hand, it is well-known that for a Casselman-Wallach representation $\pi$, $\pi/\ov{\fku \pi}$ is a Casselman-Wallach $L$-representation. Hence, the continuous surjection
\[
 \pi/\ov{\fku\pi} \lra \pi_c/\ov{\fku\pi_c}=\pi_c/\fku\pi_c
\]
implies that $\pi_c/\fku \pi_c$ is a Casselman-Wallach representation as well. Consequently, by Remark~\ref{CW-ass-weaker}, $\pi_c$ satisfies condition~\ref{CW_assump}. This implies that $\CJF(\pi_c)\in\CC(\fkg,L)_f$ and an exact sequence
\[
 0\lra \Ker \varphi \lra \pi_c \stackrel{\varphi}{\lra} \CJF(\pi_c)\lra 0.
\]

Recall from Section~\ref{res-max-sec} that $\tau_j|_{P_{n-k,k-1}}$ admits a coarse spectral filtration for every $j$. Thus, $\pi_c$ inherits a coarse spectral filtration. By Proposition~\ref{rearrange prop}, we can assume that there is a closed subspace $\beta$ in the filtration such that the successive quotient in $\beta$ is of the form $I_l,l\neq 0$, and the successive quotient in $\pi_c/\beta$ is of the form $I_0$.  Let $\mathscr{T}$ be the set of successive quotients in the coarse spectral filtration of $\pi$, which is of the form $I_0$. We simply denote $\omega_{\sigma}:=\omega_{\tau}$ when $\sigma=I_0(\tau)$ for some irreducible $\GL_{n-k}\times \GL_k$-representation $\tau$. Let $\mathscr{T}_s$ be the subset of $\mathscr{T}$ consisting of successive quotients $\sigma$ such that
    \[
     s\leq  \mathrm{Re}(\omega_{\sigma})\leq s+1,
    \]
    where $s\in\BZ$. 
\begin{lemma}\label{CJ-dist}
    Let $\sigma=\pi_c^{\flat}/\pi_c^{\sharp}$, where $\pi_c^{\sharp}\subset\pi_c^{\flat}$ are successive closed subspaces in the coarse spectral filtration of $\pi_c$. Then 
    \begin{enumerate}
        \item If $\sigma=I_l(\sigma_0)$ for some positive integer $l$ and some $S_l^n$-representation $\sigma_0$, then 
        \[
        \left(\Ker\varphi\cap \pi_c^{\flat}\right)/\left(\Ker\varphi\cap \pi_c^{\sharp}\right)\simeq \sigma;
        \]
        \item If $\sigma=I_0(\tau)$ for some irreducible $L$-representation $\tau$, then 
        \[
        \left(\Ker\varphi\cap \pi_c^{\flat}\right)/\left(\Ker\varphi\cap \pi_c^{\sharp}\right)= 0.
        \]
    \end{enumerate}
\end{lemma}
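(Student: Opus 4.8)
The plan is to argue that the Casselman--Jacquet functor $\CJF=\widehat{\mathcal{J}}_{\fku}$ (with $\fku=\fku_{n-k,k}$) ``kills'' exactly the trivial-extension spectrum in the coarse spectral filtration of $\pi_c$, by comparing the $\fkz_L$-weights of the two types of successive quotients. The starting point is that, since $\pi_c$ satisfies the assumption~\eqref{CW_assump}, Proposition~\ref{surj_CJ} gives the short exact sequence $0\to\Ker\varphi\to\pi_c\xrightarrow{\varphi}\CJF(\pi_c)\to 0$, and by the interpretation $\CJF(\sigma)\simeq\Hom_{cts}((\sigma')^{\fku},\BC)$ one has $\Ker\varphi=\bigcap_\ell\overline{\fku^\ell\pi_c}$, which we already know equals $\bigcap_\ell\fku^\ell\pi_c$ since each $\fku^\ell\pi_c$ is closed. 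So concretely $\Ker\varphi$ is the ``$\fku$-divisible part'' of $\pi_c$, and the two assertions amount to: a geometric Mackey induction $I_l(\sigma_0)$ with $l\ge 1$ is entirely $\fku$-divisible, whereas a trivial extension $I_0(\tau)=E'(\tau)$ has trivial $\fku$-divisible part (the intersection of all $\fku^\ell E'(\tau)$ is $0$).

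For assertion (1), I would use that $I_l(\sigma_0)=\SInd_{S^l_n}^{P_{n-k,k}}(\sigma_0\boxtimes\psi^n_l)$ is realized as Schwartz sections of a tempered bundle $\CE$ over $X=S^l_n\backslash P_{n-k,k}$, with the $\fku$-action given by multiplication by the Nash function $x\mapsto\varphi(x)(\xi)$ where $\varphi:X\hookrightarrow\fku^*$ is the embedding $x\mapsto{}^x\psi^n_l$, exactly as in the computation in the proof of Corollary~\ref{eq-spec-sup}. Since $l\ge 1$, $\psi^n_l\ne 0$, so $0\notin\varphi(X)$; hence, choosing a coordinate $\xi_0\in\fku$ with $\varphi(\cdot)(\xi_0)$ nowhere vanishing (possible by compactness/covering, exactly as in Case~2 of Corollary~\ref{bun ver}), multiplication by this everywhere-nonzero Nash function is an isomorphism of $\CS(X,\CE)$, so $\fku\cdot I_l(\sigma_0)=I_l(\sigma_0)$, and a fortiori $\fku^\ell\cdot I_l(\sigma_0)=I_l(\sigma_0)$ for all $\ell$. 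Since the coarse spectral filtration is finite in each ``block'' and $\Ker\varphi$ is a $\fkg$- and $L$-stable closed subspace, the induced subquotient $\Ker\varphi\cap\pi_c^{\flat}/\Ker\varphi\cap\pi_c^{\sharp}$ injects into $\sigma$ and, because all of $\sigma$ is $\fku$-divisible while the complementary pieces (quotient $\pi_c^{\flat}/\pi_c^{\sharp}$ modulo $\sigma$, etc.) are handled inductively, one gets it is all of $\sigma$. I would phrase this cleanly via the long exact sequence in $\fku$-homology: $\rmh_i(\fku,\sigma)=0$ for all $i$ (again by Corollary~\ref{bun ver}, Case~2) forces $\rmh_i(\fku,\Ker\varphi\cap\pi_c^{\flat})\xrightarrow{\sim}\rmh_i(\fku,\Ker\varphi\cap\pi_c^{\sharp})$, and then a comparison with $\rmh_i(\fku,\pi_c^{\flat})\xrightarrow{\sim}\rmh_i(\fku,\pi_c^{\sharp})$ coming from $\rmh_i(\fku,\CJF(\pi_c^{\flat}))\xrightarrow{\sim}\rmh_i(\fku,\CJF(\pi_c^{\sharp}))$ gives the claimed isomorphism of subquotients.

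For assertion (2), the point is that $I_0(\tau)=E'(\tau)$, the trivial extension of an irreducible Casselman--Wallach $L$-representation $\tau$, sits at a $\fkz_L$-weight that is \emph{minimal} among weights appearing in the relevant part of $\pi_c$; more precisely, by the property~(ii) of the coarse spectral filtration (Proposition~\ref{rest_max}), the set $\{\Re(\omega_\sigma)\mid I_0(\sigma)\text{ a successive quotient}\}$ has a finite minimum, so the trivial-extension weights are bounded below. Since $\CJF(\pi_c)\in\CC(\fkg,L)_f$, its weights are bounded below as well, with the same minimal weights, and $\min wt(\CJF(\pi_c))=\min wt(\pi_c/\fku\pi_c)$; combined with the fact that on a trivial extension $E'(\tau)$ the full tower $\fku^\ell E'(\tau)$ shrinks (since $\ov{\fku}$ acts locally nilpotently and $\fku$ acts trivially — wait, that is the wrong group), I will instead argue that $\varphi|_{\pi_c^{\flat}}$ restricted to the $I_0(\tau)$-block is injective: any $v$ in $\Ker\varphi$ whose image in the subquotient is a nonzero vector of $E'(\tau)$ would have to be $\fku$-divisible, but the $\fkz_L$-weight of $\tau$ is minimal, so $v$ cannot lie in $\fku\cdot(\text{higher-weight part})$, giving $v=0$. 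I expect the main obstacle to be exactly this bookkeeping with the induced filtration: one must check that the coarse spectral filtration of $\pi_c$ can be arranged so that trivial-extension blocks are not ``mixed'' with $\fku$-divisible blocks in a way that obstructs the weight argument, which is where property~(ii) of Proposition~\ref{rest_max} and the finite-length structure of $\CJF(\pi_c)\in\CC(\fkg,L)_f$ do the essential work; once the weights are separated, both assertions follow by splicing the long exact sequences in $\fku$-homology as above.
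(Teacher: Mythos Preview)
Your overall strategy---that $I_l(\sigma_0)$ for $l\ge1$ is entirely $\fku$-divisible while $I_0(\tau)$ is caught by a weight lower bound---matches the paper exactly. The execution, however, has gaps in both parts.

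For (1), your long-exact-sequence argument does not close. You claim that $\rmh_i(\fku,\sigma)=0$ forces $\rmh_i(\fku,\Ker\varphi\cap\pi_c^\flat)\xrightarrow{\sim}\rmh_i(\fku,\Ker\varphi\cap\pi_c^\sharp)$, but the quotient $(\Ker\varphi\cap\pi_c^\flat)/(\Ker\varphi\cap\pi_c^\sharp)$ is only known a priori to be a closed \emph{subspace} of $\sigma$, and a subspace need not inherit vanishing $\fku$-homology; so this step begs the question. Your fallback comparison via $\CJF(\pi_c^\flat)$ and $\CJF(\pi_c^\sharp)$ is also unjustified: assumption~\eqref{CW_assump} has been verified for $\pi_c$ itself, not for its filtration subspaces, so these objects are not known to lie in $\CC(\fkg,L)$. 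The paper instead argues set-theoretically. Since the natural map $(\Ker\varphi\cap\pi_c^\flat)/(\Ker\varphi\cap\pi_c^\sharp)\hookrightarrow\sigma$ is automatically injective, (1) is equivalent to
\[
\pi_c^\sharp+\bigcap_j\bigl(\fku^j\pi_c\cap\pi_c^\flat\bigr)=\pi_c^\flat.
\]
From $\fku\sigma=\sigma$ (your own observation via Corollary~\ref{bun ver}) one gets $\bigcap_j\fku^j(\pi_c^\flat/\pi_c^\sharp)=\sigma$, and then the obvious inclusion $\bigcap_j\fku^j\pi_c^\flat\subset\bigcap_j(\fku^j\pi_c\cap\pi_c^\flat)=\Ker\varphi\cap\pi_c^\flat$ finishes the surjectivity directly---no homology comparison is needed.

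For (2), your weight intuition is right, and you correctly identify the obstacle, but you do not actually carry out the argument. The paper makes it precise: set
\[
\Omega_{\pi_c}:=\min\{\mathrm{Re}\,\omega_\sigma\mid I_0(\sigma)\text{ is a successive quotient of the coarse spectral filtration of }\pi_c\},
\]
which is finite by Proposition~\ref{rest_max}(ii), and choose an integer $k$ with $\mathrm{Re}\,\omega_\tau<2k+\Omega_{\pi_c}$. Since the $\fkz_L$-weight of $\fku$ under the one-parameter subgroup $(aI_{n-k},a^{-1}I_k)$ is $2$ and $\Ker\varphi\subset\fku^k\pi_c$, this single inequality forces $\fku^k\pi_c\cap\pi_c^\flat\subset\pi_c^\sharp$, giving (2). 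The ``bookkeeping with the induced filtration'' you worry about is therefore just this one numerical choice, not a rearrangement of the filtration.
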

\begin{proof}
    We first prove (1). Note that (1) is equivalent to the following equality
    \[
    \pi_c^{\sharp}+\bigcap_j \left( \fku^j\pi_c\cap \pi_c^{\flat}\right)=\pi_c^{\flat}.
    \]
    By definition, the left-hand side is contained in the right-hand side. We prove the reverse inclusion. By Corollary~\ref{bun ver}, we have 
    \[
    \rmh_0(\fku,\sigma)=0.
    \]
    In other words, $\fku\sigma=\sigma$. Hence,  
    \[
  \bigcap_j\fku^j \pi_c^{\flat}\bigg/\left(\bigcap_j\fku^j \pi_c^{\flat}\cap \pi_c^{\sharp}\right)= \bigcap_j\fku^j (\pi_c^{\flat}/\pi_c^{\sharp})=\pi_c^{\flat}/\pi_c^{\sharp}.
    \]
    Consequently, the result follows from
    \[
    \pi_c^{\sharp}+\bigcap_j \left(\fku^j\pi_c\cap \pi_c^{\flat}\right)\supset \pi_c^{\sharp}+\bigcap_j\fku^j\pi_c^{\flat}=\pi_c^{\flat}.
    \]

    We proceed to prove (2). Proposition~\ref{rest_max} allows us to define
    \[
    \Omega_{\pi_c}:=\min\{\mathrm{Re}\, \omega_{\sigma}\mid I_0(\sigma) \text{ is a successive quotient in the coarse spectral filtration of }\pi_c\}.
    \]
     Let $k$ be an integer such that
    \[ \mathrm{Re}\,\omega_{\tau} <2k+\Omega_{\pi_c}.
    \]
    We claim that
    \begin{equation}\label{equality}
          \pi_c^{\sharp}\cap \fku^k\pi_c=\pi_c^{\flat}\cap \fku^k\pi_c,
    \end{equation}
    which will imply the result. Suppose that equation~\ref{equality} does not hold, then there exists an element $v\in \pi_c^{\flat}\cap \fku^k\pi_c$, whose image in $\pi^{\flat}_c/\pi_c^{\sharp}$ is non-zero. Thus, we can replace $\pi_c$ by $\pi_c/\beta$.
    
   By the proof of Proposition~\ref{rest_max}, it is not hard to show that there exists a polynomial $p$ such that
   \[
   \mathrm{Card}(\mathscr{T}_s)\leq p(s)
   \]
   for any integer $s$. Consequently, the infinite product
   \[
  \mathbb{T}:= \prod_{\sigma\in \mathscr{T}} \frac{z^{\ell}-(\mathrm{Re}(\omega_{\sigma})+2k)^{\ell}}{\mathrm{Re}(\omega_{\pi_c^{\flat}/\pi_c^{\sharp}})^{\ell}-(\mathrm{Re}(\omega_{\sigma})+2k)^{\ell}}
   \]
   is convergent for sufficiently large $\ell$ when $z=\begin{pmatrix}
       I_{n-k} & \\
       & -I_k
   \end{pmatrix}\in\Lie(\GL_n)$ by the moderate growth condition. This leads to a contradiction since $\mathbb{T}(\fku^k \pi_c)=0$ but $\mathbb{T}$ acts on $\pi_c^{\flat}/\pi_c^{\sharp}$ by identity.
\end{proof}

Now we can prove that $\rmh_i(\fku,\pi_c)$ is Casselman-Wallach for every integer $i$. By Corollary~\ref{bun ver}, if $\sigma=I_l(\sigma_0)$ for some positive integer $l$ and some $S_l^n$-representation $\sigma_0$, then  $\rmh_i(\fku,\sigma)=0$ for every integer $i$.
Hence, by Lemma~\ref{CJ-dist} and Lemma~\ref{exa-BZ}, we have
\[
\rmh_i(\fku,\Ker\varphi)=0\text{ and } \rmh_i(\fku,\pi_c)\simeq \rmh_i(\fku,\CJF(\pi_c)) 
\]
for every integer $i$. The result now follows from Proposition~\ref{homo-CW}.

\begin{remark}
    Lemma~\ref{CJ-dist} shows that the Casselman-Jacquet functor can separate the trivial extension spectrum and non-trivial extension spectrum. This ideal can be applied to general spectral decomposition, which will be explored in future work. In particular, similarly as $\pi_c$ we can prove that 
    \[
    \rmh_i(\fku,\pi)\simeq \rmh_i(\fku,\CJF(\pi))
    \]
    for a Casselman-Wallach representation $\pi$ of $\GL_n$.
\end{remark}

\subsection{Proof of Theorem~\ref{intro-CW}}
In this subsection, we prove Theorem~\ref{intro-CW}. Let $G$ be a real reductive group. We will show that, in general, to prove $\rmh_i(\fku,\pi)$ is Casselman-Wallach for every Casselman-Wallach representation $\pi$ of $G$ and every parabolic subgroup $P=LU$, it suffices to prove for maximal parabolic subgroups.
\begin{lemma}
    Let $P=LU$ be a parabolic subgroup of $G$, and let $Q=MV$ be a parabolic subgroup of $L$. If $\rmh_i(\fkv,\tau)$ is Casselman-Wallach for every Casselman-Wallach representation $\tau$ of $L$, and $\rmh_i(\fku,\pi)$ is Casselman-Wallach for every Casselman-Wallach representation $\pi$ of $G$, then $\rmh_i(\fku+\fkv,\pi)$ is a Casselman-Wallach $M$-representation for every Casselman-Wallach representation $\pi$ of $G$.
\end{lemma}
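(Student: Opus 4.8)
The plan is to run the Lyndon--Hochschild--Serre spectral sequence attached to the ideal $\fku\subset\fku+\fkv$ entirely inside the category of Casselman-Wallach $M$-representations, and then to invoke Lemma~\ref{spec-haus} to control the topology on the abutment.

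First I would record the purely algebraic point: since $V\subset L$ normalizes $U$ we have $[\fkv,\fku]\subset\fku$, so $\fku+\fkv$ is a Lie subalgebra, $\fku$ is an ideal in it, and $(\fku+\fkv)/\fku\simeq\fkv$; indeed $QU$ is a parabolic subgroup of $G$ with Levi factor $M$ and unipotent radical $VU$, and $\fku+\fkv=\Lie(VU)_{\BC}$ is its nilradical. Restricting the Koszul complex~\eqref{koszul} of $\fku+\fkv$ acting on $\pi$ and using $\wedge^\bullet(\fku+\fkv)\simeq\wedge^\bullet\fkv\otimes\wedge^\bullet\fku$, one obtains a complex $Y_\bullet$ of nuclear Fr\'echet $M$-representations (each term is $\wedge^\bullet\fkv\otimes\wedge^\bullet\fku\otimes\pi$, and $M$ normalizes both $\fku$ and $\fkv$) together with the finite increasing filtration $\CF^pY_\bullet$ by exterior degree $\le p$ in $\fkv$. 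This is exactly the setting of Lemma~\ref{spec-haus}: its $E_1$-page is $E_1^{p,q}\simeq\wedge^p\fkv\otimes\rmh_q(\fku,\pi)$ and its $E_2$-page is $E_2^{p,q}\simeq\rmh_p(\fkv,\rmh_q(\fku,\pi))$, all differentials being continuous $M$-equivariant maps (here $\fkv\subset\fkl$ acts on $\rmh_q(\fku,\pi)$ through its $\fkl$-module structure).

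Next I would check that every page consists of Casselman-Wallach $M$-representations. By the second hypothesis $\rmh_q(\fku,\pi)$ is a Casselman-Wallach representation of $L$ for each $q$, so $E_1^{p,q}=\wedge^p\fkv\otimes\rmh_q(\fku,\pi)$ is Casselman-Wallach (in particular Hausdorff), and by the first hypothesis $E_2^{p,q}=\rmh_p(\fkv,\rmh_q(\fku,\pi))$ is again Casselman-Wallach. For $r\ge 2$ one argues by induction: if $E_r^{p,q}$ is Casselman-Wallach, then $d_r$, a continuous $M$-morphism between Casselman-Wallach representations, has closed image by Corollary~\ref{closed image}; hence $E_{r+1}^{p,q}=\Ker d_r/\Im d_r$ is a Hausdorff subquotient of $E_r^{p,q}$, still of finite length, hence Casselman-Wallach. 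Since $\dim\fkv<\infty$ the filtration has only finitely many steps, so $E_r=E_\infty$ for $r$ large, and every $E_r^{p,q}$ with $r\ge 1$ is Hausdorff.

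Finally, Lemma~\ref{spec-haus} yields that $\rmh_i(\fku+\fkv,\pi)=\rmh_i(Y_\bullet)$ is Hausdorff for every $i$. It carries a finite filtration whose successive quotients are the finitely many terms $E_\infty^{p,q}$ with $p+q=i$, each of finite length, so $\rmh_i(\fku+\fkv,\pi)$ has finite length; being also a Hausdorff subquotient of the moderate-growth smooth Fr\'echet $M$-representation $\wedge^i(\fku+\fkv)\otimes\pi$, it is a Casselman-Wallach $M$-representation, as desired. The only genuinely delicate point is the interaction between the subquotient topology and the filtration on the abutment --- that Hausdorffness of all pages forces Hausdorffness of $\rmh_i(Y_\bullet)$ --- but this is precisely what Lemma~\ref{spec-haus} packages; everything else is routine bookkeeping, the two essential inputs being Corollary~\ref{closed image} and the two standing hypotheses.
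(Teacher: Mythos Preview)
Your proposal is correct and follows essentially the same approach as the paper: both set up the Hochschild--Serre spectral sequence via the filtration by $\fkv$-degree on the Koszul complex, verify that the $E_1$ and $E_2$ pages are Casselman-Wallach using the two hypotheses, propagate this to all higher pages via Corollary~\ref{closed image}, and then invoke Lemma~\ref{spec-haus}. You spell out slightly more detail than the paper does (the ideal property of $\fku\subset\fku+\fkv$, and the final step passing from Hausdorffness plus a finite filtration by Casselman-Wallach pieces to the Casselman-Wallach property of the abutment), but the structure of the argument is identical.
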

\begin{proof}
    Consider the double complex given by the Koszul resolution
    \[
    P_{p,q}:=\wedge^p \fkv\otimes \wedge^q \fku\otimes \pi,
    \]
    then 
    \[
    \rmh_i(\Tot(P_{\bullet,\bullet}))=\rmh_i(\fku+\fkv,\pi).
    \]
    The total complex admits a finite increasing filtration $\CF^j:=\Tot_{p\leq j,\bullet}$ with 
    \[
   E_1^{p,q}= \rmh_q(\CF^p/\CF^{p-1})=\wedge^{p}\fkv\otimes \rmh_q(\fku,\pi).
    \]
    Thus, $E_1^{p,q}$ is Hausdorff, and 
    \[
    E_2^{p,q}=\rmh_p(\fkv,\rmh_q(\fku,\pi))
    \]
    is a Casselman-Wallach representation of $M$ for every integer $p$ and $q$. Consequently,  $E_r^{p,q}$ is a Casselman-Wallach representation of $M$ for every $r\geq 2$ and every integer $p,q$, as $d_r^{p,q}$ is continuous. The result then follows from Lemma~\ref{spec-haus}. 
\end{proof}

\section{Homological Branching Law for $(\GL_{n+1},\GL_n)$}
This section shows that once the Bernstein-Zelevinsky theory is developed, computing the Euler-Poincar\'e characteristic is much more straightforward than determining the $\Hom$-space for the pair $(\GL_{n+1},\GL_n)$. Moreover, we can explore some higher extension vanishing results, which lead to conclusions of the $\Hom$-space.
\subsection{Euler-Poincar\'e characteristic formula}\label{EP_sec}
Recall the definition of the Whittaker model:
\begin{definition}
    Let $\pi$ be a Casselman-Wallach representation of a real reductive group $G$, and let $\theta$ be a non-degenerate unitary character of $U^0$. Define the multiplicity of the Whittaker model as
    \[
    \Wh(\pi):=\dim \Hom_{U^0}(\pi,\theta).
    \]
\end{definition}
 By \cite{CHM}, $\Wh(\pi)$ is finite and is independent of the choice of minimal parabolic subgroup or $\theta$. There is another point of view of $\Wh(\pi)$ as follows. Consider the \textbf{Gelfand-Graev representation}  $\SInd_{U^0}^G(\theta)$, then Shapiro's lemma implies 
 \[ \Wh(\pi)=\dim \Hom_G(\pi\widehat{\otimes}\SInd_{U^0}^G(\theta),\BC).\]
The technique of Lemma~\ref{lem-MVW} yields a concise proof of the following result.
    \begin{lemma}\label{dual-wh}
    Let $\pi$ be a Casselman-Wallach representation of $\GL_n$, then
    \[
    \dim\Psi_0^{n-1}(\pi)=\dim\Psi^{n-1}_0(\pi^{\vee}).
    \]
\end{lemma}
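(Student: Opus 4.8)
The plan is to reduce to the case of irreducible $\pi$ and then transport the computation through the identification $\pi^{\vee}(g)=\pi(g^{-t})$ supplied by Lemma~\ref{lem-MVW}.

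First I would note that $\pi\mapsto\dim\Psi_0^{n-1}(\pi)$ is additive on short exact sequences of Casselman-Wallach representations of $\GL_n$. Indeed, $\Psi_0^{n-1}(\pi)$ and the highest derivative $D^n(\pi)=\Phi\Psi^{n-1}(\pi)$ have the same (finite) dimension, since on a representation of $P_1=\{1\}$ the functor $\Phi$ is the identity and $\Psi_0$ differs from $\Psi$ only by a character twist; moreover $\pi|_{P_n}$ has a Bernstein-Zelevinsky filtration by Theorem~\ref{have_B-Z_fil}, so Corollary~\ref{vanishing} gives $L^iD^n(\pi)=0$ for all $i\ge 1$, hence $D^n$ carries short exact sequences of Casselman-Wallach representations to short exact sequences. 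As the contragredient functor is exact, it therefore suffices to treat the case $\pi$ irreducible.

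For $\pi$ irreducible I would realize $\pi$ and $\pi^{\vee}$ on one and the same space $V$ via $\pi^{\vee}(g)=\pi(g^{-t})$ (Lemma~\ref{lem-MVW}). The one piece of bookkeeping is to identify $\Psi_0^{n-1}$ with a single twisted Lie algebra homology: the subalgebras $\fkv_n\subset\fkp_n$, $\fkv_{n-1}\subset\fkp_{n-1}\subset\fkp_n$, $\dots$, $\fkv_2$ appearing at the successive stages are disjointly supported, are each normal in the ambient parabolic at their stage, and stack up to the full strictly upper-triangular nilpotent $\fkn_n$; correspondingly the characters $\psi_n,\psi_{n-1},\dots,\psi_2$ are precisely the values on the simple root spaces of the standard nondegenerate character $\chi$ of $\fkn_n$. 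By the tower property of twisted homology this yields $\Psi_0^{n-1}(\pi)\cong\rmh_0(\fkn_n,\pi\otimes(-\chi))$. By Theorem~\ref{haus thm} (with $k=n$, noting $B^n_0(\pi)=\Psi_0^{n-1}(\pi)$ since $\fku_{0,n}=0$) this is finite dimensional, so the relation subspace is closed of finite codimension and every linear functional killing it is continuous; hence $\dim\Psi_0^{n-1}(\pi)=\dim\Hom_{\fkn_n}(\pi,\chi)=\Wh(\pi)$, using that $N_n$ is the unipotent radical of the standard Borel, that Lie-algebra and group Whittaker functionals agree for Casselman-Wallach representations, and that $\Wh$ is independent of the chosen nondegenerate character.

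Running the same analysis for $\pi^{\vee}$ finishes the proof: since $\pi^{\vee}(\exp X)=\pi(\exp(-X^t))$, applying $\Psi_0$ to $\pi^{\vee}$ peels off at each stage the opposite subspaces $\overline{\fkv_n},\overline{\fkv_{n-1}},\dots$, which stack up to $\overline{\fkn_n}$, the nilpotent radical of the opposite Borel $\overline{B_n}$, together with an associated nondegenerate character $\overline{\chi}$. Thus $\Psi_0^{n-1}(\pi^{\vee})\cong\rmh_0(\overline{\fkn_n},\pi\otimes(-\overline{\chi}))$, again finite dimensional by Theorem~\ref{haus thm}, of dimension $\dim\Hom_{\overline{\fkn_n}}(\pi,\overline{\chi})=\Wh(\pi)$, this time invoking that $\overline{B_n}$ is also a minimal parabolic subgroup and the independence of $\Wh$ of the minimal parabolic. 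Comparing, $\dim\Psi_0^{n-1}(\pi)=\Wh(\pi)=\dim\Psi_0^{n-1}(\pi^{\vee})$. (One may also bypass the second appeal to $\Wh$ by conjugating with the longest Weyl element $w_0$, which carries $(\fkn_n,\chi)$ to $(\overline{\fkn_n},{}^{w_0}\chi)$ and hence, up to a torus twist, identifies the two functional spaces directly.)

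The only nonformal point — and the place I expect to spend care — is the middle bookkeeping: verifying that $\Psi_0^{n-1}$ really computes one twisted homology of $\fkn_n$ (which needs the normality of each stage's subalgebra and the compatibility of the $\psi_k$ into a single nondegenerate character), and, for $\pi^{\vee}$, tracking that the resulting homology is genuinely the one attached to the opposite nilpotent. Once these identifications are in place, Theorem~\ref{haus thm} provides Hausdorffness and finite-dimensionality, and the rest is just the recorded independence properties of the Whittaker multiplicity.
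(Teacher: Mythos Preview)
Your proof is correct and follows the same route as the paper: identify $\Psi_0^{n-1}$ with twisted $\fkn_n$-coinvariants, transport through the MVW realization $\pi^{\vee}(g)=\pi(g^{-t})$ so that $\Psi_0^{n-1}(\pi^{\vee})$ becomes the $\overline{\fkn_n}$-coinvariants of $\pi$, and conclude via independence of $\Wh$ from the minimal parabolic and the nondegenerate character. Your explicit reduction to irreducible $\pi$ (via additivity of $D^n$ from Corollary~\ref{vanishing}) is a point the paper's terse proof leaves implicit but which is indeed needed, since Lemma~\ref{lem-MVW} is only stated for irreducible $\pi$; the appeal to Theorem~\ref{haus thm} for finite-dimensionality is correct but heavier than necessary, since the finiteness of Whittaker functionals from \cite{CHM} already suffices.
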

\begin{proof}
    By Lemma~\ref{lem-MVW},
    \[
    \Psi_0^{n-1}(\pi^{\vee})=\pi/\Span\{\kappa\cdot v-\theta(\kappa)v\mid \kappa\in\ov{ \fkn_n},v\in\pi\}
    \]
    for some non-degenerate unitary character $\theta$ of $\ov{N_n}$. Since the multiplicity of the Whittaker model is independent of the choice of minimal parabolic subgroup or non-degenerate unitary character, the statement follows.
\end{proof}
For a representation $\sigma$ of $M_n$ that admits a BZ-filtration with finite bottom layer, we can also define the multiplicity of the Whittaker model as
 \[
    \Wh(\sigma):=\dim \Hom_{N_n}(\sigma,\theta),
    \]
    where $\theta$ is a non-degenerate unitary character of $N_n$. This multiplicity is finite and independent of the choice of $\theta$. Moreover, for every short exact sequence of $M_n$-representations admitting BZ-filtrations
    \[
    0\lra \sigma_1\lra \sigma_2\lra \sigma_3\lra 0,
    \]
    we have $\Wh(\sigma_2)=\Wh(\sigma_1)+\Wh(\sigma_3)$ by Proposition~\ref{homology prop}.
\begin{theorem}\label{EP-GL}
    Let $\pi$ be a Casselman-Wallach representation of $\GL_{n+1}$, and $\tau$ be a Casselman-Wallach representation of $\GL_n$. Then $\pi$ satisfies the homological finiteness condition with respect to $\tau$ and 
    \[
    \EP_{\GL_n}(\pi,\tau)=\Wh(\pi)\cdot\Wh(\tau).
    \]
\end{theorem}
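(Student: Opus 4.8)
The plan is to compute $\EP_{\GL_n}(\pi,\tau)$ one Bernstein–Zelevinsky layer at a time, after restricting $\pi$ from $\GL_{n+1}$ to the mirabolic subgroup $P_{n+1}$, and to show that every layer except the Gelfand–Graev one contributes zero. First I would invoke Theorem~\ref{have_B-Z_fil}: $\pi|_{P_{n+1}}$ carries a Bernstein–Zelevinsky filtration whose successive quotients are $I^{k-1}E(\rho)$ with $\rho$ an irreducible Casselman–Wallach representation of $\GL_{n+1-k}$, $1\le k\le n+1$. Since $\GL_n$ is the Levi factor of $P_{n+1}$ and $\widehat\otimes\,\tau^\vee$ is exact (and commutes with the relevant inverse limits, $\tau^\vee$ being nuclear Fréchet), this filtration descends to a filtration of $\pi|_{\GL_n}\widehat\otimes\tau^\vee$; grouping the infinite sub-filtrations into the finitely many outer ``layers'' $\sigma_a/\sigma_{a+1}$ (each gathering the pieces with a fixed depth $k_a$), one identifies $\sigma_a/\sigma_{a+1}$ with $I^{k_a-1}E$ applied to the $k_a$-th derivative layer $B^{k_a}(\pi)$ of $\pi$, which by Theorem~\ref{haus thm} has Casselman–Wallach derived functors. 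This is the step that secures homological finiteness below.

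The heart of the argument is the single–layer computation of $\EP_{\GL_n}\!\bigl(I^{k-1}E(\rho)|_{\GL_n},\tau\bigr)$. Here I would run an iterated Mackey decomposition: $I^{k-1}E(\rho)$ is a Schwartz induction from a subgroup $H\subset P_{n+1}$ of $\rho$ tensored with degenerate characters, so restricting to $\GL_n$ produces a finite filtration of $I^{k-1}E(\rho)|_{\GL_n}\widehat\otimes\tau^\vee$ indexed by the $\GL_n$-orbits on $H\backslash P_{n+1}$; applying the Mackey isomorphism for Schwartz induction and Shapiro's lemma for Schwartz homology, each orbit contributes $\rmh^{\CS}_\bullet$ of a group of the shape $H\cap{}^g\GL_n$, which after stripping the unipotent directions via Corollary~\ref{bun ver} and Proposition~\ref{homology prop} collapses the Euler characteristic to $\pm\,\EP_{\GL_{n+1-k}}\!\bigl(\rho,\tau^{(k-1)}\bigr)$, where $\tau^{(k-1)}$ is the $(k-1)$-st Bernstein–Zelevinsky derivative of $\tau$, a Casselman–Wallach representation of $\GL_{n+1-k}$ by the results of Section~\ref{highest der} and Section~\ref{res-max-sec}. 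To keep the orbit bookkeeping manageable I would use the \emph{substitution} technique (replacing $\tau|_{P_n}$ by a computationally tractable quasi-isomorphic complex), in the spirit of the diagram in the introduction; the \emph{switching} trick can play the same role on the $\pi$-side. Now Proposition~\ref{EP-prop}(4) finishes the layer: for $k\le n$ the group $\GL_{n+1-k}$ has non-compact center, so $\EP_{\GL_{n+1-k}}(\rho,\tau^{(k-1)})=0$ (both arguments being Casselman–Wallach, the Euler characteristic is defined by Proposition~\ref{EP-prop}(3)), and the layer contributes nothing.

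Only the layer $k=n+1$ survives. There $I^{n}E(\rho)$ with $\rho$ a one–dimensional representation of $\GL_0$ is the Gelfand–Graev representation $\Gamma_{n+1}$ of $P_{n+1}$, and by Lemma~\ref{bottom fin} together with the additivity of $\Wh$ it occurs with multiplicity $\dim D^{n+1}(\pi)=\Wh(\pi)$. Since $P_{n+1}=\GL_n\cdot N_{n+1}$ with $\GL_n\cap N_{n+1}=N_n$, there is a single double coset and $\Gamma_{n+1}|_{\GL_n}\cong\Gamma_n$, the Gelfand–Graev representation of $\GL_n$. By Shapiro's lemma for Schwartz homology combined with the vanishing of the higher twisted Whittaker homology of the Casselman–Wallach representation $\tau^\vee$ — obtained by iterating Proposition~\ref{homology prop} up the tower $N_n=V_2\rtimes\cdots\rtimes V_n$, the $V_j$-factors acting through non-trivial characters — one gets $\EP_{\GL_n}(\Gamma_n,\tau)=\dim\rmh_0(\fkn_n,\tau^\vee\otimes\theta)=\dim\Psi_0^{n-1}(\tau^\vee)=\Wh(\tau)$, the last equality by Lemma~\ref{dual-wh}. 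Summing over the layers therefore yields $\EP_{\GL_n}(\pi,\tau)=\Wh(\pi)\cdot\Wh(\tau)$; homological finiteness of $\pi$ with respect to $\tau$ is part of the same bookkeeping: each of the finitely many layers $\sigma_a/\sigma_{a+1}$ has $\rmh^{\CS}_i(\GL_n,(\sigma_a/\sigma_{a+1})\widehat\otimes\tau^\vee)$ finite-dimensional (it is an $\Ext$ between Casselman–Wallach representations via Proposition~\ref{EP-prop}(3), using Theorem~\ref{haus thm}), and the long exact sequences of the finite outer filtration with Proposition~\ref{EP-prop}(1) propagate finiteness and additivity of $\EP$ to $\pi$ itself.

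I expect the main obstacle to be the single-layer Mackey computation: pinning down the $\GL_n$-orbits on $H\backslash P_{n+1}$, controlling the resulting (Hochschild–Serre and Čech) spectral sequences, and carrying out the substitution so that the layer's Euler characteristic is genuinely \emph{reduced to}, not merely bounded by, an $\EP$ over the smaller group $\GL_{n+1-k}$. A secondary difficulty, mostly topological, is making the passage ``$\sigma_a/\sigma_{a+1}\simeq I^{k_a-1}E(B^{k_a}(\pi))$'' precise at the level of Fréchet spaces, i.e.\ checking that $\widehat\otimes\,\tau^\vee$ and the homology functors commute with the relevant $\varprojlim$'s; this is exactly where Lemma~\ref{ext_Hausd}, Lemma~\ref{spec-haus}, and the Mittag–Leffler Lemma~\ref{ML-lem} are designed to be applied.
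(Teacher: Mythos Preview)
Your broad strategy matches the paper's: use the BZ filtration of $\pi|_{P_{n+1}}$, reduce each piece via Shapiro and a Hochschild--Serre spectral sequence to an $\EP$ over a smaller $\GL$, kill the non-bottom layers by the non-compact center (Proposition~\ref{EP-prop}(4)), and compute the Gelfand--Graev layer as $\Wh(\pi)\cdot\Wh(\tau)$. But there is a genuine gap and a misallocation of tools.

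\textbf{The gap.} Your identification $\sigma_a/\sigma_{a+1}\simeq I^{k_a-1}E(B^{k_a}(\pi))$ is not established anywhere, and you should not expect it to hold in this Archimedean setting (the paper stresses that its filtration is less canonical than the $p$-adic one). You flag this as a ``mostly topological'' difficulty, but it is more than that, and the paper does \emph{not} use any such identification. Instead, it works directly with the infinite inner filtration $\sigma_{i,j}/\sigma_{i,j+1}\cong I^{k_i}E(\pi_{i,j})$ and exploits the central-character growth condition built into Definition~\ref{def-BZ}: since each $L^qB_-^{k_i}(\tau^\vee)$ is Casselman--Wallach by Theorem~\ref{haus thm}, it carries only finitely many generalized central characters, so the spectral sequence
\[
\rmh_p^{\CS}\bigl(\GL_{n-k_i},\pi_{i,j}\widehat\otimes L^qB_-^{k_i}(\tau^\vee)\bigr)\;\Rightarrow\;\rmh_{p+q}^{\CS}\bigl(\GL_n,\sigma_{i,j}/\sigma_{i,j+1}\widehat\otimes\tau^\vee\bigr)
\]
collapses to zero for all but finitely many $j$ by central-character mismatch. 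That is the finiteness input Lemma~\ref{ML-lem} needs to pass through the inverse limit; you located the Mittag--Leffler lemma but not the reason its stationarity hypothesis is satisfied. (The same mechanism is repeated for level $\le r$ filtrations, by induction on $r$.)

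\textbf{The misallocation.} The single-layer Mackey step is simpler than you expect: restricting $I^{k_i}E(\pi_{i,j})$ from $P_{n+1}$ to $\GL_n$ gives a \emph{single} orbit, so it is already $\SInd_{H_{n,k_i}}^{\GL_n}$ and Shapiro plus the spectral sequence above are immediate --- no orbit enumeration, no substitution, no switching. Those two techniques are used in the paper only for Theorem~\ref{ext-van-thm} (higher-Ext vanishing for generic $\pi,\tau$), not for the EP formula. Likewise, for the Gelfand--Graev layer the vanishing $L^q\Psi^{n-1}(\tau^\vee)=0$ for $q>0$ comes from Corollary~\ref{vanishing} applied to the BZ filtration of $\tau^\vee$, not from iterating Proposition~\ref{homology prop}, which only treats representations already of the form $I(\sigma)$.
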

\begin{proof}
    By Theorem~\ref{have_B-Z_fil}, it suffices to prove the theorem for $M_{n+1}$-representation $\pi$ with BZ-filtration. We prove by induction on the level of BZ-filtration. Following the notation of Definition~\ref{def-BZ}, when $\pi$ admits a level $\leq 1$ BZ-filtration
    \[
     \pi=\sigma_0\supset \dots \supset \sigma_m\supset 0.
    \]
    By Proposition~\ref{EP-prop} (1), it suffices to prove $\sigma_i/\sigma_{i+1}$ satisfies the homological finiteness condition with respect to $\tau$ and 
    \[
    \EP_{\GL_n}(\sigma_i/\sigma_{i+1},\tau)=\Wh(\sigma_i/\sigma_{i+1})\cdot \Wh(\tau)
    \]
    for $0\leq i\leq m-1$.
    
    \textbf{Case 1.} ($k_i\neq n$). By Lemma~\ref{ML-lem} and Corollary~\ref{vanishing}, we have
    \[
    \rmh_0^{\CS}( N_n,\sigma_i/\sigma_{i+1}\otimes \theta^{-1})\simeq  \mathop{\varprojlim}\limits_j\rmh_0^{\CS}( N_n,  \sigma_i/\sigma_{i,j}\otimes \theta^{-1})=0.
    \]
    On the other hand,
    \[
    \rmh_l^{\CS}(\GL_n, \sigma_i/\sigma_{i+1}\widehat{\otimes}\tau^{\vee})\simeq \mathop{\varprojlim}\limits_j\rmh_l^{\CS}(\GL_n, \sigma_i/\sigma_{i,j}\widehat{\otimes}\tau^{\vee}).
    \]
   Note that 
   \[
   \rmh_l^{\CS}(\GL_n, \sigma_{i,j}/\sigma_{i,j+1}\widehat{\otimes}\tau^{\vee})\simeq \rmh_l^{\CS}(\GL_n,I^{k_i}E(\pi_{i,j})\widehat{\otimes}\tau^{\vee})\simeq \rmh_l^{\CS}(H_{n,k_i},\pi_{i,j}\widehat{\otimes}( \psi_{n,k_i}\otimes \tau^{\vee}\otimes \delta_{H_{n,k_i}}^{-1/2})),
   \]
   where the second isomorphism comes from Mackey isomorphism and Shapiro's lemma. Consequently, by spectral sequence, we have
   \[
   \rmh_p^S(\GL_{n-k_i},\pi_{i,j}\widehat{\otimes} L^qB_{-}^{k_i}(\tau^{\vee}))\Rightarrow \rmh_{p+q}^{\CS}\left(\GL_n, (\sigma_{i,j}/\sigma_{i,j+1})\widehat{\otimes}\tau^{\vee}\right).
   \]
   By Theorem~\ref{haus thm}, $L^qB_{-}^{k_i}(\tau^{\vee})$ is a Casselman-Wallach representation of $\GL_{n-k_i}$. Hence, by the central character condition on BZ-filtration, 
   \[ \rmh_l^{\CS}(\GL_n, \sigma_{i,j}/\sigma_{i,j+1}\widehat{\otimes}\tau^{\vee})=0,\forall l\in \BZ
   \]
   for all sufficiently large $j$. Therefore, $\sigma_i/\sigma_{i+1}$ satisfies the homological finiteness condition with respect to $\tau$ by Proposition~\ref{EP-prop} (3). Moreover, 
   \[
   \EP_{\GL_n}(\sigma_{i,j}/\sigma_{i,j+1},\tau)=\sum_q (-1)^q\EP_{\GL_{n-k_i}}(\pi_{i,j},L^qB_{-}^{k_i}(\tau^{\vee})^{\vee})=0
   \]
   since $\GL_{n-k_i}$ has non-compact center at this time. Thus, by additive property~\ref{EP-prop} (1), we have $\EP_{\GL_n}(\sigma_i/\sigma_{i+1},\tau)=0$.
   
   \textbf{Case 2.} ($k_i=n$). By Lemma~\ref{bottom fin}, $\sigma_i/\sigma_{i+1}$ has finite filtration 
   \[
   \sigma_i=\sigma_{i,0}\supset \dots \supset \sigma_{i,s}=\sigma_{i+1}.
   \]
   As $\Wh(I^nE(\BC))=1$ by Proposition~\ref{homology prop}, it suffices to prove
   \[
   \EP_{\GL_n}(\sigma_{i,j}/\sigma_{i,j+1},\tau)=\Wh(\tau).
   \]
   By similar calculation in case (1), since $L^q\Psi^{n-1}(\tau^{\vee})=0$ for $q>0$, we have 
   \[
    \EP_{\GL_n}(\sigma_{i,j}/\sigma_{i,j+1},\tau)=\EP_{\GL_{0}}(\BC,\Psi^{n-1}(\tau^{\vee})^{\vee})= \dim (\Psi^{n-1}(\tau^{\vee})).
   \]
   Thus the result follows from the fact that
   \[
   \dim (\Psi^{n-1}(\tau^{\vee}))=\dim (\Psi^{n-1}(\tau))=\Wh(\tau).
   \]
    by Lemma~\ref{dual-wh}.
    
Assume that the statement holds for every $M_n$-representation with a BZ-filtration of level $\leq r$. Let $\pi$ have a BZ-filtration of level $\leq r+1$:
\[
\pi = \sigma_0 \supset \dots \supset \sigma_m \supset 0.
\]
By the Koszul resolution, there exists a positive integer such that, for all $q > N$, $L^q B^{k}_{-} (\tau^{\vee}) = 0$ for every integer $0 \leq k \leq n$. Therefore, the generalized central characters in $L^q B^{k}_{-} (\tau^{\vee})$ for all $q$ and $k$ form a finite set $\mathscr{S}_{\tau}$. Define
\[
c := \min \{\mathrm{Re}\,\chi \mid \chi \in \mathscr{S}_{\tau}\}.
\]
By the central character condition in the definition of the BZ-filtration,  there exists some positive integer $j_0$ such that,  for every integer $0 \leq i \leq m-1$ and all $j \geq j_0$,
\[
\min \Omega_{i,j} > -c,
\]
where $\Omega_{i,j}$ is  as Definition \ref{def-BZ}.
 Consequently, by a similar argument as in Case (1), we have
\[
\rmh_l^{\CS}(\mathrm{GL}_n, \sigma_{i,j} / \sigma_{i,j+1} \widehat{\otimes} \tau^{\vee}) = 0,
\]
for all integers $l$, $0 \leq i \leq m-1$, and $j \geq j_0$. Hence, the homological finiteness and Euler-Poincar\'e characteristic formula follow from the induction hypothesis and additivity property~\ref{EP-prop} (1). 

\end{proof}
\begin{remark}
In \cite[Conjecture 7.6]{Wan}, Chen Wan proposes a conjectural Euler-Poincar\'e characteristic formula in terms of geometric multiplicity. 

Since both the Euler-Poincar\'e characteristic and the geometric multiplicity are additive with respect to representations, Theorem~\ref{EP-GL} confirms the conjecture for the pair $(\mathrm{GL}_{n+1}(\mathbf{k}), \mathrm{GL}_n(\mathbf{k}))$ when $\mathbf{k}$ is an Archimedean local field.

\end{remark}

\subsection{Higher Extension vanishing for generic representations}
By comparing infinitesimal characters, we have the following higher extension vanishing result for generic representations, see \cite{CSa21} for the $p$-adic analogy. 
\begin{theorem}\label{ext-van-thm}
    Let $\pi$ and $\tau$ be irreducible generic representations of $\GL_{n+1}$ and $\GL_n$, respectively. Then
    \[
    \Ext^i_{\GL_n}(\pi\widehat{\otimes}\tau^{\vee},\BC)=0,i>0.
    \]
\end{theorem}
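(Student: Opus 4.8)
The plan is to combine the Euler-Poincar\'e characteristic formula (Theorem~\ref{EP-GL}) with the degree-zero Rankin-Selberg identity and a vanishing input coming from the opposite Bernstein-Zelevinsky filtration and a careful bookkeeping of infinitesimal characters. First I would reduce, via the local Langlands correspondence and the classification of irreducible standard modules in Section~\ref{irr_stand}, to the case where $\pi$ and $\tau$ are standard modules (for generic representations the standard module is irreducible, so $\pi$ and $\tau$ are themselves products of characters and relative discrete series with ordered real parts). By additivity of $\EP$ (Proposition~\ref{EP-prop}) and of Whittaker multiplicities, and since $\Wh(\pi)=\Wh(\tau)=1$ for generic $\pi,\tau$, Theorem~\ref{EP-GL} gives
\[
\sum_{i\ge 0}(-1)^i\dim\Ext^i_{\GL_n}(\pi\widehat{\otimes}\tau^{\vee},\BC)=1,
\]
while Rankin-Selberg theory gives $\dim\Ext^0_{\GL_n}(\pi\widehat{\otimes}\tau^{\vee},\BC)=\dim\Hom_{\GL_n}(\pi,\tau)=1$. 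Hence it suffices to prove $\sum_{i\ge 1}(-1)^i\dim\Ext^i=0$, and in fact I would aim to show each $\Ext^i$ vanishes for $i>0$ directly by a spectral-sequence/infinitesimal-character argument rather than only the alternating sum.

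The core mechanism: realize $\pi|_{P_{n+1}}$ through (the opposite of) its Bernstein-Zelevinsky filtration and compute $\rmh^{\CS}_i(\GL_n,\pi\widehat{\otimes}\tau^{\vee})$ layer by layer, exactly as in the proof of Theorem~\ref{EP-GL}. For each successive quotient $I^{k-1}E(\pi_{i,j})$ with $k\ne n+1$ one has the spectral sequence
\[
\rmh^{\CS}_p(\GL_{n-k+1},\pi_{i,j}\widehat{\otimes}L^qB^{k-1}_{-}(\tau^{\vee}))\Rightarrow \rmh^{\CS}_{p+q}(\GL_n,\,\sigma_{i,j}/\sigma_{i,j+1}\widehat{\otimes}\tau^{\vee}),
\]
where each $L^qB^{k-1}_{-}(\tau^{\vee})$ is Casselman-Wallach by Theorem~\ref{haus thm}. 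The plan is to show that for generic $\tau$ the infinitesimal characters appearing in the $\pi_{i,j}$-side and in the $L^qB^{k-1}_{-}(\tau^{\vee})$-side never match in positive homological degree, using Lemma~\ref{irr-lem} (the irreducibility/genericity constraints on real parts $t_j\pm\tfrac{k_j}{2}$) together with Theorem~\ref{central_char_of_fil}, which pins down precisely the infinitesimal characters of the subquotients of the BZ-filtration of a standard module. Concretely: the generic hypothesis forces strict inequalities among the ``segment endpoints'' of $\pi$ and $\tau$; the derivative functors $\Psi,\Phi_0$ and $B^k$ shift these endpoints by explicit half-integers; and a $\GL_m$ homology $\rmh^{\CS}_p(\GL_m,\alpha\widehat{\otimes}\beta)$ with $p>0$ can be nonzero only when the infinitesimal characters of $\alpha$ and $\beta^\vee$ collide (via Proposition~\ref{EP-prop}(3), which identifies it with $\Ext^p_{\fkg,K}(\alpha^K,\beta^K)$, and this Ext vanishes unless $\alpha,\beta^\vee$ share an infinitesimal character). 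Chasing these shifts shows every higher term is killed; only the $k=n+1$ bottom layer with $p=q=0$ survives, recovering the $1$.

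The main obstacle I anticipate is the combinatorial/infinitesimal-character bookkeeping in the last step: unlike the $p$-adic case there is no Zelevinsky classification, so one must argue directly with the four families of unitary-type/standard building blocks over $\BR$ and $\BC$, track how $B^k$ and the $\mathrm{Sym}^j$-twists from Borel's lemma move the Harish-Chandra parameters, and verify that the strict inequalities from Lemma~\ref{irr-lem} are genuinely preserved under all the functors involved (including the $\det$-twists by $|\det|^{\pm 1/2}$ that appear in Definition~\ref{def-der-gl} and in~\eqref{closed_cal}). A secondary subtlety is handling the relative discrete series $D_{k,s}$ of $\GL_2(\BR)$: here one should use the explicit BZ-filtration~\eqref{DS_pos_fil} of $D_k$ (and its opposite version) recorded in Section~\ref{LLC-sec}, so that the endpoint-shift analysis applies uniformly. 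Once the non-collision of infinitesimal characters is established, the vanishing $\Ext^i_{\GL_n}(\pi\widehat{\otimes}\tau^{\vee},\BC)=0$ for $i>0$ follows formally from the layerwise computation together with Lemma~\ref{ML-lem} and the finiteness already proven in Theorem~\ref{EP-GL}.
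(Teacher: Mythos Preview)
Your framework---filter $\pi|_{P_{n+1}}$ via the BZ-filtration, compute $\rmh^{\CS}_i$ layer by layer through the spectral sequence, and kill non-bottom layers by infinitesimal character---is the same scaffolding the paper uses, but there is a genuine gap in the core claim. You assert that ``the generic hypothesis forces strict inequalities among the segment endpoints of $\pi$ and $\tau$'' so that infinitesimal characters never collide outside the bottom layer. This is false: genericity of $\pi$ and of $\tau$ constrains the segments \emph{within} each representation separately (irreducibility of each standard module), but imposes no relation \emph{between} the segments of $\pi$ and those of $\tau$. Nothing prevents a character $\chi$ in $\pi$ from being positively linked to a character $\xi$ in $\tau$ (i.e.\ $\xi=\chi|\det|_{\mathbf{k}}^{1/2}(\det)^r$ for some $r\ge 0$), and when this happens the successive quotient of the BZ-filtration arising from $\chi$ will have infinitesimal character matching a piece of $B^k_{-}(\tau^{\vee})$, so the spectral-sequence term $\rmh^{\CS}_p(\GL_{n-k},\pi_{i,j}\widehat{\otimes}L^qB^{k}_{-}(\tau^{\vee}))$ need not vanish for $p>0$. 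Your bookkeeping cannot close this hole by tracking shifts alone.

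The paper handles exactly these collisions by two additional mechanisms you do not invoke. The first is a \emph{substitution} technique: write $\pi=\pi_1\times\chi$ with $\chi$ the offending linked character, replace $\chi$ by a generic $\tilde{\chi}$ not linked to anything in $\tau$, and observe that the open-orbit piece $\pi_o$ of $\pi|_{\ov{P_{n+1}}}$ coincides with $\tilde{\pi}_o$; vanishing for $\tilde{\pi}$ (by induction on the number of linked characters) then gives vanishing for $\pi_o$, while $\pi_c$ is handled by direct infinitesimal-character comparison since $\chi$ being positively linked forces it \emph{not} to be negatively linked (here the irreducibility of $\tau$ is used). The second is the \emph{switching lemma} (Lemma~\ref{swit-lem}), which exchanges the roles of $\pi$ and $\tau$ up to harmless twists; this is essential for reducing discrete-series factors on the $\tau$-side to characters and for the inductive elimination of discrete series in Step~3. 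The constraints in Lemma~\ref{irr-lem} are used not to prevent collisions outright but to guarantee that when a discrete series in $\pi$ is linked on one side, it is \emph{unlinked} on the other, so that one of the two BZ-filtrations (direct or opposite) admits a valid substitution. Without these two ideas your argument stalls at the very first linked pair.
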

We first sketch the main idea before going to the detailed proof.
\begin{definition}
    Let $\chi$ and $\chi^{\flat}$ be two characters of $\GL_1(\mathbf{k})$, we call \begin{enumerate}
        \item $\chi$ is \textbf{positively linked \textit{to}} $\chi^{\flat}$ ( or $\chi^{\flat}$ is \textbf{positively linked \textit{by}} $\chi$ ) if $\chi^{\flat}=\chi |\det|_{\mathbf{k}}^{1/2}\cdot  (\det)^{r_1}\cdot (\overline{\det})^{r_2}$ for some non-negative integers $r_1,r_2$;
        \item $\chi$ is \textbf{negatively linked \textit{to}} $\chi^{\flat}$ ( or $\chi^{\flat}$ is \textbf{negatively linked \textit{by}} $\chi$ ) if $\chi^{\flat}=\chi |\det|_{\mathbf{k}}^{-1/2} \cdot  (\det)^{r_1}\cdot (\overline{\det})^{r_2}$ for some non-positive integers $r_1,r_2$;
    \end{enumerate}

We say that $\chi$ and $\chi^{\flat}$ are \textbf{linked with} each other if  $\chi$ is positively or negatively linked to $\chi^{\flat}$. 

We say  that $\chi$ and $\chi^{\flat}$ are \textbf{related with} each other if the parabolic induction $\chi\times \chi^{\flat}$ of $\GL_2$ is reducible. 
\end{definition}

\begin{remark}
The definition of ``link" comes from the (opposite) BZ-filtrations, see (i) of Theorem \ref{central_char_of_fil}. 

\end{remark}
Our proof proceeds in three steps. 
\begin{itemize}
    \item First, we prove the statement when both $\pi$ and $\tau$ are products of characters, which serves as a starting point for the inductive argument in the next step. We prove by induction on $m$, the number of characters in $\pi$ that are linked with some characters in $\tau$. When $m=0$, the result follows from extension vanishing for the Gelfand-Graev representation and comparing the infinitesimal character of each non-bottom layer term in the BZ-filtration. For $m>0$, it follows from the ``substitution" technique.   
    \item Next, we prove the statement when only one of $\pi$ or $\tau$ contains some relative discrete series of $\GL_2(\BR)$. This is accomplished by the observation that a character of $\tau$ cannot both (i) positively linked to some characters or some discrete series of $\pi$, and (ii) negatively linked to some characters or some discrete series of $\pi$. Otherwise, it contradicts the irreducibility of $\pi$  (see Definition~\ref{DS_link}). The statement then follows from a substitution argument that replaces discrete series with characters.
    \item Finally, we prove the statement in full generality. To do so, we use the following switching lemma to swap the positions of $\pi$ and $\tau$, and then apply a substitution argument to replace the discrete series appearing in $\pi$ and $\tau$ with characters.
\end{itemize}

\begin{lemma}[Switching lemma]\label{swit-lem}
    Let $\pi$ be an irreducible generic representation of $\GL_{n+1}$ and $\tau$ be an irreducible generic representation of $\GL_n$. Then there exists a countable subset $\mathcal{D}\subset \sqrt{-1} \BR$ such that for all $s_1,s_2\in \sqrt{-1} \BR\backslash \mathcal{D}$, $\tau^{\vee}\times \chi_{0,s_1}\times \chi_{0,s_2}$ is irreducible and 
    \[
    \Ext^i_{\GL_n}(\pi\widehat{\otimes} \tau^{\vee},\BC)\simeq \Ext^i_{\GL_{n+1}}((\tau^{\vee}\times \chi_{0,s_1}\times \chi_{0,s_2})\widehat{\otimes} \pi,\BC)
    \]
    for every integer $i$.
\end{lemma}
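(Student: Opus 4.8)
The plan is to pass from the restriction problem for $(\GL_{n+1},\GL_n)$ to the restriction problem for $(\GL_{n+2},\GL_{n+1})$ in a way that swaps the roles of $\pi$ and $\tau$, using the Euler--Poincar\'e additivity and the homological branching machinery already in place. First I would record that $\tau^\vee\times\chi_{0,s_1}\times\chi_{0,s_2}$ is irreducible for $s_1,s_2$ outside a countable set: by the irreducibility criteria of Section~\ref{irr_stand}, the locus of $(s_1,s_2)\in(\sqrt{-1}\BR)^2$ where reducibility can occur is cut out by countably many affine conditions of the form $s_i - s_j \in (\text{half-integer}) + \BZ$ or $|s_i - t|\in(\text{half-integer})+\BZ_{>0}$ with $t$ ranging over the (finite) data of $\tau$; removing the corresponding countable set $\mathrm{Ex}\subset\sqrt{-1}\BR$ of ``bad'' values of each $s_i$ suffices. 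Note also that both $\chi_{0,s_i}$ are unitary characters, so $\tau^\vee\times\chi_{0,s_1}\times\chi_{0,s_2}$ remains generic.

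The core is the following chain of isomorphisms. On one side, by Shapiro's lemma and Frobenius reciprocity for Schwartz induction (as used throughout Section~\ref{EP_sec}),
\[
\Ext^i_{\GL_{n+1}}\big((\tau^\vee\times\chi_{0,s_1}\times\chi_{0,s_2})\widehat{\otimes}\pi,\BC\big)
\]
can be computed by restricting $\tau^\vee\times\chi_{0,s_1}\times\chi_{0,s_2}$ (a representation of $\GL_{n+2}$) to $\GL_{n+1}$ and pairing with $\pi$; equivalently, via the Bernstein--Zelevinsky filtration of $(\tau^\vee\times\chi_{0,s_1}\times\chi_{0,s_2})|_{P_{n+2}}$ and the higher derivatives $L^qB^k$. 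By Theorem~\ref{ind_BZ} (applied to $\tau^\vee\times(\chi_{0,s_1}\times\chi_{0,s_2})$) and the Leibniz-type computation of highest derivatives, the bottom layer of this filtration is governed by $D^{n+1}$ of the induced module, while the non-bottom layers contribute geometric terms involving $\GL_{\ge 1}$ factors with non-compact center, hence vanish in the Euler--Poincar\'e sense. The point is that the ``surviving'' piece reproduces exactly $\Ext^i_{\GL_n}(\pi\widehat{\otimes}\tau^\vee,\BC)$: the two extra characters $\chi_{0,s_1},\chi_{0,s_2}$ are arranged so that $D(\chi_{0,s_1}\times\chi_{0,s_2})$ contributes a one-dimensional Whittaker-type factor, collapsing the $\GL_{n+2}\supset\GL_{n+1}$ problem back to the $\GL_{n+1}\supset\GL_n$ problem with $\pi$ and $\tau$ interchanged. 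Concretely I would run the same Borel-filtration/open-closed-orbit analysis as in the proof of Theorem~\ref{EP-GL}, but track the individual $\Ext^i$ groups rather than only their alternating sum, using that for generic $\pi,\tau$ the relevant infinitesimal characters in the non-surviving layers never match (so those layers contribute zero in every degree, not just in the alternating sum).

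The main obstacle I anticipate is upgrading the Euler--Poincar\'e-level identity to a degreewise isomorphism. The additivity arguments of Proposition~\ref{EP-prop} only give equalities of alternating sums; to get $\Ext^i\simeq\Ext^i$ for each $i$ one must show that the spectral sequences attached to the Borel filtrations of the two orbit pieces degenerate appropriately, which is where the genericity hypothesis and the choice of $s_1,s_2\in\sqrt{-1}\BR\setminus\mathrm{Ex}$ do real work: the unitarity of $\chi_{0,s_i}$ and the irreducibility (hence infinitesimal-character rigidity) force all ``cross terms'' in the spectral sequence to vanish, leaving a single non-zero row whose total cohomology is the desired $\Ext^\bullet_{\GL_n}(\pi\widehat{\otimes}\tau^\vee,\BC)$. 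A secondary technical point is ensuring the countable exceptional set is genuinely only countable --- this follows because $\tau$ has finitely many Langlands data and each reducibility/infinitesimal-coincidence condition excludes at most countably many $s_i$. Once these two points are settled, the lemma follows by assembling the isomorphisms above.
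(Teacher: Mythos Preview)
Your proposal is essentially correct and matches the paper's approach: the paper simply cites the irreducibility criteria of Section~\ref{irr_stand} together with \cite[Corollary 4.4]{CC25}, remarking that the argument there ``only involves the infinitesimal character'' and therefore upgrades from $\Hom$ to every $\Ext^i$ --- exactly the mechanism you describe. One organizational comment: what you flag as the ``main obstacle'' (upgrading the Euler--Poincar\'e identity to a degreewise isomorphism) is in fact the entire point and is not an obstacle at all, since an infinitesimal-character mismatch kills $\Ext^i$ in \emph{every} degree; you already say this a few lines later, so the paragraph framing it as a difficulty to be overcome is misleading and should be rewritten as the main observation.
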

\begin{proof}
    The proof follows from the irreducibility criterion in Section~\ref{irr_stand} and the proof of \cite[Corollary 4.4]{CC25}. Note that the proof of \cite[Corollary 4.4]{CC25} only involves the infinitesimal character, and this approach is valid for all extension groups.
\end{proof}

We need one more definition of the linking condition for relative discrete series. This condition is defined according to the BZ-filtration and opposite BZ-filtration of discrete series in Section~\ref{LLC-sec}.
\begin{definition}\label{DS_link}
    Let $\chi$ be a character of $\GL_1(\BR)$, and let $D_{k,t}$ be the relative discrete series defined in Section~\ref{LLC-sec}. We say that 
    \begin{enumerate}
        \item  its upper character is positively linked to $\chi$ if $\chi= \chi_{\epsilon_i,t+\frac{k+1}{2}+i}$ for some non-negative integer $i$;
        \item its upper character is negatively linked to $\chi$ if $\chi= \chi_{\epsilon_{i+k},t-\frac{k+1}{2}+i}$ for some non-positive integer $i$;
        \item  its lower character is positively linked to $\chi$ if $\chi= \chi_{\epsilon_{i+1},t+\frac{k+1}{2}+i}$ for some non-negative integer $i$;
        \item its lower character is negatively linked to $\chi$ if $\chi= \chi_{\epsilon_{i+k+1},t-\frac{k+1}{2}+i}$ for some non-positive integer $i$;
    \end{enumerate}
    When (1) or (3) happens, we say that  the discrete series is positively linked to $\chi$, or $\chi$ is negatively linked to the discrete series. When (2) or (4) happens, we say that the discrete series is negatively linked to $\chi$, or $\chi$ is  positively linked to the discrete series.

     We say  that $\chi$ and $D_{k,t}$ are \textbf{related with} each other if the parabolic induction $\chi\times D_{k,t}$ of $\GL_3$ is reducible. 
\end{definition}
\begin{remark}\label{parity-rem}
    By the above definition, we observe a simple but useful fact about parity.
    \begin{itemize}
        \item The upper character and lower character of a discrete series cannot be positively or negatively linked to the same character.
    \end{itemize}
\end{remark}
An essential distinction between generic and non-generic representations lies in the BZ-filtration: for generic representations, the bottom layer is the Gelfand-Graev representation, whose higher extension groups always vanish. Let $G$ be a real reductive group, and the Gelfand-Graev representation is defined as in Section~\ref{EP_sec}.

\begin{theorem}\label{GG-van}
    For every Casselman-Wallach representation $\pi$ of $G$, we have
    \[
    \Ext^i_G(\pi\widehat{\otimes}\SInd_{U^0}^G(\theta),\BC)=0 
    \text{ for } i\geq 1.
    \]
\end{theorem}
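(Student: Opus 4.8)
The plan is to mimic the mechanism of Theorem~\ref{tw-homo-thm}: transport the problem to the unipotent radical $U^0$, reduce to a minimal principal series, and then run the geometry of the Bruhat decomposition, which is exactly the ``uniqueness of Whittaker functionals'' argument in homological form. By the Mackey isomorphism for Schwartz induction one has $\pi\widehat{\otimes}\SInd_{U^0}^G(\theta)\simeq\SInd_{U^0}^G(\pi|_{U^0}\otimes\theta)$, and by Shapiro's lemma for Schwartz homology (as invoked in the proof of Theorem~\ref{EP-GL}) together with the comparison theorem identifying the Schwartz homology of $U^0$ with the Lie algebra homology $\rmh_i(\fku^0,-)$ (the maximal compact of the simply connected unipotent group $U^0$ being trivial), one gets a natural topological isomorphism $\rmh^{\CS}_i(G,\pi\widehat{\otimes}\SInd_{U^0}^G(\theta))\simeq\rmh_i(\fku^0,\pi|_{U^0}\otimes\theta)$ for every $i$. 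Since $\pi\widehat{\otimes}\SInd_{U^0}^G(\theta)$ is nuclear, it therefore suffices to show $\rmh_i(\fku^0,\pi|_{U^0}\otimes\theta)=0$ for $i\ge1$; dualizing then gives $\Ext^i_G(\pi\widehat{\otimes}\SInd_{U^0}^G(\theta),\BC)\simeq\rmh^{\CS}_i(G,\pi\widehat{\otimes}\SInd_{U^0}^G(\theta))'=0$ for $i\ge1$.

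Next I would reduce to a minimal principal series. By Casselman's subrepresentation theorem, $\pi$ embeds into $I:=\Ind_{P^0}^G(\sigma)$ for some finite dimensional representation $\sigma$ of the minimal Levi $L^0$, giving a short exact sequence $0\to\pi\to I\to Q\to0$ with $Q$ Casselman-Wallach. Tensoring with $\theta$ and using the long exact sequence of $\rmh_\bullet(\fku^0,-)$ and the vanishing $\rmh_j(\fku^0,-)=0$ for $j>\dim\fku^0$ (the Koszul complex has length $\dim\fku^0$), a downward induction on $i$ reduces both the vanishing for $i\ge1$ and the finite dimensionality of $\rmh_0$ to the case $\pi=I$: in the inductive step $\rmh_{i+1}(\fku^0,Q\otimes\theta)$ vanishes by the induction hypothesis and $\rmh_i(\fku^0,I\otimes\theta)$ by the principal series case, squeezing $\rmh_i(\fku^0,\pi\otimes\theta)$ between zeros, and for $i=0$ the same sequence exhibits $\rmh_0(\fku^0,\pi\otimes\theta)$ as a subspace of the finite dimensional $\rmh_0(\fku^0,I\otimes\theta)$.

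Then comes the heart of the argument. Realize $I|_{U^0}$ as the Schwartz sections of a tempered bundle $\CE$ over $P^0\backslash G$ with $U^0$ acting by right translation. The Bruhat stratification $P^0\backslash G=\bigsqcup_{w\in W}P^0\backslash P^0wU^0$ is a finite $U^0$-stable stratification by locally closed Nash submanifolds, the stratum indexed by $w$ being an orbit isomorphic to $U^0_w\backslash U^0$ with $U^0_w:=U^0\cap w^{-1}P^0w$. By the co-sheaf property of Schwartz functions (Proposition~\ref{co-sheaf prop}) this produces a finite $U^0$-stable filtration of $I|_{U^0}$ whose successive quotients are $\SInd_{U^0_w}^{U^0}(F_w)$, where $F_w$ is the fibre of $\CE$ over $[w]$; a short computation shows $F_w|_{U^0_w}$ is the trivial character of $U^0_w$, because $U^0_w$ is carried into the unipotent radical of $w^{-1}P^0w$, on which the inducing character is trivial. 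Twisting by $\theta$ and applying Shapiro's lemma again, the stratum $w$ contributes $\rmh_\bullet(\fku^0_w,\BC_{\theta|_{U^0_w}})$ to $\rmh_\bullet(\fku^0,I|_{U^0}\otimes\theta)$. For $w=w_0$ the longest Weyl element, $U^0_{w_0}=\{1\}$ and the contribution is the finite dimensional space $F_{w_0}$ in degree $0$. For $w\ne w_0$ there is a simple root $\alpha$ with $w\alpha>0$, so the simple root subgroup $U_\alpha$ lies in $U^0_w$ and $\theta|_{U^0_w}$ is nontrivial by nondegeneracy; a nontrivial character of a finite dimensional nilpotent Lie algebra annihilates all of its homology (Hochschild--Serre down to the abelianization, where one has the Koszul complex of contraction against a nonzero covector), so these strata contribute zero in every degree. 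Hence only $w_0$ survives, and Lemma~\ref{fil-lem} gives $\rmh_i(\fku^0,I|_{U^0}\otimes\theta)=0$ for $i\ge1$ with $\rmh_0(\fku^0,I|_{U^0}\otimes\theta)\cong F_{w_0}$ finite dimensional, completing the reduction and hence the proof.

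The main obstacle I expect lies in the third step: setting up the $U^0$-equivariant Bruhat filtration of $I|_{U^0}$ rigorously in the Schwartz/Nash framework (identifying the stratum sections with Schwartz inductions and keeping track of the tempered bundle and the trivial modular characters), together with the clean verification of the geometric point that $\theta$ restricts nontrivially to $U^0\cap w^{-1}P^0w$ for all $w\ne w_0$ and the ensuing Lie algebra homology vanishing. Everything else --- the Mackey isomorphism, Shapiro's lemma, the comparison theorem, Casselman's subrepresentation theorem, and the long exact sequences --- is bookkeeping with tools already recorded in the paper.
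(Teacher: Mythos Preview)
The paper's own proof is a one-line citation of \cite[Theorem 8.2]{CHM}; your argument is essentially a sketch of how that theorem is proved, so the route is the same in spirit. The reduction to $\rmh_i(\fku^0,\pi\otimes\theta)$ via Mackey and Shapiro is fine, and the downward induction to principal series is standard.

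There is, however, a genuine mis-statement in your third step. The Bruhat stratification and the co-sheaf property do \emph{not} give a finite filtration of $I|_{U^0}$ with successive quotients $\SInd_{U^0_w}^{U^0}(F_w)$. What they give is a finite filtration whose successive quotients are the relative Schwartz sections $\CS_{C_w}(X,\CE)$ in the sense of Proposition~\ref{co-sheaf prop}(2). Only for the open cell $w=w_0$ does this coincide with $\SInd_{\{1\}}^{U^0}(F_{w_0})$. For every other $w$ you must invoke Borel's lemma: each $\CS_{C_w}(X,\CE)$ carries a further \emph{infinite} decreasing filtration whose $j$-th successive quotient is $\SInd_{U^0_w}^{U^0}\bigl(F_w\otimes\Sym^j(\mathfrak{n}_w)^\vee\bigr)$, where $\mathfrak{n}_w$ is the normal space at $[w]$. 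This is exactly the pattern used throughout the paper (e.g.\ in the proofs of Theorem~\ref{ind_BZ}, Lemma~\ref{open_orbit}, and Proposition~\ref{rest_max}), and it is the content of the Bruhat filtration in \cite{CHM} that you are implicitly appealing to.

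The repair is not difficult and the overall strategy survives: for $w\neq w_0$ the stabilizer $U^0_w$ contains a simple root subgroup on which $\theta$ is nontrivial (your argument here is correct), and $U^0_w$ acts \emph{unipotently} on each $F_w\otimes\Sym^j(\mathfrak{n}_w)^\vee$; a nontrivial character twisted by a unipotent action still has vanishing $\fku^0_w$-homology in every degree, so each normal-derivative layer contributes zero. One then feeds this into Lemma~\ref{fil-lem} (which is designed precisely for such inverse-limit filtrations) to conclude that the whole stratum $w\neq w_0$ contributes zero. The ``main obstacle'' you flagged is therefore slightly more involved than you indicated: it is not just identifying the stratum sections, but also running Borel's lemma and checking vanishing on every normal-derivative layer.
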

\begin{proof}
    The statement follows from \cite[Theorem 8.2]{CHM} directly. For $G=\GL_n$, it also follows from the BZ-filtration and Proposition~\ref{homology prop}.
\end{proof}

In the following proof, for ``discrete series", we always mean the \textbf{relative discrete series of $\GL_2(\BR)$}.
\begin{proof}[Proof of Theorem~\ref{ext-van-thm}]
    \textbf{Step 1.} Assume that $\pi$ and 
$$\tau=\xi_1\times \dots\times \xi_n$$ 
are products of characters. Let $m(\pi,\tau)$ denote the number of characters in $\pi$ that are linked with some characters in $\tau$. We proceed by induction on $m(\pi,\tau)$. When $m(\pi,\tau)=0$, consider the BZ-filtration of $\pi$. Let $I^{k}E(\pi^{\flat})$ be a successive quotient of this filtration, where $\pi^{\flat}$ is an irreducible representation of $\GL_{n-k}$ with infinitesimal character as described in Theorem~\ref{central_char_of_fil}. If $k=0$, then the infinitesimal character of $\pi^{\flat}$ differs from that of $\tau$ by $m(\pi,\tau)=0$. This implies 
\[
\Ext^l_{\GL_n}\left(I^kE(\pi^{\flat})\widehat{\otimes}\tau^{\vee},\BC\right)=\Ext^l_{\GL_n}\left(\pi^{\flat}\widehat{\otimes}\tau^{\vee},\BC\right)=0
\]
for every integer $l$. For $0<k<n$, by Shapiro's lemma,
\[
\rmh_l^{\CS}\left(\GL_n,I^kE(\pi^{\flat})\widehat{\otimes} \tau^{\vee}\right)\simeq  \rmh_l^{\CS}\left(H_{n,k},\pi^{\flat}\widehat{\otimes}(\psi_{n,k}\otimes\tau^{\vee}\otimes\delta_{H_{n.k}}^{-1/2})\right).
\]
To prove the homology vanishing, we apply the BZ-filtration to
\[
\tau^{\vee}=\xi_1^{-1}\times\dots\times \xi_n^{-1}.
\]
It suffices to show that for every successive subquotient $I^sE(\tau^{\sharp})$ in the BZ-filtration of $\tau^{\vee}$, the following holds for every integer $l$:
\[
\rmh_l^{\CS}(H_{n,k},\pi^{\flat}\widehat{\otimes}(\psi_{n,k}\otimes I^sE(\tau^{\sharp})\otimes\delta_{H_{n.k}}^{-1/2}))=0.
\]
Consider the spectral sequence
\[
\rmh^{\CS}_p\left(\GL_{n-k},\pi^{\flat}\widehat{\otimes} L^qB_{-}^k(I^sE(\tau^{\sharp}))\right)\Rightarrow \rmh_{p+q}^{\CS}\left(H_{n,k},\pi^{\flat}\widehat{\otimes}(\psi_{n,k}\otimes I^sE(\tau^{\sharp})\otimes\delta_{H_{n.k}}^{-1/2})\right).
\]
When $k\neq s$, the left-hand side equals zero by Proposition~\ref{homology prop}. When $k=s$, we have
\[
L^qB_{-}^k\left(I^sE(\tau^{\sharp})\right)=\tau^{\sharp}\otimes \wedge^{q}\fkv_{n-k+1}\otimes|\det|_{\mathbf{k}}^{-1/2},
\]
whose generalized infinitesimal characters differ from $({\pi^{\flat}})^{\vee}$ by the assumption that $m(\pi,\tau)=0$. Therefore,
\[
\rmh^{\CS}_p\left(\GL_{n-k},\pi^{\flat}\widehat{\otimes} L^qB_{-}^k(I^sE(\tau^{\sharp}))\right)=0
\]
for every integers $p,q$. When $k=n$, the higher extension vanishes by Theorem~\ref{GG-van}.

    Suppose that the statement holds for $m(\pi,\tau)=m$, we proceed to prove when $m(\pi,\tau)=m+1$. Write $\pi$ as $\pi_1\times \chi$, where $\chi$ is a character linked with some character in $\tau$. Moreover, by considering $(\pi^{\vee},\tau^{\vee})$, and the Switching Lemma \ref{swit-lem}, that is, replacing $(\pi,\tau)$ by $\left((\tau^{\vee}\times \chi_{0,s_1}\times \chi_{0,s_2}),\pi^{\vee}\right)$ with some $s_1,s_2$ not being linked or related with characters of $\tau^{\vee}$ or $\pi^{\vee}$, we can take $\chi$ to be the one such that $\mathrm{Re}(\chi)\geq \mathrm{Re}(\eta)$, for every $\eta$ in the set 
    \[\left\{\chi_i, \xi_j\ |\ \chi_i\ \text{is linked with}\ \xi_j\right\}.\]  
    We observe that $\chi$ cannot be negatively linked to some character in $\tau$, otherwise, it contradicts the maximality of $\mathrm{Re}(\chi)$. 
    
    The $\ov{M_{n+1}}$ has a unique open orbit and a unique closed orbit on $P_{n,1}\backslash\GL_{n+1}$, which leads to the short exact sequence
    \begin{equation}\label{sub-ex-seq}
            0\lra \pi_o\lra \pi|_{\ov{M_{n+1}}}\lra \pi_c\lra 0.
    \end{equation}    
    We prove the extension vanishing for both $\pi_o$ and $\pi_c$. 
    
    The extension vanishing for $\pi_o$ follows from the ``\textbf{substitution}" technique as follows. Let $\widetilde{\pi}:=\pi_1\times\widetilde{\chi}$, where $\widetilde{\chi}$ is a character such that
    
    \begin{itemize}
        \item it is not linked with any character in $\tau$, and
        \item $\widetilde{\pi}$ is irreducible.
    \end{itemize} 
    
    Moreover, we note that $
    \widetilde{\pi}_o\simeq \pi_o$.
    By induction hypothesis, we have
    \[
    \Ext^l_{\GL_n}(\widetilde{\pi}\widehat{\otimes}\tau^{\vee},\BC)=0 \text{ for }l\geq 1.
    \]
   
    On the other hand, $\widetilde{\pi}_c$ admits a filtration whose successive quotients are of the form
    \[  \left(|\det|^{-1/2}_{\mathbf{k}}\widetilde{\chi}\otimes (\det)^{-i_1} (\overline{\det})^{-i_2}\right)\mind \pi_1|_{\ov{M_n}},\ i_1,i_2\in \mathbb{Z}_{\geq 0}. 
    \]
    Hence, the successive quotients in opposite BZ-filtration of $\widetilde{\pi}_c$ always contain some character linked with $\widetilde{\chi}$. Therefore, by a similar argument as above, comparing the infinitesimal characters, we get 
    \[
    \Ext^l_{\GL_n}(\widetilde{\pi}_c\widehat{\otimes}\tau^{\vee},\BC)=0\text{ for }l\geq 1.
    \] 
    Then the long exact sequence associated to a similar sequence as \eqref{sub-ex-seq} for $\widetilde{\pi}$ implies 
     \[
    \Ext^l_{\GL_n}(\widetilde{\pi}_o\widehat{\otimes}\tau^{\vee},\BC)=\Ext^l_{\GL_n}(\pi_o\widehat{\otimes}\tau^{\vee},\BC)=0\text{ for }l\geq 1.
    \]
Since $\chi$ is not negatively linked to any character in $\tau$, by comparing the infinitesimal characters,
  \[
    \Ext^l_{\GL_n}(\pi_c\widehat{\otimes}\tau^{\vee},\BC)=0\text{ for }l\geq 1.
    \]
  Consequently, the result follows from the long exact sequence associated to~\eqref{sub-ex-seq}.
  
  \textbf{Step 2.} Assume that only one of $\pi$ and $\tau$  contains some discrete series. By the Switching Lemma \ref{swit-lem} as \textbf{Step 1}, we can assume that $\tau$ contains no discrete series. 
  
Notice that for a fixed character $\xi$ of $\tau$, the following two things cannot hold simultaneously:
\begin{itemize}
\item $\xi$ is positively linked to some character or discrete series of $\pi$,
\item $\xi$ is negatively linked to some character or discrete series of $\pi$.
\end{itemize}
Otherwise,  it would contradict the irreducibility of $\pi$: for example, assume $\xi$ is positively linked to a character $\chi$ of $\pi$, and negatively linked to a discrete series $D$ of $\pi$, then the parabolic induction $\chi\times D$ must be reducible  and the irreducibility criterion in  Subsection \ref{irr_stand}.  

We can still use the induction argument as \textbf{Step 1} on the size of the sets: 
\begin{align*}
A_1:= & \ \{(\chi^{\pi},\xi^{\tau})\ |\ \text{character $\chi^{\pi}$ in}\ \pi\  \text{is positively linked to}\ \text{character $\xi^{\tau}$ in}\  \tau\}, \\ 
A_2:= &\ \{(D^{\pi},\xi^{\tau})\ |\ \text{discrete series $D^{\pi}$ in}\ \pi\  \text{is positively linked to}\ \text{character $\xi^{\tau}$ in}\  \tau\}.\end{align*}
Using the Switching lemma and replacing $\xi^{\tau}$ with suitable characters, we can reduce to the case that the above sets are empty.

Next, by a similar argument as \textbf{Step 1} again to the size of the set,
\[
A_3:=\{(\chi^{\pi},D^{\pi})\ |\ \text{character $\chi^{\pi}$ in}\ \pi\  \text{is related with}\ \text{upper or lower character of $D^{\pi}$ in}\  \pi\},
\]
replacing $\chi^{\pi}$ with suitable characters, one reduces to the case that the above set is empty.

Let $m(\pi)$ be the number of discrete series in $\pi$. 
The base case  $m(\pi)=0$ is covered by \textbf{Step 1}.
Assuming the statement holds for $m(\pi)=m$, we prove it for $m(\pi)=m+1$. 

Take $D_{k,t}$ in $\pi$ with minimal $k$ among those of discrete series of $\pi$. 
Then every character of $\tau$ cannot be negatively linked to $D_{k,t}$; otherwise, it contradicts  $A_2=\emptyset$. 

Now we use the \textbf{``substitution" for the discrete series} as follows. The $M_{n+1}$ has a unique open orbit and a unique closed orbit on $P_{2,n-1}\backslash\GL_{n+1}$, which leads to the short exact sequence:
     \begin{equation}\label{subds-ex-seq}
            0\lra \pi_o\lra \pi|_{M_{n+1}}\lra \pi_c\lra 0.
    \end{equation}    
  Here 
  \[\pi_o\simeq \p_1\mind D_{k,t}|_{M_2}\] 
  and $\pi_c$ admits a filtration with successive quotients of the form 
  \[\left(|\det|^{1/2}_{\mathbf{k}}D_{k,t}\otimes_{\BR} \mathrm{Sym}^i(\mathrm{St})\right)\times \pi_1|_{M_{n-1}},\ i\in \mathbb{Z}_{\geq 0},\]
  where $\mathrm{St}$ is the standard representation of $\GL_2$.
  
  By the filtration in~\eqref{DS_pos_fil}, we have a short exact sequence for $\pi_o$
  \begin{equation}\label{subds-ex-2}
        0\lra \pi_o^{\flat}\lra \pi_o\lra \pi_o^{\sharp}\lra 0,
  \end{equation}
  where $\pi_o^{\flat}\simeq \p_1\mind\sigma_1$. Let $\widetilde{\pi}:=(\widetilde{\chi}\times \chi_{0,t+\frac{k}{2}})\times \pi_1$, where $\widetilde{\chi}$ is a character such that
  \begin{itemize}
        \item it is not positively or negatively linked to some character in $\tau$, and
        \item $\widetilde{\pi}$ is irreducible.
  \end{itemize}
 Note that by the minimality of $k$ in the $D_{k,t}$, Lemma \ref{irr-lem} implies that $\chi_{0,t+\frac{k}{2}}$ is not related to any discrete series in $\pi_1$. Since $A_3=\emptyset$,  $\chi_{0,t+\frac{k}{2}}$ is not related to any characters in $\pi_1$. Hence, $\chi_{0,t+\frac{k}{2}}\times \pi_1$ is irreducible, so such $\widetilde{\chi}$ exists.
  
  Likewise, the $M_{n+1}$-action on $P_{2,n-1}\backslash\GL_{n+1}$ leads to an exact sequence
  \begin{equation}\label{subds-ex-til}
            0\lra \widetilde{\pi}_o\lra \widetilde{\pi}|_{M_{n+1}}\lra \widetilde{\pi}_c\lra 0,
    \end{equation}    
 where $\widetilde{\pi}_o\simeq \p_1\mind (\widetilde{\chi}\times \chi_{0,\frac{k}{2}})|_{M_2}$. Furthermore, the $M_2$-action on $B_2\backslash\GL_2$ gives rise to an exact sequence
 \begin{equation}\label{subds-til-2}
      0\lra \widetilde{\pi}_o^{\flat}\lra \widetilde{\pi}_o\lra \widetilde{\pi}_o^{\sharp}\lra 0,
   \end{equation}
  where $\widetilde{\pi}_o^{\flat}\simeq \p_1\mind(\widetilde{\chi}\times \chi_{0,\frac{k}{2}})_o$. Thus, we observe that $\widetilde{\pi}_o^{\flat}\simeq \pi_o^{\flat}$.
  
  By the induction hypothesis, we have
    \[
    \Ext^l_{\GL_n}(\widetilde{\pi}\widehat{\otimes}\tau^{\vee},\BC)=0 \text{ for }l\geq 1.
    \]
   
On the other hand, since the successive quotients in the BZ-filtration of $\widetilde{\pi}_o^{\sharp}$ and $\widetilde{\pi}_c$ contain  some character positively linked by $\widetilde{\chi}$, 
    \[
   \Ext^l_{\GL_n}(\widetilde{\pi}_o^{\sharp}\widehat{\otimes}\tau^{\vee},\BC)=  \Ext^l_{\GL_n}(\widetilde{\pi}_c\widehat{\otimes}\tau^{\vee},\BC)=0\text{ for }l\geq 1
    \]
    by infinitesimal character. 
    
Hence, by the long exact sequence associated to the short exact sequence~\eqref{subds-ex-til} and~\eqref{subds-til-2}, 
    \[
    \Ext^l_{\GL_n}(\widetilde{\pi}^{\flat}_o\widehat{\otimes}\tau^{\vee},\BC)=\Ext^l_{\GL_n}(\pi^{\flat}_o\widehat{\otimes}\tau^{\vee},\BC)=0\text{ for }l\geq 1.
    \]
    
    Since every infinitesimal character of successive quotients in the BZ-filtrations of $\pi_c$ and $\pi_o^{\sharp}$ always contains some characters positively linked by the lower character of $D_{k,t}$ and $A_2=\emptyset$, one has
      \[
    \Ext^l_{\GL_n}(\pi_o^{\sharp}\widehat{\otimes}\tau^{\vee},\BC)= \Ext^l_{\GL_n}(\pi_c\widehat{\otimes}\tau^{\vee},\BC)=0\text{ for }l\geq 1.
    \]
  The result now follows from the long exact sequence associated to~\eqref{subds-ex-seq} and~\eqref{subds-ex-2}.

\textbf{Step 3.} 
We apply the substitution argument as \textbf{Step 2} to the characters in $\pi$ and $\tau$, reducing to the case when there exist no characters in $\pi$ (resp. $\tau$) linked with the characters in $\tau$  (resp. $\pi$), and there exist no characters in $\pi$ (resp. $\tau$) related with the discrete series in $\pi$ (resp. $\tau$). 

Take the discrete series $D_{k,t}$ in $\pi$ or $\tau$ such that $k$ is minimal among those of the discrete series in $\pi$ and  $\tau$. By the switching lemma, one can assume that $D_{k,t}$ is in $\pi$. The upper character of $D_{k,t}$ cannot both (i) positively link to  a character or a discrete series in $\tau$ and (ii) negatively link to  a character or a discrete series in $\tau$.  Otherwise, it contradicts the irreducibility of $\tau$. Meanwhile, the upper character of $D_{k,t}$ does not relate to any characters in $\pi$ by the substitution in the last paragraph. The upper character of $D_{k,t}$ does not relate to any discrete series in $\pi$ by the minimality of $k$. Hence, one can use the substitution argument as \textbf{Step 2} to replace the $D_{k,t}$ by characters. 

Applying the substitution arguments as \textbf{Step 2} to  repeatedly, we can finish the proof.

\end{proof}

We emphasize that although the higher extension groups vanish for generic representations, it is not true in general.
\begin{example}\label{ext_nonzero}
 Let $\mathbf{1}_n$ be the trivial representation of $\rm{GL}_n(\BC)$. Let $\pi=\mathbf{1}_2\times\mathbf{1}_2$ be an irreducible unitary representation of $\rm{GL}_4(\BC)$, and $\tau:=\mathbf{1}_1\times \mathbf{1}_1\times \chi$ be an irreducible unitary representation of $\rm{GL}_3(\BC)$, where $\chi$ is a unitary character of $\rm{GL}_1(\BC)$.
 In Example~\ref{exa-BZ}, we have seen that $\pi|_{\rm{GL}_3(\BC)}$ admits a filtration
 \[
 \pi|_{\rm{GL}_3(\BC)}=\sigma_0\supset \sigma_1\supset\sigma_2\supset 0,
 \]
  where $\sigma_0/\sigma_1$ and $\sigma_1/\sigma_2$ have infinite filtrations such that each irreducible subquotient has a positive central character. Therefore, for every integer $i$,
 \[
 \Ext^i_{\rm{GL}_3(\BC)}\left((\sigma_0/\sigma_2)\widehat{\otimes} \tau^{\vee},\BC\right)=0 \text{ and } \Hom_{\rm{GL}_3(\BC)}(\pi,\tau)\simeq \Hom_{\rm{GL}_3(\BC)}(\sigma_2,\tau).
 \]
 On the other hand,
\begin{align*}
    \rmh_0^{\CS}(\GL_3(\BC),\sigma_2\widehat{\otimes}\tau^{\vee}) & \simeq \rmh_0^{\CS}\left(\GL_3(\BC),\SInd_{M_3}^{\GL_3(\BC)}(E(\mathbf{1}_1\times \mathbf{1}_1)\widehat{\otimes}\tau^{\vee}|_{M_3})\right)\\
    &\simeq \rmh_0^{\CS}\left(M_3, E((\mathbf{1}_1\times \mathbf{1}_1)\cdot |\det|^{-1})\widehat{\otimes}\tau^{\vee}|_{M_3}\right).
\end{align*}
Since there exists a surjective map $\Phi(\tau^{\vee})\twoheadrightarrow (\mathbf{1}_1\times \mathbf{1}_1)\cdot |\det|^{1}$, we have 
\[\dim \rmh_0^{\CS}(\GL_3(\BC),\sigma_2\widehat{\otimes}\tau^{\vee})\geq 1.\]
 By multiplicity one theorem(see \cite[Theorem B]{SZ12}), the dimension is exactly one. From the perspective of Euler-Poincar\'e characteristic, $\pi$ is non-generic and $\tau$ is generic, thus $\EP_{\GL_3(\BC)}(\pi,\tau)=0$ by Theorem~\ref{EP-GL}. This implies that 
 \[
 \Ext^i_{\GL_3(\BC)}(\pi\widehat{\otimes}\tau^{\vee},\BC)\neq 0 \text{ for some } i\geq 1.
 \]
 In fact, this example fits into the framework of non-tempered GGP-conjecture, which now is a theorem in both real and $p$-adic cases, see \cite{GGP20,Chan22} for $p$-adic case and \cite{Boi25,CC25} for real case.
 
\end{example}

\end{document}